\newcommand{\arxiv}[2][]{\ifthenelse{\equal{#1}{}}
{\href{http://arxiv.org/abs/#2}{\tt arXiv:#2}}
{\href{http://arxiv.org/abs/math/#2}{\tt arXiv:math.#1/#2}}}
\renewcommand\subsection{\@startsection
{subsection}{2}{0cm} 
{-\baselineskip}     
{0.5\baselineskip}   
{\sffamily}} 
\theoremstyle{plain}
\newtheorem{theorem}{Theorem}[section]
\newtheorem{lemma}[theorem]{Lemma}
\newtheorem{corollary}[theorem]{Corollary}
\newtheorem{proposition}[theorem]{Proposition}
\newtheorem{problem}[theorem]{Problem}
\newtheorem{addendum}[theorem]{Addendum}
\newtheorem{conjecture}[theorem]{Conjecture}
\theoremstyle{definition}
\newtheorem{example}[theorem]{Example}
\newtheoremstyle{remark}
{}{}{}{}{\itshape}{}{ }{\thmname{#1}\thmnumber{ \itshape #2.}}
\theoremstyle{remark}
\newtheorem{remark}[theorem]{Remark}
\newtheoremstyle{concise}
{}{}{}{}{\bfseries}{}{ }{\thmnumber{#2.}\thmnote{ #3.}}
\theoremstyle{concise}
\newtheorem{definition}[theorem]{}
\newenvironment{roster}[1][0]
{

\begin{enumerate}\setcounter{enumi}{#1}}
{\end{enumerate}}
 \def\R{\mathbb{R}} \def\Z{\mathbb{Z}} \def\C{\mathbb{C}}
\def\Ham{\mathbb{H}}
\def\x{\times} \def\but{\setminus} \def\emb{\hookrightarrow} 
\def\eps{\varepsilon} \def\phi{\varphi} \def\emptyset{\varnothing}
\def\xr#1{\xrightarrow{#1}}  \renewcommand{\:}{\colon}
\def\imp{$\Rightarrow$}  
\DeclareMathOperator{\st}{st} \DeclareMathOperator{\lk}{lk}
 \DeclareMathOperator{\id}{id}
\DeclareMathOperator{\Ext}{Ext}
 \def\theta{\vartheta}
\def\mono{\rightarrowtail} \def\epi{\twoheadrightarrow}
\def\cel{{}^\lfloor{}} \def\cer{{}^\rfloor{}}
\def\fll{{}_\lceil{}} \def\flr{{}_\rceil{}}
\def\bydef{\mathrel{\mathop:}=}
\begin{document}

\title{Combinatorics of embeddings}
\author{Sergey A. Melikhov}
\dedicatory{To the memory of my father Alexandr Pavlovich Melikhov}
\address{Steklov Mathematical Institute of the Russian Academy of Sciences,
ul.\ Gubkina 8, Moscow, 119991 Russia}
\email{melikhov@mi.ras.ru}
\thanks{Supported by Russian Foundation for Basic Research Grant No.\ 11-01-00822}

\begin{abstract}
We offer the following explanation of the statement of the Kuratowski
graph planarity criterion and of $6/7$ of the statement of
the Robertson--Seymour--Thomas intrinsic linking criterion.
Let us call a cell complex {\it dichotomial} if to every nonempty cell there
corresponds a unique nonempty cell with the complementary set of vertices.
Then every dichotomial cell complex is PL homeomorphic to a sphere; there exist
precisely two $3$-dimensional dichotomial cell complexes, and their $1$-skeleta
are $K_5$ and $K_{3,3}$; and precisely six $4$-dimensional ones, and their
$1$-skeleta all but one graphs of the Petersen family.

In higher dimensions $n\ge 3$, we observe that in order to characterize those
compact $n$-polyhedra that embed in $\R^{2n}$ in terms of finitely many
``prohibited minors'', it suffices to establish finiteness of the list of all
$(n-1)$-connected $n$-dimensional finite cell complexes that do not embed in
$\R^{2n}$ yet all their proper subcomplexes and proper cell-like combinatorial
quotients embed there.
Our main result is that this list contains the $n$-skeleta of $(2n+1)$-dimensional
dichotomial cell complexes.
The $2$-skeleta of $5$-dimensional dichotomial cell complexes include (apart from
the three joins of the $i$-skeleta of $(2i+2)$-simplices) at least ten
non-simplicial complexes.
\end{abstract}

\maketitle

\section{Introduction}\label{intro}

\begin{definition}[Reading guide]
This introduction attempts to motivate the notions we eventually arrive at.
The fast reader may want to first look at the Main Theorem and its corollaries in
\S\ref{dichotomial spheres}; these involve only the (very short) definitions of
linkless and knotless embeddings (in \S\ref{graphs}), of a cell complex (in
\S\ref{notation}) and of an $h$-minor (in \S\ref{h-minors}).
However most of the examples and remarks in \S\ref{section:main} do depend on much
of the preceding development.

The constructively minded reader might be best guided by \S\ref{algorithmic},
which explains how non-algorithmic topological notions such as PL spheres and
contractible polyhedra can be eliminated from our results and conjectures.
\end{definition}

\begin{definition}[Conventions] All posets, and in particular simplicial complexes,
shall be implicitly assumed to be finite.
By a {\it graph} we mean a $1$-dimensional simplicial complex (so no loops or
multiple edges).
By a {\it circuit} in a graph we mean a connected subgraph with all vertices of
degree $2$.
Following the terminology of PL topology (as opposed to that of convex geometry),
by a {\it polyhedron} we mean a space triangulable by a simplicial complex, and
moreover endowed with a PL structure, i.e.\ a family of compatible triangulations
(see e.g.\ \cite{Hu}); by our convention above, all polyhedra are compact.
The polyhedron triangulated by a simplicial complex $K$ is denoted $|K|$.
All {\it maps} between polyhedra shall be assumed piecewise linear, unless stated
otherwise.
Two embeddings $f$, $g$ of a polyhedron in a sphere are considered {\it equivalent}
if they are related by an isotopy $h_t$ of the sphere (i.e.\ $h_0=\id$ and $h_1f=g$).
\end{definition}

\subsection{Graphs}\label{graphs}

The complete graph $K_5$ and the complete bipartite graph $K_{3,3}$
(also known as the utilities graph) are shown in Fig.\ 1.
They can be viewed as the $1$-skeleton $(\Delta^4)^{(1)}$ of the $4$-simplex and
the join $(\Delta^2)^{(0)}*(\Delta^2)^{(0)}$ of two copies of the three-point set.

\begin{theorem}[Kuratowski, 1930] A graph $G$ contains no subgraph that is
a subdivision of $K_5$ or $K_{3,3}$ iff $|G|$ embeds in $S^2$.
\end{theorem}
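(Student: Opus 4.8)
The plan is to prove the two implications separately; the substantive one is that a graph $G$ with no subgraph that is a subdivision of $K_5$ or $K_{3,3}$ embeds in $S^2$. For the converse, observe that subdividing edges preserves the homeomorphism type of $|G|$, and that a subpolyhedron of a polyhedron embeddable in $S^2$ is itself embeddable; hence it suffices to see that neither $|K_5|$ nor $|K_{3,3}|$ embeds in $S^2$. I would derive this from Euler's formula $v-e+f=2$ for a connected graph lying in $S^2$ (proved by induction on $e$, removing a terminal edge, or an edge incident to two distinct complementary regions). Since $K_5$ is a simple graph, every complementary region of a hypothetical embedding is bounded by a circuit of length $\ge 3$, so counting edge--region incidences gives $2e\ge 3f$, whence $e\le 3v-6=9$, contradicting $e=10$; since $K_{3,3}$ is moreover bipartite, all boundary circuits have length $\ge 4$, so $2e\ge 4f$, whence $e\le 2v-4=8$, contradicting $e=9$.

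For the forward implication I would argue by induction on the number of edges of $G$. A graph embeds in $S^2$ iff each of its blocks does, and a subgraph that is a subdivision of $K_5$ or $K_{3,3}$, being $2$-connected, lies in a single block; so we may assume $G$ is $2$-connected. If $\{u,v\}$ is a $2$-separator, decompose $G=G_1\cup G_2$ with $G_1\cap G_2=\{u,v\}$ and each $G_i$ a proper subgraph, and set $G_i^+=G_i\cup\{uv\}$. Each $G_i^+$ has fewer edges than $G$ and again contains no subdivision of $K_5$ or $K_{3,3}$: a hypothetical one using the new edge $uv$ can be rerouted through a $u$--$v$ path in the other part, which exists and is internally disjoint from $G_i$ by $2$-connectivity, yielding such a subgraph in $G$. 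By induction each $|G_i^+|$ embeds in $S^2$; and since $G_i^+$ is $2$-connected, the edge $uv$ lies on the boundary of a complementary region, into which a planar embedding of $G_{3-i}^+$ can be inserted, producing a planar embedding of $G$. So we may further assume $G$ is $3$-connected.

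For $3$-connected $G$ I would invoke the contraction lemma that a $3$-connected graph on at least five vertices has an edge $e=xy$ whose contraction $G/e$ is still $3$-connected. By induction $|G/e|$ embeds in $S^2$, and by Whitney's theorem the embedding of a $3$-connected planar graph is unique up to homeomorphism of $S^2$, so its complementary regions are bounded exactly by the non-separating induced circuits of $G/e$. Let $w\in G/e$ be the image of the contracted edge. Deleting $w$ from the embedding exposes the cyclic order of the edges at $w$; one then tries to re-split $w$ into $x$ and $y$ joined by $e$, distributing the former neighbours of $w$ between $x$ and $y$ along two cyclically consecutive arcs. This succeeds — yielding a planar embedding of $G$ — unless the neighbourhoods of $x$ and of $y$ interleave around that region in one of a small number of forbidden cyclic patterns. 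The crux, and the step I expect to be the main obstacle, is the finite case analysis showing that each such pattern — three cyclically alternating common neighbours of $x$ and $y$, or two disjoint interleaved pairs — forces, using $3$-connectivity to supply the connecting paths that serve as branch arcs, an explicit subdivision of $K_5$ (respectively $K_{3,3}$) inside $G$, contrary to hypothesis. (Alternative routes to the same conclusion replace this analysis by bridge/conflict-graph arguments relative to a maximal planar subgraph, or by Tutte's theorem on convex embeddings of $3$-connected graphs.)
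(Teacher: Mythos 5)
The paper does not prove this theorem: it is quoted as a known result, with the explicit remark that known proofs of the hard direction involve exhaustion of cases, and with references to Kuratowski, Makarychev and Sarkaria. The only ingredient the paper itself establishes is the non-embeddability of $K_5$ and $K_{3,3}$ in $S^2$, which it derives from the Borsuk--Ulam theorem via deleted joins ($K_{3,3}=|[3]*[3]|$ is a $2$-obstructor, and $K_5=(\Delta^4)^{(1)}$ is a self-dual subcomplex of the dichotomial $3$-sphere $\partial\Delta^4$, hence also a $2$-obstructor). Your Euler-formula edge count is a different, more elementary route to the same facts and is correct; the paper's route has the advantage of generalizing to all dimensions (van Kampen--Flores), which is the point of the paper.

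Your outline of the hard direction is the standard Thomassen--Diestel argument; the block and $2$-separator reductions are fine, and the concluding case analysis is explicitly deferred rather than wrong. But there is one genuine gap in the $3$-connected induction step: you apply the induction hypothesis to $G/e$, which requires knowing that $G/e$ contains no subdivision of $K_5$ or $K_{3,3}$, and this does not follow formally from the hypothesis on $G$ --- contracting an edge does not in general preserve the absence of a fixed \emph{topological} minor (a vertex of large degree created by the contraction may serve as a new branch vertex). What contraction preserves is the absence of a \emph{minor}. So you must either run the entire induction with ``no $K_5$ or $K_{3,3}$ minor'' as the hypothesis and invoke, at the very end, the equivalence of the minor and subdivision formulations for these two particular graphs (a $K_{3,3}$ minor yields a $K_{3,3}$ subdivision because all its degrees are $3$; a $K_5$ minor yields a $K_5$ or a $K_{3,3}$ subdivision by the usual branch-set analysis), or prove that equivalence as a separate lemma before starting. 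This equivalence is precisely the content of Wagner's reformulation quoted in the paper immediately after the theorem; without it your induction does not close.
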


Known proofs of the `only if' part involve exhaustion of cases.
A relatively short proof was given by Yu.\ Makarychev \cite{Mak} and further
developed in \cite{Sk-A1}.
An interesting configuration space approach was suggested by Sarkaria \cite{Sa3}
(but beware of an error, pointed out in \cite{Sk-M}).
Given the considerable difficulty of the result, it is astonishing that besides
Kuratowski's own proof \cite{Ku} (announced in 1929), there were independent
contemporary proofs: by L. S. Pontryagin (unpublished%
\footnote{Pontryagin's autobiography dates it to the 1926/27 academic year, and
mentions that it corrected a previous result by Kuratowski.
Did Pontryagin hesitate to publish because he wanted to understand where
$K_5$ and $K_{3,3}$ come from?}, but acknowledged
in Kuratowski's original paper \cite{Ku}), and by O. Frink and P. A. Smith,
announced in 1930 \cite{FS}, \cite{Wh}.

A useful reformulation of Kuratowski's theorem was suggested by K. Wagner
\cite{Wa}.
The following non-standard definition is equivalent to the standard one (see
Proposition \ref{nevo} below).
We call a graph $H$ a {\it minor} of a graph $G$, if $H$ is obtained from
a subgraph $F$ of $G$ by a sequence of edge contractions, where an edge
contraction as a simplicial map $f$ that shrinks one edge $\{v_1,v_2\}$ onto
a vertex, provided that $\lk\{v_1\}\cap\lk\{v_2\}=\emptyset$.
The latter condition is equivalent to saying that all point-inverses of
$|f|$ are points, except for one point inverse, which is an edge.

It is easy to see that $S^2$ modulo an arc is homeomorphic to $S^2$ (cf.\ Lemma
\ref{trivial}); hence if $|G|$ embeds in $S^2$ and $H$ is a minor of $G$,
then $|H|$ embeds in $S^2$.
This observation along with Kuratowski's theorem immediately imply

\begin{theorem}[Wagner, 1937] A graph $G$ has no minor isomorphic to $K_5$ or
$K_{3,3}$ iff $|G|$ embeds in $S^2$.
\end{theorem}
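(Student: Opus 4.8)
The plan is to derive Wagner's theorem directly from Kuratowski's theorem, using two elementary facts: that a graph has $K_5$ or $K_{3,3}$ as a minor if and only if it contains a subdivision of $K_5$ or of $K_{3,3}$ as a subgraph, and --- as already recorded in the text above --- that embeddability of a graph in $S^2$ passes to its minors. Both implications are then obtained by contraposition.

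For the ``only if'' direction ($\Leftarrow$), suppose $G$ has a minor isomorphic to $K_5$ or $K_{3,3}$. A subgraph of a planar graph is planar (just restrict the embedding), and each edge contraction occurring in the definition of a minor has a single arc as its only nondegenerate point-inverse, so by the identification of $S^2$ modulo an arc with $S^2$ (cf.\ Lemma \ref{trivial}) it carries an embedding in $S^2$ to an embedding in $S^2$. Hence if $|G|$ embedded in $S^2$, so would $K_5$ or $K_{3,3}$. But $K_5$ is itself a subdivision of $K_5$, and $K_{3,3}$ of $K_{3,3}$, so by the `only if' part of Kuratowski's theorem neither embeds in $S^2$ --- a contradiction.

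For the ``if'' direction ($\Rightarrow$), suppose $|G|$ does not embed in $S^2$. By Kuratowski's theorem $G$ contains a subgraph $F$ that is a subdivision of $K_5$ or of $K_{3,3}$; its branch vertices are joined by internally disjoint arcs, one for each of the $10$ (resp.\ $9$) edges of the unsubdivided graph. Contracting the interior edges of each of these arcs in turn recovers $K_5$ (resp.\ $K_{3,3}$) as a minor of $F$, hence of $G$. The one point to verify is the link condition $\lk\{v_1\}\cap\lk\{v_2\}=\emptyset$ for each such contraction: if $\{v_1,v_2\}$ lies on an arc and $v_2$ has degree $2$, then $\lk\{v_2\}=\{v_1,w\}$ with $w$ the next vertex along the arc, and $w\notin\lk\{v_1\}$ because in $F$ the two endpoints of a subdivided edge are joined only through that arc (arcs of length $1$ require no contraction). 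So $G$ has a $K_5$ or $K_{3,3}$ minor.

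Given Kuratowski's theorem there is no substantial obstacle here --- the text is right that the two results ``immediately imply'' this one. The only two ingredients carrying the argument are the homeomorphism between $S^2$ modulo an arc and $S^2$, flagged above as easy and established later as Lemma \ref{trivial}, and the verification of the link condition along a subdivided edge; the sole pitfalls in the latter are the degenerate cases (an arc of length $1$, or a pair of branch vertices that were already adjacent), which should be dispatched explicitly.
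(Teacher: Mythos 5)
Your proposal is correct and follows exactly the route the paper intends: it combines Kuratowski's theorem with the observation that planarity is inherited by minors (via the homeomorphism $S^2/\text{arc}\cong S^2$ of Lemma \ref{trivial}), which is precisely the two-sentence argument the paper gives before stating Wagner's theorem. Your additional verification of the link condition $\lk\{v_1\}\cap\lk\{v_2\}=\emptyset$ along subdivided arcs is a worthwhile detail, since the paper uses a non-standard definition of minor, and your handling of the degenerate cases is correct.
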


We are now ready for a more substantial application of minors.
We call an embedding $g$ of a $1$-polyhedron $\Gamma$ in $S^m$ {\it knotless} if
for every circuit $C\subset\Gamma$, the restriction of $g$ to $C$ is an unknot.
We call an embedding $g$ of a polyhedron $X$ into $S^m$ {\it linkless} if for
every two disjoint closed subpolyhedra of $g(X)$, one is contained in an $m$-ball
disjoint from the other one.
An $n$-polyhedron admitting no linkless embedding in $S^{2n+1}$ is also known in
the literature (at least for $n=1$) as an ``intrinsically linked'' polyhedron.

\includegraphics{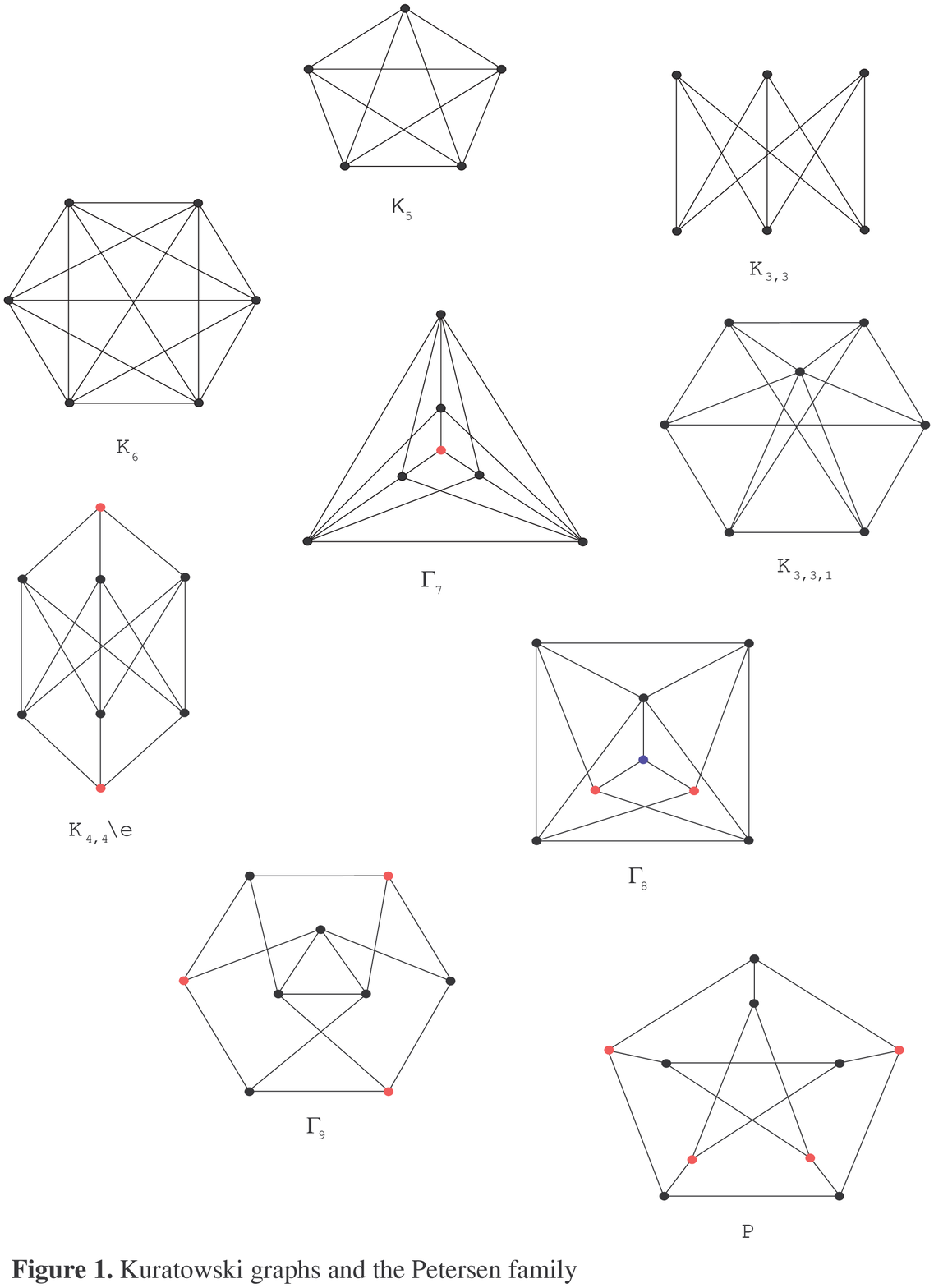}
\newpage

For $n>1$, the notion of a linkless embedding in $S^{2n+1}$ can be reformulated
in terms of linking numbers of pairs of {\it co}cycles (see Lemma \ref{circuits}
and Lemma \ref{is-linkless}).
When $n=1$, variations of the notion of ``linking'' (such as existence of
a nontrivial two-component sublink, or of a two-component sublink with nonzero
linking number) lead to inequivalent versions of the notion of a linkless
embedding in $\R^3$.
It is amazing, however, that they all become equivalent upon adding
a quantifier:

\begin{theorem}[Robertson--Seymour--Thomas, 1995, + $\eps$]\label{RST0}
Let $G$ be a graph. If $|G|$ admits an embedding in $S^3$ that links every
pair of disjoint circuits with an even linking number, then $|G|$ admits a
linkless, knotless embedding in $S^3$.
\end{theorem}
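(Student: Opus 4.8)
The plan is to deduce Theorem~\ref{RST0} from the Robertson--Seymour--Thomas characterization of linklessly embeddable graphs, supplemented by a $\Z/2$ refinement of the Conway--Gordon theorem and a routine monotonicity statement. Recall that, by the theorem of Robertson, Seymour and Thomas, $|G|$ admits a \emph{flat} embedding in $S^3$ --- one in which every circuit bounds an embedded disk meeting $G$ only along that circuit --- if and only if $G$ has no minor isomorphic to one of the seven graphs of the Petersen family; and a flat embedding is in particular knotless (each circuit bounds a disk, hence is unknotted) and linkless (for disjoint circuits the disks can be taken disjoint). So the conclusion of Theorem~\ref{RST0} is equivalent to the assertion that $G$ has no Petersen-family minor, and it suffices to prove: if $G$ has such a minor, then $|G|$ admits no embedding in $S^3$ in which every pair of disjoint circuits has even linking number.

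First I would record the monotonicity lemma: if $H$ is a minor of $G$ and $|G|$ has an embedding linking every pair of disjoint circuits evenly, then so does $|H|$ --- so that a Petersen-family minor of such a $G$ would inherit an evenly-linked embedding, and it is enough to handle the Petersen-family graphs themselves. For a subgraph the lemma is immediate, so only an edge contraction $G\to G/e$ along an edge $e=\{v_1,v_2\}$ with $\lk\{v_1\}\cap\lk\{v_2\}=\emptyset$ needs attention. Given such an embedding $g$ of $|G|$, postcompose with a PL collapse $c\colon S^3\to S^3/g(e)\cong S^3$ (an arc is collapsible, cf.\ Lemma~\ref{trivial}); the result is an embedding $g'$ of $|G/e|$. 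What has to be checked is that every pair of disjoint circuits $C_1',C_2'$ of $G/e$ lifts to a pair of disjoint circuits $\widetilde C_1,\widetilde C_2$ of $G$ --- a circuit through the image vertex that enters and leaves along edges lying over $v_1$ and $v_2$ is lifted by inserting $e$, and disjointness survives since the two circuits cannot both pass through that one vertex --- and that collapsing the arc $g(e)$, possibly with a whisker attached along a circuit running through a single $v_i$, induces an isomorphism on the relevant link complements, so that $\mathrm{lk}\bigl(g'(C_1'),g'(C_2')\bigr)=\mathrm{lk}\bigl(g(\widetilde C_1),g(\widetilde C_2)\bigr)$. This is bookkeeping, but it genuinely uses that $S^3$ modulo an arc is $S^3$.

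The crux is the remaining input: \emph{every} embedding in $S^3$ of \emph{each} of the seven Petersen-family graphs has a pair of disjoint circuits with \emph{odd} linking number. For $K_6$ this is the Conway--Gordon theorem, in the sharp form $\sum \mathrm{lk}_2(C,C')=1$ over the ten pairs of disjoint triangles; in particular an odd-linked pair exists. For the six of these graphs that occur as $1$-skeleta of $4$-dimensional dichotomial cell complexes, I expect the odd-linked pair to be produced by the deleted-product / van Kampen style obstruction developed for our Main Theorem, exploiting the self-duality built into the dichotomial structure. The seventh graph of the family is precisely the one that does not arise this way --- whence the ``$6/7$'' of the abstract --- and must be dealt with separately; its $\Z/2$ intrinsic linking is known, and can also be propagated from the $K_6$ case by tracking the $\Z/2$-linking class of disjoint cycles across a $\Delta$--Y exchange. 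The main obstacle, and where I would spend most effort, is exactly this third step: making the $\Z/2$ Conway--Gordon statement uniform over the whole family and self-contained for the exceptional graph; the rest is assembly on top of the Robertson--Seymour--Thomas theorem.
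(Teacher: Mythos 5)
Your proposal is correct in outline but follows a genuinely different route from the paper. The paper does not really reprove anything here: it presents Theorem \ref{RST0} as the concatenation of Lemma \ref{panels}(a) (the Robertson--Seymour--Thomas equivalence: an embedding with all pairwise linking numbers even exists iff a panelled embedding exists) with Lemma \ref{panels}(b) (panelled $=$ linkless $+$ knotless, which is \cite[Lemma 4.1]{M2}). You instead route through the forbidden-minor characterization: even-linking embeddability is minor-monotone; every embedding of every Petersen-family graph contains an odd-linked pair of disjoint circuits (Conway--Gordon for $K_6$, propagated by Sachs through $\nabla\mathrm{Y}$- and $\mathrm{Y}\nabla$-exchanges --- the paper reproves this for six of the seven graphs via dichotomial $4$-spheres, which is exactly the source of the ``$6/7$'' in the abstract); hence an evenly-linked $G$ has no Petersen-family minor and so admits a flat embedding. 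What your route buys is that the only black box left from \cite{RST} is the forbidden-minor theorem itself; what it costs is the extra combinatorial layer (minor-monotonicity plus the uniform mod-$2$ Conway--Gordon statement), which the paper's direct citation of Lemma \ref{panels}(a) avoids. One step you should not wave off: ``flat implies linkless'' in this paper's sense does not follow from the remark that disjoint circuits bound disjoint panels, because linklessness here quantifies over \emph{all} pairs of disjoint closed subpolyhedra of $g(|G|)$, not just circuits; the assertion that a panelled embedding splits every such pair is precisely Lemma \ref{panels}(b), whose proof requires a genuine (engulfing-type) argument and, as Remark \ref{erratum} notes, must handle disconnected subgraphs. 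Your monotonicity lemma and the lifting of disjoint circuits through an admissible edge contraction are fine as sketched.
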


This is a very powerful result; it implies, {\it inter alia}, that the Whitney
trick can be made to work in dimension four in a certain limited class of problems
(Theorem \ref{Whitney}).

Theorem \ref{RST0} above as well as the following Theorem \ref{RST} are essentially
due to Robertson, Seymour and Thomas \cite{RST}, \cite{RST'}; they had a different
formulation but its equivalence with the present one is relatively easy \cite{M2}
(see also Lemma \ref{panels} and Remark \ref{erratum} below).

\begin{theorem}[Robertson--Seymour--Thomas, 1995, + $\eps$]\label{RST}
Let $G$ be a graph.

(a) Two linkless, knotless embeddings of $|G|$ in $S^3$ are inequivalent iff
they differ already on $|H|$ for some subgraph $H$ of $G$, isomorphic to
a subdivision of $K_5$ or $K_{3,3}$.

(b) $|G|$ linklessly embeds in $S^3$ iff $G$ has no minor in the Petersen family.
\end{theorem}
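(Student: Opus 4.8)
The plan is to extract both parts from the work of Robertson, Seymour and Thomas \cite{RST}, \cite{RST'}, the only new ingredient being a dictionary between the notions of a linkless and of a knotless embedding used here and their notion of a {\it flat} (panelled) embedding of a graph; for this dictionary I will invoke Lemma \ref{panels}, and I will use Theorem \ref{RST0} to promote their ``even linking number'' hypotheses and conclusions to the linkless-and-knotless ones occurring in Theorem \ref{RST} (this promotion, together with the verification that their notion of equivalence of embeddings agrees with ambient isotopy of $S^3$, being what the ``$\eps$'' and the reference \cite{M2} account for). Two facts will be used freely: a flat embedding of a graph is automatically linkless and knotless; and, by Theorem \ref{RST0}, a graph that admits an embedding in $S^3$ linking every pair of disjoint circuits with an even linking number admits a linkless knotless one.

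For (b), I would first note that linkless embeddability of $|G|$ in $S^3$ is a minor-monotone property of $G$. For a subgraph this is immediate, as the restriction of a linkless embedding to a subpolyhedron is linkless. For an edge contraction I would use, as in the remark preceding Wagner's theorem, that $S^3$ modulo an arc is PL homeomorphic to $S^3$ (Lemma \ref{trivial}): the quotient map sends a linkless embedding of $|G|$ to one of $|H|$, carrying a pair of disjoint subpolyhedra and a ball separating them to a pair of disjoint subpolyhedra and a ball separating them. Next, none of the seven graphs in the Petersen family embeds linklessly: for $K_6$ this is the Conway--Gordon--Sachs theorem, which in any embedding exhibits two disjoint triangles of odd linking number, and no ball can contain one of two linked circuits while missing the other; the remaining six graphs arise from $K_6$ by $\Delta Y$-exchanges, under which, as in \cite{RST}, non-existence of a linkless embedding is inherited. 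These two points give the ``only if'' direction. For ``if'', a graph with no Petersen-family minor admits a flat embedding by \cite{RST}, and a flat embedding is linkless (and knotless) by Lemma \ref{panels}.

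For (a), the ``if'' direction is one line: an ambient isotopy of $S^3$ realizing an equivalence of the two embeddings of $|G|$ restricts to one realizing an equivalence of their restrictions to $|H|$; so if the restrictions are inequivalent, so are the embeddings. For ``only if'', let $f$ and $g$ be inequivalent linkless knotless embeddings of $|G|$. Both are flat by Lemma \ref{panels}, so I can apply the structure theorem for flat embeddings of a fixed graph from \cite{RST}, \cite{RST'}: any two flat embeddings of $G$ are joined by a finite sequence of re-embeddings, each one either supported on one side of a $2$-separation of $G$ or obtained by replacing a flat embedding of a subdivision $H$ of $K_5$ or $K_{3,3}$ by its mirror image (these being the only two flat embeddings of such an $H$, and being inequivalent by chirality). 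Re-embeddings of the first kind leave the equivalence class of the embedding unchanged, so in a sequence joining $f$ to $g$ some re-embedding of the second kind changes it, and the first such already changes the equivalence class of the restriction to the corresponding Kuratowski subgraph $H$; that $H$ works, with $f|_{|H|}$ and $g|_{|H|}$ inequivalent.

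The hard part will be the ``only if'' direction of (a): everything rests on a sufficiently precise normal form for flat embeddings of a graph, guaranteeing both that any two are joined by local re-embeddings of the two kinds above and that a nontrivial local re-embedding of the second kind is already nontrivial on a $K_5$- or $K_{3,3}$-subdivision. Distilling exactly this from \cite{RST}, \cite{RST'}, whose setup and emphasis differ, and matching up the two equivalence relations, is where I expect to rely on \cite{M2}, together with the elementary fact that $K_5$ and $K_{3,3}$ each admit precisely two flat embeddings in $S^3$, interchanged by a reflection and mutually inequivalent. A subsidiary point, used in (b), is to confirm that the present ball-separation definition of a linkless embedding of a $1$-polyhedron implies the classical condition that no two disjoint circuits form a nonsplit link --- which is clear, since two linked circuits cannot be separated by a ball --- so that the Petersen-family obstruction genuinely applies; I would record this alongside Lemma \ref{circuits}.
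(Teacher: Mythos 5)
Your proposal takes essentially the same route as the paper, which does not prove Theorem \ref{RST} from scratch but derives it from the Robertson--Seymour--Thomas papers via the dictionary of Lemma \ref{panels} between panelled (flat) embeddings and linkless-and-knotless ones, with the remaining translation delegated to \cite{M2}. Your more elementary Conway--Gordon--Sachs argument for the ``only if'' direction of (b) is a harmless substitute for the paper's deleted-join argument (Example \ref{3.5} and Corollary \ref{3.3}), and your honest flagging of the hard normal-form step in (a) matches what the paper leaves to \cite{RST}, \cite{RST'} and \cite{M2}.
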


The {\it Petersen family} of graphs is shown in Fig.\ 1 (disregard the colors for
now) and includes the Petersen graph $P$, the complete graph $K_6$, the complete
tripartite graph $K_{3,3,1}$, the graph $K_{4,4}\but$(edge), and three further
graphs which we denote $\Gamma_7$, $\Gamma_8$ and $\Gamma_9$.
These seven graphs can be defined as all the graphs obtainable from $K_6$ by
a sequence of $\nabla{\mathrm Y}$- and ${\mathrm Y}\nabla$-{\it exchanges}, which
as their name suggests interchange a copy of the $1$-skeleton of a $2$-simplex
$\Delta^2\subset\Delta^3$ with a copy of the triod, identified with the remaining
edges of the $1$-skeleton of $\Delta^3$.

Of course, (a) can be reformulated in terms of minors, for given a minor $H$ of
$G$ and a knotless and linkless embedding of $|G|$ in $S^3$, the homeomorphism
$S^3/\text{tree}\cong S^3$ (see Lemma \ref{trivial}) yields a knotless and linkless
embedding of $|H|$ in $S^3$, unique up to equivalence (by an orientation-preserving
homeomorphism of $S^3$).

\begin{remark} Robertson and Seymour have proved, in a series of twenty papers
spanning over 500 pages, that every minor-closed family of graphs is
characterized by a finite set of forbidden minors (see \cite{Di} for an outline).
Let us abbreviate ``a subdivision of a subgraph'' to a {\it $\tau$-subgraph}.
There exists a $\tau$-subgraph-closed family of trees that is not characterized
by a finite set of forbidden $\tau$-subgraphs (cf.\ \cite[\S12, Exercise 5]{Di}).

However, it is well-known and easy to see that every minor-closed family of graphs
that is characterized by a finite set $S$ of forbidden minors is also characterized
by a possibly larger but still finite set $S^+$ of forbidden $\tau$-subgraphs.
(For each $G\in S$ and each $v\in G$, replace the star $\st(v,G)$ by a tree with
$\deg v$ leaves and no vertices of degree $2$.
Since for every $v$ there are only finitely many such trees, we obtain a finite set
of graphs containing $S^+$.)
It is easy to see that $\{K_5,\,K_{3,3}\}^+=\{K_5,\,K_{3,3}\}$, but $\Pi^+\ne\Pi$
where $\Pi$ is the Petersen family.
\end{remark}

\begin{remark}
Y. Colin de Verdiere introduced a combinatorially defined invariant $\mu(G)$ of
the graph $G$, for which it is known that $\mu(G)\le 3$ iff $|G|$ is planar, and
$\mu(G)\le 4$ iff $|G|$ admits a linkless embedding in $S^3$ (see \cite{LS},
\cite{Iz}).

Van der Holst conjectured that $\mu(G)\le 5$ iff $|\hat G|$ has zero $\bmod 2$ van
Kampen obstruction, where $\hat G$ is the cell complex obtained by glueing up
all circuits in $G$ by $2$-cells \cite{Ho}.
His supporting evidence for this conjecture is that if $G$ is any of the 78 graphs
(called the {\it Heawood family} in \cite{Ho}), related by a sequence of $\nabla$Y-
and Y$\nabla$-exchanges to $K_7$ or $K_{3,3,1,1}$, then $\mu(G)=6$, and $|\hat G|$
has nonzero $\bmod 2$ van Kampen obstruction (so in particular does not embed in
$S^4$); but if $H$ is a proper minor of $G$, then $\mu(G)\le 5$ and $|\hat H|$
has zero $\bmod 2$ van Kampen obstruction \cite{Ho}.

On the other hand, since $K_7$ contains circuits of length $\ge 4$, $\hat K_7$
itself contains a {\it proper} subcomplex isomorphic to the $2$-skeleton of
the $6$-simplex, and it is well-known (see \cite{M2}) that $|(\Delta^6)^{(2)}|$
still has a nonzero $\bmod 2$ van Kampen obstruction.
The similar proper subcomplex of $\hat K_{3,3,1,1}$ is discussed in
\S\ref{dichotomial spheres} below.
\end{remark}

\begin{remark} Let $E_n$ stand for ``the problem of embeddability of a certain
$n$-polyhedron in $S^{2n}$'', and let $L_n$ stand for ``the problem of
linkless embeddability of a certain $n$-polyhedron in $S^{2n+1}$''; it is
understood that the polyhedron is specified, but nevertheless omitted in
the notation.
In \S\ref{commensurability} we describe a reduction (if and only if) of every $E_n$
to an $L_n$; and of every $L_n$ to an $E_{n+1}$ (Theorem \ref{commensuration}).
The construction is geometric, i.e.\ does not involve configuration spaces.
The case $n=1$ was also done by van der Holst \cite{Ho}.

Let us mention other known relations between embeddability and linkless
embeddability.
(i) M. Skopenkov has derived the non-embeddability of a certain $n$-polyhedron in
$S^{2n}$ from non-existence of linkless embeddings of its links of vertices in
$S^{2n-1}$ (thereby reducing a certain $E_n$ to a few $L_{n-1}$'s) by
a geometric argument \cite{Sk-M2}.
(ii) Conversely, the author derived the linkless non-embeddability of
the $n$-skeleton of the $(2n+3)$-simplex in $S^{2n+1}$ from the fact that
the $(n+1)$-skeleton of the $(2n+4)$-simplex has nonzero van Kampen obstruction
even modulo $2$ (thereby reducing a certain $L_n$ to a certain strengthened
$E_{n+1}$ by an algebraic argument with configuration spaces) \cite[Example 4.7]{M2}.
(iii) An odd-dimensional version of the van Kampen obstruction $\theta_{2n}$
to an $E_n$ can be identified as a complete obstruction $\theta_{2n+1}$ to
an $L_n$ \cite{M2}.
\end{remark}

Arguments which might relate to a possible common higher dimensional
generalization of the Kuratowski and Robertson--Seymour--Thomas theorems
could include, apart from those in \S\ref{commensurability}, those in
\cite{Sa0}, \cite{Sa1} (see also \cite{Um1}), \cite{Sk-A2}.
Unfortunately, the present paper gives little clue to understanding {\it proofs} of
the Kuratowski and Robertson--Seymour--Thomas theorems, but it does attempt to
provide a better understanding of their {\it statements}.

To this end let us first look at a more statistically representative selection
of cases.

\subsection{Self-dual complexes}
MacLane and Adkisson proved that two embeddings of a $1$-polyhedron $\Gamma$ in
the plane are inequivalent iff they differ already on a copy of $S^1$ or on
a copy of the triod contained in $\Gamma$ \cite{AdM}.
We restate this as follows.

\begin{theorem}[MacLane--Adkisson, 1941]
If $K$ is a simplicial complex, then two embeddings of $|K|$ in $S^2$ are
inequivalent iff they differ already on $|L|$ for some subcomplex $L$ of $K$,
isomorphic to $\Delta^2$ or $(\Delta^2)^{(0)}*\Delta^0$ or a subdivision of
$\Delta^0\sqcup\partial\Delta^2$.
\end{theorem}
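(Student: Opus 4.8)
\emph{Proof plan.}\ The idea is to reduce to the MacLane--Adkisson theorem quoted above (in its $S^2$-incarnation), peeling off the $2$-cells of $K$ and then translating the polyhedral witnesses into combinatorial ones. The ``only if'' direction is immediate: an ambient isotopy of $S^2$ carrying $f$ to $g$ restricts, for every subcomplex $L\subseteq K$, to one carrying $f|_{|L|}$ to $g|_{|L|}$. For ``if'', assume $f|_{|L|}\sim g|_{|L|}$ for every $L$ of the three listed types; we must build an ambient isotopy of $S^2$ from $f$ to $g$. We may assume $\dim K\le 2$, else $|K|$ does not embed in $S^2$ and there is nothing to prove. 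I would first treat the $2$-cells. Since $|\Delta^2|$ is a disk and the complement of a PL disk in $S^2$ is a PL disk, the planar PL Schoenflies theorem together with Alexander's trick shows that two embeddings of $|\Delta^2|$ in $S^2$ are equivalent iff they induce the same orientation (with respect to a fixed orientation of $S^2$ and the vertex-ordering orientation of $\Delta^2$). Letting $L\cong\Delta^2$ range over the $2$-simplices of $K$, we get that $f$ and $g$ induce the same orientation on every $2$-cell.

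Next I would reduce to the $1$-skeleton. Suppose we have an orientation-preserving homeomorphism $h$ of $S^2$ with $hf=g$ on $|K^{(1)}|$. For each $2$-simplex $\sigma$ of $K$, the sets $hf(|\bar\sigma|)$ and $g(|\bar\sigma|)$ are PL disks with the same boundary circle $g(|\partial\sigma|)$; if they were distinct, that circle would carry opposite orientations as the boundary of the two disks, contradicting the agreement $hf|_{|\partial\sigma|}=g|_{|\partial\sigma|}$ together with the orientation matching established above. So $hf(|\bar\sigma|)=g(|\bar\sigma|)$, and by Alexander's trick an ambient isotopy supported in the interior of this disk carries $hf|_{|\bar\sigma|}$ to $g|_{|\bar\sigma|}$ rel $|\partial\sigma|$. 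Since the disks $g(|\bar\sigma|)$ have pairwise disjoint interiors, these isotopies combine into one making $hf=g$ on all of $|K|$. Hence it suffices to handle the graph $\Gamma:=|K^{(1)}|$, i.e.\ to prove $f|_{|K^{(1)}|}\sim g|_{|K^{(1)}|}$.

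For $\Gamma$ I would invoke the $S^2$-form of MacLane--Adkisson: an embedding of a graph in $S^2$ is determined up to equivalence by (i) its rotation system (the cyclic order of the incident edges at each vertex) and, when $\Gamma$ is disconnected, (ii) the mutual nesting of the components in one another's complementary regions --- this follows from the quoted planar statement by adjoining a point at infinity, or directly from the theory of rotation systems. It remains to check that the allowed witnesses control (i) and (ii). For (i): a cyclic order on $d\ge 3$ elements is determined by its restrictions to $3$-element subsets, so it is enough to match, for each vertex $v$ and each triple of neighbours $x,y,z$, the cyclic order of the edges $vx,vy,vz$. If $x,y,z$ are pairwise non-adjacent, this is read off from the embedding of $\{v\}\cup\{x,y,z\}\cup\{vx,vy,vz\}\cong(\Delta^2)^{(0)}*\Delta^0$; if, say, $x$ and $y$ are adjacent, then $\{v,x,y\}$ spans a triangle $\cong\partial\Delta^2$ in $\Gamma$, and the cyclic order is recovered from the embedding of that triangle together with the isolated vertex $z$ --- a subdivision of $\partial\Delta^2\sqcup\Delta^0$. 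For (ii): tree components do not separate $S^2$ and so may be isotoped freely; for a non-tree component $\Gamma_i$, which face of $f(\Gamma_i)$ contains another component is determined (a standard fact about plane graphs) by which side of $f(C)$ it lies on as $C$ runs over the circuits of $\Gamma_i$, and --- once the rotation systems, hence a coherent orientation of each circuit, are matched --- each such side is detected by the embedding of $C$ together with a vertex of the other component, again a subdivision of $\partial\Delta^2\sqcup\Delta^0$.

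The real obstacle, I expect, is (ii): showing that the circuit-plus-point witnesses --- which only ever carry one extra point at a time --- pin down the entire mutual nesting of the components, and checking that the orientation conventions along circuits inherited from step (i) are coherent enough to make ``which side'' a genuine, witness-visible invariant. The $2$-cell reduction and step (i) are routine once the right witnesses are identified; the whole subtlety is in reconstructing the global combinatorial map of a disconnected plane graph from these local data, equivalently in transferring the planar MacLane--Adkisson theorem cleanly to $S^2$.
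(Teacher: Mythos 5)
The paper does not prove this theorem: it quotes it as a restatement of MacLane--Adkisson \cite{AdM}, whose original version concerns $1$-polyhedra in the plane with witnesses $S^1$ and the triod, and gives no argument for the passage to the present ($S^2$, arbitrary simplicial complex) form. So there is no in-paper proof to compare against, and I can only assess your plan on its own terms. Your route --- detect the orientation of each $2$-cell with the $\Delta^2$ witnesses, reduce to the $1$-skeleton via Schoenflies plus the Alexander trick, then match the graph embeddings via rotation systems and component nesting --- is the natural way to make this passage, and both reductions you carry out are correct. Two small remarks: your labels ``if'' and ``only if'' are swapped relative to the statement (the direction proved by restricting the isotopy is the easy one), and the case split in step (i) is unnecessary, since a subcomplex need not be induced: for any vertex $v$ and any three neighbours $x,y,z$ the triod on $\{v;x,y,z\}$ is a subcomplex isomorphic to $(\Delta^2)^{(0)}*\Delta^0$ whether or not some of $x,y,z$ are adjacent.

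The obstacle you flag in (ii) does dissolve. Once an orientation-preserving homeomorphism $h_i$ of $S^2$ with $h_if=g$ on a component $\Gamma_i$ is fixed, ``which side of $g(C)$'' is a well-defined and witness-visible invariant for every circuit $C\subset\Gamma_i$: any two orientation-preserving homeomorphisms of $S^2$ that agree on an embedded circle carry its two complementary disks to the respective disks in the same way (otherwise their difference would be an orientation-reversing homeomorphism fixing the circle pointwise). Hence $f|_{|C|\sqcup\{v\}}\sim g|_{|C|\sqcup\{v\}}$ forces $h_i$ to carry the side of $f(C)$ containing $f(\Gamma_j)$ to the side of $g(C)$ containing $g(\Gamma_j)$; and since distinct faces of $f(\Gamma_i)$ are separated by some circuit of $\Gamma_i$ (Alexander duality in the sphere), the full pairwise nesting data is matched. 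What remains genuinely unwritten is the assembly: an induction on the number of components, inserting them one at a time into the already-matched picture, using that a connected graph embedded in an open face (a disk) with prescribed rotation system can be isotoped within that face into a prescribed position. That step is standard plane-graph topology but is where the remaining writing effort lies; as it stands the proposal is a correct and essentially complete plan rather than a finished proof.
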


It is quite obvious also that $|K|$ embeds in $S^1$ iff $K$ contains no
subcomplex isomorphic to $\Delta^2$ or $(\Delta^2)^{(0)}*\Delta^0$ or
a subdivision of $\Delta^0\sqcup\partial\Delta^2$.

Halin and Jung were able to go one dimension higher and gave a list of prohibited
subcomplexes for the problem of embedding of a $2$-polyhedron in the plane \cite{HJ}.
We again restate it as applied to the problem of embedding a polyhedron of
an arbitrary dimension in $S^2$:

\begin{theorem}[Halin--Jung, 1964] If $K$ is a simplicial complex, $|K|$ embeds
in $S^2$ iff it does not contain a subcomplex, isomorphic to a subdivision of
$K_5$, $K_{3,3}$, or one of the following complexes:
\begin{gather*}
HJ_0\bydef\Delta^3,\\
HJ_1\bydef\Delta^0\sqcup\partial\Delta^3,\\
HJ_2\bydef\Delta^0*(\Delta^0\sqcup\partial\Delta^2),\\
HJ_3\bydef\Delta^1*\partial\Delta^1*\emptyset\cup
\partial\Delta^1*\emptyset*\Delta^0\cup\emptyset*\Delta^1*\emptyset,\\
HJ_4\bydef(\Delta^2)^{(0)}*\Delta^1,\\
HJ_5\bydef\Delta^2*\emptyset\cup(\Delta^2)^{(0)}*\partial\Delta^1.\\
\end{gather*}
\end{theorem}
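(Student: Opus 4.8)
The plan is to establish the `only if' direction by inspection and the `if' direction by a minimal-counterexample argument; the difficulty lies entirely in the latter, and there almost entirely in dimension $2$.

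For `only if' it suffices, since PL embeddability in $S^2$ is preserved by subdivision and inherited by subpolyhedra, to verify that none of $K_5$, $K_{3,3}$, $HJ_0,\dots,HJ_5$ embeds in $S^2$. For the two graphs this is Kuratowski's theorem. For $HJ_0=\Delta^3$ it is invariance of domain, a $3$-ball having no room in a surface; for $HJ_1=\Delta^0\sqcup\partial\Delta^3$ it is that an embedded $2$-sphere already fills $S^2$. The complexes $HJ_2$ and $HJ_4$ fail a \emph{local} test: an embedding of a $2$-dimensional polyhedron into $S^2$ restricts, near any vertex $v$, to an embedding of the cone on $\lk v$, so $\lk v$ must embed in $S^1$, and near the interior of any edge $e$ it restricts so that $\lk e$ must embed in $S^0$; but in $HJ_2=\Delta^0*(\Delta^0\sqcup\partial\Delta^2)$ the link of the apex is a circuit together with a disjoint vertex, and in $HJ_4=(\Delta^2)^{(0)}*\Delta^1$ the link of the edge is three points. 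Finally $HJ_3$ and $HJ_5$ pass every local test, and here I would argue by hand: each has planar $1$-skeleton, but in no planar embedding of it can the $2$-cells be inserted. In $HJ_5=\Delta^2\cup\big((\Delta^2)^{(0)}*\partial\Delta^1\big)$ the $1$-skeleton is $K_5$ minus an edge, a $3$-connected graph whose unique planar embedding has no facial triangle along the $3$-cycle bounding the filled $2$-simplex. In $HJ_3$ the $1$-skeleton is $K_4$ together with a further vertex joined to two adjacent vertices of $K_4$; in any planar embedding that vertex occupies one of the two triangular faces of $K_4$ containing the edge between those two vertices, and these are exactly the two faces that the $2$-cells of $HJ_3$ must fill, so at most one of the $2$-cells can be realized.

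For `if', argue the contrapositive: assuming $|K|$ does not embed in $S^2$, let $M$ be a simplicial complex with the fewest simplices among all those whose polyhedron is PL homeomorphic to a subpolyhedron of $|K|$ and does not embed in $S^2$. Then whenever $M$ has a subcomplex $N$ with $|N|$ non-embeddable, necessarily $N=M$; so it suffices to identify $|M|$ with $|K_5|$, $|K_{3,3}|$, or one of $|HJ_0|,\dots,|HJ_5|$. If $M$ has a $3$-simplex, then it together with its faces is such an $N$, so $M$ is a $3$-simplex and $|M|=|HJ_0|$. If $\dim M\le 1$ then $M$ is a non-planar graph, so by Kuratowski's theorem it contains a subdivision of $K_5$ or of $K_{3,3}$ as such an $N$. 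There remains $\dim M=2$; then $M^{(1)}$ is planar, since otherwise $M^{(1)}$ would itself be such an $N$ of smaller size. If some edge of $M$ lies in three $2$-simplices, those three and the edge form a copy $N$ of $HJ_4$; and if some $\lk(v,M)$ contains a circuit $Z$ together with a vertex $w$ in a different component of $\lk(v,M)$, then $v*(Z\sqcup\{w\})\subseteq\st(v,M)$ is a subdivision $N$ of $HJ_2$. We may thus assume that every edge of $M$ lies in at most two $2$-simplices and that every vertex link is a circuit or a disjoint union of arcs and isolated points --- that is, that all the necessary local conditions hold.

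In this remaining situation an embedding of $|M|$ in $S^2$ amounts, up to details, to a planar embedding of the graph $M^{(1)}$ in which every $2$-simplex $\sigma$ can be assigned a face bounded by the triangle $\partial\sigma$, distinct $2$-simplices getting distinct faces; so the problem reduces to the graph-theoretic question of which pairs --- a planar graph together with a set of marked triangles --- admit a planar embedding realizing all the marked triangles as distinct faces. Closed surfaces are disposed of first: if $|M|$ realizes a triangulated $2$-sphere together with a vertex off it, then that sphere fills $S^2$ and $|M|=|HJ_1|$; and a closed surface not PL homeomorphic to $S^2$ never arises, since deleting one open $2$-simplex from it leaves a bounded surface of the same genus and orientability type, still non-embeddable, contradicting minimality. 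In the remaining cases the $2$-simplices of $M$ make up only planar pieces, and what must be shown is that a minimal obstruction to the face-assignment just described --- bearing in mind that $M^{(1)}$ need not be $3$-connected, hence may admit several planar embeddings --- is always a subdivision of $HJ_3$ or of $HJ_5$. This final classification is the main obstacle; everything preceding it is bookkeeping with invariance of domain, Kuratowski's theorem, and the elementary structure of subpolyhedra of $S^2$.
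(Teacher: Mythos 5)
The paper does not prove this theorem: it is quoted from Halin and Jung's article \cite{HJ}, and the paper itself only supplies the `only if' half (via Theorem \ref{self-dual}, since $K_5$ and the $HJ_i$ are precisely the self-dual subcomplexes of $\partial\Delta^4$, and $K_{3,3}=[3]*[3]$), together with the remark that the \emph{subpolyhedron} version of the converse, with the shorter prohibited list $|K_5|,|K_{3,3}|,|HJ_1|,|HJ_2|$, is ``rather easy modulo the Kuratowski theorem.'' Your `only if' direction is essentially correct (the local link tests for $HJ_2$ and $HJ_4$ and the unique-planar-embedding arguments for $HJ_3$ and $HJ_5$ are sound substitutes for the self-dual machinery). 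The `if' direction, however, has two genuine gaps.

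First, the step you yourself flag as ``the main obstacle'' --- that a minimal obstruction to assigning distinct faces of a planar embedding of $M^{(1)}$ to the marked triangles is always a subdivision of $HJ_3$ or of $HJ_5$ --- is where essentially all of the content of the theorem lives once the local tests and the closed-surface cases are disposed of; it is not proved, and no strategy for it is indicated beyond naming the problem. Second, and independently of the first, your minimal complex $M$ ranges over triangulations of arbitrary subpolyhedra of $|K|$, so even a completed argument would show only that $|K|$ contains a \emph{subpolyhedron} PL homeomorphic to one of the listed polyhedra --- the weak version --- not that $K$ contains a \emph{subcomplex} isomorphic to a subdivision of one of the listed complexes, which is what the theorem asserts. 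These really are different statements: as the paper notes immediately after the theorem, each of $|HJ_0|,|HJ_3|,|HJ_4|,|HJ_5|$ already contains a proper subpolyhedron homeomorphic to one of $|K_5|,|K_{3,3}|,|HJ_1|,|HJ_2|$, so under your minimality these four complexes essentially never arise as $M$; yet for $K=HJ_5$, say, every proper subcomplex of $K$ embeds in $S^2$ by Theorem \ref{self-dual}(a), so the subcomplex promised by the theorem is $HJ_5$ itself and cannot be located by passing to a smaller non-embeddable subpolyhedron. Converting a prohibited subpolyhedron of $|K|$ into a prohibited subcomplex of $K$ is exactly the combinatorial work that forces $HJ_0$, $HJ_3$, $HJ_4$, $HJ_5$ into the list at all, and your proposal does not engage with it.
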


See \cite[Appendix A]{MTW} for pictures of $HJ_1$ through $HJ_5$.
We note that $|HJ_0|$, $|HJ_3|$, $|HJ_4|$ and $|HJ_5|$ each contain a subpolyhedron
homeomorphic to one of $|K_5|$, $|K_{3,3}|$, $|HJ_1|$ and $|HJ_2|$.
This leads to a shorter list of prohibited {\it subpolyhedra} for the problem of
embedding a polyhedron in $S^2$.
A proof of this weak version of the Halin--Jung theorem is rather easy modulo
the Kuratowski theorem (see \cite{Sk-A1}).

A statement like that of the preceding theorem might look bewildering, and just
like with the previously mentioned results, its proof does not seem to explain what
is special about these particular complexes.
But the following definition, going back to Schild \cite{Sch}, does it, by $7/8$:

A subcomplex of $\partial\Delta^n$ is called {\it self-dual} in $\partial\Delta^n$
if it contains precisely one face out of each pair $\Delta^k$, $\Delta^{n-k-1}$
of complementary faces of $\Delta^n$.

Now observe that $K_5$, as well as each $HJ_i$, is self-dual as a subcomplex of
$\partial\Delta^4$.
Indeed, this is obvious for $HJ_0$.
But each $HJ_{i+1}$, can be obtained from $HJ_i$ by exchanging a pair of
complementary faces of $\partial\Delta^4$, except that $HJ_5$ is obtained in
this way from $HJ_3$ not $HJ_4$.
Also, $K_5$ is obtained in this way from $HJ_5$.

It is not hard to check that the seven complexes $K_5$ and the $HJ_i$,
are in fact all the self-dual subcomplexes of $\partial\Delta^4$ (up to isomorphism).
Note that $K_{3,3}$ is missing in this picture (but see Example
\ref{Halin-Jung revisited} below).

Similarly one can check that the three complexes $\Delta^2$,
$(\Delta^2)^{(0)}*\Delta^0$ and $\Delta^0\sqcup\partial\Delta^2$ in
the MacLane--Adkisson theorem are precisely all the self-dual subcomplexes
of $\partial\Delta^3$.

This is no coincidence; in fact, the following general result already implies
that the prohibited subcomplexes that occur in the Kuratowski, MacLane--Adkisson
and Halin--Jung theorems as well as in part (a) of the Robertson--Seymour--Thomas
theorem {\it must} occur there (possibly along with some additional ones).

\begin{theorem}\label{self-dual} Let $K=K_1*\dots*K_r$, where each $K_i$ is
a self-dual subcomplex of $\partial\Delta^{m_i}$, where $m_1+\dots+m_r=m$.
Then

(a) {\rm (Schild, 1993)}
$|K|$ does not embed in $S^{m-2}$; but $|L|$ embeds in $S^{m-2}$ for every proper
subcomplex $L$ of $K$;

(b) every embedding $g$ of $|K|$ in $S^{m-1}$ is inequivalent to $hg$, where $h$
is an orientation-reversing homeomorphism of $S^{m-1}$; but for every proper
subcomplex $L$ of $K$, the restrictions of $j$ and $hj$ to $|L|$ are equivalent,
where $j$ denotes the inclusion of $|K|$ in
$|\partial\Delta^{m_1}*\dots*\partial\Delta^{m_r}|=S^{m-1}$.
\end{theorem}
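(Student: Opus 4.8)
The plan is to route the entire argument through the $2$-fold deleted join and the Bier sphere construction. For a simplicial complex $X$ write $X^{*2}_\Delta$ for the \emph{deleted join}: the subcomplex of $X*X$ spanned by the unions $\alpha\sqcup\beta$ of vertex-disjoint faces $\alpha,\beta\in X$, carrying the $\Z/2$-action that interchanges the two copies of $X$. Two standard facts drive everything: $X\mapsto X^{*2}_\Delta$ is multiplicative, $(X*Y)^{*2}_\Delta\cong X^{*2}_\Delta*Y^{*2}_\Delta$ $\Z/2$-equivariantly (diagonal action on the right); and for a self-dual $J\subseteq\partial\Delta^n$ — equivalently, $J$ equals its combinatorial Alexander dual $J^*:=\{\tau:\Delta^n\but\tau\notin J\}$ — the deleted join $J^{*2}_\Delta=J*_\Delta J^*$ is the Bier sphere of $J$, a PL $(n-1)$-sphere. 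Combining these for $K=K_1*\dots*K_r$ gives $K^{*2}_\Delta\cong S^{m_1-1}*\dots*S^{m_r-1}$, a PL $S^{m-1}$, on which the diagonal $\Z/2$-action is free (the exchange action on each Bier sphere has no invariant nonempty face — an invariant $\alpha\sqcup\beta$ forces $\alpha=\beta=\emptyset$ — and joins of free actions are free). Granting this, suppose $f\colon|K|\to S^{m-2}\subseteq\R^{m-1}$ were an embedding; then sending the point $x^t*y^s$ of $K^{*2}_\Delta$ (with $x,y$ on disjoint faces and $t+s=1$) to $t\,f(x)-s\,f(y)$ lands in $\R^{m-1}\but 0$ (disjointness of faces forces $f(x)\ne f(y)$, while the summands have norms $t$ and $s$), and radial projection yields a $\Z/2$-map $K^{*2}_\Delta\to S^{m-2}$ with antipodal target. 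Since $K^{*2}_\Delta=S^{m-1}$ is $(m-2)$-connected with a free action, Borsuk--Ulam forbids this, so $|K|$ does not embed in $S^{m-2}$.

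For the minimality half of (a), a proper $L\subsetneq K$ lies in $K^{-\sigma}:=\{\tau\in K:\sigma\not\subseteq\tau\}$ for some $\sigma\in K\but L$, and after enlarging $\sigma$ I may take $\sigma=\sigma_1*\dots*\sigma_r$ with every $\sigma_i\ne\emptyset$; self-duality then puts the complementary face $\sigma_i^c:=\Delta^{m_i}\but\sigma_i$, a nonempty proper face of $\Delta^{m_i}$, outside $K_i$. I will set $D_i:=\{\tau\in\partial\Delta^{m_i}:\sigma_i^c\not\subseteq\tau\}$ — this is $\partial\Delta^{m_i}$ with the open star of the face $\sigma_i^c$ removed, hence a PL $(m_i-1)$-ball whose boundary sphere is the subcomplex $\partial D_i=\{\tau\subseteq\Delta^{m_i}:\sigma_i\not\subseteq\tau,\ \sigma_i^c\not\subseteq\tau\}$. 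Because no face of $K_i$ contains $\sigma_i^c$, every face of $K$ lies in $D_1*\dots*D_r$; and for a face $\tau=\tau_1*\dots*\tau_r$ of $K^{-\sigma}$, picking $i_0$ with $\sigma_{i_0}\not\subseteq\tau_{i_0}$ shows $\tau_{i_0}$ contains neither $\sigma_{i_0}$ nor $\sigma_{i_0}^c$, so $\tau_{i_0}\in\partial D_{i_0}$ and $\tau\in D_1*\dots*\partial D_{i_0}*\dots*D_r\subseteq\partial(D_1*\dots*D_r)$. Hence $|L|\subseteq|K^{-\sigma}|\subseteq\partial(|D_1|*\dots*|D_r|)$, which, being the boundary of a join of PL balls, is a PL $(m-2)$-sphere; this finishes (a).

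For (b), run the same deleted-join map with target the unit sphere of $\R^m$: an embedding $g\colon|K|\to S^{m-1}$ gives a $\Z/2$-equivariant self-map $\tilde g$ of $S^{m-1}=K^{*2}_\Delta$, whose equivariant homotopy class — equivalently its degree, well defined up to an overall sign and odd by Borsuk--Ulam — is an isotopy invariant of $g$ (an ambient isotopy carrying $g$ to $g'$ moves $g$ to $g'$ through embeddings, hence $\tilde g$ to $\tilde g'$ equivariantly) and it changes sign under post-composition with an orientation-reversing homeomorphism $h$ (immediate for a linear reflection, where $\widetilde{hg}=h\circ\tilde g$). As an odd integer cannot equal its negative, $g$ and $hg$ are inequivalent. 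For a proper subcomplex $L$, part (a) already puts $j(|L|)$ inside the PL $(m-2)$-sphere $S^{m-2}_\sigma:=\partial(|D_1|*\dots*|D_r|)\subseteq S^{m-1}$, which by PL Schoenflies bounds a ball on each side; the reflection $\rho$ of $S^{m-1}$ swapping those two balls is orientation-reversing and fixes $S^{m-2}_\sigma$ pointwise, so $\rho\circ(j|_{|L|})=j|_{|L|}$, which exhibits $j|_{|L|}$ and $(hj)|_{|L|}$ as equivalent.

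I expect the main obstacle to be the minimality step of (a): it is the only point where the full hypothesis — that $K$ is a join of \emph{self-dual} pieces — is used, and the decisive observation, that deleting the star of one face pushes the rest of $K$ onto the boundary of the ball $|D_1|*\dots*|D_r|$, rests on careful PL bookkeeping with open stars, links, and boundaries of joins of balls. The one genuinely delicate point in (b) is the sign change of the equivariant invariant under an \emph{arbitrary} orientation-reversing homeomorphism rather than a linear reflection (and, relatedly, upgrading ``$\rho\circ(j|_{|L|})=j|_{|L|}$'' to ``$(hj)|_{|L|}$ is ambient isotopic to $j|_{|L|}$'' for a prescribed $h$), which brings in the mapping class group of $S^{m-1}$ and is harmless in the dimensions relevant here.
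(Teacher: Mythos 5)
Your proof is correct and takes essentially the same route as the paper's: the key input in both is that the deleted join of $K$ is an $(m-1)$-sphere with a free exchange involution (you cite the Bier sphere theorem and multiplicativity of deleted joins under joins; the paper packages the same fact as ``$K$ is an $(m-2)$-obstructor'' via Lemma \ref{5.3} and Corollary \ref{5.7}), after which Borsuk--Ulam gives the non-embedding and the odd-degree/sign argument gives the chirality exactly as in Lemma \ref{5.2}. Your sphere $\partial(|D_1|*\dots*|D_r|)$ containing $|L|$ is literally the paper's $(\partial\tau_1*\dots*\partial\tau_r)*\partial\sigma$ with $\tau_i=\sigma_i^c$, and the reflection-plus-Alexander-trick finish for the subcomplex half of (b) is also identical.
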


A simple proof of the assertions on proper subcomplexes in (a) and (b) is
given below.
A simple proof of the non-embeddability in (a) and the inequivalence in (b) is
given in \S\ref{embeddings}, where we also elaborate on historic/logical
antecedents of Theorem \ref{self-dual}.

\begin{proof}[Proof. Subcomplexes in (a)]
Let $\sigma$ be a maximal simplex of $K$ that is not in $L$.
Then $\sigma=\sigma_1*\dots*\sigma_r$, where each $\sigma_i$ is a maximal
(and in particular nonempty) simplex of $K_i$.
Since $K_i$ is self-dual, the complementary simplex $\tau_i$ to $\sigma_i$
is not in $K_i$.
It follows that $L\subset(\partial\tau_1*\dots*\partial\tau_r)*\partial\sigma$.
The latter is a combinatorial sphere, which is of codimension $r+1$ in the
$(m+r-1)$-simplex $(\tau_1*\dots*\tau_r)*\sigma=\Delta^{m_1}*\dots*\Delta^{m_r}$.
\end{proof}

\begin{proof}[Subcomplexes in (b)] The preceding construction exhibits, within
the sphere $S^{m-1}$ that contains $|K|$, an embedded copy of $S^{m-2}$
that contains $|L|$.
Let $r\:S^{m-1}\to S^{m-1}$ be the reflection in $S^{m-2}$.
Then $hr^{-1}=hr$ is orientation preserving, and so is isotopic to the identity
by the Alexander trick.
Hence $hj$ is equivalent to $rj=j$.
\end{proof}

\begin{remark}\label{van Kampen} The above construction can be easily extended
(cf.\ \cite[Example 3.5]{M2}) to yield, for any maximal simplex $\sigma$ of $K$,
a map $f_\sigma\:|K|\to S^{m-1}$ with precisely one double point, one of whose
two preimages lies in the interior of $|\sigma|$.
This implies that every proper subpolyhedron $P$ of $|K|$ embeds in $S^{2m-2}$.
Indeed, since $P$ is compact, it is disjoint from some point in the interior of
$|\sigma|$ for some maximal face $\sigma$ of $K$, and therefore from a disk $D$
in the interior of $|\sigma|$.
Then $P$ is embedded in $S^{m-1}$ by $f_\sigma$ precomposed with an appropriate
self-homeomorphism of $|K|$, fixed outside $|\sigma|$.
\end{remark}

We shall next see that Theorem \ref{self-dual} is not as exciting as it might
appear to be.

\subsection{Collapsible and cell-like maps}

In this subsection we assume familiarity with collapsing and regular neighborhoods
(see e.g.\ \cite{Hu}).
The following fact is well-known.

\begin{lemma}\label{trivial} If $Q$ is a collapsible subpolyhedron of a manifold
$M$, then the quotient $M/Q$ is homeomorphic to $M$.
\end{lemma}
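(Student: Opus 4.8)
The plan is to replace $Q$ by a regular neighborhood and then recognize $M/Q$ as the result of regluing a ball to the complement. Write $n=\dim M$ and, to fix ideas, assume first that $Q\incl\operatorname{int}M$. Let $N$ be a regular neighborhood of $Q$ in $M$ and let $F$ denote its frontier in $M$. Since $Q$ is collapsible it collapses to a point $p\in Q$; as regular neighborhoods are invariant under such a formal deformation, $N$ is also a regular neighborhood of $p$, hence a PL $n$-ball, and $F=\partial N$ is a PL $(n-1)$-sphere. Because $Q\incl\operatorname{int}N$ and $Q\cap F=\emptyset$, in the quotient $M/Q$ the subpolyhedron $M\but\operatorname{int}N$ is embedded, $F$ maps homeomorphically, and we get a decomposition $M/Q=(M\but\operatorname{int}N)\cup_F (N/Q)$, the two pieces meeting exactly along the copy of $F$.

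The key step is to identify $N/Q$. Here I would invoke the mapping cylinder structure of regular neighborhoods: $N$ is PL homeomorphic, rel $Q$, to the mapping cylinder of the pseudo-radial projection $F\to Q$, with $F$ as the free end. Collapsing the target $Q$ of a mapping cylinder leaves the cone on its free end, so $N/Q$ is PL homeomorphic to the cone on the sphere $F$, that is, to a PL $n$-ball; moreover the quotient map restricts to a PL homeomorphism from $F$ onto $\partial(N/Q)$.

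It then remains to compare the decompositions $M=(M\but\operatorname{int}N)\cup_F N$ and $M/Q=(M\but\operatorname{int}N)\cup_F(N/Q)$. Both are obtained from $X\bydef M\but\operatorname{int}N$ by attaching a PL $n$-ball along the boundary sphere $F\incl\partial X$: via the inclusion $F=\partial N\incl N$ in the first case, and via the PL homeomorphism $F\to\partial(N/Q)$ induced by the quotient map in the second. Choosing any PL homeomorphism $N\to N/Q$ and correcting it by a PL self-homeomorphism of the ball $N$ that extends (Alexander trick) the appropriate self-homeomorphism of $F$, one produces a PL homeomorphism $N\to N/Q$ carrying the first attaching map to the second; extending it by the identity on $X$ yields a PL homeomorphism $M\to M/Q$.

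Finally, the general case in which $Q$ is allowed to meet $\partial M$ is handled by the same argument, now using regular neighborhoods in a manifold with boundary: $F$ becomes a ball, $N/Q$ is a cone on a ball and hence again a ball, and the regluing step uses the relative Alexander trick for ball pairs. The step I expect to be the real content is the identification of $N/Q$ with a ball through the mapping cylinder neighborhood theorem; the rest is bookkeeping about gluings.
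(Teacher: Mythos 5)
Your proposal is correct and follows essentially the same route as the paper's proof: pass to a regular neighborhood $N$ of $Q$, which is a ball by collapsibility, identify $N/Q$ with the cone on $\partial N$ (hence a ball) rel the boundary, and extend the resulting homeomorphism by the identity on the complement. The extra details you supply (the mapping cylinder neighborhood theorem to justify $N/Q\cong C(\partial N)$, the Alexander trick to match the attaching maps, and the case $Q\cap\partial M\ne\emptyset$) are exactly the right way to fill in what the paper leaves implicit.
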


\begin{proof} Let $N$ be a regular neighborhood of $Q$ in $M$.
Since $Q$ is collapsible, $N$ is a ball.
On the other hand, $N/Q$ is homeomorphic rel $\partial N$ to
$pt*\partial N$, which is also a ball.
This yields a homeomorphism $N\to pt*\partial N$ keeping $\partial N$ fixed, which
extends by the identity to a homeomorphism $M\to M/Q$.
\end{proof}

Let us call a map between polyhedra {\it finite-collapsible} if it is
the composition of a sequence of quotient maps, each shrinking a collapsible
subpolyhedron to a point.

\begin{corollary}\label{finite-collapsible} Let $f\:P\to Q$ be
a finite-collapsible map between polyhedra.
If $P$ embeds in $S^m$, then so does $Q$.
\end{corollary}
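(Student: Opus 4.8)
The plan is to induct on the number of quotient maps in the given factorization of $f$. Since a composition of finite-collapsible maps is again finite-collapsible, it suffices to treat a single quotient map $q\:P\to P/Q_0$, where $Q_0$ is a collapsible subpolyhedron of $P$; the general statement then follows by applying this special case repeatedly, each time to the polyhedron produced at the previous step, which embeds in $S^m$ by the inductive hypothesis.

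So suppose $e\:P\emb S^m$ is an embedding and $Q_0\incl P$ is collapsible. First I would observe that $e$ restricts to a PL homeomorphism of $Q_0$ onto the subpolyhedron $e(Q_0)$ of $S^m$, and that collapsibility is a PL-invariant property of a polyhedron; hence $e(Q_0)$ is a collapsible subpolyhedron of the manifold $S^m$, and Lemma \ref{trivial} applies to give $S^m/e(Q_0)\cong S^m$, with $\pi\:S^m\to S^m/e(Q_0)$ the quotient map.

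Next, $e$ induces a map $\bar e\:P/Q_0\to S^m/e(Q_0)$ with $\bar e\circ q=\pi\circ e$. It is injective: it is injective on the image of $P\but Q_0$, the class $[Q_0]$ is sent to the class $[e(Q_0)]$, and these two classes are distinct from the former ones because $e(P\but Q_0)$ is disjoint from $e(Q_0)$ (here we use that $e$ itself is injective). Moreover $P/Q_0$ is again a compact polyhedron, being a quotient of a polyhedron by a subpolyhedron, and $\bar e$ is PL with respect to suitable triangulations. Since a PL injective map from a compact polyhedron is an embedding onto its image, $\bar e$ is a PL embedding; composing it with the homeomorphism $S^m/e(Q_0)\cong S^m$ of Lemma \ref{trivial} yields an embedding $P/Q_0\emb S^m$, which completes the induction.

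The only points needing care — and none of them are serious — are that the quotient of a polyhedron by a (collapsible) subpolyhedron is again a polyhedron and that the induced map $\bar e$ is PL. Both are handled exactly as in the proof of Lemma \ref{trivial}: pass to a triangulation in which $Q_0$ is a full subcomplex, take a simplicial regular neighbourhood $N$ of $Q_0$, and use that $N$ is a ball and $N/Q_0$ is PL homeomorphic rel $\partial N$ to $pt*\partial N$. I expect this bookkeeping to be the main obstacle, but it is routine; everything else follows immediately from Lemma \ref{trivial}.
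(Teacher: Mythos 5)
Your proof is correct and is exactly the argument the paper intends: the corollary is stated as an immediate consequence of Lemma \ref{trivial} (identify the collapsible subpolyhedron with its image in $S^m$, quotient the sphere, observe the quotient of $P$ sits inside it, and iterate over the finitely many collapses). The bookkeeping you flag about quotients of polyhedra and PL-ness of the induced map is handled just as you describe, via the regular-neighbourhood argument already present in the proof of Lemma \ref{trivial}.
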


We use this nearly trivial observation to give a simple proof of a result
by Zaks \cite{Za} (see also \cite{Um2}, \cite[3.7.1]{Sa1}); as a byproduct,
we also get a slightly stronger statement:

\begin{theorem}\label{Zaks} For each $n\ge 2$ there exists an infinite list of
pairwise non-homeomorphic compact $(n-1)$-connected $n$-polyhedra $P_i$ such that
each $P_i$ does not embed in $S^{2n}$, but every its proper subpolyhedron does.
\end{theorem}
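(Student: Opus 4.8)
The plan is to build the polyhedra $P_i$ as $n$-skeleta of joins of self-dual complexes of the kind appearing in Theorem \ref{self-dual}, and then to lift the non-embeddability and the embeddability of proper subpolyhedra to infinitely many non-homeomorphic examples by taking higher-dimensional joins with spheres or simplices, combined with Corollary \ref{finite-collapsible}. Concretely, one fixes a suitable self-dual configuration $K=K_1*\dots*K_r$ with $\dim|K|=n$ and $m-2=2n$, so that $|K|$ is $(n-1)$-connected (this follows from $K$ being a combinatorial $(m-1)$-sphere minus open cells of dimension $\ge n+1$, hence $(n-1)$-connected), does not embed in $S^{2n}$ by Theorem \ref{self-dual}(a), and has every proper subpolyhedron embeddable in $S^{2n}$ by Remark \ref{van Kampen}. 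That gives one example $P_0$; the task is then to produce \emph{infinitely} many pairwise non-homeomorphic ones with the same properties.

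To manufacture the infinite family, I would vary the combinatorial type of one of the join factors while keeping the self-duality. The cleanest device: take $K_1$ to range over the self-dual subcomplexes of $\partial\Delta^{m_1}$ obtained by a sequence of complementary-face exchanges, as in the Halin--Jung discussion, and simultaneously \emph{subdivide} — a subdivision of a self-dual complex, joined appropriately, still has a non-embeddable polyhedron (non-embeddability is a property of $|K|$, not of the triangulation) and still has the proper-subpolyhedron property. Subdivisions with more and more vertices, or iterated joins $K * (\partial\Delta^0)^{*t}$ truncated to dimension $n$, produce polyhedra $P_t$ whose homology or homotopy groups (e.g.\ the rank of $H_n$, which grows with $t$) distinguish them up to homeomorphism. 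One must check $(n-1)$-connectivity is preserved: joining with $S^0$ or with a point and then taking the $n$-skeleton keeps everything $(n-1)$-connected because the $n$-skeleton of an $(n-1)$-connected complex with free $H_n$ is still $(n-1)$-connected, and removing top cells from a highly connected sphere only affects $H_n$ upward.

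The two structural inputs, both already available, are: (1) Theorem \ref{self-dual}(a), which hands us non-embeddability in $S^{2n}$ together with embeddability of every proper \emph{subcomplex}, and Remark \ref{van Kampen}, which upgrades this to every proper \emph{subpolyhedron}; and (2) Corollary \ref{finite-collapsible}, which lets us pass non-embeddability back and forth along finite-collapsible maps — useful if one prefers to present the $P_i$ as quotients of a single model rather than as subdivisions. The $(n-1)$-connectivity in each case is a direct homology-and-$\pi_1$ computation from the join/skeleton description. Pairwise non-homeomorphism is arranged by a crude numerical invariant, such as $\dim_{\Q} H_n(P_i;\Q)$ or the number of vertices in the minimal triangulation, chosen to be strictly increasing in $i$.

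The main obstacle I anticipate is \emph{not} the non-embeddability — that is given to us outright by Theorem \ref{self-dual} — but verifying that the proper-subpolyhedron embeddability genuinely survives all the modifications, and doing so uniformly across the infinite family. Remark \ref{van Kampen} gives the single map $f_\sigma$ with one double point for the \emph{standard} complex; for a subdivision or a join with extra factors one has to re-run that construction and check that the double point can still be arranged in the interior of a top simplex disjoint from the given compact proper subpolyhedron. This is routine but is the point where care is needed: one needs that every proper subpolyhedron of $P_i$ misses the interior of \emph{some} maximal simplex, which forces the $P_i$ to be built so that no proper subpolyhedron can be dense, i.e.\ so that each maximal simplex's interior is an open set whose complement is a proper subpolyhedron — automatic for finite complexes. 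So the real work is bookkeeping: confirm that the join/subdivision operations commute with ``has one-double-point map per maximal simplex'' and preserve $(n-1)$-connectivity, then read off non-homeomorphism from an increasing invariant.
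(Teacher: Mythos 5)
There is a genuine gap, and it sits exactly where the theorem's difficulty lies: producing \emph{infinitely many pairwise non-homeomorphic} examples. Your two proposed mechanisms both fail. First, a subdivision $K'$ of $K$ satisfies $|K'|=|K|$ by definition, so subdividing changes neither the homeomorphism type nor $H_n$; it cannot yield even two non-homeomorphic polyhedra, let alone infinitely many with growing Betti numbers. Second, any construction in which the original complex survives as a proper subcomplex — e.g.\ joining $K$ with extra factors and truncating to the $n$-skeleton, which still contains the $n$-dimensional $K$ — is dead on arrival: $|K|$ is then a proper subpolyhedron of the new $P_t$ that does not embed in $S^{2n}$, so $P_t$ violates the required minimality property. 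Third, varying the self-dual join configuration itself gives only finitely many complexes in each dimension; indeed Sarkaria's theorem (quoted after Theorem \ref{5.9}) says the joins $F_{i_1}*\dots*F_{i_r}$ are the \emph{only} simplicial complexes among $n$-dimensional $2n$-obstructors. So none of your three knobs generates an infinite family, and the "bookkeeping" you defer is not where the problem is.

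The paper's proof uses the device you mention only in passing at the end (Corollary \ref{finite-collapsible}) as its \emph{central} mechanism: it fixes the single model $P_0=|(\Delta^{2n+2})^{(n)}|$ and forms $P_r$ by shrinking $r$ disjoint collapsible codimension-one balls $C_1,\dots,C_r$ (constructed inside one $n$-simplex via iterated barycentric subdivision) to points. The quotient map $f_r\:P_0\to P_r$ is finite-collapsible, and — the key reversibility step — there is also a finite-collapsible map $g_r\:P_r\to P_0$, obtained by shrinking the collapsible image of a suitable ball. Corollary \ref{finite-collapsible} applied to $g_r$ transfers non-embeddability from $P_0$ to $P_r$; applied to $f_r$ restricted to $f_r^{-1}(Q)$ it transfers embeddability of proper subpolyhedra (via Remark \ref{van Kampen}) from $P_0$ to $P_r$. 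The $P_r$ are all homotopy equivalent — no homological invariant can distinguish them — and are told apart instead by a \emph{local} invariant, the links of the singular points $p_i$. If you want to salvage your write-up, replace the subdivision/join machinery with this quotient construction and replace the Betti-number invariant with the link invariant.
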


Zaks' series of examples satisfied the conclusion of Theorem \ref{Zaks} except
for being $(n-1)$-connected.

\begin{proof}
Let $K$ be the $n$-skeleton of the $(2n+2)$-simplex and let $\sigma$ be
an $(n-1)$-simplex of $K$.
By inspection, it is a face of at least three $n$-simplices; let $\tau$
be one of them.
Let $b$ be the barycenter of $\sigma$, and let $B_i$ (resp.\ $D_i$) be the star
of $b$ in the $i$th barycentric subdivision of $\tau$ (resp.\ of $\sigma$).
Let $C_i$ be the closure of $|\partial B_i|\but |D_i|$; it is a codimension one
ball properly embedded in $|\tau|$, with boundary sphere embedded in $|\sigma|$.

Given a positive integer $r$, let $P_r$ be the polyhedron obtained from
$P_0\bydef|K|$ by shrinking each $C_i$ to a point $p_i$ for $i=1,\dots,r$.
Then the quotient map $f_r\:P_0\to P_r$ is finite-collapsible, and sends $|B_1|$
onto a collapsible polyhedron $X_r$.
The quotient $P_r/X_r$ is homeomorphic to $P_0/|B_1|$ and therefore to $P_0$.
Thus we obtain a finite-collapsible map $g_r\:P_r\to P_0$.

The links of the points $p_i$ in $P_r$ are homeomorphic to each other and not
homeomorphic to the link of any other point in $P_r$.
Consequently, $P_0,P_1,P_2,\dots$ are pairwise non-homeomorphic.
Since $P_0$ does not embed in $S^{2n}$ (see Theorem \ref{self-dual}), and $g_r$
is finite-collapsible, $P_r$ does not embed in $S^{2n}$.
If $Q$ is a proper subpolyhedron of $P_r$, then $R\bydef f_r^{-1}(Q)$ is a proper
subpolyhedron of $P_0$, and $f_r|_R\:R\to Q$ is clearly finite-collapsible.
Since $R$ embeds in $S^{2n}$ (see Remark \ref{van Kampen}), so does $Q$.
\end{proof}

A map between polyhedra is called {\it collapsible}, resp.\ {\it cell-like} if
every its point-inverse is collapsible, resp.\ contractible (and so, in particular,
nonempty).
The following generalization of Corollary \ref{finite-collapsible} is a relatively
easy consequence of well-known classical results.

\begin{theorem}\label{minors embed} (a) If $f\:P\to Q$ is a collapsible map between
polyhedra, and $P$ embeds in a manifold $M$, then $Q$ embeds in $M$.

(b) If $f\:P\to Q$ is a cell-like map between $n$-polyhedra, and $P$ embeds in
an $m$-manifold $M$, where $m\ge n+3$, then $Q$ embeds in $M$.
\end{theorem}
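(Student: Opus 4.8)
The plan is to reduce both parts to classical theorems about cell-like and near-homeomorphism maps, the point being that a collapsible (resp.\ cell-like) map is, after a suitable subdivision, close to being a genuine homeomorphism, or at worst a limit of embeddings, so that the image of an embedding of $P$ can be pushed to an embedding of $Q$. For part (a), I would first record that a collapsible map $f\:P\to Q$ is in particular a \emph{finite-collapsible}-like map in a local sense: over each simplex of a fine enough triangulation of $Q$, the point-inverse is a collapsible polyhedron, and collapsibility lets us apply Lemma \ref{trivial} fiberwise. Concretely, choose triangulations of $P$ and $Q$ making $f$ simplicial after subdivision; then build $f$ as a finite composition of elementary steps, each collapsing a single collapsible subpolyhedron sitting over one simplex of $Q$, exactly as in the proof of Corollary \ref{finite-collapsible}. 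The difference from Corollary \ref{finite-collapsible} is that there the collapsing subpolyhedra were literally collapsible; here we must check that the point-inverses, though possibly not globally shrinkable in one step, can be organized into such a finite sequence. This is a standard fact (it is essentially the statement that collapsible maps are \emph{finite-collapsible}, cf.\ the discussion preceding Corollary \ref{finite-collapsible}), and once it is in hand, Corollary \ref{finite-collapsible} applies verbatim to give the embedding of $Q$ in $M$.

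For part (b), the hypotheses $m\ge n+3$ and the cell-like (rather than collapsible) condition signal that we are in the range where cell-like maps behave like near-homeomorphisms; the relevant classical inputs are the work on cell-like maps and the Approximation Theorem / Engulfing in codimension $\ge 3$. The approach I would take: let $e\:P\emb M$ be the given embedding, and consider $f\:P\to Q$ cell-like between $n$-polyhedra. By a theorem of Siebenmann–Kozlowski (in the PL/topological category) a cell-like map between finite-dimensional compacta is a \emph{uniform limit of homeomorphisms} provided the relevant spaces are manifolds; since $P$ and $Q$ need not be manifolds, instead I would thicken: take a regular neighborhood $N$ of $e(P)$ in $M$, which PL collapses to $e(P)$, hence to $P$. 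The cell-like map $f$ then induces, via the collapse, a cell-like-to-one situation on $N$, and one uses the fact that in codimension $m-n\ge 3$ a PL-embedded $n$-polyhedron is determined up to ambient PL homeomorphism by its "abstract" neighborhood germ, together with the Cell-like Approximation-type statement that $Q$ also embeds in a manifold of the same dimension with a compatible regular neighborhood. More directly: the mapping cylinder $M_f$ of $f$ is an $(n+1)$-polyhedron, $P$ embeds in it as the domain end, and $P$ embeds in $M$; one engulfs a neighborhood of $P$ in $M$ to push the mapping-cylinder collapse $M_f\searrow Q$ inside $M$, realizing $Q$ as a subpolyhedron. The codimension-$3$ hypothesis is exactly what makes the engulfing/general-position step go through.

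The main obstacle I expect is part (b): the passage from "$f$ is cell-like" to "the embedding of $P$ can be carried along" is not formal, because cell-like point-inverses need not be collapsible, so one cannot iterate Lemma \ref{trivial}. The honest route is to invoke a genuine theorem — either (i) that cell-like maps of polyhedra in the PL category, in the presence of an ambient manifold of codimension $\ge 3$, can be "straightened" using the fact that their mapping cylinders are again nicely triangulable and collapse to the target, combined with PL engulfing à la Stallings/Zeeman; or (ii) the topological near-homeomorphism theorem for cell-like maps onto ANRs, which would give $Q$ as a limit of PL embeddings $P\emb M$ followed by near-homeomorphisms $P\to Q$, and then one appeals to the fact that in codimension $\ge 3$ a limit of embeddings of polyhedra is again realized by an embedding (this last point itself requires general position, hence $m\ge n+3$). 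I would structure the write-up to isolate this as a single cited lemma ("cell-like maps of $n$-polyhedra are nearly homeomorphisms, or embeddings survive under them, in codimension $\ge 3$"), proved by mapping-cylinder engulfing, so that the $6/7$-page of genuine content is quarantined there and the corollary for embeddings is then a one-line deduction as in Corollary \ref{finite-collapsible}.
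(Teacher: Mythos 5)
Part (a) of your proposal rests on the claim that a collapsible map is finite-collapsible, i.e.\ factors as a finite composition of quotient maps each shrinking a single collapsible subpolyhedron to a point. This is false, and the paper never asserts it: a finite-collapsible map (in the sense of Corollary \ref{finite-collapsible}) has only finitely many nondegenerate point-inverses, since each elementary step contributes at most one, whereas a collapsible map such as the projection $P\x I\to P$ has a continuum of them. Nor does $f$ ``collapse the preimage of a simplex of $Q$ to a point'', so your proposed elementary steps do not even compose to give $f$. The genuine content of part (a) is the Cohen--Homma theorem (Theorem \ref{Cohen-Homma}): one identifies $P$ with a subpolyhedron of $M$, forms the adjunction space $M\cup_f Q\supset Q$, observes that the quotient map $M\to M\cup_f Q$ is collapsible, and proves that a collapsible map with a closed manifold as source is a homeomorphism onto its target. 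That theorem cannot be obtained by iterating Lemma \ref{trivial} fiberwise; it needs Cohen's simplicial transversality lemma, which shows that the preimage $\sigma^*_f$ of each dual cone is a manifold collapsing onto the (collapsible) point-inverse of the barycenter $\hat\sigma$, hence a ball, after which the homeomorphism is assembled conewise over dual cones of increasing dimension.

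Part (b) of your proposal does not contain a proof: both of your routes end by postulating a lemma that is essentially the theorem itself. Route (ii) can be made to work and is exactly the ``awkward'' deduction the paper mentions before its proof: the fibers of $f$ form a cell-like decomposition of $M$ whose nondegenerate elements lie in an $n$-polyhedron with $n\le m-3$; such decompositions are shrinkable \cite[23.2, 5.2]{Dav}, so $M\cup_f Q$ is topologically homeomorphic to $M$ and $Q$ embeds topologically; the codimension $\ge 3$ hypothesis then allows approximation by a PL embedding \cite[5.8.1]{DV}. Note that this is \emph{not} ``a limit of PL embeddings is an embedding''; it is shrinkability of the decomposition followed by the topological-to-PL approximation theorem for embeddings of polyhedra in codimension $\ge 3$. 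The paper's own proof avoids topological embeddings entirely and is a direct PL construction in the dual-cone decomposition above: the preimages $\sigma^*_f$ are now contractible manifolds with spines of codimension $\ge 3$, those of dimension $\ge 6$ and $\le 4$ are genuine balls (Poincar\'e conjecture in high dimensions, collapsibility of low-dimensional spines), and in the remaining case of a homotopy $5$-ball one embeds the $2$-dimensional dual cone of $Q$ using contractibility plus general position. The entire difficulty of (b) is concentrated in these low-dimensional dual cones, which your outline does not address.
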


The proof is not hard and quite instructive, but as this introduction is getting
a bit too involved we defer it until \S\ref{collapsing}.
There we also elaborate on the following

\begin{corollary}\label{quasi-embedding} If $f\:X\to Y$ is a map between
$n$-polyhedra whose nondegenerate point-inverses lie in a subpolyhedron of
dimension $\le m-n-2$, and $X$ embeds in $S^m$, then $Y$ embeds in $S^m$.
\end{corollary}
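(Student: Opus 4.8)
The plan is to realize $Y$ as the target of a \emph{collapsible} map out of a polyhedron that still embeds in $S^m$, and then quote Theorem~\ref{minors embed}(a), which has no dimension restriction. Since an $n$-polyhedron embeds in $S^{2n+1}$ by general position, we may first dispose of the trivial range: if $m\ge 2n+1$ then $Y$ embeds in $S^m$ outright, so from now on $m\le 2n$; in particular $\dim Z\le m-n-2\le n-2$, where $Z$ denotes the given subpolyhedron of $X$ containing all nondegenerate point-inverses of $f$.

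Put $W=f(Z)$ and $d=\dim Z$. A one-line observation shows $f^{-1}(W)=Z$: if $f(x)\in W$ with $x\notin Z$, choose $z\in Z$ with $f(z)=f(x)$; then $f^{-1}(f(x))\supset\{x,z\}$ is nondegenerate, hence contained in $Z$, a contradiction. Thus $f$ restricts to a homeomorphism $X\setminus Z\to Y\setminus W$ and to a surjection $f|_Z\:Z\to W$. Form the partial mapping cylinder
\[
P\bydef X\cup_Z\mathrm{Cyl}(f|_Z),\qquad \mathrm{Cyl}(f|_Z)=\bigl(Z\times[0,1]\sqcup W\bigr)/\bigl((z,1)\sim f(z)\bigr),
\]
where $Z\subset X$ is identified with $Z\times\{0\}$; this is a polyhedron of dimension $\le\max(n,d+1)=n$. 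Define $\pi\:P\to Y$ to be $f$ on $X\setminus Z$ and the mapping-cylinder projection onto $W\subset Y$ on $\mathrm{Cyl}(f|_Z)$; these agree on $Z$, so $\pi$ is a well-defined PL map. Its point-inverse over $y\in Y\setminus W$ is a single point, while over $w\in W$ it is $f|_Z^{-1}(w)$ together with $f|_Z^{-1}(w)\times[0,1]$ and the point $w$, glued in the evident way, i.e.\ the cone on $f|_Z^{-1}(w)$; in either case the fiber is collapsible. Hence $\pi$ is a collapsible map.

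It remains to embed $P$ in $S^m$; then Theorem~\ref{minors embed}(a) completes the proof. Begin with a given embedding $X\hookrightarrow S^m$ and let $U$ be a regular neighbourhood of $Z$ in $S^m$, so $U$ is an $m$-dimensional neighbourhood collapsing to $Z$. The task is to extend the inclusion $Z\hookrightarrow S^m$ to an embedding of $\mathrm{Cyl}(f|_Z)$ into $U$ whose image meets $X$ in exactly $Z$; then gluing $X$ to that image along $Z$ realizes $P$ as a subpolyhedron of $S^m$. This is a general-position argument in the PL manifold $S^m$, relative to the already embedded $Z$: the part of $\mathrm{Cyl}(f|_Z)$ not in $Z$ has dimension $\le d+1\le m-n-1$, so it can be pushed off $X\setminus Z$ because $(m-n-1)+n=m-1<m$, and made embedded because $2(m-n-1)\le m-1$ in the range $m\le 2n$ (first push the collar $Z\times(0,\varepsilon]$ off $X$ into a regular neighbourhood of $X$ in $S^m$, then extend over the rest by general position inside $U$).

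The main obstacle is precisely this last step: one must squeeze the mapping-cylinder collar of $Z$ transversally off $X$ and then embed the remaining stump of dimension $\le m-n-1$, and it is here — and only here — that the hypothesis $\dim Z\le m-n-2$ is used, together with the reduction to $m\le 2n$. Everything else is formal: the construction of $P$, the verification that $\pi$ is collapsible, and the invocation of Theorem~\ref{minors embed}(a).
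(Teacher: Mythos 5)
Your proof is correct and takes essentially the same route as the paper's: both form the partial mapping cylinder $X\cup_Z MC(f|_Z)$, extend the given embedding of $X$ over it by general position in $S^m$, and then apply Theorem~\ref{minors embed}(a) to the projection onto $Y$, whose point-inverses are cones and hence collapsible. Your additional checks (the reduction to $m\le 2n$, the identity $f^{-1}(f(Z))=Z$, and the explicit dimension counts) are correct elaborations of what the paper compresses into ``extends by general position''.
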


\begin{proof} The given embedding of $X$ extends by general position to
an embedding of $X\cup_Z MC(f|_Z)$, where $Z$ is the given subpolyhedron and
$MC$ denotes the mapping cylinder.
On the other hand, the point-inverses of the projection $X\cup_Z MC(f|_Z)\to Y$
are cones, so Theorem \ref{minors embed}(a) yields an embedding of $Y$.
\end{proof}

Let us call a polyhedron $Z$ an {\it $h$-minor} of a polyhedron $X$, if
there exists a subpolyhedron $Y$ of $X$ and a cell-like map $Y\to Z$.
(We note that composition of cell-like maps is obviously cell-like, cf.\ \cite{Sm};
see also \cite[comment to Corollary 2.3]{Hat} for a combinatorial proof.)
By Theorem \ref{minors embed}(a), all $h$-minors of an $n$-polyhedron embeddable
in $S^m$, $m-n\ge 3$, also embed in $S^m$.

One could hope that using $h$-minors instead of subpolyhedra enables one to
prevent Theorem \ref{Zaks} from ``driving us from the paradise'' which Theorem
\ref{self-dual} might seem to promise.
This is not so: using Corollary \ref{quasi-embedding}, it is easy to construct
an infinite list $P_0,P_1,\dots$ of pairwise non-homeomorphic $n$-polyhedra,
$n\ge 3$, such that each $P_i$ does not embed in $S^{2n}$, but every its proper
(in any reasonable sense) $h$-minor embeds in $S^{2n}$.
In fact, the original examples of Zaks \cite{Za} work (compare \cite[6.5]{Ne}).

However,  all such examples (Zaks' examples and their modifications constructed
using Corollary \ref{quasi-embedding}) are not going to be $(n-1)$-connected.
Relevance of $(n-1)$-connected $n$-polyhedra is ensured by the following
observation.

\begin{theorem}\label{connected} Let $P$ be an $n$-polyhedron, $n\ne 2$, that
embeds in $S^{2n}$.
Then $P$ embeds in an $(n-1)$-connected $n$-polyhedron $Q$ such that $Q$
embeds in $S^{2n}$.
\end{theorem}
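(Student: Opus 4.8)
The plan is to build $Q$ from $P$ by attaching cells of dimensions $\le n$ to kill the homotopy groups $\pi_0,\dots,\pi_{n-1}$, while keeping the result embeddable in $S^{2n}$. First I would triangulate $P$ by a simplicial complex $K$ and fix a PL embedding $e\:|K|\to S^{2n}$. Working inductively on $i=0,1,\dots,n-1$, suppose we have reached an $n$-polyhedron $P_i$ with $\pi_{<i}(P_i)=0$ that embeds in $S^{2n}$; pick a finite generating set of $\pi_i(P_i)$ represented by PL maps $\phi_j\:S^i\to P_i$. Since $i\le n-1$ and the ambient dimension is $2n\ge 2i+2$, general position lets us homotope each $\phi_j$, inside $S^{2n}$, to a PL embedding of $S^i$ whose image meets $P_i$ only along $\phi_j(S^i)$ itself — more precisely, one first makes $\phi_j$ a PL embedding of $S^i$ into $P_i$ at the cost of replacing it by a homotopic map (possible when $2i\le n$; in the remaining range one instead argues directly with the attached cell). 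Then one attaches an $(i+1)$-cell to $P_i$ along $\phi_j$; the point is that the resulting $(i+1)$-complex, and hence (after doing all $j$ and then all later stages) the final $n$-complex, can still be embedded in $S^{2n}$. Set $Q=P_n$; it is $(n-1)$-connected by construction, contains $P=P_0$, and is an $n$-polyhedron.

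The reason the embedding survives each cell attachment is a dimension count exactly of the type already used in this paper: an $(i+1)$-cell attached along an embedded $i$-sphere $S\subset P_i\subset S^{2n}$ can be realized as a PL $(i+1)$-ball $D$ in $S^{2n}$ with $\partial D=S$ and $D\cap P_i=S$, provided $(i+1)+\dim P_i=(i+1)+n\le 2n$, i.e.\ $i\le n-1$, which is precisely our range; this is Zeeman's unknotting / general-position argument for pushing a cone off a polyhedron in codimension $\ge 3$ (here codimension $2n-(i+1)\ge n+1\ge 4$). Concretely one takes the cone on $S$ from a generic point, puts it in general position with respect to $P_i$, and removes the lower-dimensional intersections by the usual finger/Penrose moves since the complementary dimensions are in the stable range. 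Doing this for a whole finite collection of spheres at stage $i$ requires only that the balls be chosen successively disjoint from one another away from their boundaries, which is again general position. This is where the hypothesis $n\ne 2$ enters: when $n=2$ the cells attached at stage $i=1$ have $\dim D=2$ and $\dim P_1=2$, so $\dim D+\dim P_1=4=2n$ is only borderline and the codimension of $D$ in $S^4$ is $2$, so neither general position nor codimension-$\ge 3$ unknotting applies, and indeed the statement is false for $n=2$.

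A cleaner way to organize the same idea, which I would actually prefer to write up, is: first thicken. Embed $P=|K|$ in its regular neighbourhood $N$ in $S^{2n}$, a compact $2n$-manifold with boundary, and recall $N$ collapses to $|K|$. Now attach the handles dual to the cells above: to kill a class in $\pi_i(P)\cong\pi_i(N)$, $i\le n-1<n\le 2n-n-1$, attach an $(i+1)$-handle to $N$ along an embedded $i$-sphere in $\partial N$, legitimately because $i+1\le n<2n/2$ keeps us in the handle-trading range and the attaching sphere has a trivial normal bundle by the codimension count. After attaching handles for generators of $\pi_0,\dots,\pi_{n-1}$ we get a compact $2n$-manifold $N'\supset N$ with $\pi_{<n}(N')=0$; take $Q=N'$ if a manifold answer is acceptable, or take $Q$ to be a spine (core complex) of $N'$ of dimension $\le n$, which is $(n-1)$-connected, contains $P$, and embeds in $S^{2n}$ because $N'$ does — and $N'$ embeds in $S^{2n}$ since it is $N$ with handles attached inside a collar, or, if one worries about the outside, one arranges all handles within $S^{2n}\but\partial N$ on the $N$ side from the start. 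The main obstacle, and the only place real care is needed, is verifying that the attaching spheres can be taken disjoint and unknotted in $S^{2n}$ simultaneously for a full generating set at each stage; this is routine general position exactly because we are always in codimension $\ge 3$ and below the middle dimension, but it must be stated carefully, and it is exactly this step that collapses when $n=2$.
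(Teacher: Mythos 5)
Your stages $i\le n-2$ are sound, and they reproduce what the paper does in a single step (cone off the $(n-2)$-skeleton: $\dim|CK^{(n-2)}|=n-1$, and $(n-1)+n<2n$, so general position applies). The gap is the last stage $i=n-1$, which is the real content of the theorem. Your dimension count reads ``$(i+1)+n\le 2n$, i.e.\ $i\le n-1$, which is precisely our range'', but general position demands \emph{strict} inequality: two polyhedra of dimensions $a$ and $b$ in a $2n$-manifold generically meet in dimension $a+b-2n$, so they can be separated only when $a+b<2n$. For $i=n-1$ the attached cells are $n$-disks and $n+n=2n$ exactly: a generic PL $n$-disk $D\subset S^{2n}$ with $\partial D\subset P$ meets $P$, the other disks, and itself in isolated interior points that no amount of general position or codimension-three unknotting will remove. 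This is precisely the critical dimension in which the van Kampen obstruction lives, and removing such double points is the PL Whitney-trick problem that this whole subject is about; the handle version fails identically, since the cores of the $n$-handles are again $n$-disks in a $2n$-manifold. Your diagnosis of the hypothesis $n\ne 2$ is therefore also off: the borderline equality occurs at the top stage for \emph{every} $n$, not only for $n=2$.

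What the paper actually does at the top stage is non-geometric. Having reduced to $(n-2)$-connected $P$ as above, it represents a finite basis of $\pi_{n-1}(P)$ by maps $f_i\:S^{n-1}\to P$ chosen so that each image meets every $n$-simplex of a triangulation $K$ of $P$, and attaches $n$-disks along these $f_i$ to form $Q$. Because no new cell is disjoint from any $n$-simplex of $K$, one can check that the van Kampen obstruction of $Q$ vanishes, and then one invokes the completeness of that obstruction, which holds exactly for $n\ne 2$ --- this is where the hypothesis enters (completeness fails for $n=2$ by Freedman, Krushkal and Teichner). Note that the embedding of $Q$ obtained this way is a new embedding and need not extend the given embedding of $P$, which is consistent with the fact that no direct geometric extension argument is available in this dimension. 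If you want to salvage your outline, you must replace the final general-position step by an obstruction-theoretic (or Whitney-trick) argument of this kind.
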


\begin{proof} We shall show that more generally if $P$ is
an $n$-polyhedron, $n\ge 2$, with vanishing van Kampen obstruction,
then $P$ embeds in a polyhedron $Q$ with vanishing van Kampen obstruction.
The van Kampen obstruction is well-known to be complete for $n\ne 2$ (see \cite{M2}
and Remark \ref{erratum2} below).

Let $K$ be some triangulation of $P$.
By general position $P\cup |CK^{(n-2)}|$ embeds in $S^{2n}$.
So without loss of generality $P$ is $(n-2)$-connected.
Then $P$ is homotopy equivalent to a wedge of $(n-1)$-spheres.
If $n=2$, $\pi_1(P)$ is finitely generated (and even finitely presented).
If $n\ge 3$, $P$ is simply-connected, hence $\pi_{n-1}(P)$ is finitely
generated over $\Z$ (see \cite[20.6.2(3)]{tD} for a new simple proof of this
classical result).
Let $f_i\: S^{n-1}\to P$ represent the free homotopy classes of some finite basis
of $\pi_{n-1}(P)$.
We may amend the $f_i$ so that the image of each $f_i$ meets each
$n$-simplex of $K$.
Let $Q$ be obtained by adjoining $n$-disks to $P$ along the $f_i$.
By construction, $Q$ is $(n-1)$-connected.
Moreover, due to our choice of the $f_i$, the van Kampen obstruction of $Q$
is zero.
\end{proof}

\begin{remark}\label{dual-collapsible}
The proof of Theorem \ref{minors embed}(a) works to establish it for the more
general class of dual-collapsible maps.
A map is {\it dual-collapsible} if it can be triangulated by a simplicial map
$f\:K\to L$ such that for each simplex $\tau$ of $L$, its dual cone $\tau^*$
has collapsible preimage $|f^{-1}(\tau^*)|$.
It can be shown that composition of dual-collapsible maps is dual-collapsible
(which seems not to be the case for collapsible maps, even though it happens
to be the case for collapsible retractions \cite[8.6]{Co2}).
Many assertions in the present paper involving cell-like maps (directly or through
$h$-minors), including the statement of the main theorem, hold for dual-collapsible
maps.
\end{remark}

\subsection{Edge-minors}\label{edge-minors}

An {\it edge contraction} is a simplicial map $f\:K\to L$ that sends every edge
onto an edge, apart from one edge $\{v_1,v_2\}$ which it shrinks onto a vertex.
We call $f$ admissible if $\lk\{v_1\}\cap\lk\{v_2\}=\lk\{v_1,v_2\}$.
An equivalent condition is that $\{v_1,v_2\}$ is not contained in any ``missing
face'' of $K$, i.e.\ in an isomorphic copy of $\partial\Delta^n$ in $K$ that does
not extend to an isomorphic copy of $\Delta^n$ in $K$.
We define a simplicial complex $L$ to be an {\it edge-minor} of a simplicial
complex $K$ if $L$ is obtained from a subcomplex of $K$ by a sequence of
admissible edge contractions.

Yet another equivalent formulation of the admissibility condition is that
every point-inverse of $|f|$ is collapsible.
This has the following consequences:

\begin{enumerate}
\item If $f\:K\to L$ is an edge contraction, and $\Lambda$ is a subcomplex of $L$,
then $f|_{f^{-1}(\Lambda)}$ is either an edge contraction or a homeomorphism.

\item If $L$ is an edge-minor of $K$ and $|K|$ embeds in $S^m$, then $|L|$
embeds in $S^m$.
\end{enumerate}

E. Nevo considered a slightly different definition of a ``minor'' which we
shall term a {\it Nevo minor} \cite{Ne}.
It is similar to that of edge-minor, with the following amendment.
We call an edge contraction $f\:K\to L$, where $\dim K=n$, Nevo-admissible, if
the $(n-2)$-skeleton $(\lk\{v_1\}\cap\lk\{v_2\})^{(n-2)}=\lk\{v_1,v_2\}$;
an equivalent condition is that $\{v_1,v_2\}$ is not contained in any missing
face of $K$ of (missing) dimension $\le n$.
Note that in the case of graphs this is a vacuous condition.

\begin{proposition}\label{nevo} The notions of Nevo minor and edge-minor are
equivalent.
\end{proposition}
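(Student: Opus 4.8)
The plan is to prove the two implications separately. One direction is immediate: if an edge contraction $f\:K\to L$ shrinking an edge $e=\{v_1,v_2\}$ is admissible, i.e.\ $e$ lies in no missing face of $K$, then \emph{a fortiori} $e$ lies in no missing face of (missing) dimension $\le\dim K$, so $f$ is Nevo-admissible; since both notions allow passage to an arbitrary subcomplex, every edge-minor of a complex is also a Nevo minor of it. For the converse I would argue by induction on the length of a chain $K\supseteq K_0\to K_1\to\dots\to K_s$ exhibiting a given complex as a Nevo minor of $K$, reducing everything to one key assertion: \emph{if $f\:K\to L$ is a Nevo-admissible edge contraction, then $L$ is obtained from a subcomplex $K'\incl K$ by a single admissible edge contraction} (or $f$ is already admissible, in which case $K'=K$).

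To prove this assertion, observe first that since $\dim K=n$, every missing face of $K$ is a copy of $\partial\Delta^d$ for some $d\le n+1$, so Nevo-admissibility of $f$ forces the contracted edge $e=\{v_1,v_2\}$ to lie only in missing faces that are copies of $\partial\Delta^{n+1}$, necessarily of full dimension $n$ in $K$. For each such copy $\Sigma\cong\partial\Delta^{n+1}$ with $e\incl\Sigma$, I would delete from $K$ the facet $A_\Sigma$ of $\Sigma$ that omits the vertex $v_2$; since $A_\Sigma$ is top-dimensional in $K$, the result $K'$ is a subcomplex, and $e\not\incl A_\Sigma$. Two verifications then remain. First, $f(K')=L$: the complementary facet $B_\Sigma\incl\Sigma$, the one omitting $v_1$, is not among the deleted facets (it contains $v_2$, which none of the $A_{\Sigma'}$ does), hence survives in $K'$, and $f(B_\Sigma)=f(A_\Sigma)$, so no simplex of $L$ disappears. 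Second, $e$ lies in no missing face of $K'$: were $\partial\tau\incl K'$ a missing face with $e\incl\partial\tau$ (so $\tau$ is not a simplex of $K'$), then either $\tau$ would be one of the deleted facets $A_\Sigma$ --- impossible, since $e\not\incl A_\Sigma$ --- or $\partial\tau$ would be a missing face of $K$ containing $e$, hence one of the copies $\Sigma$ of $\partial\Delta^{n+1}$, and then $\partial\tau=\Sigma$ would contain the deleted facet $A_\Sigma$, contradicting $\partial\tau\incl K'$. Hence $f|_{K'}\:K'\to L$ is an admissible edge contraction.

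To run the induction I also need the relation ``is an edge-minor of'' to be transitive, and for this it suffices to sharpen the first of the two consequences recorded earlier in this subsection: the full preimage $\Lambda'\bydef f^{-1}(\Lambda)$ of a subcomplex $\Lambda\incl L$ under an admissible edge contraction $f\:K\to L$ is a subcomplex of $K$ on which $f$ restricts to an admissible edge contraction or to an isomorphism. That the restriction is an edge contraction or an isomorphism is that consequence; that admissibility is preserved I would verify by comparing $\lk_{\Lambda'}(v_i)$ with $\lk_K(v_i)$, using that $\Lambda'$ is a \emph{full} preimage together with the admissibility of $f$ in $K$. With this in hand the inductive step proceeds as follows: replace $K_0\to K_1$ using the key assertion, so that $K_1$ is obtained from a subcomplex of $K$ by at most one admissible contraction; apply the inductive hypothesis to the shorter chain $K_1\to\dots\to K_s$ to present $K_s$ as an edge-minor of $K_1$; and then pull the initial subcomplex in that presentation back through the admissible contraction $K'\to K_1$ into $K$ by the sharpened consequence. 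A possible isomorphism arising here is harmless, and the base case $s=0$ (a subcomplex of $K$) is trivial.

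I expect the main obstacle to be not any single step but the bookkeeping of this last assembly --- in particular, tracking when a restricted admissible contraction degenerates to an isomorphism and checking that this does not corrupt the chain of admissible contractions and subcomplex-passages defining the edge-minor. The geometric content of the key assertion --- that the ``superfluous'' facets $A_\Sigma$ may be discarded before contracting $e$ without altering the quotient complex --- is exactly the higher-dimensional analogue of the elementary observation that an edge of a triangle in a graph may be deleted before contracting one of the triangle's other two edges.
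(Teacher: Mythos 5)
Your proof is correct and follows essentially the same route as the paper: your key assertion, deleting the top-dimensional facet of each missing $\partial\Delta^{n+1}$ containing $e$ that omits $v_2$, is exactly the paper's construction of the subcomplex $K^+$ obtained by removing the $n$-simplices $\{v_1\}*\sigma_i$ for the offending $(n-1)$-simplices $\sigma_i\in\lk(v_1)\cap\lk(v_2)\but\lk(e)$. Your sharpening of consequence (1) to record that the full preimage of a subcomplex under an \emph{admissible} contraction is again contracted \emph{admissibly} is a point the paper leaves implicit in its appeal to assertion (1), and your link computation for it is right.
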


The author learned from E. Nevo that the published version of his paper in fact
contains this remark, which was also pointed out by his referee.
We note that Proposition \ref{nevo} along with assertion (2) above yields a proof
of Conjecture 1.3 in \cite{Ne}: if $L$ is a Nevo minor of $K$, and $|K|$ embeds
in $S^m$, then $|L|$ embeds in $S^m$.

\begin{proof} By assertion (1) above, it suffices to show that if $f\:K\to L$ is
a Nevo-admissible edge contraction, then $L$ is a minor of $K$.
Suppose that $f$ shrinks an edge $e=\{v_1,v_2\}$.
If $f$ is not admissible, $\lk(v_1)\cap\lk(v_2)$ is the union of $\lk(e)$
with a nonempty collection of $(n-1)$-simplices $\sigma_1,\dots,\sigma_k$.
Let $K^+$ be the subcomplex of $K$ obtained by removing the $n$-simplices
$\{v_0\}*\sigma_i$ from $K$.
Then $f|_{K^+}\:K^+\to L$ is an admissible edge contraction.
Hence $L$ is a minor of $K$.
\end{proof}

\begin{lemma}\label{Steinitz} A simplicial complex of dimension $\le 2$ is
an edge-minor of every its subdivision.
\end{lemma}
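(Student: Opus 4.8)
The plan is to induct on the number of simplices of $K'$. If $K'=K$ there is nothing to prove, so suppose $K'$ properly subdivides $K$. It suffices to produce an admissible edge contraction $f\colon K'\to K''$ with $K''$ isomorphic to a subdivision of $K$ having strictly fewer simplices than $K'$: for then the inductive hypothesis supplies a subcomplex $L''\subset K''$ and a sequence of admissible edge contractions $L''\to\dots\to K$; setting $L'=f^{-1}(L'')\subset K'$ and restricting $f$, the first of the two consequences of admissibility recorded in \S\ref{edge-minors} shows $f|_{L'}$ is an edge contraction or a homeomorphism, and it stays admissible because $\lk(v_1)\cap\lk(v_2)=\lk(\{v_1,v_2\})$ intersects with $|f^{-1}(\Lambda)|$ to the corresponding identity there; so $K$ is realized as an edge-minor of $K'$. (In particular the edge-minor relation is transitive.) Everything thus reduces to finding $f$.

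If no $2$-simplex of $K$ is properly subdivided in $K'$, the subdivision is one-dimensional in nature: it only refines free edges of $K$. Pick a subdivision vertex $w$ of degree $2$ adjacent to an original vertex $v$ along such an edge; the two neighbours of $w$ are non-adjacent in $K'$, so $\{v,w\}$ lies in no missing face, its contraction is admissible, and it yields a subdivision of $K$ with one vertex fewer. So assume some $2$-simplex $\tau$ of $K$ is subdivided in $K'$ into a nontrivial triangulated $2$-ball $D_\tau$, whose boundary $\partial D_\tau$ is the induced subdivision of $\partial\tau$. I will find $f$ inside $D_\tau$, changing nothing elsewhere.

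An edge $e'=\{u,v\}$ of $D_\tau$ not lying on $\partial D_\tau$ has carrier $\tau$, so every simplex of $K'$ containing it lies in $D_\tau$; since $|D_\tau|$ is a $2$-ball it contains no subcomplex homeomorphic to $S^2$, hence no missing $\partial\Delta^k$ with $k\ge3$ passes through $e'$, and contracting $e'$ is admissible in $K'$ exactly when $u$ and $v$ have precisely two common neighbours in $D_\tau$, namely the apices of the two triangles on $e'$. Now reduce $D_\tau$. If it has an interior vertex $o$ of degree $3$, with $\lk_{D_\tau}(o)=\{c_1,c_2,c_3\}$, then $\{o,c_1\}$ is such an edge, and contracting it — which collapses an arc inside $|\tau|$ — replaces the three triangles at $o$ by the single non-degenerate triangle $c_1c_2c_3$, giving a subdivision of $K$ with two triangles fewer. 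If $D_\tau$ has an interior vertex but none of degree $3$, one locates an admissible edge $\{o,c\}$ with $o$ interior and $c\in\lk_{D_\tau}(o)$ whose contraction ``fans out'' the star of $o$ onto $\partial D_\tau$ and deletes triangles without changing $\partial D_\tau$; the existence of such an edge follows from the structure of triangulated $2$-balls (shellability; no interior vertex can have every chord of its link realized outside its star). If $D_\tau$ has no interior vertex it is a triangulation of the polygon $\partial D_\tau$, hence a $2$-tree in which every triangle is a face, so a chord can be chosen cutting off an ``ear'' at a non-corner vertex, and contracting a side of that ear deletes the ear vertex, un-subdividing one point of $\partial\tau$. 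In each case $K''$ is again a subdivision of $K$ up to isomorphism — any triangulated $2$-ball with a prescribed polygonal boundary is geometrically realizable over $\tau$ by a straightening theorem for planar triangulations (F\'ary--Tutte) — with strictly fewer simplices.

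The decisive point, and the only place where $\dim K\le 2$ enters, is the analysis of triangulated $2$-balls in the last paragraph: locating an admissible edge, and checking that contracting it keeps us among subdivisions of $K$ — that is, that we never pinch $|D_\tau|$ by collapsing an arc meeting $\partial\tau$ the wrong way, and never contract a corner of $\tau$. This is exactly what fails for $n\ge3$ (a triangulated $n$-ball need not collapse, and a subdivided $n$-simplex cannot in general be un-subdivided by edge contractions), and it works here only because of the special structure of triangulated $2$-balls — collapsibility, shellability, the $2$-tree property. I expect this case analysis to be the bulk of the proof; the inductive scaffolding of the first two paragraphs is routine.
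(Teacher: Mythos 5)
Your inductive scaffolding (exhibit one admissible contraction at a time) matches the paper's, and your free-edge and degree-$3$ cases are sound; but two of your cases have real gaps. (1) In the case ``$D_\tau$ has an interior vertex but none of degree $3$'', the existence of an admissible edge $\{o,c\}$ with $o$ interior is precisely the crux, and you only assert it. It is true, but not by shellability: one needs that the chords of the link cycle of $o$ that are edges of $D_\tau$ form a non-crossing family (planarity), hence extend to a triangulation of that polygon, which has an ear; the ear vertex $c$ meets no chord, so $\lk(o)\cap\lk(c)$ is just the two apices and $\{o,c\}$ is admissible. Without some such argument the proof does not close. (2) More seriously, your ``no interior vertex'' case is not local to $D_\tau$, contrary to your announced plan: the sides of an ear at a non-corner vertex of $\partial D_\tau$ lie in $|K^{(1)}|$, so contracting one acts simultaneously on every $D_{\tau''}$ with $\tau''$ containing the relevant edge $e$ of $K$, and admissibility can fail there --- e.g.\ if in some $D_{\tau''}$ the two endpoints have a common neighbour spanning an empty triangle (a $3$-cycle bounding a further-subdivided disk). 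As written nothing prevents you from being forced into this case while such a $\tau''$ exists; you would need a global ordering of the moves, which is not supplied.

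The paper's proof sidesteps both issues by organizing the descent around the obstructions to admissibility themselves, i.e.\ missing faces. Missing tetrahedra are always ``old''; every ``new'' missing triangle lies inside a single subdivided $2$-simplex and bounds a disk there; an innermost such disk is not a single triangle, hence has an interior edge, which by innermostness lies in no missing triangle and so is admissible (and has a vertex interior to the $2$-simplex, since a $3$-cycle has no chords). Once no new missing triangles remain, \emph{every} remaining new edge --- with a vertex interior to a $2$-simplex or lying in $|K^{(1)}|$ --- is automatically admissible; this global statement is exactly what your ear-contraction step is missing. No structure theory of disk triangulations is needed on that route.
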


This is proved in \S\ref{edge-minors2} using innermost circle arguments.

Since every triangulation $T$ of $S^2$ is easily seen to be a subdivision of
$\partial\Delta^3$, Lemma \ref{Steinitz} implies the result of Steinitz (1934)
that $\partial\Delta^3$ is a minor of $T$; see \cite{Zo} and \cite{Su}.

In contrast, there exists a subdivision $S$ of $\partial\Delta^4$ such that
$\partial\Delta^4$ is not an edge-minor of $S$; see \cite[Example 6.1]{Ne}.

The preceding results now imply the following version of the Halin--Jung theorem:

\begin{theorem} A simplicial complex $K$ has no edge-minors among the seven self-dual
subcomplexes of $\partial\Delta^4$ along with $K_{3,3}$ iff $|K|$ embeds in $S^2$.
\end{theorem}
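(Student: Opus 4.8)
The plan is to combine the Halin--Jung theorem with the edge-minor formalism developed in \S\ref{edge-minors}. The statement asserts that for a simplicial complex $K$, having no edge-minor among $K_5$, $HJ_0,\dots,HJ_5$, $K_{3,3}$ is equivalent to $|K|$ embedding in $S^2$. One direction is immediate: if $|K|$ embeds in $S^2$, then by assertion (2) in \S\ref{edge-minors} every edge-minor of $K$ also embeds in $S^2$, while none of $K_5$, the $HJ_i$, or $K_{3,3}$ does (this is the non-embeddability half of Theorem \ref{self-dual}(a), together with the Kuratowski theorem for $K_5$ and $K_{3,3}$); so $K$ has no such edge-minor.

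For the converse I would prove the contrapositive: if $|K|$ does not embed in $S^2$, then $K$ has an edge-minor among the eight listed complexes. By the Halin--Jung theorem, $|K|$ contains a subcomplex $L$ isomorphic to a subdivision of one of $K_5$, $K_{3,3}$, $HJ_0,\dots,HJ_5$. Since $L$ is a subcomplex of $K$, it suffices to produce, from $L$, an edge-minor equal to the corresponding unsubdivided complex $M \in \{K_5, K_{3,3}, HJ_0,\dots,HJ_5\}$ — the composite ``pass to the subcomplex $L$, then contract down to $M$'' is then an edge-minor of $K$. Now every $M$ on the list has dimension $\le 2$ (indeed $HJ_0 = \Delta^3$ has dimension $3$ — so here I must be a little careful), so for the dimension $\le 2$ cases Lemma \ref{Steinitz} applies directly: $M$ is an edge-minor of every subdivision of $M$, hence of $L$.

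The one genuinely separate case is $HJ_0 = \Delta^3$, which is $3$-dimensional, so Lemma \ref{Steinitz} does not literally apply. But in the Halin--Jung list $\Delta^3$ is really being used only as a $2$-dimensional obstruction in disguise: any polyhedron containing $|\Delta^3|$ contains $|\partial\Delta^3|$, which is a triangulation of $S^2$, hence (as noted after Lemma \ref{Steinitz}, via Steinitz's theorem) has $\partial\Delta^3$ as an edge-minor — but $\partial\Delta^3 = K_{3,3}$ is \emph{not} on our list either. The cleaner route is to observe that $|HJ_0|$ contains $|HJ_1| = |\Delta^0 \sqcup \partial\Delta^3|$ only after adding a point; the honest statement is that Halin--Jung can be upgraded so that the subdivided-$\Delta^3$ case is replaced by one of the lower cases. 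Concretely: any subdivision $L$ of $\Delta^3$ contains a subdivision of $HJ_4 = (\Delta^2)^{(0)} * \Delta^1$ or of $K_{3,3}$ on its boundary-plus-interior structure; more simply, one checks directly that the complex $\Delta^3$ itself has an edge-minor isomorphic to $HJ_4$ (contract one edge of $\Delta^3$ to see it, or pass to a suitable subcomplex) — and then Lemma \ref{Steinitz} handles the subdivision. So in this case the edge-minor chain is: subcomplex $L \subset K$ (a subdivision of $\Delta^3$), then edge-minor $L \to \Delta^3$ is replaced by first taking a $2$-dimensional subcomplex witnessing non-planarity and then applying Lemma \ref{Steinitz}.

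I expect the main obstacle to be exactly this $HJ_0 = \Delta^3$ bookkeeping: making precise that the $3$-dimensional member of the Halin--Jung list can be absorbed into the $2$-dimensional members at the level of edge-minors, so that Lemma \ref{Steinitz} suffices for all eight target complexes. Once that is settled, the rest is the formal ``subcomplex then Steinitz'' argument, using only that edge-minors compose (which follows from assertion (1): a sequence of admissible edge contractions of a subcomplex is again an edge-minor of the ambient complex) and that none of the eight targets embeds in $S^2$.
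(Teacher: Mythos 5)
Your overall strategy --- Halin--Jung plus Lemma \ref{Steinitz} for the forward direction, and assertion (2) of \S\ref{edge-minors} plus the non-embeddability half of Theorem \ref{self-dual}(a) (with Kuratowski for $K_{3,3}$) for the converse --- is exactly the intended argument, and it is correct for the seven targets of dimension $\le 2$. The gap is your treatment of $HJ_0=\Delta^3$, where both of your proposed fixes rest on false claims. First, $\partial\Delta^3$ is not $K_{3,3}$: it is the boundary of the tetrahedron, a triangulation of $S^2$ with $1$-skeleton $K_4$, so it is not an obstruction to planarity at all. Second, $\Delta^3$ cannot have $HJ_4=(\Delta^2)^{(0)}*\Delta^1$ as an edge-minor: passing to a subcomplex and performing edge contractions can only decrease the number of vertices, and $\Delta^3$ has four vertices while $HJ_4$ has five. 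So the proposed ``absorption'' of the $HJ_0$ case into the $2$-dimensional cases fails as written.

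The correct resolution is much simpler than anything you attempt. A subcomplex $L$ of $K$ that is a subdivision of $\Delta^3$ triangulates a $3$-ball and is therefore $3$-dimensional; any $3$-simplex of $L$, together with all of its faces, is a subcomplex of $L$ (hence of $K$) isomorphic to $\Delta^3=HJ_0$ itself. Thus in the $HJ_0$ case no edge contraction is needed at all, and Lemma \ref{Steinitz} is only ever invoked for the seven targets of dimension $\le 2$, where it applies. With that one-line observation substituted for your $HJ_0$ discussion, the proof is complete and coincides with the paper's (implicit) one.
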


Since a given (finite) simplicial complex only has finitely many minors,
this yields an algorithm deciding embeddability of $|K|$ in $S^2$.
A presumably faster, but more elaborate algorithm is discussed in \cite{MTW}.

\begin{addendum}[to Theorem \ref{self-dual}]\label{self-dual-addendum}
Let $K=K_1*\dots*K_r$, where each $K_i$ is a self-dual subcomplex of
$\partial\Delta^{m_i}$, where $m_1+\dots+m_r=m$, and let $L$ be a proper
edge-minor of $K$.
Then

(a) $|L|$ embeds in $S^{m-2}$;

(b) the embeddings of $|L|$ in $S^{m-1}$ induced by $j$ and $hj$ are equivalent,
where $j$ is the inclusion of $|K|$ in
$S^{m-1}\bydef|\partial\Delta^{m_1}*\dots*\partial\Delta^{m_r}|$, and $h$ is
an orientation-reversing homeomorphism of $S^{m-1}$.
\end{addendum}

The proof, given in \S\ref{edge-minors2}, is reminiscent of the above argument
towards Theorem \ref{self-dual}.

\subsection{Hemi-icosahedron and hemi-dodecahedron}\label{semi}

The central symmetry of $\R^3$ in the origin yields a simplicial free involution
on the boundary of a regular icosahedron centered at the origin.
Its quotient by this involution is a simplicial complex $\R P^2_\triangle$ with
$6$ vertices, triangulating the real projective plane.
It is easy to see that its $1$-skeleton is the complete graph $K_6$, and for each
pair of disjoint circuits in $K_6$, precisely one bounds a $2$-simplex in
$\R P^2_\triangle$.
Hence $\R P^2_\triangle$ is self-dual as a subcomplex of $\partial\Delta^5$
(compare \cite[5.8.5]{Mat}).
By Theorem \ref{self-dual} this implies that the projective plane
$\R P^2=|\R P^2_\triangle|$ does not embed in $S^3$, embeds in $S^4$, and
every embedding of $\R P^2$ in $S^4$ is inequivalent to its reflection.
In fact, every embedding of $\R P^2$ in $S^4$ with fundamental group of
the complement isomorphic to $\Z/2$ is known to be topologically equivalent
to either the standard embedding or its reflection \cite{La3}.

A well-known $9$-vertex triangulation of $\C P^2$ \cite{KB} (see \cite{MY},
\cite{BD1}, \cite{MS}, \cite{BD3} for other constructions) is easily seen to be
self-dual as a subcomplex of $\partial\Delta^8$.
Thus Theorem \ref{self-dual} applies again.
In this connection we note that every embedding of $\C P^2$ in $S^7$ is known
to be equivalent to either the standard embedding or its reflection \cite{Sk-A3}.

In fact, it is known that a combinatorial $n$-manifold with $v$ vertices is
self-dual as a subcomplex of $\partial\Delta^{v-1}$ if and only if $2v=3n+6$
\cite{ArM} (if), \cite{Da} (only if).
Combinatorial $n$-manifolds with $\frac{3n}2+3$ vertices can only occur in
dimensions $n=0,2,4,8,16$ and with a $\bmod 2$ cohomology ring isomorphic
to that of the respective projective plane \cite{BK1} (see also \cite{La2}).
In dimensions $2$ and $4$ these are unique (up to a relabelling of vertices);
an algebraic topology proof can be found in \cite{ArM} and a combinatorial
one in \cite{BD2}.
In dimension $8$, there exist three self-dual subcomplexes of $\partial\Delta^{14}$,
all triangulating a certain $8$-manifold $\Ham P^2_{(?)}$ \cite{BK2}.
The existence in dimension $16$ seems to be still open.

Theorem \ref{self-dual} now implies

\begin{corollary}\label{1.10} The join of $r$ copies of $\R P^2$, $c$ copies of
$\C P^2$ and $h$ copies of $\Ham P^2_{(?)}$ embeds in
$S^{5r+8c+14h-1}$ and does not embed in $S^{5r+8c+14h-2}$.
Every embedding $g$ of this join in $S^{5r+8c+14h-1}$ is inequivalent to $hg$,
where $h$ is an orientation reversing homeomorphism of the sphere.
\end{corollary}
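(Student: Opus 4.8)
The plan is to read the statement as a direct specialization of Theorem \ref{self-dual}. First I would recall, as established in the preceding paragraphs, the three building blocks: $\R P^2$ carries the $6$-vertex triangulation $\R P^2_\triangle$, self-dual in $\partial\Delta^5$; $\C P^2$ carries the $9$-vertex triangulation of \cite{KB}, self-dual in $\partial\Delta^8$; and $\Ham P^2_{(?)}$ carries one of the $15$-vertex triangulations of \cite{BK2}, self-dual in $\partial\Delta^{14}$. I would then index the $r+c+h$ chosen factors as $K_1,\dots,K_{r+c+h}$, where the first $r$ are copies of $\R P^2_\triangle$ (so $m_i=5$), the next $c$ are copies of the $\C P^2$-triangulation (so $m_i=8$), and the last $h$ are copies of the $\Ham P^2$-triangulation (so $m_i=14$). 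Set $K=K_1*\dots*K_{r+c+h}$ and $m=\sum m_i=5r+8c+14h$. Since geometric realization commutes with joins and $|K_i|$ is $\R P^2$, $\C P^2$, or $\Ham P^2_{(?)}$ accordingly, $|K|$ is precisely the join named in the corollary.

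Next I would invoke Theorem \ref{self-dual}(a): $|K|$ does not embed in $S^{m-2}=S^{5r+8c+14h-2}$ (and every proper subcomplex does, which is what makes the next dimension optimal, though this is not needed for the stated corollary). For the positive direction I would note that $K$ is a subcomplex of $\partial\Delta^{m_1}*\dots*\partial\Delta^{m_{r+c+h}}$, whose polyhedron is a sphere of dimension $\sum_i(m_i-1)+(r+c+h-1)=m-1$; hence $|K|$ embeds in $S^{5r+8c+14h-1}$. Finally, Theorem \ref{self-dual}(b) gives that every embedding $g$ of $|K|$ into this $S^{m-1}$ is inequivalent to $hg$ for any orientation-reversing homeomorphism $h$ of the sphere, which is the chirality assertion of the corollary.

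I do not expect a genuine obstacle: all the content sits in Theorem \ref{self-dual} and in the self-duality of the three triangulations, both already available. The one point that merits an explicit sentence is that Theorem \ref{self-dual} is stated for an arbitrary tuple $(m_1,\dots,m_r)$, so there is no difficulty in forming a single join out of factors living in ambient simplices of three different dimensions $5$, $8$ and $14$; everything else is bookkeeping of the dimension count $m-1=5r+8c+14h-1$ and $m-2=5r+8c+14h-2$.
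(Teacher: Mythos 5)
Your proposal is correct and is exactly the paper's intended argument: the corollary is stated immediately after the observations that $\R P^2_\triangle$, the $9$-vertex $\C P^2$ and the $15$-vertex $\Ham P^2_{(?)}$ are self-dual in $\partial\Delta^5$, $\partial\Delta^8$ and $\partial\Delta^{14}$ respectively, and the paper derives it by the same direct application of Theorem \ref{self-dual} to the join, with the embedding in $S^{m-1}$ furnished by the inclusion $j$ into $|\partial\Delta^{m_1}*\dots*\partial\Delta^{m_r}|$. Your dimension bookkeeping ($m=5r+8c+14h$) matches the paper's.
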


\begin{remark}
We note that $K_6$ admits an embedding in $S^3$ that links any given disjoint
circuits $|C|$, $|C'|$ in $|K_6|$ with any given odd linking number and does not
link any other disjoint pair of circuits.
Similar arguments work to prove the same assertion for any other graph of
the Petersen family (cf.\ Remark \ref{hemi-exchanges}).

Indeed, one of $C$, $C'$ bounds a simplex in the semi-icosahedron and the other
one can be identified with $\R P^1$.
Thus $K_6$ is the $1$-skeleton of a triangulation of the M\"obius band $\mu$
where $\partial\mu$ and the middle curve of $\mu$ are triangulated by $C$ and $C'$.
Using various embeddings of $\mu$ in $S^3$, e.g.\ those corresponding to all
half-odd-integer framings of the trivial knot, we obtain embeddings of $K_6$ in
$S^3$ with $\lk(|C|,|C'|)$ equal to any given odd number and all other disjoint
pairs of circuits geometrically unlinked.
\end{remark}

Similarly to the hemi-icosahedron $\R P^2_\triangle$ we have the hemi-dodecahedron
$\R P^2_\bigstar$ (compare \cite[\S6C]{McS}), whose $1$-skeleton is the Petersen
graph $P$, and for each pair of disjoint circuits in $P$, precisely one bounds
a cell in $\R P^2_\bigstar$.

This suggests that we should not be too fixed on simplicial complexes; this will
be our next concern.

\section{Main results}\label{section:main}

By a cell complex we mean what can be described as a finite CW-complex where
each attaching map is a homeomorphism of the sphere onto a subcomplex.
(We recall that we assume all maps between polyhedra to be PL by default.)
Note that the empty set is not a cell in our notation.

Cell complexes, and in particular simplicial complexes, are uniquely determined
by their face posets, so we may alternatively view them as posets of
a special kind.
This view is inherent in the following combinatorial notation \cite{M3}.

\subsection{Combinatorial notation}\label{notation}

Given a poset $P$ and a $\sigma\in P$, the {\it cone} $\fll\sigma\flr$
(resp.\ the {\it dual cone} $\cel\sigma\cer$) is the subposet of all $\tau\in P$
such that $\tau\le\sigma$ (resp.\ $\tau\ge\sigma$).

The {\it prejoin} $P+Q$ of posets $P=(\mathcal P,\le)$ and $Q=(\mathcal Q,\le)$
is the poset $(\mathcal P\sqcup\mathcal Q,\le)$, where $\mathcal P$ and $\mathcal Q$
retain their original orders, and every $p\in\mathcal P$ is set to be less than
every $q\in\mathcal Q$.
The {\it cone} $CP=P+\{\hat 1\}$ and the {\it dual cone} $C^*P=\{\hat 0\}+P$,
where $\{\hat 1\}$ and $\{\hat 0\}$ are just fancy notations for the one-element
poset.
The {\it boundary} $\partial Q$ of a cone $Q=CP$ is the original poset $P$,
and the {\it coboundary} $\partial^*Q$ of a dual cone $Q=C^*P$ is again $P$.
Given a finite set $S$, we denote the poset of all subsets of $S$ by $2^S$;
and the poset $\partial^*2^S$ of all nonempty subsets by $\Delta^S$.
When $S$ has cardinality $n+1$ and the nature of its elements is irrelevant,
the {\it (combinatorial) $n$-simplex} $\Delta^S$ is also denoted $\Delta^n$.

We call a poset $P$ a {\it simplicial complex} if it is a complete quasi-lattice
(i.e.\ every subset of $P$ that has an upper bound in $P$ has a least upper
bound in $P$; or equivalently every subset of $P$ that has a lower bound in $P$
has a greatest lower bound in $P$), and every cone of $P$ is order-isomorphic to
a simplex.

For a poset $P$ we distinguish its {\it barycentric subdivision} $P^\flat$ that is
the poset of all nonempty chains in $P$, ordered by inclusion, and the {\it order
complex} $|P|$ that is the polyhedron triangulated by the simplicial complex
$P^\flat$.
It should be noted that many fundamental homeomorphisms in combinatorial topology
can be promoted to combinatorial isomorphisms by upgrading from the barycentric
subdivision to the {\it canonical subdivision} $P^\#$ that is the poset of all order
intervals in $P$, ordered by inclusion \cite{BBC}, \cite{M3}.

We call a poset $P$ a {\it cell complex} if for every $p\in P$
the order complex $|\partial\fll p\flr|$ is homeomorphic to a sphere.

It is not hard to see that the so defined cell/simplicial complexes are precisely
the posets of nonempty faces of the customary cell/simplicial complexes \cite{M3}
(the case of cell complexes is trivial and well-known \cite{Mc}, \cite{Bj}).
General posets may be thought of as ``cone complexes'' \cite{vK2}, \cite{Mc}, \cite{M3}, and
their order-preserving maps may be thought of as ``conical'' maps.

From now on, we switch to the new formalism.

A few more auxiliary definitions follow.
The {\it dual} of a poset $P=(\mathcal P,\le)$ is the poset
$P^*\bydef(\mathcal P,\ge)$.
We note that $2^S$ is isomorphic to its own dual (by taking the complement);
and therefore so is $\partial\Delta^S=\partial(\partial^*2^S)$.
A poset $Q=(\mathcal Q,\preceq)$ is a {\it subposet} of $P$ if $\mathcal Q$ is
a subset of $\mathcal P$, and $p\preceq q$ iff $p\le q$ for all $p,q\in\mathcal Q$.
We will often identify a poset with its underlying set by an abuse of
notation.
A subposet $Q$ of $P$ is a {\it subcomplex} of $P$ if the cone (in $P$) of
every element of $Q$ lies in $Q$.

Let $P=(\mathcal P,\le)$ and $Q=(\mathcal Q,\le)$ be posets.
The {\it product} $P\x Q$ is the poset $(\mathcal P\x\mathcal Q,\preceq)$,
where $(p,q)\preceq (p',q')$ iff $p\le p'$ and $q\le q'$.
It is easy to see that $2^S\x 2^T\simeq 2^{S\sqcup T}$ naturally in $S$ and $T$.

The {\it join} $P*Q\bydef\partial^*(C^*P\x C^*Q)$ is obtained from $(C^*P)\x (C^*Q)$
by removing the bottom element $(\hat0,\hat0)$.
Thus $C^*(P*Q)\simeq C^*P\x C^*Q$, whereas $P*Q$ itself is the union
$C^*P\x Q\cup P\x C^*Q$ along their common part $P\x Q$.

From the above, $\Delta^S*\Delta^T\simeq\Delta^{S\sqcup T}$
naturally in $S$ and $T$.
It follows that the join of simplicial complexes
$K\subset\Delta^S$ and $L\subset\Delta^T$ is isomorphic to the simplicial complex
$\{\sigma\cup\tau\subset S\sqcup T\mid\sigma\in K\cup\{\emptyset\},
\tau\in L\cup\{\emptyset\},\,\sigma\cup\tau\ne\emptyset\}\subset\Delta^{S\sqcup T}$.

The join and the prejoin are related via barycentric subdivision:
$(P+Q)^\flat\simeq P^\flat*Q^\flat$.
Indeed, a nonempty finite chain in $P+Q$ consists of a finite chain in $P$
and a finite chain in $Q$, at least one of which is nonempty.
Note that in contrast to prejoin, join is commutative: $P*Q\simeq Q*P$.
Prejoin is associative; in particular, $C(C^*P)\simeq C^*(CP)$.

\subsection{{\it h}-Minors of cell complexes}\label{h-minors}

We call a cell complex $L$ an {\it $h$-minor} of a cell complex $K$, if there
exists a subcomplex $M$ of $K$ and an order preserving map $f\:M\to K$ such that
on the level of order complexes, $|f|\:|M|\to|K|$ is a cell-like map.
(Note that cell-like maps include dual-collapsible maps, see Remark
\ref{dual-collapsible}.)

\begin{example}[subdivision]\label{subdivision}
If a simplicial complex $L$ is an edge-minor of a simplicial complex $K$, then
of course $L$ is an $h$-minor of $K$; but not vice versa.
Indeed, let $K$ be a simplicial complex and $K'$ its simplicial subdivision
such that $K'$ is not an edge-minor of $K$ (see \cite[Example 6.1]{Ne}); we may
fix an identification of their order complexes.
Let $f\:K'\to K$ send every $\sigma\in K'$ to the minimal $\tau\in K$ such that
$|\sigma|\subset|\tau|$.
Then $f$ is an order preserving map such that $|f^{-1}(\fll\tau\flr)|$ is an
$n$-ball for each $n$-simplex $\tau\in K$.
(This understanding of a subdivision also arises in the study of PL transversality
\cite{M3} and in that of combinatorial grassmanians \cite{Mn}.)
It is easy to see that $|f^{-1}(\fll\tau\flr)|$ collapses onto $|f^{-1}(\tau)|$
for each $\tau\in K$, and it follows that $|f|$ is a cell-like map.

More generally, we say that an order preserving map $f\:P'\to P$ between posets
is a {\it subdivision} if $|f^{-1}(\fll\tau\flr)|$ is homeomorphic to
$|Cf^{-1}(\partial\fll\tau\flr)|$ by a homeomorphism fixed on
$|f^{-1}(\partial\fll\tau\flr)|$.
It is easy to see that $|f|$ is a cell-like map and $|P'|$ is homeomorphic
to $|P|$ \cite{M3} (see also \cite{Ak}, \cite[1.4]{DM} for a special case).

In particular, a cell complex is an $h$-minor of every its simplicial subdivision.
\end{example}

\begin{example}[zipping]
Another special case of taking an $h$-minor is zipping.
Given a poset $P$ and a $\sigma\in P$ such that
$\fll\sigma\flr$ is isomorphic to $Q+\Delta^1$ for some $Q$, by an isomorphism
$h$, we say that $P$ {\it elementarily zips} to $P/h^{-1}(\Delta^1)$ (the quotient
in the concrete category of posets over the category of sets, cf.\ \cite{AHS}).
A {\it zipping} is a sequence of elementary zippings.

It is not hard to see that if $K$ edge-contracts to $L$, then $K$ zips to $L$
(for instance, it takes two elementary zippings to zip a $2$-simplex onto
a $1$-simplex).
The author learned from E. Nevo that he has independently observed this fact,
and that a definition of zipping appears in Reading's paper \cite{Re}.
In fact, we borrow the term ``zipping'' from that paper.
E. Nevo also observed that if $K$ elementarily zips to $L$, then the
barycentric subdivision $K^\flat$ edge-contracts to $L^\flat$ in two steps.

We shall encounter a modification of zipping with $\Delta^1$ replaced by
$C((\Delta^2)^{(0)})$, as well as zipping itself, in the proof of
Proposition \ref{homology mfld}.
\end{example}

\begin{conjecture}\label{conjecture:main} For each $n$ there exist only finitely
many $n$-dimensional cell complexes $K$ such that $|K|$ is $(n-1)$-connected and
does not embed in $S^{2n}$, but $|L|$ embeds in $S^{2n}$ for each proper
$h$-minor $L$ of $K$.
\end{conjecture}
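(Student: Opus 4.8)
The plan is to recast the conjecture as a finiteness statement about the van Kampen obstruction and then to show that a cell complex which is minimal for non-embeddability in $S^{2n}$ cannot have too many cells.

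I would first dispose of the extreme cases $n\le 2$. For $n=1$, an $h$-minor of a graph is, after absorbing subdivisions, an ordinary graph minor -- a cell-like map of $1$-polyhedra contracts trees, and $S^2$ modulo a tree is $S^2$ (Lemma \ref{trivial}) -- so a critical complex is a minor-minimal non-planar graph, and Wagner's theorem identifies this list as $\{K_5,K_{3,3}\}$. For $n=2$ the situation is genuinely different: Theorem \ref{minors embed}(b) requires the ambient dimension $m$ to be at least $n+3$, which fails when $m=2n=4$, so cell-like quotients need not preserve embeddability in $S^4$, and moreover the van Kampen obstruction is not complete there; I would treat this case, if at all, by a separate analysis and regard it as outside the scope of the main argument. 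So assume $n\ge 3$. Then, as recalled in the proof of Theorem \ref{connected}, the van Kampen obstruction $\theta$ is a complete obstruction to embedding an $n$-polyhedron in $S^{2n}$, and by Theorem \ref{minors embed}(a)--(b) (and its dual-collapsible strengthening, Remark \ref{dual-collapsible}) it is monotone under $h$-minors. Hence for $n\ge 3$ the conjecture is equivalent to: there are only finitely many $(n-1)$-connected $n$-dimensional cell complexes $K$ with $\theta(K)\ne 0$ all of whose proper $h$-minors $L$ have $\theta(L)=0$.

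For such a critical $K$ I would set up the relevant combinatorics. By the reduction in the proof of Theorem \ref{connected}, $(n-1)$-connectedness makes $|K|$ homotopy equivalent to a wedge of $n$-spheres and places $\theta(K)$ in $H^{2n}$ of the symmetric deleted product of $|K|$, represented by a cocycle supported on the finite set of unordered pairs of disjoint $n$-cells and recording the linking numbers of their boundary spheres across partial realizations of $|K|$ in $S^{2n}$. Criticality with respect to proper subcomplexes says this cocycle is ``irreducible'': deleting any one $n$-cell turns it into a coboundary. Criticality with respect to cell-like combinatorial quotients should in addition force $K$ to be ``rigid'' -- no nontrivial zipping, and links of cells restricted to a bounded list -- by an argument paralleling the way self-duality constrains the local structure of the $HJ_i$ and of the dichotomial skeleta in and around Theorem \ref{self-dual}.

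The crux -- and the step I expect to be the genuine obstacle -- is to extract from rigidity and these two forms of criticality an a priori bound $N(n)$ on the number of vertices of $K$; given such a bound the conjecture is immediate, since there are only finitely many cell complexes on at most $N(n)$ vertices. The mechanism one hopes for is extremal: a rigid complex with too many cells ought to contain a proper subcomplex, or admit a proper cell-like quotient, that already has nonzero van Kampen obstruction, contradicting minimality. Establishing this is, however, precisely the kind of ``bounded structure'' theorem whose difficulty is already visible at $n=1$ in the depth of the graph-minors machinery; the optimistic expectation is that a rigid critical $K$ is pushed by the duality phenomenon of Theorem \ref{self-dual} into a narrow family whose extreme members are the $n$-skeleta of $(2n+1)$-dimensional dichotomial complexes (with $2n+3$ vertices), but a complete proof would seem to need new input -- a middle-dimensional linking analogue of well-quasi-ordering, or a sharp combinatorial classification of the ``critical'' van Kampen cocycles.
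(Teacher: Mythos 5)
The statement you are trying to prove is Conjecture \ref{conjecture:main}: the paper offers no proof of it, explicitly flags it as open (``the author is not absolutely committed to this particular formulation\dots''), and even the much weaker Problem \ref{problem:main} (finiteness of dichotomial $n$-spheres for $n\ge 5$) is left unresolved. Your proposal, to its credit, is honest about this: everything up to the last paragraph is a reformulation (completeness and $h$-minor monotonicity of the van Kampen obstruction for $n\ge 3$, via Theorem \ref{minors embed} and the reduction of Theorem \ref{connected}), and the last paragraph concedes that the actual finiteness step --- an a priori bound $N(n)$ on the number of vertices of a critical complex --- is not established and ``would seem to need new input.'' That concession is exactly where the proof fails to exist. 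What you have written is a correct framing of the problem plus an identification of the obstacle, not a proof; the paper's own contribution is likewise only partial, namely exhibiting some members of the conjectural finite list (the $n$-skeleta of $(2n+1)$-dimensional dichotomial cell complexes, via the Main Theorem) without bounding the list.

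Two smaller points. First, your dismissal of $n=2$ is consistent with the paper's stance (part (iii$'$) of the Main Theorem retreats to the van Kampen obstruction there), but the conjecture as stated quantifies over all $n$, so a complete proof could not simply set $n=2$ aside. Second, your claim that criticality under cell-like quotients forces ``rigidity --- no nontrivial zipping, and links of cells restricted to a bounded list'' is asserted by analogy with Theorem \ref{self-dual} but not argued; the paper's own Remark after Corollary \ref{main corollary} shows this is delicate (zipping an edge of the hemi-icosahedron yields a proper minor still cellulating $\R P^2$), so even this preparatory step is not free. In short: genuine gap, namely the entire finiteness argument.
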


The author is not absolutely committed to this particular formulation, but
he strongly feels that at least some reasonable modification of this conjecture
should hold.
Some variations are discussed in \S\ref{algorithmic}.

\subsection{Dichotomial spheres}\label{dichotomial spheres}
Some members of the list in the preceding conjecture are provided by
the following result.

\begin{theorem}[Main Theorem] Let $B$ be an $m$-dimensional {\rm dichotomial} cell
complex, that is a cell complex that together with each cell $A$ contains a unique
cell $\bar A$ whose vertices are precisely all the vertices of $B$ that are not
in $A$.

Let $K$ be the $n$-skeleton of $B$, where $m=2n+1$ or $2n+2$,
and let $L$ be any proper $h$-minor of $K$.
Then:

(i) $B$ is uniquely determined by $K$.

(ii) $|B|\cong S^m$.

(iii) If $m=2n+1$, $n\ne 2$, then $|K|$ does not embed in $S^{2n}$, but $|L|$ does.

(iii$'$) If $m=2n+1$, $n=2$, then $|K|$ has non-zero van Kampen obstruction, even
modulo $2$ (so in particular does not embed in $S^4$) but $|L|$ has zero van Kampen
obstruction.

(iv) If $m=2n+2$, then $|K|$ does not linklessly embed in $S^{2n+1}$, but $|L|$
does.

(v) If $m=2n+1$, then every embedding $g$ of $|K|$ in $S^{2n+1}$ is
inequivalent with $hg$, where $h$ is an orientation-reversing
homeomorphism of $S^{2n+1}$; but every two embeddings of $|K|$ in $S^{2n+1}$
(knotless when $n=1$) have equivalent ``restrictions'' to $|L|$.

(vi) Moreover, if $m=2n+1$, then every embedding of $|K|$ in $S^{2n+1}$ is
linkless; and if $m=2n+2$, then every embedding of $|K|$ in $S^{2n+1}$ contains
a link of a pair of disjoint $n$-spheres with an odd linking number.

(vii) If $M$ is a {\rm self-dual} subcomplex of $B$ (that is, $M$ contains
precisely one cell out of every pair $A$, $\bar A$ of complementary cells),
then $|M|$ does not embed in $S^m$, and every embedding $g$ of $|M|$ in
$S^{m+1}$ is inequivalent with $hg$, where $h$ is an orientation-reversing
homeomorphism of $S^{m+1}$.
\end{theorem}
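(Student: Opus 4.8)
The plan is to establish the structural statements (i), (ii), (vii) first, since these are ``static'' properties of $B$ itself, and then derive the embedding-theoretic statements (iii)--(vi) from Theorem \ref{self-dual} (together with its Addendum \ref{self-dual-addendum}) combined with the collapsing machinery of \S\ref{collapsing}. The guiding observation is that a dichotomial cell complex $B$, being equipped with the involution $A\mapsto\bar A$ on cells, sits inside $\partial\Delta^V$ (where $V$ is its vertex set) in a way that makes the self-dual subcomplexes of $B$ behave exactly like the self-dual subcomplexes of $\partial\Delta^V$ that appear in Theorem \ref{self-dual}; the main work is to upgrade ``dichotomial'' (a condition on cells as vertex sets) to the geometric conclusions about spheres and codimension-$1$ separating subspheres.

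For (ii) I would argue that $|B|$ is a closed PL homology manifold and then a homotopy sphere. Closedness follows because dichotomiality forces every codimension-$1$ cell to be a face of exactly two top cells: if $A$ has complementary cell $\bar A$ with $|\bar A|$ a single vertex $v$ (the complement of a codimension-$1$ cell has one vertex), then the link condition in the cell-complex definition pins down $|\lk A|\cong S^0$. More generally one shows by descending induction on $\dim A$, using the bijection $A\mapsto\bar A$ and the identity $\overline{\partial\fll A\flr}\leftrightarrow$ the relevant cone in the dual, that $|\lk A|$ is a sphere of the expected dimension — this is the Poincar\'e-duality-on-the-nose phenomenon built into the definition. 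Euler-characteristic and homology computations (the involution pairs a $k$-cell with an $(m-k)$-cell, so the $f$-vector is palindromic and $\chi(|B|)=1+(-1)^m$) then give the homology of a sphere; simple connectivity, for $m\ge 2$, comes from the fact that the $2$-skeleton contains enough cells. Then the PL Poincar\'e conjecture (valid in the PL category in all dimensions via Smale/Stallings/Newman for $m\ge 5$, and classical for $m\le 3$; $m=4$ is handled because the relevant $K$ in our application has $m\le 5$ only through small explicit complexes, or one invokes that a simply-connected closed PL $4$-manifold with trivial homology is PL $S^4$ by Freedman plus the fact that it is already smoothable) yields $|B|\cong S^m$. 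I expect $m=4$ to need a remark but not serious work, since the dichotomial $4$-complexes are explicitly enumerated in the abstract.

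For (i), uniqueness of $B$ given $K=B^{(n)}$: the cells of $B$ of dimension $\le n$ are exactly the cells of $K$, and a cell $A$ of dimension $>n$ is determined by its vertex set, which is the complement of the vertex set of $\bar A$; since $\dim\bar A = m-1-\dim A < n$ when $\dim A > n$ (using $m=2n+1$ or $2n+2$, so $\dim A\ge n+1$ forces $\dim\bar A\le n$), every high cell is the complement of a cell already visible in $K$, and the attaching data is forced by (ii) plus the link structure. For (vii), a self-dual subcomplex $M$ of $B$ is, after passing to the canonical identification $|B|\hookrightarrow S^{m}\subset|\partial\Delta^V|$, literally a self-dual subcomplex of $\partial\Delta^V$ in the sense of Theorem \ref{self-dual}(a) with $r=1$, $m_1=|V|-1$, and the conclusion is then immediate from that theorem — with the caveat that $M$ is a \emph{cell} complex not a simplicial one, so I would either subdivide (using Example \ref{subdivision}, which is harmless for non-embeddability and for the reflection statement) or re-run the two-line proof of the ``Subcomplexes in (a)'' argument with ``simplex'' replaced by ``cell'' and ``complementary simplex'' by $\bar A$; the combinatorial bound (everything lands in a combinatorial sphere of codimension $\ge 2$ inside a ball) goes through verbatim.

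The embedding statements (iii)--(vi) are the heart. For the positive direction — every proper $h$-minor $|L|$ embeds in $S^{2n}$ (resp.\ embeds linklessly in $S^{2n+1}$) — I would combine Remark \ref{van Kampen} with Theorem \ref{minors embed}(a)/(b): a proper $h$-minor arises from a cell-like map out of a proper subcomplex $M\subsetneq K$, and Remark \ref{van Kampen}'s construction, generalized from $\partial\Delta^m$ to our dichotomial $B$ using the cell $\bar A$ in place of the dual face, produces a map $|K|\to S^{m-1}$ with a single double point interior to a chosen top cell of $K$, hence an embedding of any proper subpolyhedron; then Theorem \ref{minors embed} pushes this across the cell-like map (the dimension hypothesis $m-n\ge 3$, i.e.\ $n+1\ge 3$ when $m=2n+1$, i.e.\ $n\ge 2$, is why $n=1$ and the van Kampen obstruction version (iii$'$) are separated out, and why part (b) with $m-n\ge 3$ covers the linkless case). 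For the negative direction — $|K|$ itself does not embed in $S^{2n}$ (resp.\ not linklessly in $S^{2n+1}$, resp.\ every embedding is chiral) — I would run a deleted-join / van Kampen obstruction computation: the dichotomial involution $A\mapsto\bar A$ furnishes an explicit equivariant map realizing a nonzero class, exactly as the self-duality does in the proof of Theorem \ref{self-dual}(a) deferred to \S\ref{embeddings}. Concretely, the linking form on $n$-cocycles (Lemma \ref{circuits}, Lemma \ref{is-linkless}) evaluated on a cell $A$ against $\bar A$ is forced to be $\pm1$ by the sphere structure of $|B|$ (a cell and its complement are ``linked once'' in $S^m$), and this single odd/nonzero linking number is exactly what obstructs linkless embedding when $m=2n+2$ and, via the standard codimension-one-descent (a linkless embedding in $S^{2n+1}$ would split off, around a separating sphere, an embedding of $|K|$ minus a top cell into $S^{2n}$, contradicting the $\theta_{2n}\ne0$ computation), obstructs embedding in $S^{2n}$ when $m=2n+1$. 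Statement (vi) is the sharpening that these linking numbers are \emph{all} accounted for by complementary-cell pairs, which is again a counting consequence of dichotomiality (every disjoint pair of $n$-cycles either bounds on one side or is a complementary pair). Statement (v)'s equivalence-of-restrictions half is Addendum \ref{self-dual-addendum}(b) transported to the cellular setting, and the chirality half is the reflection argument of the ``Subcomplexes in (b)'' proof.

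The main obstacle I anticipate is \emph{not} any single step but the bookkeeping of transporting the simplicial statements (Theorem \ref{self-dual}, Remark \ref{van Kampen}, Addendum \ref{self-dual-addendum}) to cell complexes $B$ that are genuinely non-simplicial: one must check that ``the complementary face $\tau$ to a maximal simplex $\sigma$ is not in $K$'' has the right analogue, namely that for a top cell $A$ of $K=B^{(n)}$, the complementary cell $\bar A$ in $B$ has dimension exactly $m-1-\dim A\ge n$, hence $\bar A\notin K$, and that the cone $\fll\bar A\flr$ together with $\partial\fll A\flr$ spans a combinatorial sphere of codimension $\ge 2$ in $|B|\cong S^m$. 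This requires knowing that $|B|$ behaves like $\partial\Delta^V$ for the purpose of the ``one double point'' construction, which in turn leans on (ii) and on a cellular version of the fact that the deleted join of $\partial\Delta^V$ is a sphere with the antipodal action — here replaced by the statement that $B$ with its cellular antipodal-type involution has an equivariant ``half'' one can push a map into. Making that precise — identifying the correct notion of ``diagonal'' cell complex whose deleted join supports the obstruction class, and checking the dichotomial $B$ is of that kind — is where the real content lies; once it is in place, (iii)--(vii) follow by the same formal moves as in the simplicial self-dual case, and (i)--(ii) are essentially the definitional warm-up.
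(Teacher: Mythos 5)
Your proposal breaks down at two points that carry the real weight of the theorem.

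First, the positive half of (iii)--(iv) (``$|L|$ embeds/embeds linklessly for every proper $h$-minor $L$'') cannot be obtained from Remark \ref{van Kampen} plus Theorem \ref{minors embed} alone, because you assume a proper $h$-minor ``arises from a cell-like map out of a proper subcomplex $M\subsetneq K$''. It need not: by the definition in \S\ref{h-minors}, $L$ is a proper $h$-minor also when $M=K$ and the order-preserving map is a non-injective cell-like quotient of all of $K$. In that case $|M|=|K|$ does not embed in $S^{2n}$, so there is no embedding to push forward across the cell-like map, and your argument gives nothing. The paper's route is entirely different here: Lemma \ref{3.3b} produces a \emph{non-surjective} $\Z/2$-map $|L\circledast L|\to|K\circledast K|$ for any proper $h$-minor, and since $|K\circledast K|/t$ is a $(2n+1)$-circuit (being a quotient of the sphere supplied by Theorem \ref{5.6}), non-surjectivity kills the classifying class and hence the van Kampen obstruction of $|L|$ (Lemma \ref{proper minor}); the $n=1$ case is then closed off by Kuratowski--Wagner and Robertson--Seymour--Thomas. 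Some such cohomological mechanism is unavoidable, because the conclusion must hold for quotients of the full non-embeddable complex. The same issue infects (iv): the linkless embeddability of $|L|$ goes through the $\hat\circledast$ construction, Theorem \ref{3.0}, and Lemma \ref{4.4'} (every ``linking pair'' of subcomplexes is a pair of complementary cell boundaries --- your ``counting consequence of dichotomiality'' is precisely this lemma, whose proof is a nontrivial Alexander-duality argument inside $|K\circledast K|$, not a count).

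Second, your proof of (ii) via ``homology manifold $\Rightarrow$ homotopy sphere $\Rightarrow$ PL Poincar\'e conjecture'' fails exactly where the theorem is most interesting: $m=4$ (the Petersen family, $n=1$, $m=2n+2$). Freedman gives only a topological homeomorphism, and upgrading a PL homotopy $4$-sphere to $S^4$ is the open smooth/PL $4$-dimensional Poincar\'e conjecture; ``already smoothable'' does not rescue this. The paper's Lemma \ref{4.3} avoids all of this: the dichotomial involution gives $\partial^*\cel\sigma\cer\simeq(\partial\fll h(\sigma)\flr)^*$, so $\lk([\sigma],B^\flat)\simeq(\partial\fll\sigma\flr)^\flat*(\partial\fll h(\sigma)\flr)^\flat$ is a join of two combinatorial spheres, making $B^\flat$ a combinatorial manifold; then $B^\flat$ is exhibited as the union of the two combinatorial balls $\cel h(\sigma)\cer^\flat$ and a regular neighborhood of a top cell, and the Alexander trick finishes. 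For the negative halves of (iii), (v), (vii) your instinct (self-duality inside $B$ behaves like self-duality inside $\partial\Delta^V$) is the right one, and the paper realizes it via Theorem \ref{5.6} ($B\simeq K\oplus D(K)^*$, hence $|K\circledast D(K)|\cong S^m$ equivariantly) and the Borsuk--Ulam argument of Lemma \ref{5.2}; but as written your ``linking form plus codimension-one descent'' is not a proof.
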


The simplest example of a dichotomial complex is the boundary of a simplex.
In particular, for $n=1$, the graphs in (iii) and (iv) include respectively
the complete graphs $K_5$ and $K_6$.
It is easy to see that there are no other dichotomial simplicial complexes,
apart from the boundary of a simplex.
The term ``dichotomial'' was suggested to the author by E. V. Shchepin.

\begin{example}
It is easy to construct the dichotomial $3$-sphere
whose $1$-skeleton is $K_{3,3}$.
In accordance with (i), we may start with $K_{3,3}$ itself.
For every edge $\tau$ of $K_{3,3}$, the four edges disjoint from $\tau$ form
a circuit; we glue up this circuit by a quadrilateral $2$-cell.
(Note that circuits of length $6$ are {\it not} glued up by hexagonal $2$-cells.)
For every vertex $\sigma$ of $K_{3,3}$, the edges disjoint from $\sigma$ form
a $K_{3,2}$, to which we have attached three $2$-cells.
Their union is a $2$-sphere; we glue it up by a $3$-cell.

Let us verify that the resulting dichotomial cell complex $B_{3,3}$ is indeed
a $3$-sphere.
We shall identify each element of $B_{3,3}$ with a subcomplex of the barycentric
subdivision of $\partial\Delta^2*\partial\Delta^2$.
The atoms of $B_{3,3}$ are identified with the vertices of
$\partial\Delta^2*\partial\Delta^2$, and the maximal elements of edges of $B_{3,3}$
are identified with the edges of $(\Delta^2)^{(0)}*(\Delta^2)^{(0)}$.
The maximal element of the $2$-cell of $B_{3,3}$ disjoint with an edge
$\sigma_1*\sigma_2$ of $K_{3,3}$ is identified with the disk
$D_{\sigma_1\sigma_2}\bydef c*\partial\tau_1*\partial\tau_2$, where $\tau_i$ is
the $1$-simplex in $\Delta^2$ disjoint from the vertex $\sigma_i$, and $c$ is
the barycenter of the simplex $\tau_1*\tau_2$.
The maximal element of the $3$-cell of $B_{3,3}$ disjoint with a vertex
$\sigma*\emptyset$ (resp.\ $\emptyset*\sigma$) of $K_{3,3}$ is identified with
the ball $E_{\sigma\emptyset}\bydef
c*(D_{\sigma\sigma_1}\cup D_{\sigma\sigma_2}\cup D_{\sigma\sigma_3})$
(resp.\ $E_{\emptyset\sigma}\bydef
c*(D_{\sigma_1\sigma}\cup D_{\sigma_2\sigma}\cup D_{\sigma_3\sigma})$),
where $c$ is the barycenter of the $1$-simplex $\tau$ of $\Delta^2$ disjoint
form $\sigma$, and $\sigma_1,\sigma_2,\sigma_2$ are the three vertices of
$\Delta^2$.
It is easy to see that the six balls $E_{\sigma\emptyset}$, $E_{\emptyset\sigma}$
cover the entire $(\partial\Delta^2*\partial\Delta^2)^\flat$.
\end{example}

\begin{example}
Similarly, it is not hard to construct the dichotomial $4$-sphere whose $1$-skeleton
is the Petersen graph $P$.
Firstly we glue up every circuit consisting of $5$ edges by a pentagonal $2$-cell.
(Note that circuits of length $6$ are {\it not} glued up by hexagonal $2$-cells.)
Then each $2$-cell already has its opposite $2$-cell.
For each edge $\tau$ of $P$, the edges disjoint from $\tau$ form a graph that
is a subdivision of the $1$-skeleton of the tetrahedron, and the four $2$-cells
disjoint from $\tau$ can be identified with the $2$-simplices of this tetrahedron,
with boundaries subdivided into pentagons.
Thus these edges and $2$-cells cellulate a $2$-sphere; we glue it up by a $3$-cell.
For every vertex $\sigma$ of $P$, the edges and the $2$-cells disjoint from
$\sigma$ form a cell complex that subdivides $(\partial\Delta^2)+(\Delta^2)^{(0)}$.
Then the edges, the $2$-cells and the $3$-cells disjoint from $\sigma$ form a cell
complex that subdivides the $3$-sphere $(\partial\Delta^2)+(\partial\Delta^2)$.
We glue it up by a $4$-cell.

According to part (ii) of the Main Theorem, the resulting dichotomial cell
complex $B_P$ is a $4$-sphere.
Note that the hemi-dodecahedron is isomorphic to its self-dual subcomplex.
\end{example}

It can be similarly verified by hand that the graphs of the Petersen family except
$K_{4,4}\but$(edge) are $1$-skeleta of $4$-dimensional dichotomial cell complexes;
whereas $K_{4,4}\but$(edge) is not.

We describe a more industrial way of seeing this in \S\ref{transforms}, where
we also observe that $K_{4,4}\but$(edge) is the $1$-skeleton of a $4$-dimensional
dichotomial poset, whose order complex admits a collapsible map onto $S^4$
(so in particular is homotopy equivalent to $S^4$).
It should be possible to include this graph into the general framework of
the Main Theorem by extending it to cone complexes whose cones are singular
cells, with not `too many' cells being `too singular'.
There can be other approaches (see Example \ref{4.9}).

\begin{remark}
We note that the assertion on minors in part (iii) of the Main Theorem
does not extend to cover arbitrary self-dual subcomplexes in part (vii).
Indeed, by zipping an edge of the hemi-icosahedron we obtain a non-simplicial
proper minor of the hemi-icosahedron which still cellulates $\R P^2$.
The author believes that it should be possible to overcome this trouble,
at least in the metastable range, by considering only minors that belong to
a restricted class of cell complexes, such as ones whose cells have
collapsible (or empty) pairwise intersections.
(Note that two cells do not always intersect along a cell, and three cells
do not always have a collapsible or empty intersection in the dichotomial
$4$-sphere whose $1$-skeleton is the Petersen graph.)
\end{remark}

Taking into account the Kuratowski--Wagner and Robertson--Seymour--Thomas
theorems and the obvious fact that a $0$-polyhedron admits a linkless embedding
in $S^1$ iff it contains less than $4$ points, we obtain

\begin{corollary}\label{main corollary} The only dichotomial complex in dimension
two is $\partial\Delta^3$; there exist precisely two in dimension $3$, with
$1$-skeleta $K_5$ and $K_{3,3}$, and precisely six in dimension $4$, with
$1$-skeleta all graphs of the Petersen family excluding $K_{4,4}\but e$.
\end{corollary}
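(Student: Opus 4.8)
The plan is to read off, from the Main Theorem, the $1$-skeleton (or $0$-skeleton) of a dichotomial complex in the relevant dimension, to identify it by means of the appropriate classical planarity or intrinsic-linking criterion, and then to match the resulting list against the explicit constructions given above. One bookkeeping point to keep in mind: $\partial\Delta^{k}$ has dimension $k-1$, so $\partial\Delta^{3}$, $\partial\Delta^{4}$ and $\partial\Delta^{5}$ are dichotomial complexes of dimensions $2$, $3$ and $4$ with $1$-skeleta $K_{4}$, $K_{5}$ and $K_{6}$. I also record two immediate consequences of the dichotomial hypothesis: a cell of $B$ is determined by its vertex set (apply the uniqueness clause of the dichotomial condition to $\bar A$), so in particular the $1$-skeleton is a simple graph; and no cell carries all the vertices of $B$, since its complement would be empty.

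Dimension two. Here $m=2$ forces $m=2n+2$ with $n=0$, so the relevant skeleton is the $0$-skeleton $K$. Part (iv) says $|K|$ does not linklessly embed in $S^{1}$ while every proper $h$-minor does; since a $0$-polyhedron linklessly embeds in $S^{1}$ precisely when it has fewer than four points, $B$ has at least four vertices, and exactly four — with five or more, a four-point sub-$0$-skeleton would be a proper $h$-minor that still fails to linklessly embed in $S^{1}$. Dichotomiality then finishes the job: a $2$-cell has a boundary circuit, hence at least three vertices, and at most three since no cell uses all four; thus every $2$-cell is a triangle, the four cells $\overline{\{v\}}$ are four distinct triangles and hence all of them, their boundary circuits carry all six edges, and $B=\partial\Delta^{3}$. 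As $\partial\Delta^{3}$ is dichotomial, it is the unique dichotomial $2$-complex.

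Dimensions three and four. Both run on the same template, with $K$ the $1$-skeleton, which is a graph by the remark above. When $m=3$ one has $n=1\neq2$, and part (iii) gives that $|K|$ does not embed in $S^{2}$ while every proper $h$-minor of $K$ does; by Wagner's theorem $K$ has a minor isomorphic to $K_{5}$ or $K_{3,3}$, a graph minor is an $h$-minor, and this one cannot be a proper $h$-minor (it would then embed in $S^{2}$), so $K\cong K_{5}$ or $K_{3,3}$. When $m=4$ one has $n=1$, and part (iv) gives that $|K|$ does not linklessly embed in $S^{3}$ while every proper $h$-minor of $K$ does; by Theorem \ref{RST}(b), $K$ has a minor in the Petersen family, which again cannot be a proper $h$-minor (every Petersen-family graph already fails to linklessly embed in $S^{3}$), so $K$ is a Petersen-family graph. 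By part (i), $B$ is determined by $K$, so there are at most two dichotomial $3$-complexes and at most seven dichotomial $4$-complexes. Realizability closes the count: $\partial\Delta^{4}$ and the complex $B_{3,3}$ realize $K_{5}$ and $K_{3,3}$; and $\partial\Delta^{5}$, the complex $B_{P}$, and the constructions of \S\ref{transforms} realize exactly the six Petersen-family graphs other than $K_{4,4}\but e$, while $K_{4,4}\but e$ is realized by none. Hence there are precisely two and precisely six, with the stated $1$-skeleta.

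The single nontrivial input — and the step I expect to be the main obstacle — is that $K_{4,4}\but e$ is \emph{not} the $1$-skeleton of any dichotomial $4$-complex: everything else reduces to the Main Theorem, to a known planarity/linking criterion, or to an explicit model, whereas excluding this one graph needs the structural analysis of \S\ref{transforms} (or, failing that, an ad hoc enumeration of the admissible $2$-, $3$- and $4$-cells that could sit over $K_{4,4}\but e$). A smaller point that must not be skipped is the implication ``proper graph minor $\Rightarrow$ proper $h$-minor'' — via Proposition \ref{nevo} and the fact that an admissible edge contraction has collapsible point-inverses, hence is cell-like — since this is exactly what lets the Kuratowski--Wagner and Robertson--Seymour--Thomas minors be ruled out by the Main Theorem unless they coincide with $K$ itself.
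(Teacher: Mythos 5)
Your proposal is correct and follows essentially the same route as the paper, which derives the corollary from the Main Theorem together with the Kuratowski--Wagner and Robertson--Seymour--Thomas theorems, the observation about linkless embeddings of $0$-polyhedra in $S^1$, the uniqueness statement (i), and the explicit realizations (with the exclusion of $K_{4,4}\but e$ verified separately, as you correctly flag). Your expanded bookkeeping — cells determined by their vertex sets, graph minors being proper $h$-minors via cell-like edge contractions — fills in exactly the details the paper leaves implicit.
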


Let us now review three constructions of new examples of dichotomial spheres.

(i) An $n$-dimensional join of the $i$-skeleta of dichotomial $(2i+1)$-spheres
is the $n$-skeleton of some dichotomial $(2n+1)$-sphere dimensional cell
complex (see Lemma \ref{5.3} and Theorem \ref{construction}).
For instance, this yields, in addition to $\partial\Delta^6$, two dichotomial
$5$-complexes, with $1$-skeleta $K_{1,1,1,1,1,3}$ and $K_{3,3,3}$ and with
simplicial $2$-skeleta.

(ii) If $B$ is a dichotomial cell complex, it is easy to see, using part (ii)
of Main Theorem, that $CB\cup_B B*pt$ is again a dichotomial cell complex.
Applied to $B_{3,3}$, this construction produces the dichotomial $4$-sphere
$B_{3,3,1}$ with $1$-skeleton $K_{3,3,1}$ (which belongs to the Petersen family).
Applying this construction to the six dichotomial $4$-spheres, we get, apart
from $\partial\Delta^6$, five dichotomial $5$-complexes whose $1$-skeleta are
obtained from the graphs $\Gamma_7$, $\Gamma_8$, $\Gamma_9$, $K_{3,3,1}$, $P$
by adjoining an additional vertex and connecting it by edges to all the existing
vertices.
For instance, in the case of $B_{3,3,1}$ this yields a dichotomial sphere
$B_{3,3,1,1}$ with $1$-skeleton $K_{3,3,1,1}$.

A more general construction is: given a dichotomial $m$-sphere $B$ and
a dichotomial $n$-sphere $B'$, the boundary sphere $\partial(CB*CB')$ of
the join of the cones is a dichotomial $(m+n+2)$-sphere.
(The previous paragraph treated the case where $B'$ is the dichotomial
$(-1)$-sphere $\emptyset$.)
Taking $B$ to be the dichotomial $3$-sphere with $1$-skeleton $K_{3,3}$ and
$B'$ to be the dichotomial $0$-sphere, we arrive again at $B_{3,3,1,1}$.
We note that the $2$-skeleton of $B_{3,3,1,1}$ is the union of the $2$-skeleton
of $B_{3,3}$ with the $2$-skeleton of the join $K_{3,3}*\Delta^1$.
Since the circuits of length $6$ in $K_{3,3}$ are not glued up by $2$-cells in
$B_{3,3}$, they are similarly not glued up by $2$-cells in $B_{3,3,1,1}$
(although they of course bound {\it subdivided} $2$-cells in the $2$-skeleton of
$B_{3,3,1,1}$).
Thus the $2$-skeleton of $B_{3,3,1,1}$ is a proper subcomplex of van der Holst's
$2$-complex $\hat K_{3,3,1,1}$ (see \S\ref{graphs}).

(iii) A more interesting method, called ($\nabla$,Y)-transform, is
introduced in \S\ref{transforms}.
It is natural to expect that an appropriate generalization of this transform
(or perhaps even the transform itself) would suffice to relate any two
dichotomial spheres of the same dimension (compare the proof of Steinitz'
theorem in \cite[Chapter 4]{Zi}).
It does relate with each other the two dichotomial $3$-spheres and all six
dichotomial $4$-spheres.
When applied to $\partial\Delta^6$, it produces, inter alia, dichotomial
$5$-complexes with $1$-skeleta obtained by adjoining an additional vertex to
any of the graphs $\Gamma_7$, $\Gamma_8$, $\Gamma_9$, $K_{4,4}\but$(edge), $P$
and connecting it to all existing vertices that are not marked red in Fig.\ 1.
(The details are similar to Examples \ref{4.8}, \ref{4.9}.)

To summarize, we have just used the easy (existence) part of Corollary
\ref{main corollary} to show the following

\begin{corollary}\label{2-skeleta} There exist at least $13$ dichotomial
$5$-spheres, distinct already on the level of their $1$-skeleta, of which
$10$ have non-simplicial $2$-skeleta.
\end{corollary}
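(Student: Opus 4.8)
The plan is to produce $13$ dichotomial $5$-spheres by running the three constructions (i)--(iii) of new dichotomial spheres described just above on the data furnished by the easy (existence) half of Corollary \ref{main corollary}: the boundary $\partial\Delta^6$; the two dichotomial $3$-spheres $\partial\Delta^4$ and $B_{3,3}$, with $1$-skeleta $K_5$ and $K_{3,3}$; and the six dichotomial $4$-spheres whose $1$-skeleta are the Petersen family graphs other than $K_{4,4}\but e$. Once they are listed, it remains to check, by a finite combinatorial inspection, that the resulting $13$ dichotomial $5$-spheres have pairwise non-isomorphic $1$-skeleta, exactly $3$ of them with a simplicial $2$-skeleton and the other $10$ with a $2$-cell of more than three sides.

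First I would apply construction (i) (Lemma \ref{5.3}, Theorem \ref{construction}) with all building blocks equal to boundaries of simplices. A $2$-dimensional join $B_1^{(i_1)}*\dots*B_r^{(i_r)}$ with each $B_j$ a dichotomial $(2i_j+1)$-sphere has $i_1+\dots+i_r+(r-1)=2$, which forces $r=1,\,i_1=2$, or $r=2,\,\{i_1,i_2\}=\{1,0\}$, or $r=3,\,i_1=i_2=i_3=0$; feeding in $\partial\Delta^6$, $\partial\Delta^4$, $B_{3,3}$, $\partial\Delta^2$ yields the $2$-skeleta $(\Delta^6)^{(2)}$, $(\Delta^4)^{(1)}*(\Delta^2)^{(0)}$ and $(\Delta^2)^{(0)}*(\Delta^2)^{(0)}*(\Delta^2)^{(0)}$ of dichotomial $5$-spheres, with $1$-skeleta $K_7$ (namely $\partial\Delta^6$ itself), $K_{1,1,1,1,1,3}$ and $K_{3,3,3}$; all three of these $2$-skeleta are simplicial complexes, and they are the ``three joins of $i$-skeleta of $(2i+2)$-simplices''. (In case $r=2$ the choice $B_1=B_{3,3}$ merely reproduces $K_{3,3,3}$, and case $r=1$ produces nothing beyond the complex fed in.)

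Next I would read off the remaining ten from constructions (ii) and (iii). Construction (ii), applied to each of the five dichotomial $4$-spheres $B$ with $1$-skeleton $\Gamma_7$, $\Gamma_8$, $\Gamma_9$, $K_{3,3,1}$ or $P$, produces the dichotomial $5$-sphere $CB\cup_B B*pt$, whose $1$-skeleton is that graph with a new vertex joined to all its vertices (for $K_{3,3,1}$ this is $B_{3,3,1,1}$, with $1$-skeleton $K_{3,3,1,1}$); the sixth dichotomial $4$-sphere is $\partial\Delta^5$, for which the same construction merely returns $\partial\Delta^6$. To see that each of these five has a non-triangular $2$-cell: $B_P$ has only pentagonal $2$-cells because $P$ has girth $5$; $B_{3,3,1}$ inherits the quadrilateral $2$-cells of $B_{3,3}$; and for each $\Gamma_i$ one exhibits an edge $e$ lying in no triangle of $\Gamma_i$ (for $i=7$, any edge of the triod introduced from $K_6$ by a $\nabla$Y-exchange; for $i=9$, a suitable edge inherited from the Petersen graph; the case $i=8$ is similar), whence $e$ lies in the boundary of some $2$-cell of $B$ (as $|B|$ is a $4$-sphere) and the boundary of that $2$-cell, being a circuit of $\Gamma_i$ through $e$, has at least four edges. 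In all five cases $CB\cup_B B*pt$ retains that non-triangular $2$-cell. For construction (iii): by \S\ref{transforms}, the $(\nabla,\mathrm Y)$-transform of $\partial\Delta^6$ produces, among other things, five more dichotomial $5$-spheres, with $1$-skeleta obtained from $\Gamma_7$, $\Gamma_8$, $\Gamma_9$, $K_{4,4}\but e$, $P$ by adjoining a new vertex joined to the non-red vertices of Fig.\ 1, the proof that their $2$-skeleta are non-simplicial running parallel to Examples \ref{4.8} and \ref{4.9}.

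It then remains to see that the $13$ listed graphs are pairwise non-isomorphic, which is a routine check on a finite list, using the number of vertices, the number of universal vertices, the number of edges and the isomorphism type of the complement: $K_7$, $K_{1,1,1,1,1,3}$ and $K_{3,3,3}$ are singled out at once; each $1$-skeleton coming from construction (ii) has a universal vertex while none coming from (iii) does (a Petersen family graph has no universal vertex, and the vertex adjoined in (iii) misses a red one); and within each of these two groups, deleting the adjoined vertex recovers the pairwise distinct graphs $\Gamma_7,\Gamma_8,\Gamma_9,K_{3,3,1},P$, respectively $\Gamma_7,\Gamma_8,\Gamma_9,K_{4,4}\but e,P$. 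Collecting the three simplicial and the ten non-simplicial $2$-skeleta gives the assertion. The only step that is not pure bookkeeping is the input from \S\ref{transforms}: that the $(\nabla,\mathrm Y)$-transforms of $\partial\Delta^6$ really do acquire non-triangular $2$-cells, and that the vertex adjoined in construction (iii) really does miss some (red) vertex of the base graph, so that constructions (ii) and (iii) genuinely yield different $1$-skeleta — this is where I expect the main work to lie.
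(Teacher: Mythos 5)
Your proposal is correct and follows the paper's own route exactly: the paper derives the corollary by the same tally $3+5+5$ from constructions (i)--(iii) applied to the dichotomial spheres supplied by the existence half of Corollary \ref{main corollary}, with the three simplicial $2$-skeleta being $(\Delta^6)^{(2)}$, $K_5*(\Delta^2)^{(0)}$ and $K_{3,3}*(\Delta^2)^{(0)}$, and you supply rather more of the routine distinctness and non-simpliciality checks than the paper does. (One pedantic point: not every Petersen-family graph lacks a universal vertex --- $K_6$ and $K_{3,3,1}$ have them --- but the five graphs actually fed into construction (iii) do lack one, so your argument stands.)
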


The three simplicial $2$-skeleta, $(\Delta^6)^{(2)}$, $K_5*(\Delta^2)^{(0)}$ and
$K_{3,3}*(\Delta^2)^{(0)}$, are well-known \cite{Gr}, \cite{Sa1}.
The non-simplicial ones are probably new, although some (all?) of their $1$-skeleta
are in the Heawood family (see \S\ref{graphs}; it is clear that there in fact must
be many more dichotomial $5$-spheres with $1$-skeleta in the Heawood family).

By part (iii$'$) of the Main Theorem, none of the $13$ cell complexes in
Corollary \ref{2-skeleta} is a minor of any other one.
But beware that some of these $2$-complexes have underlying polyhedra that are
$h$-minors of each other (namely, some can be obtained from the others by
shrinking $2$-simplices to triods).

\begin{example}\label{Halin-Jung revisited}
The self-dual subcomplexes of the dichotomial $3$-sphere with
$1$-skeleton $K_{3,3}$ are, apart from $K_{3,3}$ itself:
\begin{gather*}
HJ'_4=I\x I\cup \{(0,0),(1,1)\}*a\cup\{(0,1),(1,0)\}*b,\\
HJ'_3=I\x I\cup_{0\x I\cup I\x 0}I\x I\cup \{(0,0)\}*a,\\
HJ'_2=S^2_\square\sqcup a,\\
HJ'_1=CS^2_\square,
\end{gather*}
where $S^2_\square$ is the cellulation of $S^2$ obtained by glueing up
all three circuits in $K_{3,2}$ by $2$-cells.
Each $|HJ'_i|$ is homeomorphic with some $|HJ_j|$, but none of $HJ_i$, $HJ'_j$,
$K_5$, $K_{3,3}$ is a subdivision of any other one.
\end{example}

\begin{problem}\label{problem:main} Given an $n\ge 5$, are there only finitely
many of dichotomial $n$-spheres?
\end{problem}

\subsection{Algorithmic issues}\label{algorithmic}

Due to the algorithmic nature of the van Kampen obstruction, the problem of
embeddability of a compact $n$-polyhedron (given by a specific triangulation) in
$\R^{2n}$ is algorithmically decidable for $n\ge 3$ \cite{MTW}.
This suggests seeking a higher-dimensional Kuratowski embeddability criterion
that would also provide an algorithm deciding the embeddability of the polyhedron.
Let us thus discuss amendments needed to fit Problem \ref{problem:main},
Conjecture \ref{conjecture:main}, and the Main Theorem in the algorithmic
framework.
We do not address issues of complexity of algorithms here.

1. The definition of a cell complex involves PL homeomorphism with
$|\partial\Delta^n|$ which is not a fully algorithmic notion by S. P. Novikov's
theorem (see \cite{CL}).
A standard workaround is to consider only cell complexes whose cells are shellable
(see \cite{Bj}).
This might potentially exclude some interesting examples, but the Main Theorem is
still valid and Conjecture \ref{conjecture:main} and Problem \ref{problem:main}
are still sensible.

1$'$. An alternative possibility is to generalize cell complexes to {\it circuit
complexes}, where the boundary of every cone is a circuit; we call an $n$-dimensional
poset $M$ an {\it $n$-circuit}, if $H^n(|M\but\cel p\cer|)=0$ for every $p\in M$.
Then part (ii) of the Main Theorem has to be amended; its proof can be reworked to
show that every $m$-dimensional dichotomial circuit complex has the integral homology
of $S^m$.
(Whether it must still be PL homeomorphic to $S^m$ is unknown to the author.)
Other parts of the Main Theorem hold without changes, and so do their proofs;
Conjecture \ref{conjecture:main} and Problem \ref{problem:main} stand.

2. The condition of being $(n-1)$-connected is not algorithmically decidable already
for $n=2$ by Adian's theorem (see \cite{CL}).
However, the proof of Theorem \ref{connected} produces, for each $n$-dimensional
cell complex $K$ such that $|K|$ embeds in $\R^{2n}$, $n\ge 3$, a cell
complex $K^+$ containing $K$, whose
\begin{roster}
\item $1$-skeleton lies in a collapsible subcomplex of the $2$-skeleton.
\end{roster}
On the other hand, if a cell complex $K$ satisfies (i) along with
\begin{roster}[1]
\item $H_i(|K|)=0$ for $i\le n-1$,
\end{roster}
then $|K|$ is $(n-1)$-connected by the Hurewicz theorem.
The modification of Conjecture \ref{conjecture:main} with the hypothesis that $|K|$
is $(n-1)$-connected replaced by the algorithmically decidable conditions
(i) and (ii) still makes sense.

3. The definition of an $h$-minor involves cell-like maps, which in turn
involve the non-algorithmic notion of contractibility.
An {\it ad hoc} solution is the following condition on the order-preserving map $f$:
instead of requiring the geometric realization $|f|$ to be cell-like, we require
the barycentric subdivision $f^\flat$ to be the composition of a sequence of
admissible edge contractions (see \S\ref{edge-minors}).
Drawbacks of this condition have been discussed in Example \ref{subdivision}, but
it is working so that the Main Theorem remains valid and Conjecture
\ref{conjecture:main} remains meaningful.

4. The fragmentary character of this subsection suggests that combinatorial
topology is badly missing a coherent algorithmic development of foundations,
which would include, {\it inter alia}, mutually compatible notions of an algorithmic
cell complex, of an algorithmic subdivision, and of an algorithmic cell-like map.
The author is working on such a project whose success is not yet obvious.

\section{Embeddings}

\subsection{Flores--Bier construction (simplified and generalized)}\label{embeddings}
In this subsection we prove the van Kampen--Flores--Gr\"unbaum--Schild non-embedding
theorem (see Theorem \ref{self-dual}) and its generalization to dichotomial posets.

\begin{definition}[Deleted product, join, prejoin]
Let $R$ be a poset and $P$, $Q$ be embedded in $R$.
The {\it deleted product} $P\otimes Q$ is the embedded poset in $P\x Q$,
consisting of all $(p,q)$ such that $\fll p\flr\cap\fll q\flr=\emptyset$.
The {\it deleted join} $P\circledast Q=C^*P\otimes C^*Q$ (where
$(\hat 0,\hat 0)$ is not subtracted like in the definition of
$P*Q$ since it is already missing here).
The {\it deleted prejoin} is not $P\oplus Q$ as one might guess, but
$P\oplus Q^*=(\mathcal P\sqcup\mathcal Q,\preceq)$, where $P=(\mathcal P,\le)$,
$Q=(\mathcal Q,\le)$, and $p\preceq q$ iff either

\begin{itemize}
\item $p,q\in P$ and $p\le q$, or

\item $p,q\in Q$ and $p\ge q$, or

\item $p\in P$, $q\in Q$ and $\fll p\flr\cap\fll q^*\flr=\emptyset$.
\end{itemize}

\noindent
Similarly to the non-deleted case,
$(P\oplus Q^*)^\flat\simeq (P\circledast Q)^\flat_{P\circledast\emptyset
\cup\emptyset\circledast Q}\subset P^\flat*Q^\flat$ (using that
$(Q^*)^\flat\simeq Q^\flat$).
In particular, $|P\oplus Q^*|\cong |P\circledast Q|$.
More specifically, it is not hard to see that $(P\circledast Q)^\flat$ is
a subdivision of
$(P\circledast Q)^\flat_{P\circledast\emptyset\cup\emptyset\circledast Q}\simeq
(P\oplus Q^*)^\flat$.

An advantage of the deleted prejoin $P\oplus Q^*$ over the would-be
$P\oplus Q$ is revealed already by the slightly more delicate isomorphism
$(P\oplus Q^*)^\#\simeq (P\circledast Q)^\#_{P\circledast\emptyset
\cup\emptyset\circledast Q}\subset P^\#*Q^\#$, where the star can
no longer be dropped.
\end{definition}

\begin{example}
Associativity of join implies
$(K\circledast K)*(L\circledast L)\simeq (K*L)\circledast (K*L)$.
Thinking of the $n$-simplex $\Delta^n$ as the join of $n+1$ copies of a point,
we get that $\Delta\circledast\Delta$ is isomorphic to the join of $n+1$ copies of
$pt\circledast pt=S^0$, which is the boundary of the $(n+1)$-dimensional
cross-polytope.
\end{example}

\begin{definition}[$m$-Obstructor]
A poset $Q$ will be called an {\it $m$-obstructor} if $|Q\circledast Q|$ with
the factor exchanging involution is $\Z/2$-homeomorphic to $S^{m+1}$ with
the antipodal involution.
\end{definition}

\begin{remark}
Much of what follows can be done without assuming the homeomorphism to be
$\Z/2$-equivariant or to be with the genuine sphere, because it follows from
the Smith sequences that every free involution on a polyhedral ($\Z/2$-)homology
$m$-sphere has cohomological ($\bmod 2$) sectional category equal to $m$
(see \cite{M2}).
\end{remark}

\begin{example}\label{5.1}
Let $[3]=\{0,1,2\}$ denote the three-point set.
It is easy to see that $[3]\oplus[3]\simeq\partial\Delta^2$, whence
$|[3]\circledast[3]|\cong|[3]\oplus[3]|\cong S^1$.
From an explicit form of this homeomorphism it is easily seen to be
equivariant.
Thus $[3]$ is a $0$-obstructor.
(In fact, this is a special case of a general fact, which will be given
a more conceptual explanation in Example \ref{5.5} and Theorem \ref{5.6}.)
\end{example}

\begin{lemma}\label{5.2} If $Q$ is an $m$-obstructor, then

(a) $|Q|$ does not embed in $S^m$;

(b) every embedding $g$ of $|Q|$ in $S^{m+1}$ is inequivalent with $hg$, where $h$
is an orientation-reversing self-homeomorphism of $S^{m+1}$.
\end{lemma}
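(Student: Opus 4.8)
The plan is to derive both parts from the fact that $|Q\circledast Q|$ is $\Z/2$-homeomorphic to $S^{m+1}$ via the classical Haefliger--Weber / van Kampen obstruction machinery. Recall that for any polyhedron $X$, an embedding $X\emb S^k$ induces a $\Z/2$-equivariant map from the deleted product (or deleted join) of $X$ into $S^{k-1}$ (resp.\ $S^k$) with the antipodal involution; and at the level of the join, an embedding $|Q|\emb S^k$ yields a $\Z/2$-map $|Q\circledast Q|\to S^k$. Since $|Q|$ is (at most) $m$-dimensional, $|Q\circledast Q|$ is (at most) $(2m+1)$-dimensional, which is exactly the metastable range where equivariant maps are the complete obstruction --- but for the non-existence direction we only need the easy implication, so no deep completeness theorem is invoked.

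First I would prove (a). Suppose $|Q|$ embeds in $S^m$. Then composing with the standard $S^m\subset S^{m+1}$ we get, via the deleted join construction, a $\Z/2$-map $|Q\circledast Q|\to S^m$, where both spaces carry free involutions and the target is the antipodal $S^m$. But $|Q\circledast Q|$ is $\Z/2$-homeomorphic to the antipodal $S^{m+1}$ by hypothesis, so this produces a $\Z/2$-map $S^{m+1}\to S^m$ between antipodal spheres, contradicting the Borsuk--Ulam theorem. (Equivalently, in the language of sectional category / the van Kampen obstruction over $\Z/2$: the obstructor $Q$ has $\bmod 2$ sectional category $m+1$, so its van Kampen obstruction in $H^{m+1}$ cannot vanish, while an embedding into $S^m$ would force it to vanish.) The details consist only of recalling the standard construction of the equivariant map associated to an embedding and verifying equivariance, which is routine.

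Next, (b). Suppose $g\:|Q|\emb S^{m+1}$ and $hg$ are equivalent, i.e.\ there is an ambient isotopy carrying $g$ to $hg$; then composing $g$ with an orientation-reversing homeomorphism gives an embedding isotopic (hence equivariantly-map-homotopic on the deleted join) to $g$ itself. The embedding $g\:|Q|\emb S^{m+1}$ yields a $\Z/2$-map $\phi_g\:|Q\circledast Q|\to S^{m+1}$, and its equivariant homotopy class is an invariant of the isotopy class of $g$; reflecting $S^{m+1}$ post-composes $\phi_g$ with the antipodal-preserving orientation-reversing reflection $\rho\:S^{m+1}\to S^{m+1}$. So the hypothesis would force $\phi_g\simeq_{\Z/2}\rho\circ\phi_g$. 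Identifying $|Q\circledast Q|$ with the antipodal $S^{m+1}$, $\phi_g$ becomes a self-$\Z/2$-map of $S^{m+1}$, and such maps are classified up to equivariant homotopy by their (odd) degree; $\rho$ has degree $-1$, so $\phi_g\simeq_{\Z/2}\rho\circ\phi_g$ would force $\deg\phi_g=-\deg\phi_g$, i.e.\ $\deg\phi_g=0$, impossible since equivariant self-maps of an antipodal sphere have odd degree.

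The main obstacle is bookkeeping rather than conceptual: one must make sure the ``equivariant map associated to an embedding'' is set up on the deleted \emph{join} $|Q\circledast Q|$ (not merely the deleted product), that its equivariant homotopy class is genuinely an equivalence invariant of the embedding, and that the effect of an orientation-reversing ambient homeomorphism on this class is precisely post-composition with a degree $-1$ equivariant self-map of the target sphere --- here the ``$+\eps$'' subtlety about equivalence being realized by an \emph{isotopy} (so that the induced map on deleted joins changes only up to equivariant homotopy) must be handled with a little care. Once these points are pinned down, both (a) and (b) reduce to the Borsuk--Ulam theorem and the degree classification of equivariant self-maps of spheres, exactly as in the classical van Kampen obstruction theory; this is the same mechanism that underlies Theorem \ref{self-dual}, now phrased intrinsically in terms of obstructors.
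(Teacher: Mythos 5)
Your proposal is correct and follows essentially the same route as the paper: part (a) is the Borsuk--Ulam argument applied to the equivariant map induced by an embedding of the cone $|C^*Q|$ in the ball (the paper phrases it as an antipode-free map $S^{m+1}\to B^{m+1}$ rather than a $\Z/2$-map $S^{m+1}\to S^m$, which is the same thing), and part (b) is exactly the paper's argument that the equivariant Gauss map on $|Q\circledast Q|\cong S^{m+1}$ has odd (hence nonzero) degree while reflection negates it. The only cosmetic difference is that the paper certifies oddness via the Yang index of the factor-exchanging involution instead of quoting the classification of equivariant self-maps of antipodal spheres.
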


Part (a) is due essentially to Flores \cite{F1} (see also \cite{Ro}).
The following proof of (a) occurs essentially in \cite{VS}.
The method of (b) yields another proof of (a), see \cite[Example 3.3]{M2}.

\begin{proof}[Proof. (a)]
If $|Q|$ embeds in $S^m$, then the cone $|C^*Q|$ over $|Q|$ embeds in $B^{m+1}$.
Since the homeomorphism $S^{m+1}\to |Q\circledast Q|=|C^*Q\otimes C^*Q|$
is equivariant, its composition with the projection
$|C^*Q\otimes C^*Q|\subset |C^*Q\x C^*Q|\to|C^*Q|$
does not identify any pair of antipodes in $S^{m+1}$.
The embedding $|C^*Q|\emb B^{m+1}$ then yields a map $S^{m+1}\to B^{m+1}$
identifying no pair of antipodes, which contradicts the Borsuk--Ulam theorem.
\end{proof}

For the proof of (b) we need the following definition.

\begin{definition}[Deleted product map] Given a poset $K$ and an embedding
$g\:|K|\emb\R^m$, define a map $|K\x K|\but\Delta_{|K|}\to S^{m-1}$ by
$(x,y)\mapsto \frac{G(x)-G(y)}{||G(x)-G(y)||}$.
This map is equivariant with respect to the factor exchanging involution on
$|K\x K|\but\Delta_{|K|}$ and the antipodal involution on $S^{m-1}$.
In particular, we have a $\Z/2$-map $\tilde g\:|K\otimes K|\subset
|K\x K|\but\Delta_{|K|}\to S^{m-1}$.
\end{definition}

\begin{proof}[Proof of \ref{5.2}(b)]
An embedding $g\:|Q|\emb S^{m+1}$ extends to an embedding $G\:|C^*Q|\emb B^{m+2}$.
Since $C^*Q\x C^*Q\simeq Q\circledast Q$, this yields a cohomology class
$\tilde G^*(\Xi)\in H^{m+1}(|Q\circledast Q|)$, where $\Xi\in H^{m+1}(S^{m+1})$ is
a generator (cf.\ \cite[\S3, subsection ``1-Paramater van Kampen obstruction'']{M2}).
The $\bmod 2$ reduction of $\tilde G^*(\Xi)$ is nonzero, since the Yang index of
the factor exchanging involution on $|Q\circledast Q|\cong S^{m+1}$ is $m+1$
(see \cite[\S3, subsection ``Unoriented van Kampen obstruction'']{M2}).
The mirror symmetry $r$ in the equator $S^m\subset S^{m+1}$ extends to the
mirror symmetry $R$ in $B^{m+1}\subset B^{m+2}$, which in turn corresponds
to $r$ (i.e.\ $\widetilde{RG}=r\tilde G$).
Since $r^*(\Xi)=-\Xi$ and $\tilde G^*(\Xi)$ is a nonzero integer, $RG$ is
inequivalent to $G$.
Hence $rg$ is inequivalent to $g$.
\end{proof}

Lemma \ref{5.2} and Example \ref{5.1} imply that the three-point set $[3]$ does
not embed in $S^0$ and knots in $S^1$.
However, this is not the end of the story.

\begin{lemma}\label{5.3} If $K$ is a $k$-obstructor and $L$ an $l$-obstructor, then
$K*L$ is an $(k+l+2)$-obstructor.
\end{lemma}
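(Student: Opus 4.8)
The plan is to derive the lemma from the identity
\[(K*L)\circledast(K*L)\ \simeq\ (K\circledast K)*(L\circledast L)\]
recorded above (it follows formally from the associativity of the join), together with standard facts about joins of spheres. First I would note that this isomorphism of posets is $\Z/2$-equivariant in the appropriate sense: on the level of vertices it merely re-sorts the two ``$K$-parts'' together and the two ``$L$-parts'' together, so the factor-exchanging involution on $(K*L)\circledast(K*L)$, which swaps the two copies of $C^*(K*L)\simeq C^*K\x C^*L$, goes over to simultaneously swapping the two copies of $C^*K$ and the two copies of $C^*L$, i.e.\ to the join of the factor-exchanging involutions on $K\circledast K$ and on $L\circledast L$. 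This is a one-line check directly from the construction, and it is really the only thing one has to be careful about. Passing to order complexes and using the natural (hence equivariant) homeomorphism $|P*Q|\cong|P|*|Q|$, I obtain a $\Z/2$-homeomorphism
\[|(K*L)\circledast(K*L)|\ \cong\ |K\circledast K|*|L\circledast L|,\]
the involution on the right being the join of the two factor-exchanging involutions.

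Next I would plug in the hypotheses: $K$ being a $k$-obstructor means $|K\circledast K|$ is $\Z/2$-homeomorphic to $S^{k+1}$ with the antipodal involution, and $L$ being an $l$-obstructor means $|L\circledast L|$ is $\Z/2$-homeomorphic to $S^{l+1}$ with the antipodal involution. It then remains to identify $S^{k+1}*S^{l+1}$, equipped with the join of the two antipodal involutions, with $S^{k+l+3}$ equipped with its antipodal involution. For this I would use the standard model: realizing $S^{k+1}\subset\R^{k+2}$ and $S^{l+1}\subset\R^{l+2}$, the map $(x,y,t)\mapsto(x\cos t,\,y\sin t)$ identifies $S^{k+1}*S^{l+1}$ with the unit sphere $S^{k+l+3}$ of $\R^{k+2}\x\R^{l+2}$, and since the join coordinate $t$ is left fixed it carries the join of $x\mapsto-x$ and $y\mapsto-y$ to the antipodal involution of $S^{k+l+3}$. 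Combining the two displays, $|(K*L)\circledast(K*L)|$ is $\Z/2$-homeomorphic to $S^{k+l+3}=S^{(k+l+2)+1}$ with the antipodal involution, which is exactly the assertion that $K*L$ is a $(k+l+2)$-obstructor.

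There is essentially no topological obstacle here: the deleted joins are honest spheres by hypothesis, joins of spheres are spheres, and the join of two antipodal involutions is again antipodal; the statement was set up precisely so that it falls out of the associativity identity. The only point requiring attention — the ``hard part'', such as it is — is tracking the factor-exchanging involution through the chain of identifications (the poset isomorphism, $|P*Q|\cong|P|*|Q|$, and the sphere model), making sure that at each step it corresponds to the join of the involutions on the two factors rather than to something else; a mismatch there would be the only way the argument could go wrong.
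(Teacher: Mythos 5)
Your argument is correct and is essentially identical to the paper's proof: both rest on the isomorphism $(K*L)\circledast(K*L)\simeq(K\circledast K)*(L\circledast L)$ from associativity of the join, plus the fact that the join of two $\Z/2$-homeomorphisms onto spheres with antipodal involutions is again one. You merely spell out the equivariance bookkeeping in more detail than the paper does.
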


\begin{proof}
We are given $\Z/2$-homeomorphisms $S^{k+1}\to|K\circledast K|$
and $S^{l+1}\to |L\circledast L|$.
Their join is a $\Z/2$-homeomorphism
$S^{k+l+3}\to |(K\circledast K)*(L\circledast L)|$.
From the associativity of join
$(K\circledast K)*(L\circledast L)\simeq (K*L)\circledast (K*L)$,
which implies the assertion.
\end{proof}

Now from Example \ref{5.1} and Lemma \ref{5.3}, the join $[3]*[3]$ is
a $2$-obstructor.
Thus the graph $K_{3,3}\bydef|[3]*[3]|$ does not embed in $S^2$.
Moreover (by the proof of Lemma \ref{5.2}), the cone over $K_{3,3}$ does not
embed in $B^3$.
The same argument establishes

\begin{theorem}[{van Kampen \cite{vK}}]\label{5.4} The join of $n+1$ copy of
the three-point set $[3]$ does not embed in $S^{2n}$.
\end{theorem}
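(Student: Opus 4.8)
The plan is to reduce the statement to the two lemmas already established, via a one-line induction. Specifically, I would prove by induction on $k\ge 1$ that the join of $k$ copies of $[3]$ is a $(2k-2)$-obstructor; applying this with $k=n+1$ and invoking Lemma \ref{5.2}(a) then gives the theorem. Write $J_k$ for the join of $k$ copies of $[3]$, so that $J_1=[3]$ and, by associativity of the join, $J_{k+1}\simeq J_k*[3]$.

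For the base case $k=1$ I would cite Example \ref{5.1}: the isomorphism $[3]\oplus[3]\simeq\partial\Delta^2$ yields an equivariant homeomorphism $|[3]\circledast[3]|\cong|[3]\oplus[3]|\cong S^1$, so $J_1=[3]$ is a $0$-obstructor, in agreement with $2\cdot 1-2=0$. For the inductive step I would assume $J_k$ is a $(2k-2)$-obstructor and apply Lemma \ref{5.3} with $K=J_k$ and $L=[3]$ (a $0$-obstructor by the base case); this shows that $J_{k+1}\simeq J_k*[3]$ is a $\bigl((2k-2)+0+2\bigr)$-obstructor, that is, a $2k$-obstructor, and since $2k=2(k+1)-2$ the induction closes.

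Specializing to $k=n+1$ shows that the join of $n+1$ copies of $[3]$ is a $2n$-obstructor, so by Lemma \ref{5.2}(a) the polyhedron $|J_{n+1}|$ does not embed in $S^{2n}$, as claimed. I would also record the strengthenings that fall out for free: by the proof of Lemma \ref{5.2}(a) the cone $|C^*J_{n+1}|$ does not embed in $B^{2n+1}$, and by Lemma \ref{5.2}(b) every embedding of $|J_{n+1}|$ in $S^{2n+1}$ is inequivalent to its mirror image. There is essentially no obstacle left in the argument: all the real content sits in Lemma \ref{5.3} --- whose engine is the identity $(K\circledast K)*(L\circledast L)\simeq(K*L)\circledast(K*L)$ --- and in the Borsuk--Ulam argument behind Lemma \ref{5.2}(a); the present theorem is a bare iteration on top of them, precisely as the remark ``the same argument establishes'' preceding its statement announces.
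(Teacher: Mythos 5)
Your proposal is correct and coincides with the paper's own argument: the text preceding the theorem establishes that $[3]*[3]$ is a $2$-obstructor via Example \ref{5.1} and Lemma \ref{5.3} and then notes that "the same argument establishes" Theorem \ref{5.4}, which is exactly your induction (base case Example \ref{5.1}, inductive step Lemma \ref{5.3}, conclusion via Lemma \ref{5.2}(a)). The extra strengthenings you record are likewise the ones the paper draws from the proof of Lemma \ref{5.2}.
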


The more precise assertion that the join of $n+1$ copy of $[3]$ is an
$n$-obstructor is due to Flores \cite{F1} (see also \cite{Ro}).

\begin{definition}[Atoms]
An element $\sigma$ of a poset $P$ is called an {\it atom} of $P$, if
$\fll\sigma\flr=\{\sigma\}$.
The set of all atoms of $P$ will be denoted $A(P)$.
A poset $P$ is called {\it atomistic}, if every its element is the least upper bound
of some set of atoms of $P$.
It is easy to see that $A(\fll\sigma\flr)=A(P)\cap\fll\sigma\flr$.
Hence every element $\sigma$ of an atomistic poset is the least upper bound of
$A(\fll\sigma\flr)$.
(Beware that ``atomic'' has a different meaning in the literature on posets.)
\end{definition}

\begin{definition}[Dichotomial poset]
Let us call a poset $B$ {\it dichotomial}, if it is atomistic, and
for each $\sigma\in B$ the set of atoms $A(B)\but A(\fll\sigma\flr)$ has
the least upper bound, denoted $h(\sigma)$, in $B$.
In other words the latter condition says that there exists an $h(\sigma)\in B$
such that $\fll\sigma\flr\cap\fll h(\sigma)\flr=\emptyset$ and at the same time
$\fll\sigma\flr\cup\fll h(\sigma)\flr$ contains all the atoms of $B$.
Clearly, $h(h(\sigma))=\sigma$, so there is defined an involution $h\:B\to B$.
Clearly, the composition $H\:B\xr{h}B\xr{\id}B^*$ is order-preserving.
In particular, every dichotomial poset $B$ is isomorphic to its dual $B^*$.
\end{definition}

\begin{definition}[Combinatorial Alexander duality]
If $K$ is a subcomplex of a dichotomial poset $B$, then $B\but K$ is a dual
subcomplex of $B$.
Hence $H(B\but K)$ is a dual subcomplex of $B^*$.
Then $D(K)\bydef H(B\but K)^*$ is subcomplex of $B$.
In particular, $D(K)=K$ iff $K$ is a fundamental domain of the involution $h$;
in this case we say that $K$ is {\it self-dual} as a subcomplex of $B$.
\end{definition}

\begin{example}\label{5.5}
The boundary of every simplex $\partial\Delta^S$ is dichotomial:
$h(\sigma)=S\but\sigma$.
If $n<m$ and $K=(\Delta^m)^{(n)}$ is the $n$-skeleton of (the boundary of)
the $m$-simplex, then it is easy to see that $D(K)=(\Delta^m)^{(m-n-2)}$.
In particular, the $n$-skeleton of the $(2n+2)$-simplex is self-dual
in $\partial\Delta^{2n+2}$.
\end{example}

\begin{theorem}\label{5.6} If $K$ is a subcomplex of a dichotomial poset $B$,
then the deleted prejoin $K\oplus D(K)^*$ is isomorphic to $B$.
Moreover, if $K=D(K)$, then the isomorphism $f$ is anti-equivariant with
resect to the anti-involution $H$ and the factor-exchanging anti-involution $t$,
i.e.\ the following diagram commutes:
$$\begin{CD}
B@>f>>K\oplus K^*\\
@VHVV@VtVV\\
B^*@>f^*>>(K^*\oplus K)^*.
\end{CD}$$
\end{theorem}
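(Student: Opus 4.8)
The plan is to construct the isomorphism $f\colon B\to K\oplus D(K)^*$ explicitly on the underlying sets and then check that it preserves and reflects the order. Since $B$ is atomistic, every element $\sigma\in B$ is the least upper bound of $A(\fll\sigma\flr)$, and $K$ is a subcomplex, so either $\sigma\in K$ or $\sigma\notin K$. In the first case I would set $f(\sigma)=\sigma\in K\subset K\oplus D(K)^*$. In the second case $\sigma\in B\setminus K$, so $h(\sigma)\in H(B\setminus K)$, and $D(K)=H(B\setminus K)^*$ means $h(\sigma)$ is an element of the poset $D(K)$ (with the dual order); I would set $f(\sigma)=h(\sigma)\in D(K)^*\subset K\oplus D(K)^*$. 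This is visibly a bijection: on the $K$-part it is the identity, and on the complement it is the restriction of the bijective involution $h$, which carries $B\setminus K$ onto $H(B\setminus K)$ whose dual is exactly $D(K)$.

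The core of the argument is verifying that $f$ is an order-isomorphism, which amounts to three cases matching the three bullets in the definition of the deleted prejoin $K\oplus D(K)^*$. First, for $p,q\in K$ we need $p\le q$ in $B$ iff $p\le q$ in $K$ — immediate, since $K$ is a subposet. Second, for $p,q\in B\setminus K$ we need $p\le q$ in $B$ iff $f(p)\ge f(q)$ in $D(K)$, i.e.\ $h(p)\ge h(q)$ in $B^*$, i.e.\ $h(p)\le h(q)$ in $B$ — this is exactly the statement that $h$ is order-reversing, which follows from $h(\sigma)=\text{lub}\big(A(B)\setminus A(\fll\sigma\flr)\big)$: if $p\le q$ then $\fll p\flr\subset\fll q\flr$, so $A(\fll p\flr)\subset A(\fll q\flr)$, so $A(B)\setminus A(\fll p\flr)\supset A(B)\setminus A(\fll q\flr)$, whence $h(p)\ge h(q)$. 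Third, the mixed case: for $p\in K$ and $q\in B\setminus K$, we must show $p\le q$ in $B$ iff $\fll p\flr\cap\fll (f(q))^*\flr=\emptyset$ in $K$; now $f(q)=h(q)$ viewed in $D(K)^*$, so $(f(q))^*$ is $h(q)$ back in the original order, and the condition reads $\fll p\flr\cap\fll h(q)\flr=\emptyset$ (intersection taken in $B$, then restricted to $K$, but since $p\in K$ and $K$ is a subcomplex this is the same). By the dichotomial property $\fll q\flr\cup\fll h(q)\flr$ contains all atoms of $B$ and $\fll q\flr\cap\fll h(q)\flr=\emptyset$, so $\fll p\flr\cap\fll h(q)\flr=\emptyset$ iff $A(\fll p\flr)\subset A(\fll q\flr)$ iff (since both are atomistic) $p\le q$. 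One also checks the ``no edge'' direction, $q\not\le p$ for $q\in B\setminus K$, $p\in K$, which holds because $K$ is a subcomplex (a cone of an element of $K$ stays in $K$).

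For the equivariance clause when $K=D(K)$: here $h$ is an involution on $B$ preserving $K$ (since $K$ is a fundamental domain of $h$ means $h(K)=B\setminus K$... actually $K$ self-dual means $D(K)=K$, i.e.\ $H(B\setminus K)^*=K$, i.e.\ $h(B\setminus K)=K$, so $h$ swaps $K$ and $B\setminus K$). Then unwinding the definition, $f\circ h$ sends $\sigma\in K$ to $h(\sigma)\in B\setminus K$ and thence to $h(h(\sigma))=\sigma$... I would instead trace the square directly: $t$ is the factor-exchanging anti-involution on $K\oplus K^*$ (swapping the two copies with the appropriate order reversal), $H\colon B\to B^*$ is $h$ followed by order reversal, and on the $K$-summand $f$ is the identity while on the complement it is $h$; so both composites $f^*\circ H$ and $t\circ f$ send a $\sigma\in K$ into the second summand via $h$ and a $\sigma\in B\setminus K$ into the first summand via $h$, with matching orders. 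The verification is a direct diagram chase, bookkeeping which order-reversals cancel. The main obstacle, such as it is, is purely notational: keeping straight the four-fold nesting of duals and the identification of $D(K)^*$ with a subposet of $B$ versus a subposet of $K\oplus K^*$; the mathematical content in each case reduces to the defining properties of $h$ (order-reversing bijection with $\fll\sigma\flr\sqcup\fll h(\sigma)\flr\supseteq A(B)$) plus the fact that $K$ is a subcomplex.
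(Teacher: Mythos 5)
Your proposal is correct and follows essentially the same route as the paper: the paper defines the inverse bijection $K\oplus D(K)^*\to B$ (inclusion on the first summand, $H^{-1}$ on the second), and verifies the mixed-case order relation by exactly your chain of equivalences through atoms ($\fll p\flr\cap\fll h(q)\flr=\emptyset$ iff $A(\fll p\flr)\subset A(\fll q\flr)$ iff $p\le q$, using that $B$ is atomistic), with the equivariance clause likewise checked summand by summand. The only difference is the direction in which the bijection is written down, which is immaterial.
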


The first assertion of Theorem \ref{5.6} implies the following ``Semi-combinatorial
Alexander duality'' theorems:

(i) $(K\circledast D(K))^\flat$ is a subdivision of $B^\flat$;

(ii) $|K\circledast D(K)|$ is homeomorphic to $|B|$.

In the case where $B$ is the boundary of a simplex (see Example \ref{5.5}),
(ii) was originally proved by T. Bier in 1991 (see \cite{Mat}) and reproved in
a more direct way by de Longueville \cite{dL}.
The assertion of (i) is a special case of a result of
Bj\"orner--Paffenholz--S\"ostrand--Ziegler \cite{BPSZ} (see also \cite{CD}).

\begin{proof} Let us map $K\oplus D(K)^*$ to $B$ by sending the first factor
via the inclusion $K\subset B$ and the second factor via
$H^{-1}\:H(B\but K)\to B\but K$.
The resulting bijection $f\:K\oplus D(K)^*\to B$ is clearly an order-preserving
embedding separately on the first factor and on the second factor.
If $p\in K$ and $q\in D(K)^*$, then $p\le q$ in $K\oplus D(K)^*$ iff
$\fll p\flr\cap\fll q^*\flr=\emptyset$.
The latter is equivalent to $A(\fll p\flr)\cap A(\fll q^*\flr)=\emptyset$ (since
$B$ is atomistic).
Now $f(p)=p$, and $A(\fll f(q)\flr)$ is the complement of $A(\fll q^*\flr)$ in $A(B)$.
Hence $A(\fll p\flr)\cap A(\fll q^*\flr)=\emptyset$ is equivalent to
$A(\fll f(p)\flr)\subset A(\fll f(q)\flr)$.
The latter is in turn equivalent to $\fll f(p)\flr\subset\fll f(q)\flr$ (since $B$
is atomistic), which is the same as $f(p)\le f(q)$.
Thus $f$ is an isomorphism.

In the case $K=D(K)$, the composition $K\oplus K^*\xr{f}B\xr{H}B^*$ is the
the identity on the second factor and is $H$ on the first factor.
The same is true of the composition
$K\oplus K^*\xr{t}(K^*\oplus K)^*\xr{f^*}B^*$.
\end{proof}

\begin{definition}[Dichotomial sphere] We say that a dichotomial poset $B$ with
its $*$-involution $H\:B\to B^*$ is a {\it dichotomial $m$-sphere} if
$|B|$ is $\Z/2$-homeomorphic to $S^m$ with the antipodal involution.
\end{definition}

Since $|K\circledast K|\cong|K\oplus K^*|$ equivariantly, we obtain

\begin{corollary}\label{5.7} Let $B$ be a dichotomial $(m+1)$-sphere.
Then every self-dual subcomplex of $B$ is an $m$-obstructor.
\end{corollary}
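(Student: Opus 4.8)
The plan is to read Corollary \ref{5.7} off directly from Theorem \ref{5.6}, combined with the homeomorphism $|K\circledast K|\cong|K\oplus K^*|$ already recorded in the discussion of the deleted prejoin; the only real work is keeping track of equivariance. First I would unwind the hypothesis: $B$ being a dichotomial $(m+1)$-sphere means precisely that the anti-involution $H\:B\to B^*$ realizes to a self-homeomorphism $|H|$ of $|B|$ — here I use the canonical identification $|B^*|=|B|$, valid because a poset and its dual have literally the same barycentric subdivision (a chain in $P$ is the same thing as a chain in $P^*$) — and that $(|B|,|H|)$ is $\Z/2$-homeomorphic to $S^{m+1}$ with the antipodal involution.

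Next, assuming $K$ is self-dual in $B$, i.e.\ $K=D(K)$, Theorem \ref{5.6} supplies a poset isomorphism $f\:B\to K\oplus K^*$ that is anti-equivariant with respect to $H$ and the factor-exchanging anti-involution $t$; passing to order complexes, $|f|\:|B|\to|K\oplus K^*|$ is a homeomorphism carrying $|H|$ to $|t|$, so $\bigl(|K\oplus K^*|,\,|t|\bigr)$ is $\Z/2$-homeomorphic to $S^{m+1}$ with the antipodal involution. Then I would invoke the identification established above: $(K\circledast K)^\flat$ is a subdivision of $(K\circledast K)^\flat_{K\circledast\emptyset\cup\emptyset\circledast K}\simeq(K\oplus K^*)^\flat$, giving a homeomorphism $|K\circledast K|\cong|K\oplus K^*|$ which intertwines the factor-exchanging involution on $|K\circledast K|$ with $|t|$. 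Composing the two homeomorphisms shows that $|K\circledast K|$ with its factor-exchanging involution is $\Z/2$-homeomorphic to $S^{m+1}$ with the antipodal involution, which is exactly the statement that $K$ is an $m$-obstructor.

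The one point requiring care — and the only genuine obstacle — is the equivariance bookkeeping along the chain $|B|\cong|K\oplus K^*|\cong|K\circledast K|$: each homeomorphism must be checked to carry the relevant involution to the next, and in particular one must see that the subdivision relating the deleted prejoin to the deleted join is compatible with interchanging the two factors. This is immediate once one notes that $t$ is literally that interchange and the subdivision in question is defined factorwise, so I do not expect it to be more than a line. Everything non-equivariant is already available (Theorem \ref{5.6} and the standard properties of $\circledast$ and $\oplus$), and the sphere hypothesis on $|B|$ is used only once, at the very beginning.
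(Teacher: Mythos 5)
Your proposal is correct and follows exactly the paper's route: the paper deduces Corollary \ref{5.7} from Theorem \ref{5.6} together with the equivariant homeomorphism $|K\circledast K|\cong|K\oplus K^*|$, which is precisely the chain $|B|\cong|K\oplus K^*|\cong|K\circledast K|$ you describe. Your additional attention to the equivariance bookkeeping is a sound elaboration of what the paper leaves implicit.
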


In particular, the self-dual subcomplex $(\Delta^4)^{(1)}$ of the
dichotomial $3$-sphere $\partial\Delta^4$ is a $2$-obstructor.
Thus the graph $K_5\bydef(\Delta^4)^{(1)}$ does not embed in the plane.
The same argument establishes

\begin{theorem}[{van Kampen \cite{vK}}]\label{5.8} The $n$-skeleton of
the $(2n+2)$-simplex does not embed in $S^{2n}$.
\end{theorem}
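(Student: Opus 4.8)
The plan is to run the very same argument just used for $K_5=(\Delta^4)^{(1)}$, with $\partial\Delta^4$ replaced by $\partial\Delta^{2n+2}$. First I would record, from Example \ref{5.5}, that the boundary of a simplex is dichotomial, with $h(\sigma)$ the complementary face, and that the combinatorial Alexander dual of the $n$-skeleton of $\Delta^{2n+2}$ is $D\bigl((\Delta^{2n+2})^{(n)}\bigr)=(\Delta^{2n+2})^{((2n+2)-n-2)}=(\Delta^{2n+2})^{(n)}$; thus $(\Delta^{2n+2})^{(n)}$ is a self-dual subcomplex of the dichotomial poset $\partial\Delta^{2n+2}$.

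Next I would note that $\partial\Delta^{2n+2}$ is a dichotomial $(2n+1)$-sphere: its order complex is homeomorphic to $S^{2n+1}$, and the involution induced by $H$ on $|\partial\Delta^{2n+2}|$ is free, because a nonempty face of $\partial\Delta^{2n+2}$ is never contained in its own complement. Corollary \ref{5.7} then gives at once that the self-dual subcomplex $(\Delta^{2n+2})^{(n)}$ is a $2n$-obstructor, and Lemma \ref{5.2}(a) delivers exactly the conclusion that $|(\Delta^{2n+2})^{(n)}|$ does not embed in $S^{2n}$. So the proof is a two-line specialization of the machinery already assembled, and en route it reproves the Flores refinement that $(\Delta^{2n+2})^{(n)}$ is a $2n$-obstructor, of which Theorem \ref{5.8} is a consequence.

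The only step above that is not purely formal is the claim that complementation on $\partial\Delta^{2n+2}$ is $\Z/2$-homeomorphic to the antipodal involution on $S^{2n+1}$ — the sole point I expect to cost a word or two. I would handle it in one of three ways: quote it as classical; or deduce it from Theorem \ref{5.6}, which furnishes an equivariant isomorphism $(\Delta^{2n+2})^{(n)}\circledast(\Delta^{2n+2})^{(n)}\cong(\Delta^{2n+2})^{(n)}\oplus\bigl((\Delta^{2n+2})^{(n)}\bigr)^*\cong\partial\Delta^{2n+2}$, so that $|(\Delta^{2n+2})^{(n)}\circledast(\Delta^{2n+2})^{(n)}|\cong S^{2n+1}$ with the factor-exchange involution; or, most economically, invoke the Remark following the definition of an $m$-obstructor, by which the proof of Lemma \ref{5.2}(a) applies verbatim as soon as $|(\Delta^{2n+2})^{(n)}\circledast(\Delta^{2n+2})^{(n)}|$ is a $\bmod 2$ homology $(2n+1)$-sphere carrying a free involution, both facts being immediate here. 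Either way the identification of the involution is the hard part, and it is already dispensed with by the $K_5$ case.
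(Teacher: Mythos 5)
Your proof is correct and is exactly the paper's argument: the paper disposes of Theorem \ref{5.8} with the single phrase ``the same argument establishes,'' referring to the chain Example \ref{5.5} (self-duality of $(\Delta^{2n+2})^{(n)}$ in $\partial\Delta^{2n+2}$) $\Rightarrow$ Corollary \ref{5.7} ($2n$-obstructor) $\Rightarrow$ Lemma \ref{5.2}(a) (non-embeddability), which is precisely what you wrote. Your closing discussion of how to certify that complementation on $|\partial\Delta^{2n+2}|$ is the antipodal involution (or that only a free involution on a homology sphere is needed, per the Remark after the definition of an $m$-obstructor) is a legitimate filling-in of a point the paper leaves implicit.
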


The more precise assertion that the $n$-skeleton of the $(2n+2)$-simplex is an
$n$-obstructor is due to Flores \cite{F2}.

Bringing in Lemma \ref{5.3}, we immediately obtain the following generalization of
Theorem \ref{5.8}, which includes Theorem \ref{5.4} as well:

\begin{theorem}[{Gr\"unbaum \cite{Gr}}]\label{5.9} Every $n$-dimensional join of
skeleta $(\Delta^{2n_i+2})^{(n_i)}$ does not embed in $S^{2n}$.
\end{theorem}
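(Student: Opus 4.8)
The plan is to deduce the statement directly from the obstructor machinery of \S\ref{embeddings}. First I would record that each join factor $(\Delta^{2n_i+2})^{(n_i)}$ is a $2n_i$-obstructor. This is the sharp form of Theorem \ref{5.8} (Flores), but it also drops out of the present setup: by Example \ref{5.5} the skeleton $(\Delta^{2n_i+2})^{(n_i)}$ is a self-dual subcomplex of $\partial\Delta^{2n_i+2}$, which is a dichotomial $(2n_i+1)$-sphere, so Corollary \ref{5.7} applies and gives a $\Z/2$-homeomorphism $|(\Delta^{2n_i+2})^{(n_i)}\circledast(\Delta^{2n_i+2})^{(n_i)}|\cong S^{2n_i+1}$.

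Next I would take the join $K=(\Delta^{2n_1+2})^{(n_1)}*\dots*(\Delta^{2n_r+2})^{(n_r)}$ and iterate Lemma \ref{5.3}; equivalently, iterate the identity $(A\circledast A)*(B\circledast B)\simeq(A*B)\circledast(A*B)$ used in its proof together with $S^a*S^b\cong S^{a+b+1}$. This yields $\Z/2$-homeomorphisms
\[
|K\circledast K|\;\cong\;S^{2n_1+1}*\dots*S^{2n_r+1}\;\cong\;S^{\sum_i(2n_i+1)+(r-1)}\;=\;S^{2\sum_i n_i+2r-1}.
\]
Now comes the only computation: since $\dim(\Delta^{2n_i+2})^{(n_i)}=n_i$ and the join of $r$ complexes has dimension equal to the sum of their dimensions plus $r-1$, the hypothesis $\dim K=n$ reads $\sum_i n_i=n-r+1$, whence $2\sum_i n_i+2r-1=2(n-r+1)+2r-1=2n+1$. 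Thus $K$ is a $2n$-obstructor, and Lemma \ref{5.2}(a) gives that $|K|$ does not embed in $S^{2n}$.

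There is essentially no obstacle here beyond the bookkeeping: all the real content sits in Lemmas \ref{5.2} and \ref{5.3} and in the obstructor status of the individual skeleta. The point worth keeping in mind is that each factor has \emph{even} obstructor parameter $2n_i$ (matched to the even ambient sphere $S^{2n_i}$), and it is precisely this evenness that survives taking joins and produces non-embeddability in $S^{2n}$ rather than in some lower-dimensional sphere. In passing, Theorem \ref{5.9} thereby subsumes Theorem \ref{5.8} (the case $r=1$) and Theorem \ref{5.4} (the case $r=n+1$ with all $n_i=0$); and Lemma \ref{5.2}(b) would in addition show every embedding of $|K|$ in $S^{2n+1}$ inequivalent to its composite with an orientation-reversing homeomorphism, though that is not asserted in the statement.
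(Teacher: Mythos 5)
Your proposal is correct and is essentially the paper's own argument: the paper likewise obtains each factor $(\Delta^{2n_i+2})^{(n_i)}$ as a $2n_i$-obstructor via Example \ref{5.5} and Corollary \ref{5.7}, then invokes Lemma \ref{5.3} on the join and Lemma \ref{5.2}(a) to conclude, exactly as you do. Your dimension bookkeeping ($\sum_i n_i=n-r+1$, giving $|K\circledast K|\cong S^{2n+1}$) is the correct elaboration of the paper's ``bringing in Lemma \ref{5.3}, we immediately obtain'' step.
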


The more precise assertion that $n$-dimensional $2n$-obstructors include
$n$-dimensional joins of the form $F_{i_1}*\dots*F_{i_r}$, where each $F_i$ is
the $i$-skeleton of the $(2i+2)$-simplex has a converse.
Using matroid theory, Sarkaria has shown that these are the only simplicial
complexes among $n$-dimensional $2n$-obstructors \cite{Sa1}.
It would be interesting (in the light of Problem \ref{problem:main}) to extend
his methods to cell complexes.

\subsection{Construction of dichotomial spheres}

Our only example so far of a dichotomial poset is the boundary of simplex.
To get more examples, we can move in the opposite direction and
utilize Lemma \ref{5.3}.

\begin{definition}[Atomistic category] We say that an atomistic poset $K$ has
{\it atomistic category $\ge n$} if the set of atoms $A(K)$ is not contained in
a union of $n$ cones of $K$.
\end{definition}

\begin{theorem}\label{5.10} If $K$ is atomistic and has atomistic category
$\ne 1$, then $K\oplus K^*$ is dichotomial.
\end{theorem}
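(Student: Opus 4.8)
The plan is to set $B\bydef K\oplus K^*$ and verify directly the two clauses in the definition of a dichotomial poset, taking for the involution $h$ the map that swaps the two copies of the underlying set of $K$. The first step is to make the order on $B$ explicit. Writing $p,p'$ for elements of the first copy and $\bar q,\bar q'$ for elements of the second copy (both copies being copies of the underlying set of $K$, the second carrying the dual order), one has: $p\preceq p'$ iff $p\le p'$ in $K$; $\bar q\preceq\bar q'$ iff $q\ge q'$ in $K$; never $\bar q\preceq p$; and $p\preceq\bar q$ iff $\fll p\flr\cap\fll q\flr=\emptyset$ in $K$ --- this being the deleted-prejoin condition $\fll p\flr\cap\fll q^*\flr=\emptyset$, with $\fll q^*\flr$ unwinding to the cone of $q$ in $K$ --- equivalently, since $K$ is atomistic, iff the atom sets $A(\fll p\flr)$ and $A(\fll q\flr)$ are disjoint. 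Transitivity and the remaining poset axioms then follow at once from the monotonicity of $\sigma\mapsto A(\fll\sigma\flr)$.

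The second step is to read off the atoms of $B$ and the atoms lying below each $\sigma\in B$. A first-copy element is an atom of $B$ iff it is an atom of $K$; a second-copy element $\bar q$ is an atom of $B$ iff $q$ is maximal in $K$ and $\fll q\flr\supseteq A(K)$, i.e.\ (by atomisticity) $q$ is a top of $K$. The hypothesis ``atomistic category $\ne 1$'' means the category is $\ge 2$ or $0$ (the case $A(K)=\emptyset$ forces $K=B=\emptyset$, which is vacuously dichotomial): if it is $\ge 2$ there is no top and $A(B)=A(K)$, all in the first copy; if it is $0$ there is a unique top $\hat1$ and $A(B)=A(K)\sqcup\{\bar{\hat1}\}$, the extra atom $\bar{\hat1}$ being paired with $\hat1$ under the copy-swap. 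In either case, below a first-copy $p$ the atoms of $B$ are exactly $A(\fll p\flr)$, and below a second-copy $\bar q$ they are exactly $A(K)\but A(\fll q\flr)$ (together with $\bar{\hat1}$ when $q\ne\hat1$, in the category-$0$ case).

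Both clauses then follow from a single least-upper-bound computation in $B$. For atomisticity: a first-copy $p$ is the join in $B$ of $A(\fll p\flr)$, because any upper bound of that set in $B$ --- a first-copy element, necessarily $\ge_K p$ since $K$ is atomistic, or a second-copy $\bar q''$, which is an upper bound of $A(\fll p\flr)$ if and only if $p\preceq\bar q''$ --- already dominates $p$; and a second-copy $\bar q$ is the join in $B$ of $A(K)\but A(\fll q\flr)$, because the second-copy upper bounds of that set are precisely the $\bar q''$ with $q''\le_K q$, least among them $\bar q$, while a first-copy upper bound $p'$ would force $A(\fll p'\flr)\cup A(\fll q\flr)=A(K)$, i.e.\ two cones of $K$ covering $A(K)$ --- impossible exactly because the atomistic category is $\ge 2$. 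For the dichotomial clause: for $\sigma=p$ the atoms of $B$ not below $p$ form $A(K)\but A(\fll p\flr)$ (plus $\bar{\hat1}$ in the category-$0$ case), whose join in $B$ is, by the computation just made, $\bar p$; and for $\sigma=\bar q$ the atoms of $B$ not below $\bar q$ form $A(\fll q\flr)$, whose join in $B$ is $q$. Hence $h(p)=\bar p$ and $h(\bar q)=q$, an involution, so $B$ is dichotomial.

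The only place the hypothesis is used is the assertion that no first-copy element is an upper bound of $A(K)\but A(\fll q\flr)$, which is precisely the statement that $A(K)$ is not a union of two cones of $K$, i.e.\ atomistic category $\ge 2$. So the real work is the bookkeeping: stating the unwinding of the deleted-prejoin order and the consequent description of the atoms cleanly enough that it can be reused three times, confirming that category exactly $1$ is the single forbidden value, and separately treating the category-$0$ case with its extra atom $\bar{\hat1}$. I expect that case analysis, rather than any one hard step, to be the main obstacle.
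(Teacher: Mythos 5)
Your proof is correct and follows essentially the same route as the paper's: the paper likewise verifies atomisticity of $K\oplus K^*$ by computing upper bounds of atom sets case by case, uses the category hypothesis exactly once to exclude a first-copy upper bound of $A(K)\but A(\fll q\flr)$, handles the cone case via the extra atom $\hat 1^*$, and then reads off the dichotomial involution as the copy swap. No substantive differences.
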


The proof shows that if $K$ is atomistic and has atomistic category $1$, then
$K\oplus K^*$ is not atomistic.

\begin{proof} The hard part is to show that $K\oplus K^*$ is atomistic.
Since $K$ is a subcomplex of $K\oplus K^*$, we have
$A(K)\subset A(K\oplus K^*)$.
If $\sigma\in K$, then $\sigma$ is the least upper bound in $K$ of some set
$S$ of atoms of $K$.
If some $\tau\in K^*$ is an upper bound of $S$, then
$\fll\tau^*\flr\cap S=\emptyset$.
Hence $\fll\tau^*\flr\cap\fll\sigma\flr=\emptyset$,
and so $\tau>\sigma$.
Thus $\sigma$ is the least upper bound of $S$ in $K\oplus K^*$.

If $\sigma\in K^*$, then it is an upper bound of
$S\bydef A(K)\but A(\fll\sigma^*\flr_K)$.
If $S$ is nonempty and $K$ is not a cone, then by the hypothesis $S$ has no
upper bound in $K$.
If $\tau\in K^*$ is another upper bound of $S$, then
$\fll\tau^*\flr\cap S=\emptyset$.
Hence $A(\fll\tau^*\flr)\subset A(\fll\sigma^*\flr)$.
Since $K$ is atomistic, $\fll\tau^*\flr\subset\fll\sigma^*\flr$.
Then $\tau^*\le\sigma^*$, so $\tau\ge\sigma$.
Thus $\sigma$ is the least upper bound of $S$ in $K\oplus K^*$.

In the case $S=\emptyset$ we have that $\sigma^*$ is an upper bound
of $A(K)$.
Since $\sigma^*$ is also the least upper bound of some subset of $A(K)$,
the least upper bound of $A(K)$ exists and equals $\sigma^*$.
Since $K$ is atomistic, this implies as above that the least upper bound of $K$
exists and equals $\sigma^*$.
Then $\sigma\le\tau$ for each $\tau\in K^*$ and also $\sigma$ is
incomparable with any element of $K$.
Thus $\sigma$ an atom of $K\oplus K^*$ and so the least upper bound of a set
of atoms.

If $S\ne\emptyset$ and $K$ is a cone, in symbols $K=\fll\hat 1\flr$, then
similarly to the above, $\sigma$ is the least upper bound of $S\cup\{\hat 1^*\}$
in $K\oplus K^*$.

Thus $K\oplus K^*$ is atomistic.
Moreover, we have proved that $A(K\oplus K^*)=A(K)$ if $K$ is not a cone,
and if $K$ is a cone, then $A(K\oplus K^*)=A(K)\cup\{\hat 1^*\}$.

Given a $\sigma\in K$, let $h(\sigma)=\sigma^*\in K^*$.
If $K$ is not a cone, then by the above
$A(\fll\sigma^*\flr)=A(K)\but A(\fll\sigma\flr)$.
If $K$ is a cone, then
$A(\fll\sigma^*\flr)=(A(K)\but A(\fll\sigma\flr))\cup\{\hat 1^*\}$.
In either case $A(\fll\sigma^*\flr)=A(K\oplus K^*)\but A(\fll\sigma\flr)$,
so $K\oplus K^*$ is dichotomial.
\end{proof}

If $P$ and $Q$ are posets, the atomistic category of $P*Q$ is clearly
the maximum of the atomistic categories of $P$ and $Q$.
The atomistic category of the $n$-skeleton of the $(2n+2)$-simplex is two,
since the $2n+3$ vertices of the simplex cannot be covered by two
$n$-simplices but can be covered by three.
Hence all the joins in Theorem \ref{5.9} have atomistic category two as well.

\begin{corollary}\label{5.11} If $K$ is a join of the $n_i$-skeleta of
the $(2n_i+2)$-simplices, then $K\oplus K^*$ is a dichotomial sphere.
\end{corollary}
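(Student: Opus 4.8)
The plan is to verify the two clauses in the definition of a dichotomial $m$-sphere separately: that $K\oplus K^*$ is a dichotomial poset, which will come straight from Theorem~\ref{5.10}, and that $|K\oplus K^*|$ is $\Z/2$-homeomorphic to $S^m$ with the antipodal involution, which will come from the obstructor machinery of Corollary~\ref{5.7} and Lemma~\ref{5.3}. Write $K=K_1*\dots*K_r$ with $K_i=(\Delta^{2n_i+2})^{(n_i)}$. To invoke Theorem~\ref{5.10} I first note that $K$, being a simplicial complex, is atomistic (each face is the least upper bound of its vertices), and that, as recalled just before the statement, the atomistic category of each $K_i$ is $2$ — the $2n_i+3$ vertices of $\Delta^{2n_i+2}$ are covered by three but not by two of its $n_i$-faces — while the atomistic category of a join is the maximum over the factors; hence $K$ has atomistic category $2\ne1$. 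Thus Theorem~\ref{5.10} applies and $B\bydef K\oplus K^*$ is dichotomial, with $*$-involution $H$ induced by the evident swap of the two copies of $K$.

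For the topological clause, by Example~\ref{5.5} each $K_i$ is self-dual in $\partial\Delta^{2n_i+2}$, which is a dichotomial $(2n_i+1)$-sphere, so Corollary~\ref{5.7} says $K_i$ is a $2n_i$-obstructor. Iterating Lemma~\ref{5.3} over the join shows $K=K_1*\dots*K_r$ is a $\bigl(\sum_i 2n_i+2(r-1)\bigr)$-obstructor; and since $\sum_i 2n_i+2(r-1)=2\bigl(\sum_i n_i+r-1\bigr)=2\dim K$, this means $|K\circledast K|$, with the factor-exchanging involution, is $\Z/2$-homeomorphic to $S^{2\dim K+1}$ with the antipodal involution. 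Finally one uses the equivariant homeomorphism $|K\circledast K|\cong|K\oplus K^*|$ (see the discussion after Theorem~\ref{5.6}), noting that under it the factor exchange on $K\circledast K$ corresponds precisely to the $*$-involution $H$ of the dichotomial poset $B=K\oplus K^*$ produced above. Therefore $|B|$ is $\Z/2$-homeomorphic to $S^{2\dim K+1}$ with the antipodal involution, so $B=K\oplus K^*$ is a dichotomial $(2\dim K+1)$-sphere.

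The only delicate point, which I would write out in detail, is this last matching of involutions: that the homeomorphism $|K\oplus K^*|\cong|K\circledast K|$ really intertwines the $*$-involution $H$ supplied by Theorem~\ref{5.10} with the factor-exchanging involution, so that the target sphere carries the antipodal involution demanded by the definition of a dichotomial sphere rather than some other a priori exotic free involution. This is, however, implicit in the material around Theorem~\ref{5.6} — where the $h$-involution on $K\oplus K^*$ is identified with the factor swap, and the isomorphism $f$ of that theorem is shown to be anti-equivariant for $H$ and the factor-exchanging anti-involution $t$ — so the verification is routine, and all the remaining steps are a direct assembly of the preceding results with no genuinely new obstacle.
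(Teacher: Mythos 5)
Your proof is correct and follows exactly the route the paper intends: Theorem \ref{5.10} (via the atomistic-category computation stated just before the corollary) gives dichotomiality, and the Flores-type obstructor property of the join (Example \ref{5.5}, Corollary \ref{5.7}, Lemma \ref{5.3}) together with the equivariant identification $|K\oplus K^*|\cong|K\circledast K|$ gives the $\Z/2$-sphere condition. The paper states the corollary without a separate proof precisely because this assembly is the intended one, so nothing further is needed.
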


More generally:

\begin{theorem}\label{construction} Every atomistic $2n$-obstructor that is
an $n$-dimensional cell complex is the $n$-skeleton of some dichotomial
$(2n+1)$-sphere.
\end{theorem}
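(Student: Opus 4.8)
The plan is to take $B\bydef K\oplus K^*$ with the factor-swapping order-reversing involution $h$ (so $h(\sigma)=\sigma^*$), and to show that $B$ is a dichotomial $(2n+1)$-sphere whose $n$-skeleton is $K$. This is in fact the only possible candidate: if $B'$ were a dichotomial $(2n+1)$-sphere with $(B')^{(n)}=K$, then no pair of complementary cells of $B'$ could both have dimension $\le n$ (their dimensions sum to the odd number $2n+1$), so $K=(B')^{(n)}$ would contain exactly one cell of each complementary pair, i.e.\ be self-dual in $B'$, whence $B'\cong K\oplus K^*$ by Theorem \ref{5.6}.

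The sphere half is easy. Since $K$ is a $2n$-obstructor, by definition there is a $\Z/2$-homeomorphism from $S^{2n+1}$ with the antipodal involution onto $|K\circledast K|$ with the factor-exchanging involution; composing with the equivariant homeomorphism $|K\circledast K|\cong|K\oplus K^*|=|B|$ recorded just before Corollary \ref{5.7} (under which factor exchange corresponds to $h$) shows that $|B|$ is $\Z/2$-homeomorphic to $S^{2n+1}$. Moreover $B^\flat$, which is subdivided by the combinatorial triangulation $(K\circledast K)^\flat$ of $S^{2n+1}$, is a combinatorial $(2n+1)$-sphere; in particular $B$ is a cell complex and its face poset is pure graded of rank $2n+1$. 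So $B$ will be a dichotomial $(2n+1)$-sphere once we know it is a dichotomial poset.

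That $B$ is a dichotomial poset is, by Theorem \ref{5.10} and since $K$ is atomistic, exactly the assertion that the atomistic category of $K$ is not $1$. I would rule out atomistic categories $0$ and $1$ together, by deriving in either case an embedding $|K|\emb\R^{2n}$, which contradicts Lemma \ref{5.2}(a). If the category is $0$, then $K$ is a cone, so $|K|$ is the cone on an $(n-1)$-sphere, i.e.\ an $n$-ball, which embeds in $\R^{2n}$. If the category is $1$, write $A(K)=A(\fll\sigma_1\flr)\cup A(\fll\sigma_2\flr)$ with $K$ not a cone; then $|\fll\sigma_1\flr|$ and $|\fll\sigma_2\flr|$ are balls of dimension $\le n$ whose vertex sets between them exhaust $A(K)$, so every remaining cell of $K$ has all its vertices among $A(\fll\sigma_1\flr)\cup A(\fll\sigma_2\flr)$ and is glued to the two balls along parts of their boundaries. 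I would try to embed $|K|$ in $\R^{2n}$ by placing the two balls in general position and then inserting the remaining cells by a relative general-position/coning argument in the spirit of Remark \ref{van Kampen} and Corollary \ref{quasi-embedding}. This category-$1$ case is the step I expect to be the main obstacle: $2n$ is the critical dimension, so a naive dimension count does not close the argument and one must genuinely use the two-cone structure.

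It remains to identify $K$ with $B^{(n)}$. The copy of $K$ in $B=K\oplus K^*$ is a subcomplex, and for $\sigma$ in it one has $\fll\sigma\flr_B=\fll\sigma\flr_K$, because the cross-relations of a deleted prejoin run only from the first factor into the second, so nothing from the copy of $K^*$ lies below $\sigma$; hence $\dim_B\sigma=\dim_K\sigma\le n$. For $\tau^*$ in the copy of $K^*$, with $\tau$ the corresponding cell of $K$, the order-reversing automorphism $h$ of the pure graded poset $B$ of rank $2n+1$ gives $\dim_B\tau^*=\dim_B h(\tau)=2n+1-\dim_B\tau=2n+1-\dim_K\tau\ge n+1$. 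Therefore the cells of $B$ of dimension $\le n$ are precisely those of the copy of $K$, i.e.\ $B^{(n)}\cong K$, which completes the proof.
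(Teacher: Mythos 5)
Your architecture is the paper's: form $B\bydef K\oplus K^*$, get the equivariant homeomorphism $|B|\cong|K\circledast K|\cong S^{2n+1}$ from the obstructor hypothesis, get dichotomiality from Theorem \ref{5.10} once the atomistic category is known to be $\ne 1$, and read off $B^{(n)}=K$ from the grading. That assembly is correct (one small reordering: the conclusion that $B$ is a cell complex should be drawn from Lemma \ref{4.3} \emph{after} dichotomiality is in hand, since the implication (ii)\imp(iii) there uses the involution). But the step you defer --- ruling out atomistic category $1$ --- is not a loose end: it is the entire content of the paper's proof of this theorem, and the route you sketch for it (put the two balls in general position and insert the remaining cells) is exactly the naive critical-dimension argument you correctly suspect cannot be closed. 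As it stands, this is a genuine gap.

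The paper closes the case combinatorially, by a join trick rather than by general position. Suppose the atoms of $K$ lie in $\fll C\flr\cup\fll D\flr$ for two cells $C$, $D$. If $\fll C\flr\cap\fll D\flr=\emptyset$, then $K$ sits inside $\partial(C*D)=\partial C*D\cup C*\partial D$, a sphere of dimension $\dim C+\dim D\le 2n$. Otherwise let $C'$ be the union of the cells of $C$ disjoint from $D$; the top cell of $C$ meets $D$, so $C'\subset\partial\fll C\flr$. Since $K$ is atomistic, each cell of $K$ is the least upper bound of its atoms, which split into those lying in $C'$ and those lying in $\fll D\flr$; this realizes $|K|$ inside $|C'*D|\subset|\partial C*D|$, a ball of dimension $\le(\dim C-1)+\dim D+1\le 2n$. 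Either way $|K|$ embeds in $S^{2n}$, contradicting Lemma \ref{5.2}(a). So the two-cone structure is exploited through the join, not through any Whitney-type manoeuvre in dimension $2n$; with this substituted for your missing step, your proof is complete and coincides with the paper's.
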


\begin{proof} Let $K$ be the cell complex in question.
Suppose that $K$ is a union of two cells, $K=C\cup D$.
If $C\cap D=\emptyset$, then $K$ embeds in the $2n$-sphere
$\partial C*D\cup D*\partial C$, contradicting Lemma \ref{5.2}.
Else let $C'$ be the union of all cells of $C$ that are disjoint from $D$.
Then $C'\subset\partial C$.
On the other hand, since $K$ is atomistic, it embeds in $C'*D$.
Hence $K$ embeds in the $2n$-ball $\partial C*D$, again contradicting
Lemma \ref{5.2}.
\end{proof}

\subsection{Proof of Main Theorem (beginning)}

\begin{lemma}\label{4.3} Let $B$ be a dichotomial poset.
The following are equivalent:

(i) $|B|$ is homeomorphic to a sphere;

(ii) $B^\flat$ is a combinatorial manifold;

(iii) $B$ a cell complex.
\end{lemma}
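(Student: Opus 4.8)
The backbone of the proof is the self-duality $B\cong B^{*}$ forced by the dichotomial involution. Since $h\colon B\to B$, $\sigma\mapsto h(\sigma)$, is an order-reversing involution, $H$ is an isomorphism $B\to B^{*}$; hence for every $p\in B$ the dual cone $\cel p\cer$ is order-anti-isomorphic to the cone $\fll h(p)\flr$, so $|\partial^{*}\cel p\cer|\cong|\partial\fll h(p)\flr|$. Plugging this into the standard description of a vertex link in a barycentric subdivision, $\lk(\hat p,B^{\flat})\cong(\partial\fll p\flr)^{\flat}*(\partial^{*}\cel p\cer)^{\flat}$, I get that $|\lk(\hat p,B^{\flat})|$ is homeomorphic to the join $|\partial\fll p\flr|*|\partial\fll h(p)\flr|$ for all $p$. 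The whole lemma is read off this identity: (iii) says exactly that every $|\partial\fll p\flr|$ is a sphere, and $B^{\flat}$ being a closed combinatorial manifold amounts, via the join, to every $(\partial\fll p\flr)^{\flat}$ being a combinatorial sphere.

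The implication (ii)$\Rightarrow$(iii) is then immediate: a combinatorial manifold $B^{\flat}$ is closed (no vertex link can be a PL ball, since by the displayed identity such a link would exhibit one of the boundary complexes $(\partial\fll p\flr)^{\flat}$, $(\partial^{*}\cel p\cer)^{\flat}$ as a ball, impossible as these are boundary/coboundary complexes of cones), hence every vertex link is a combinatorial sphere, forcing each $(\partial\fll p\flr)^{\flat}$ to be one; in particular $|\partial\fll p\flr|$ is a topological sphere, i.e.\ $B$ is a cell complex. The implication (iii)$\Rightarrow$(ii) is the substantial one, and it cannot rely on (iii) alone — a double suspension of the Poincaré homology $3$-sphere is a cell complex with $|B|\cong S^{5}$ whose barycentric subdivision is not a combinatorial manifold — so dichotomiality must enter. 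I would induct on $m=\dim B$. For $m\le 4$ every triangulation of a topological $m$-manifold is automatically combinatorial (the link argument uses the $3$-dimensional Poincaré conjecture), so it is enough that $|B|$ be a topological manifold, and this is read off the displayed identity: each $|\partial\fll p\flr|$ is a topological sphere by (iii), hence so is each join $|\lk(\hat p,B^{\flat})|$, hence $B^{\flat}$ is a closed topological $m$-manifold. For $m\ge 5$ one must upgrade "$|\partial\fll p\flr|$ is a topological sphere" to "$(\partial\fll p\flr)^{\flat}$ is a combinatorial sphere": using that in any cell complex every open interval $(q,p)$ has spherical order complex $|(q,p)|\cong S^{\dim p-\dim q-2}$, the vertex links of $(\partial\fll p\flr)^{\flat}$ are joins $(\partial\fll q\flr)^{\flat}*((q,p))^{\flat}$ of strictly lower-dimensional cell-boundary and interval complexes, which — through Theorem~\ref{5.6} and the self-dual subcomplexes $\fll h(v)\flr$ attached to atoms $v$ (these are cones, and $|\fll h(v)\flr\circledast\fll h(v)\flr|\cong|B|$) — carry dichotomial data of dimension $<m$; the inductive hypothesis then makes these links, hence $(\partial\fll p\flr)^{\flat}$, combinatorial spheres, and the displayed identity finishes the argument.

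It remains to bring in (i). Assuming (ii), combinatorial Alexander duality (Theorem~\ref{5.6}, with a Mayer--Vietoris and van Kampen computation on $C^{*}M\x M\cup M\x C^{*}M$ for $M=\fll h(v)\flr$, as in \S\ref{embeddings}) shows $B^{\flat}$ is a simply connected integral homology $m$-sphere; so it is PL homeomorphic to $S^{m}$ for $m\ne 4$ (trivially for $m\le 3$) by the PL Poincaré conjecture, and for $m=4$ it is topologically $S^{4}$ by Freedman's theorem — which is all (i) demands. Conversely, if $|B|\cong S^{m}$ then $|B|$ is a topological manifold, so every $|\lk(\hat p,B^{\flat})|$, and hence by the displayed identity every $|\partial\fll p\flr|$, is a homology sphere; running the same induction on $m$ as above, with "topological sphere" weakened to "homology sphere" throughout and the same combinatorial-Alexander-duality input, promotes the $(\partial\fll p\flr)^{\flat}$ to combinatorial spheres, whence $B^{\flat}$ is a combinatorial manifold.

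The main obstacle is precisely the combinatoriality promotion in the $m\ge 5$ cases of (iii)$\Rightarrow$(ii) and (i)$\Rightarrow$(ii): for a non-maximal cell $p$ one must exhibit an honest dichotomial complex of dimension $<m$ whose barycentric subdivision realizes the order complexes of $\partial\fll p\flr$ and of the intervals $(q,p)$, so that the induction actually bites. The tools for this are the self-dual subcomplexes $\fll h(v)\flr$ (which are cones) and the deleted-(pre)join identity $|K\circledast D(K)|\cong|B|$ of Theorem~\ref{5.6}; everything else — the link formula, the fact that a PL join is a PL ball or sphere iff its factors are, the homology and $\pi_{1}$ computations, and the appeals to the Poincaré and Freedman theorems — is routine.
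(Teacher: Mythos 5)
Your link formula $\lk([\sigma],B^\flat)\cong(\partial\fll\sigma\flr)^\flat*(\partial\fll h(\sigma)\flr)^\flat$ is exactly the paper's starting point, and your (ii)$\Rightarrow$(iii) is essentially the paper's argument (the paper reads off $(\partial\fll\sigma\flr)^\flat$, up to dualization, as the link of a maximal simplex of $\fll h(\sigma)\flr^\flat$). But the rest goes astray because you read ``homeomorphic'' topologically. Under the paper's standing convention all maps between polyhedra are PL, so hypothesis (iii) already says that each $(\partial\fll p\flr)^\flat$ is a \emph{combinatorial} sphere; hence (iii)$\Rightarrow$(ii) is immediate from the link formula (a join of two combinatorial spheres is a combinatorial sphere), and (i)$\Rightarrow$(ii) is trivial, since $B^\flat$ is then a combinatorial triangulation of $S^m$. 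Your double-suspension complex is not dichotomial, so it threatens nothing; and the induction on $m$ you erect for $m\ge 5$ is both unnecessary and, by your own admission (``the main obstacle''), unfinished: the complexes $\partial\fll q\flr$ and the intervals $(q,p)$ are not dichotomial, so your inductive hypothesis never applies to them.

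The implication that genuinely requires work is (ii)$+$(iii)$\Rightarrow$(i), and there your route has a real gap at $m=4$: the PL Poincar\'e conjecture is open in dimension $4$, and Freedman's theorem yields only a topological homeomorphism, which does not establish (i) in the paper's PL sense. This is not a corner case---the dichotomial $4$-spheres carrying the Petersen family are central to the paper. (Your homology-sphere and simple-connectivity computations are also only sketched.) The paper avoids every generalized Poincar\'e conjecture by a direct decomposition: for a top-dimensional cell $\fll\sigma\flr$, the dual cone $\cel h(\sigma)\cer^\flat$ is the combinatorial $m$-ball $pt*(\partial\fll\sigma\flr)^\flat$; the complementary piece collapses onto $\fll\sigma\flr^\flat$ and is therefore a regular neighborhood of a combinatorial ball in the combinatorial manifold $B^\flat$, hence itself a combinatorial ball; and $B^\flat$, being a union of two combinatorial $m$-balls along their common boundary sphere, is a combinatorial sphere by the Alexander trick. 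You would need to replace your Poincar\'e/Freedman step by an argument of this kind for the lemma to hold in all dimensions.
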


\begin{proof} Let $h\:B\to B^*$ be the order-preserving isomorphism in
the definition of a dichotomial poset.
We note that (i)\imp(ii) is obvious.
\end{proof}

\begin{proof}[(iii)\imp(ii)] Let $\sigma\in B$.
Then $h(\cel\sigma\cer)=\fll h(\sigma)\flr^*$, so
$\partial^*\cel\sigma\cer\simeq(\partial\fll h(\sigma)\flr)^*$.
Let $[\sigma]\in B^\flat$ be the chain consisting of $\sigma$ only.
Then
$\lk([\sigma],B^\flat)\simeq
(\partial\fll\sigma\flr+\partial^*\cel\sigma\cer)^\flat$.
This is in turn isomorphic to
$(\partial\fll\sigma\flr)^\flat*(\partial^*\cel\sigma\cer)^\flat\simeq
(\partial\fll\sigma\flr)^\flat*(\partial\fll h(\sigma)\flr)^\flat$,
which is a combinatorial sphere.
Thus $B^\flat$ is a combinatorial manifold.
\end{proof}

\begin{proof}[(ii)\imp(iii)]
If $\sigma\in B$, and $\fll\tau\flr$ is a maximal simplex in
$\fll h(\sigma)\flr^\flat$, then the combinatorial sphere
$\lk(\tau,B^\flat)=(\partial\cel h(\sigma)\cer)^\flat$ is the barycentric
subdivision of $(\partial\cel h(\sigma)\cer)^*=h(\partial\fll\sigma\flr)$.
Hence $|\partial\fll\sigma\flr|$ is a sphere.
\end{proof}

\begin{proof}[(iii)$+$(ii)\imp(i)]
Suppose that $B^\flat$ is a combinatorial $m$-manifold, and let
$\fll\sigma\flr$ be an $m$-cell of $B$.
Then $h(\fll\sigma\flr)=\cel h(\sigma)\cer^*$.

Let $\tau$ be a $d$-cell of $B$ not contained in $\fll\sigma\flr$.
Then $h(\sigma)\in\fll\tau\flr$.
The intersection $\cel h(\sigma)\cer\cap\fll\tau\flr$ is the interval
$[\sigma,\tau]=\{\sigma\}+P+\{\tau\}$, where
$S=\lk(\sigma,\partial\fll\tau\flr)$.
Hence $[\sigma,\tau]^\flat\simeq\{[\sigma]\}*S^\flat*\{[\tau]\}$, where
$S^\flat\simeq\lk([\sigma],(\partial\fll\tau\flr)^\flat)$ is a combinatorial
$(d-2)$-sphere since $(\partial\fll\tau\flr)^\flat$ is a combinatorial
$(d-1)$-sphere and so in particular a combinatorial $(d-1)$-manifold.
Thus $\cel h(\sigma)\cer^\flat\cap\fll\tau\flr^\flat$ is a combinatorial $d$-ball
which meets $(\partial\fll\tau\flr)^\flat$ in a combinatorial $(d-1)$-ball.

Let $S$ be the subposet of $B$ consisting of all $\rho\in B$ that lie
neither in $\fll\sigma\flr$ nor in $\cel h(\sigma)\cer$.
Note that $h(S)=S^*$.
Since $B^\flat$ is a combinatorial $m$-manifold and $\fll\sigma\flr^\flat$ is
a combinatorial $m$-ball, $\fll S\flr^\flat$ is a combinatorial $m$-manifold
with two boundary components, $\fll S\flr^\flat\cap\fll\sigma\flr^\flat$ and
$\fll S\flr^\flat\cap\cel h(\sigma)\cer^\flat$.
By the above, $\fll S\flr^\flat$ meets $\fll\tau\flr^\flat$ in a
combinatorial $d$-ball $B_\tau$ meeting $(\partial\fll\tau\flr)^\flat$ in
a combinatorial $(d-1)$-ball $D_\tau$.
(We are using the relative combinatorial annulus theorem, which follows
from the uniqueness of regular neighborhoods, see e.g.\ \cite{RS}).

Collapsing each $|B_\tau|$ onto $|D_\tau|$ in an order of decreasing dimension, we
obtain a collapse of $|\fll S\flr|$ onto $|\fll S\flr\cap\fll\sigma\flr|$.
Hence $C\bydef\fll S\flr^\flat\cup\fll\sigma\flr^\flat$ is a regular neighborhood of
$\fll\sigma\flr^\flat$, and therefore $C$ is a combinatorial ball.
Thus $B^\flat$ is the union of two combinatorial balls $\cel h(\sigma)\cer^\flat$
and $C$ along an isomorphism of their boundaries.
By the Alexander trick, it must be a combinatorial sphere.
\end{proof}

\begin{lemma}\label{3.3b} If $K$ is a poset and $L$ is its $h$-minor, there exists
a $\Z/2$-map $H\:|L\circledast L|\to |K\circledast K|$.
Moreover, if $K$ is atomistic and $L$ is its proper $h$-minor, then $H$ is
non-surjective.
\end{lemma}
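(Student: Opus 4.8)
The plan is to split $H$ as
$$|L\circledast L|\xr{\bar s}|M\circledast M|\hookrightarrow|K\circledast K|,$$
where $M\incl K$ is the subcomplex and $f\:M\to L$ the order preserving map with $|f|$ cell-like witnessing that $L$ is an $h$-minor of $K$. The second arrow is harmless: since $M$ is a subcomplex of $K$, $C^*M$ is a subcomplex of $C^*K$, the disjointness condition defining $\otimes$ is computed identically in both, so $M\circledast M$ is a $\Z/2$-subcomplex of $K\circledast K$ and the inclusion of order complexes is a $\Z/2$-inclusion (and proper whenever $M\ne K$, e.g.\ because $(\sigma,\hat 0)\in K\circledast K$ for any cell $\sigma$ of $K$). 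So everything reduces to producing a $\Z/2$-map $\bar s\:|L\circledast L|\to|M\circledast M|$ out of $f$. Note there is no natural \emph{forward} map $M\circledast M\to L\circledast L$, because $f$ may send a pair of disjoint cells to a pair of cells sharing a vertex; the map must genuinely run backwards, and this is where cell-likeness is used.

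First I would construct a (non-equivariant) map $s\:|L|\to|M|$ that is \emph{carrier-respecting}: $s\bigl(|\fll\rho\flr|\bigr)\incl|f^{-1}(\fll\rho\flr)|$ for every cell $\rho$ of $L$. The key point is that $|f^{-1}(\fll\rho\flr)|$ is contractible — it is the point-inverse under the cell-like map $|f|$ of the contractible polyhedron $|\fll\rho\flr|$, hence cell-like, hence (being an ANR) contractible. Build $s$ by induction over the cells of $L$: pick $s$ on each vertex $v$ to be any point of the nonempty set $|f^{-1}(v)|$; and if $s$ is already defined on $|\partial\fll\rho\flr|$ with image in $|f^{-1}(\partial\fll\rho\flr)|\incl|f^{-1}(\fll\rho\flr)|$, extend over the cone $|\fll\rho\flr|$ using contractibility of $|f^{-1}(\fll\rho\flr)|$. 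Now set $\bar s\bigl(t\,a\oplus(1-t)\,b\bigr)=t\,s(a)\oplus(1-t)\,s(b)$ on join coordinates. The only nontrivial check is that $\bar s$ lands in $|M\circledast M|$: if $a,b$ lie in disjoint cells of $L$, take $\rho_1,\rho_2$ to be their carriers (still disjoint); then $s(a)\in|f^{-1}(\fll\rho_1\flr)|$, so its carrier $\sigma_1$ in $M$ satisfies $f(\sigma_1)\le\rho_1$, and likewise $f(\sigma_2)\le\rho_2$; if $\sigma_1,\sigma_2$ shared a face $\mu$, then $f(\mu)$ would be a common face of $\rho_1$ and $\rho_2$, impossible; hence $\sigma_1,\sigma_2$ are disjoint and $\bar s(t\,a\oplus(1-t)\,b)\in|\fll\sigma_1\flr|*|\fll\sigma_2\flr|\incl|M\circledast M|$. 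Continuity is clear and equivariance is immediate from the symmetric form of the formula.

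For the ``moreover'' part I would show that, $L$ being a \emph{proper} $h$-minor, the image of $\bar s$ (hence of $H$) omits the relative interior of some cell $(\sigma,\hat 0)$ of $K\circledast K$. If $M\subsetneq K$ this is immediate: any $\sigma\in K\but M$ gives such a cell, and $H$ factors through $|M\circledast M|\subsetneq|K\circledast K|$. If $M=K$, then $f$ is not an isomorphism, and the claim is that the extensions above can be performed so that $s(|L|)$ misses $\st|\sigma|\but|\partial\sigma|$ for a suitable $\sigma$: when $f$ raises the dimension of a cell (the subdivision situation), $s(|L|)$ meets $|\sigma|\but|\partial\sigma|$ in a set of dimension $\le\dim f(\sigma)<\dim\sigma$ once the extensions are taken in general position; when $f$ lowers the dimension of a cell (the collapse situation), one routes the relevant extensions around a vertex of $\sigma$, using that deleting a vertex leaves the contractible polyhedron $|f^{-1}(\fll\rho\flr)|$ sufficiently highly connected — here atomicity of $K$ enters, ensuring the omitted cell is actually detected in $K\circledast K$. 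I expect this last point — steering the carrier-respecting section away from a chosen cell in the collapse case while keeping it continuous — to be the main technical obstacle; the construction of $\bar s$ is routine obstruction theory over the skeleta of $L$, and the proper-subcomplex half of the non-surjectivity statement is essentially free.
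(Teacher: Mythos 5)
Your construction of the equivariant map is essentially the paper's: reduce to the two cases (honest subcomplex; surjective order-preserving $f$ with $|f|$ cell-like), build a carrier-respecting map $s\:|L|\to|M|$ by induction over cones using contractibility of $|f^{-1}(\fll\rho\flr)|$, and join it with itself, the image landing in the deleted join because preimages of disjoint cones are disjoint. That half is fine.

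The ``moreover'' part has a genuine gap in the one case that matters, namely $M=K$ with $f$ non-injective. Your plan --- put the inductive extensions in general position and ``route around'' a vertex so that $s(|L|)$ misses an open star, with the collapse case flagged as the main technical obstacle --- is not carried out, and it is also aiming at the wrong target: you do not need the image to miss an open cell, only a single point, and no modification of $s$ is needed at all. The paper's argument is purely combinatorial and uses only the carrier-respecting property you already have: the image of $s*s$ is contained in the union of the sets $|f^{-1}(\fll\rho_1\flr)|*|f^{-1}(\fll\rho_2\flr)|$ over disjoint pairs $\rho_1,\rho_2$ of $L$, and the two factors of each such join have \emph{disjoint $f$-images}. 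Since $f$ is order-preserving and non-injective, there are distinct $\sigma,\tau\in K$ with $f(\sigma)=f(\tau)$, and since $K$ is atomistic one may take $\sigma,\tau$ to be atoms; then $(\sigma,\tau)$ and $(\tau,\sigma)$ are genuine cells of $K\circledast K$ (disjoint cones, as both are singletons), yet the point $|\{\sigma\}\x\{\tau\}\cup\{\tau\}\x\{\sigma\}|$ cannot lie in any $|f^{-1}(\fll\rho_1\flr)|*|f^{-1}(\fll\rho_2\flr)|$ with $\rho_1,\rho_2$ disjoint, because $f(\sigma)=f(\tau)$ would then lie in $\fll\rho_1\flr\cap\fll\rho_2\flr$. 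This is where atomisticity is actually used --- to guarantee the omitted pair is a cell of the deleted join --- not to control connectivity of preimages with a vertex deleted. You should replace your general-position sketch with this observation; as written, the non-surjectivity claim is unproved.
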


\begin{proof} It suffices to consider the case where $K$ maps onto $L$
by a non-injective order-preserving map $f$ such that $|f|$ is cell-like.

Let us define a map $g\:|L|\to|K|$ such that $g(|C|)\subset |f^{-1}(C)|$ for each
cone $C$ of $L$.
Assume that $g$ is defined on cones of dimension $<d$, and let $C$ be a
$d$-dimensional cone of $L$.
Since $f$ is cell-like, $|f^{-1}(C)|$ is contractible.
We then define $g$ on $|C|$ by mapping it via some null-homotopy of
$g(|\partial C|)$ in $|f^{-1}(C)|$.

Since $f$ is order-preserving, $f^{-1}(C)$ is a subcomplex of $K$ for each cone $C$
of $L$.
Since $g(|C|)\subset |f^{-1}(C)|$ for each cone $C$ of $L$, and $f$-preimages of
disjoint cones are disjoint, $g$ sends every disjoint pair of cones to a
pair of disjoint unions of cones.
Hence $(g*g)(|L\circledast L|)\subset |K\circledast K|$.

To prove the non-surjectivity, note that $g$ in fact sends every disjoint pair of
cones to a pair of unions of cones whose $|f|$-images are disjoint.
Since $f$ is order-preserving and non-injective, there exist distinct
$\sigma,\tau\in K$ whose $f$-images are the same.
Since $K$ is atomistic, $\sigma$ and $\tau$ may be assumed to be its atoms.
Then $(g*g)(|L\circledast L|)$ does not contain
$|\{\sigma\}\x\{\tau\}\cup\{\tau\}\x\{\sigma\}|$, which lies in
$|K\circledast K|$.
\end{proof}

\begin{lemma}[{\cite[Lemma 4.5]{M2}}]\label{quotient}
Let $K$ be an $n$-dimensional poset.
Then there exists a $\Z/2$-map $p\:|K\circledast K|\to S^0*|K\otimes K|$ such that
$p$ and $p/t$ induce isomorphisms on $i$-cohomology with arbitrary (possibly
twisted) coefficients for $i\ge n+2$ and an epimorphism for $i=n+1$.
\end{lemma}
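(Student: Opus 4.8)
The plan is to realize the $\Z/2$-map $p$ geometrically from the combinatorial relationship between the deleted join $K\circledast K$ and the deleted product $K\otimes K$. Recall $K\circledast K = C^*K\otimes C^*K$, and $C^*K = \{\hat 0\}+K$. Thus a point of $|K\circledast K|$ lies on a join-type parametrization: it is described by a pair of cones $(\fll\sigma\flr,\fll\tau\flr)$ of $C^*K$ with disjoint closures, together with a join parameter $t\in[0,1]$ recording how much weight lies in the first factor versus the second. The three qualitatively distinct strata are: both $\sigma,\tau\in K$ (the ``interior'' piece, which maps to $|K\otimes K|$); $\sigma=\hat 0$, $\tau\in K$ (a cone on part of the second factor); and $\tau=\hat 0$, $\sigma\in K$. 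The map $p$ should collapse the first stratum onto $|K\otimes K|$ sitting as the ``equator'' $\emptyset * |K\otimes K| * \emptyset$ inside $S^0 * |K\otimes K|$, and send the two cone-strata to the two cone points of $S^0 * |K\otimes K|$, interpolating via the join parameter. Concretely, first I would define $p$ on $|K\circledast K|$ by sending $((1-t)x + t\,\hat0_{\text{coord}}, \ldots)$ — i.e. using the $t$-coordinate measuring proximity to the added bottom elements — to the corresponding point of the suspension-type join $S^0*|K\otimes K|$, and check $\Z/2$-equivariance (the factor exchange swaps the two added $\hat0$'s, hence swaps the two cone points of $S^0$, which is exactly the antipodal map on $S^0$).

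Next I would compute the cohomological effect. The key observation is that $S^0*|K\otimes K|$ is the suspension $\Sigma|K\otimes K|$, so $H^i(S^0*|K\otimes K|) \cong \tilde H^{i-1}(|K\otimes K|)$, while $|K\circledast K|$ relates to this suspension by filling in the cone strata with genuine (contractible) cones rather than collapsed cones. More precisely, $|K\circledast K|$ deformation-retracts-like maps onto $S^0*|K\otimes K|$ with homotopy fibers that are cones on subcomplexes of $|K\otimes K|$ — but those pieces where $p$ is not a homotopy equivalence are exactly the strata $\{\hat 0\}\times K$ and $K\times\{\hat 0\}$, whose preimages are cones (hence contractible) collapsed to points. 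So $p$ is obtained by collapsing two contractible subpolyhedra, and by the long exact sequence of the pair the induced map on $H^i$ is an isomorphism once the reduced cohomology of those contractible pieces and of the relevant quotient vanishes in the appropriate range. The dimension bookkeeping: $K$ is $n$-dimensional, so $K\otimes K$ and $K\circledast K$ are $(2n+1)$-dimensional; the pieces being collapsed are cones on $\le 2n$-dimensional complexes, contributing nothing to $H^i$ for $i\ge n+2$ after the Mayer–Vietoris/relative sequence is assembled — the slack $i\ge n+2$ rather than $i\ge n+1$ comes precisely from the place where the excised pieces are only $(n+1)$-dimensional on the nose, giving an epimorphism at $i=n+1$ instead of an iso.

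For the quotient statement about $p/t$: the involution $t$ (factor exchange) acts on $S^0*|K\otimes K|$ by simultaneously swapping the $S^0$ and acting on $|K\otimes K|$; the map $p$ is equivariant by construction, so it descends to $p/t\colon |K\circledast K|/t \to (S^0*|K\otimes K|)/t$, and the same collapsing argument applies with $\Z/2$-coefficient systems (the local coefficients twisted by the double cover of the quotient by its free part). The cohomology computation is formally identical once one notes that collapsing a $t$-invariant contractible subpolyhedron induces, downstairs, collapsing a contractible subpolyhedron of the quotient — here one uses that the cone strata can be chosen $t$-equivariantly and that a $t$-invariant cone has contractible quotient. I would invoke the universal-coefficient / local-coefficient long exact sequences in the standard way.

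The main obstacle I anticipate is getting the dimension count exactly right at the borderline degree — pinning down why $i=n+1$ yields only an epimorphism and not an isomorphism, and confirming nothing goes wrong in the twisted-coefficient version. This requires carefully identifying the pair $(|K\circledast K|,\ \text{collapsed strata})$ and checking that the collapsed subpolyhedron, while contractible, contributes a nonzero relative group exactly in degree $n+1$ (coming from the $(n+1)$-skeleton-level attaching data of the cones on $n$-dimensional pieces). Everything else — equivariance, the shape of $p$, the reduction to collapsing contractible subcomplexes — is routine once the stratification of $|K\circledast K|$ by the ``number of added bottom elements'' is written down explicitly. Since this lemma is quoted from \cite[Lemma 4.5]{M2}, I would in practice cite that source for the verification and only sketch the construction here; the reader wanting details is referred there.
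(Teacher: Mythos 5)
Your map $p$ is the same one the paper uses (the quotient collapsing $|K*\emptyset|$ and $|\emptyset*K|$ to the two suspension points), and your equivariance check is correct. But there is a genuine error in the cohomological step: the two collapsed subpolyhedra are \emph{not} contractible cones. The subposet $K*\emptyset\subset K\circledast K$ consists of the pairs $(\sigma,\hat 0)$ and is isomorphic to $K$ itself, so each collapsed piece is a copy of $|K|$ --- an arbitrary $n$-dimensional polyhedron. If the collapsed pieces really were contractible, $p$ would induce isomorphisms on cohomology in \emph{all} degrees, and the restriction to $i\ge n+2$ with only an epimorphism at $i=n+1$ would be vacuous; your own closing paragraph, which asks the contractible pieces to ``contribute a nonzero relative group exactly in degree $n+1$'', is internally inconsistent for the same reason (collapsing a contractible subpolyhedron contributes nothing in any degree).

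The correct bookkeeping is driven by the fact that the collapsed set $A\bydef|K*\emptyset|\sqcup|\emptyset*K|\cong|K|\sqcup|K|$ has dimension $n$: comparing the long exact sequences of the pairs $(|K\circledast K|,A)$ and $(S^0*|K\otimes K|,S^0)$, one needs $H^{i-1}(A)=H^i(A)=0$ for an isomorphism (hence $i\ge n+2$) and only $H^i(A)=0$ for an epimorphism (hence $i=n+1$). The paper packages exactly this by observing that the relative mapping cylinder of $p$ collapses onto the $(n+1)$-dimensional pair $(|CK\sqcup CK|,\,|K\sqcup K|)$. Your treatment of $p/t$ and twisted coefficients is fine in outline once this is repaired, since the collapsed set is $t$-invariant and the same exact-sequence argument runs with local coefficients.
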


\begin{proof} The map $p$ is the quotient map shrinking $|K*\emptyset|$ and
$|\emptyset*K|$ to points.
The relative mapping cylinder of $p$ collapses onto the pair of $(n+1)$-polyhedra
$(|CK\sqcup CK|,|K\sqcup K|)$, and the assertion follows.
\end{proof}

\begin{lemma}\label{is-linkless}
Let $K$ be an $n$-dimensional cell complex, $n\ge 2$.
An embedding $g\:|K|\emb B^{2n+1}$ is linkless iff for every pair of disjoint
subcomplexes $L$ and $M$ of $K$, the map $\tilde g|_{|L\x M|}\:|L\x M|\to S^{2n}$
is null-homotopic.
\end{lemma}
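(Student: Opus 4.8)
The plan is to establish the two implications by quite different means: the direction ``$g$ linkless $\Rightarrow$ the maps are null-homotopic'' is soft, while the converse carries the real content.

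Suppose first that $g$ is linkless and that $L,M$ are disjoint subcomplexes of $K$. Then $|L|$ and $|M|$ are disjoint (each point of $|K|$ has a unique carrier cell, which lies in every subcomplex containing that point), so $g(|L|)$ and $g(|M|)$ are disjoint closed subpolyhedra of $g(|K|)$; by hypothesis one of them, say $g(|L|)$, lies in a $(2n+1)$-ball $D$ disjoint from the other. Contracting $D$ to an interior point $p$ gives a homotopy $g_t$ of $g|_{|L|}$ to the constant map at $p$ through maps into $D$, hence missing $g(|M|)$; feeding it into $\tilde g|_{|L|\x|M|}$ via $(x,y)\mapsto\bigl(g_t(x)-g(y)\bigr)/\|g_t(x)-g(y)\|$ produces a homotopy to a map independent of $x$, i.e.\ one factoring through the projection $|L|\x|M|\to|M|$. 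Since $\dim|M|\le n<2n$ this map is null-homotopic, hence so is $\tilde g|_{|L|\x|M|}$. (If instead $g(|M|)\subset D$, factor through $|L|$.)

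For the converse I would argue by engulfing. Let $P,P'$ be disjoint closed subpolyhedra of $g(|K|)$ and put $A=g^{-1}(P)$, $A'=g^{-1}(P')$. The open $(2n+1)$-manifold $V\bydef\R^{2n+1}\but P'$ is $(n-1)$-connected: $\tilde H_i(V)\cong\tilde H^{2n-i}(P')=0$ for $i\le n-1$ since $\dim P'\le n$, and $\pi_1(V)=0$ because $n\ge2$ lets one push $2$-disks off $P'$ in general position. As $\dim P\le n$, so that the codimension of $P$ in $V$ is $\ge n+1\ge3$, a standard engulfing theorem (cf.\ \cite{RS}) will place $P$ in a $(2n+1)$-ball $D\subset V$ once the homological obstruction $H_n(P)\to H_n(V)$ vanishes, and then $D\supset P$ with $D\cap P'=\emptyset$, establishing linklessness. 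To see the vanishing I would use the elementary fact that, because $|K|$ is $n$-dimensional, the support of every $n$-cycle of $|K|$ is automatically a \emph{subcomplex} of $K$ (an $n$-simplex of a fine triangulation occurring in an $n$-cycle forces, by cancellation of the interior $(n-1)$-faces of the surrounding $n$-cell of $K$, that whole $n$-cell to occur). Hence $H_n(P)\cong H_n(A)$ is generated by classes $[z]$ whose supports $|z|$ are subcomplexes of $K$ disjoint from $A'$; and under Alexander duality $H_n(V)\cong\tilde H^n(P')$, so that the component of $[g_*z]$ detected by pairing with $H_n(P')$ is the linking form $[w]\mapsto\mathrm{lk}(g_*z,g_*w)$, in which $|w|$ may likewise be taken to be a subcomplex of $K$ disjoint from $|z|$; this vanishes by the hypothesis applied to the disjoint subcomplexes $|z|$ and $|w|$ of $K$.

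The step I expect to be the main obstacle is the remaining part of the vanishing of $H_n(A)\to H_n(V)$, namely its component in the torsion summand $\mathrm{Ext}\bigl(H_{n-1}(P'),\Z\bigr)$ of $\tilde H^n(P')$: this is a ``torsion linking'' of $g_*z$ with torsion $(n-1)$-cycles of $P'$, whose supports need \emph{not} be subcomplexes of $K$, so the cellular-support trick fails and the hypothesis cannot be fed in directly. I would attempt to handle it by passing to a subdivision $K'$ of $K$ in which $A'$ (and hence all its $(n-1)$-cycles) becomes a subcomplex, together with a proof that the null-homotopy hypothesis for $K$ is inherited by $K'$; the subtlety there is that the naive reduction---restricting known null-homotopies along a cover of $|L'|\x|M'|$ by products of closed cells of $K$---breaks down precisely when two disjoint subpolyhedra lie inside one cell of $K$, so that the relevant product of closed cells contains the diagonal, off of which $\tilde g$ is not even defined. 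A secondary point is to match exactly the homological hypothesis of the engulfing theorem one invokes to the full vanishing of the obstruction class $\tilde g^*(\iota)\in H^{2n}(|A|\x|A'|;\Z)$ (with $\iota\in H^{2n}(S^{2n})$ a generator).
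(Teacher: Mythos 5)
Your first paragraph (linkless $\Rightarrow$ null-homotopic) is correct; the paper does not even write that direction out. The converse is where the content is, and the gap you flag at the end is genuine and is precisely the hard point of the lemma. The obstruction to engulfing $P$ into a ball in $V=S^{2n+1}\but P'$ is not merely the vanishing of $H_n(P)\to H_n(V)$ but the full primary obstruction in $H^n(P;\pi_n(V))\cong H^n(P)\otimes\tilde H^n(P')\cong H^{2n}(P\x P')$, i.e.\ the class $\tilde g^*(\iota)$ itself. Besides the $\mathrm{Hom}(H_n(P),\Z)\otimes\mathrm{Hom}(H_n(P'),\Z)$ summand, which your linking-number argument does kill, it has summands involving $\Ext(H_{n-1}(P),\Z)$ and $\Ext(H_{n-1}(P'),\Z)$ (torsion linking); $H_{n-1}$ of a subpolyhedron can carry torsion invisible to every subcomplex of $K$, its cycles need not have cellular supports, and so this part of the obstruction is simply not reached by your method. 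Your proposed repair by subdivision fails for the reason you yourself give.

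The device you are missing is the paper's puncturing trick. Given disjoint subpolyhedra $P$ and $Q$, remove from $|K|$ an interior point (off $P$) of each $n$-cell $|C|$ with $|C|\not\subset P$; the resulting polyhedron contains $P$ and deformation retracts onto $|L|\cup|K^{(n-1)}|$, where $L$ is the maximal subcomplex of $K$ with $|L|\subset P$, and since the relative cohomology of the pair vanishes above dimension $n$, the restriction $H^n(|L|\cup|K^{(n-1)}|)\to H^n(P)$ is \emph{onto}. This is how all of $H^n(P)$ --- torsion included --- gets expressed through subcomplex data, and, combined with the same for $Q$, it is the mechanism by which the paper transfers the hypothesis on disjoint subcomplexes to the null-homotopy of $\tilde g|_{P\x Q}$. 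The paper then concludes not by engulfing but by the Haefliger--Weber isotopy criterion applied to $P\sqcup Q$: once $\tilde g|_{P\x Q}$ is null-homotopic, $g|_{P\sqcup Q}$ is equivalent to an embedding taking $P$ and $Q$ into disjoint balls (this is where $n\ge 2$ is used). Your engulfing endgame would also serve once the full obstruction is killed; the defect of the proposal is only the unhandled torsion part, but that is the crux.
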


\begin{proof}
Let $P$ and $Q$ be disjoint subpolyhedra of $|K|$.
Let $P'$ be obtained by puncturing $K$ in some interior point of each $n$-cell $C$
of $K$ such that $|C|\not\subset P$; define $Q'$ similarly.
Then $P'$ deformation retracts onto $|L|\cup|K^{(n-1)}|$ and $Q'$ deformation
retracts onto $|M|\cup |K^{(n-1)}|$, where $L$ and $M$ are the maximal
subcomplexes of $K$ such that $|L|\subset P$ and $|M|\subset Q$.
Now the hypothesis implies that $\tilde g|_{P'\x Q'}$ is null-homotopic, and
therefore $\tilde g|_{P\x Q}$ is null-homotopic.
Then by the Haefliger--Weber criterion \cite{We}, $g|_{P\sqcup Q}$ is equivalent
to the embedding $h\:P\sqcup Q\emb B^{2n+1}$, obtained by combining $e_1g|_P$
and $e_2g|_Q$, where $e_1,e_2\:B^{2n+1}\to B^{2n+1}$ are embeddings with disjoint
images.
\end{proof}

\begin{lemma}\label{all-linkless} Let $K$ be an $n$-dimensional cell complex.
If $H^{2n+1}(|K\circledast K|)$ is cyclic, then every embedding of $|K|$
in $S^{2n+1}$ is linkless.
\end{lemma}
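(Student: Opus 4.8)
The plan is to deduce the statement from Lemma~\ref{is-linkless}, the point being that under the cyclicity hypothesis the relevant obstruction groups vanish outright.

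First, since $\dim|K|=n<2n+1$, any embedding of $|K|$ in $S^{2n+1}$ misses a point and so may be viewed as $g\:|K|\emb B^{2n+1}$. Assume $n\ge 2$ (for $n=1$ the argument of the next paragraph, applied to a pair of disjoint circuits, shows the hypothesis forces $K$ to contain no two disjoint circuits, and then every embedding of $|K|$ in $S^3$ is linkless because a subcomplex containing no circuit has a ball regular neighborhood disjoint from anything disjoint from it; the case $n=0$ is similar). By Lemma~\ref{is-linkless}, $g$ is linkless iff for every pair of disjoint subcomplexes $L,M$ of $K$ the map $\tilde g|_{|L\x M|}\:|L\x M|\to S^{2n}$ is null-homotopic. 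Since $\dim|L\x M|\le 2n$, Hopf's classification theorem gives $[|L\x M|,S^{2n}]\cong H^{2n}(|L\x M|)$, so it suffices to prove that $H^{2n}(|L\x M|)=0$ for every disjoint pair $L,M$.

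To prove this, observe that $L\cap M=\emptyset$ makes the "deleted" condition vacuous on the cells involved, so $L\otimes M=L\x M$ and $M\otimes L=M\x L$ are subcomplexes of $K\otimes K$; they are disjoint (a common cell would be a cell of $L\cap M$), their union is invariant under the factor-exchanging involution $t$, and $t$ carries $M\x L$ isomorphically onto $L\x M$. Because $\dim|K\otimes K|\le 2n$, restriction is an epimorphism
$$H^{2n}(|K\otimes K|)\epi H^{2n}(|L\x M|\sqcup|M\x L|)\cong H^{2n}(|L\x M|)\oplus H^{2n}(|M\x L|).$$
By Lemma~\ref{quotient} applied to $K$ (here $2n+1\ge n+2$), the map $p\:|K\circledast K|\to S^0*|K\otimes K|$ induces an isomorphism $H^{2n+1}(|K\circledast K|)\cong H^{2n+1}(S^0*|K\otimes K|)=\widetilde H^{2n}(|K\otimes K|)=H^{2n}(|K\otimes K|)$, so the hypothesis says $H^{2n}(|K\otimes K|)$ is cyclic. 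Writing $A=H^{2n}(|L\x M|)$ and using the isomorphism $H^{2n}(|M\x L|)\cong A$ induced by $t$, the displayed epimorphism exhibits $A\oplus A$ as a quotient of a cyclic group; hence $A\oplus A$ is cyclic, and a finitely generated abelian group $A$ with $A\oplus A$ cyclic must be $0$ (the rank is $0$, so $A$ is finite, and a finite cyclic group cannot contain the two distinct order-$|A|$ subgroups $A\oplus 0$ and $0\oplus A$ unless $|A|=1$). Thus $A=0$, as required, and $g$ is linkless.

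The only place the hypothesis enters is the final observation that a cyclic group cannot surject onto $A\oplus A$ unless $A=0$, combined with the identification $H^{2n+1}(|K\circledast K|)\cong H^{2n}(|K\otimes K|)$ coming from Lemma~\ref{quotient}; this, rather than any delicate analysis of individual links or linking numbers, is where the work sits. The points requiring care are routine but essential: (i) checking that $L\otimes M$ and $M\otimes L$ really are \emph{disjoint} subcomplexes of $K\otimes K$ whenever $L\cap M=\varnothing$, so that restriction lands in the indicated direct sum and $t$ exchanges the summands; (ii) the degree conditions in Lemma~\ref{quotient}, which is why the argument runs through $|K\otimes K|$ rather than an ad hoc suspension and why $n\le 1$ is handled separately; and (iii) that the appeal to Lemma~\ref{is-linkless} is legitimate only for $n\ge 2$.
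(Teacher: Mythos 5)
Your proof is correct and follows essentially the same route as the paper's: both reduce to showing $H^{2n}(|L\x M|)=0$ via Lemma \ref{is-linkless} and the Hopf classification theorem, both exhibit $A\oplus A\cong H^{2n}(|L\x M\cup M\x L|)$ as an epimorphic image of the cyclic group $H^{2n}(|K\otimes K|)$ supplied by Lemma \ref{quotient}, and both dispose of $n=1$ by noting that the same computation forces one of any two disjoint subpolyhedra to be a forest. The extra care you take with the disjointness of $L\x M$ and $M\x L$ and with the cyclic-group argument matches what the paper leaves implicit.
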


\begin{proof}
Let $L$ and $M$ be disjoint subcomplexes of $K$, and let $G=H^{2n}(|L\x M|)$.
Since $L$ and $M$ are disjoint, $H^{2n}(|L\x M\cup M\x L|)$ is isomorphic to
$G\oplus G$ and also is an epimorphic image of $H^{2n}(|K\otimes K|)$.
The latter group is cyclic by Lemma \ref{quotient} (as long as $n>0$), so $G$
must be zero.
If $n\ge 2$, the assertion follows from Lemma \ref{is-linkless} and the
Hopf classification theorem.
For $n=1$, the proof of Lemma \ref{is-linkless} works to show that if $P$ and $Q$
are disjoint subpolyhedra of $|K|$, then $H^2(P\x Q)=0$.
But then either $P$ or $Q$ must be a forest, and the assertion follows.
\end{proof}

\begin{lemma}\label{deleted} Let $L$ be an $n$-dimensional cell complex.
Then $H^{2n}(\overline{|L|},|L\otimes L|/t)=0$, where
$\overline{|L|}=(|L|\x |L|\but\Delta_{|L|})/t$.
\end{lemma}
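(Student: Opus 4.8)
The plan is to reconstruct the quotient $\overline{|L|}$ of the topological deleted product from the quotient of the combinatorial one by adjoining the cell‑products $\bar p\x\bar q$ one dimension at a time, and to check that none of the pieces adjoined contributes to $H^{2n}$ of the pair.

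First I would make the comparison geometric. Realize $|L|\x|L|$ as the polyhedron of the product cell complex $L\x L$ (so that $|L\otimes L|$, the order complex of the subposet of those $(p,q)$ with $\fll p\flr\cap\fll q\flr=\emptyset$, is identified with the subcomplex $X\bydef\bigcup_{\bar p\cap\bar q=\emptyset}\bar p\x\bar q$, and the combinatorial diagonal corresponds to $\Delta_{|L|}$). Since $\bar p\cap\bar q=\emptyset$ forces $(\bar p\x\bar q)\cap\Delta=\emptyset$, we get $X\incl M\bydef|L|\x|L|\but\Delta_{|L|}$. For a pair of cells with $\bar p\cap\bar q\ne\emptyset$ set $C_{pq}\bydef(\bar p\x\bar q)\cap M$. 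Filtering by $d=\dim p+\dim q$ gives $X=M_0\incl M_1\incl\dots\incl M_N=M$, where $M_d$ is $M_{d-1}$ with the pieces $C_{pq}$, $\dim p+\dim q=d$, $\bar p\cap\bar q\ne\emptyset$, adjoined. A routine check — using that distinct open cells are disjoint and that $\partial(\bar p\x\bar q)=\bigcup_{(p',q')<(p,q)}\bar{p'}\x\bar{q'}$ — shows $M_{d-1}\cap C_{pq}=\partial C_{pq}\bydef\bigl(\partial(\bar p\x\bar q)\bigr)\cap M$, and that distinct stage‑$d$ pieces meet only inside $M_{d-1}$. The factor‑exchanging involution $t$ preserves this filtration: it swaps $C_{pq}$ with $C_{qp}$ when $p\ne q$ and is free on each $C_{pp}$, which misses the fixed diagonal. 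So the filtration descends to $X/t=M_0/t\incl\dots\incl M_N/t=\overline{|L|}$, and by excision $H^*(M_d/t,M_{d-1}/t)$ is the direct sum of the groups $H^*(C_{pq},\partial C_{pq})$ over unordered pairs $\{p,q\}$, $p\ne q$, of stage $d$, together with the groups $H^*(C_{pp}/t,\partial C_{pp}/t)$, $2\dim p=d$. By the long exact sequences of the triples $(M_d/t,M_{d-1}/t,M_0/t)$ it then suffices to prove that each such relative group vanishes in degree $2n$ (I tacitly assume $n\ge1$; for $n=0$ the lemma is empty, as there are no pieces to adjoin).

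For a piece with $p\ne q$: since distinct open cells are disjoint, $\bar p\cap\bar q$ is a nonempty subpolyhedron of dimension $\le n-1$, and its copy $\Delta\cap(\bar p\x\bar q)\cong\bar p\cap\bar q$ lies in $\partial(\bar p\x\bar q)$. Thus $C_{pq}$ is the ball $\bar p\x\bar q$ with a nonempty compact subset of its boundary sphere removed; the straight‑line contraction to the center stays in $C_{pq}$, so $C_{pq}$ is contractible and the exact sequence of $(C_{pq},\partial C_{pq})$ gives $H^{2n}(C_{pq},\partial C_{pq})\cong H^{2n-1}(\partial C_{pq})$. Here $\partial C_{pq}=S^{d-1}\but(\bar p\cap\bar q)$ is an open $(d-1)$‑manifold, non‑compact since a nonempty closed set was deleted; hence $H^{d-1}(\partial C_{pq})=0$, and when $d<2n$ also $H^{2n-1}(\partial C_{pq})=0$ for dimension reasons — so the group vanishes either way, for any coefficient system. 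For a piece with $p=q$: $C_{pp}=\bar p\x\bar p\but\Delta_{\bar p}$ and its boundary part $\partial C_{pp}$ are, respectively, the complement of an unknotted properly embedded flat $(\dim p)$‑ball in a $2(\dim p)$‑ball, and the complement of an unknotted $(\dim p-1)$‑sphere in a $(2\dim p-1)$‑sphere; each is therefore $t$‑equivariantly homotopy equivalent to a $(\dim p-1)$‑sphere with the antipodal involution (seen via the linear change of variable $(x,y)\mapsto(x-y,\,x+y)$, which carries $\Delta$ to a coordinate subspace). Hence $C_{pp}/t$ and $\partial C_{pp}/t$ each have the homotopy type of $\R P^{\dim p-1}$; since $\dim p-1\le n-1<2n$, the exact sequence of $(C_{pp}/t,\partial C_{pp}/t)$ forces $H^{2n}(C_{pp}/t,\partial C_{pp}/t)=0$. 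Telescoping the triples then yields $H^{2n}(\overline{|L|},|L\otimes L|/t)=0$.

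I expect the main difficulty to be not a single hard step but the bookkeeping: one must verify carefully that $|L\otimes L|$ really is the subcomplex $X$ and sits inside $M$, that the frontier of every adjoined cell‑product $C_{pq}$ is already present at the earlier stages of the filtration, and that $t$ respects the filtration so that excision is legitimate on the quotients. Once that scaffolding is in place, the only genuinely geometric inputs are the two elementary facts invoked above — a ball with a compact subset of its boundary removed is contractible, and a non‑compact manifold has vanishing top cohomology — together with the standard equivariant homotopy type of the complement of an unknotted flat ball, and the lemma follows.
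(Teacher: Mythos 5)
Your proof is correct and is essentially a fully detailed elaboration of the paper's own (one-sentence) argument: the paper merely observes that every maximal cell of $L\x L$ not in $L\otimes L$ meets the diagonal, leaving implicit exactly the filtration-by-punctured-cell-products bookkeeping and the vanishing of top cohomology of the resulting non-compact pieces that you carry out. No gaps; the extra care you take with the diagonal pieces $C_{pp}$ and with arbitrary (twisted) coefficients is welcome, since the lemma is later applied with $\Z$, $\Z/2$ and $\Z_T$ coefficients.
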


\begin{proof}
If $C$ is a maximal cell of $L\x L$ that is not in $L\otimes L$, then $|C|$
meets the diagonal $\Delta_{|L|}$.
\end{proof}

Let us call an $n$-polyhedron $M$ an {\it $n$-circuit}, if $H^n(M\but\{x\})=0$
for every $x\in M$.

\begin{lemma}\label{proper minor} Let $K$ be an $n$-dimensional atomistic cell complex
such that $|K\circledast K|/t$ is a $(2n+1)$-circuit, and let $L$ be a proper
$h$-minor of $K$ that is a cell complex.
Then

(a) $|L|$ embeds in $S^{2n}$ if $n\ne 2$;

(b) every two embeddings (knotless if $n=1$) of $|K|$ in $S^{2n+1}$ become
equivalent when ``restricted'' to $|L|$.
\end{lemma}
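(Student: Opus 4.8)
The plan is to treat (a) and (b) by the same mechanism: in each case there is a single van Kampen--type obstruction class for $L$, living in $H^{2n+1}\bigl(|L\circledast L|/t;\,A\bigr)$ for the appropriate coefficient system $A$ (an Euler-type class of the free double cover $|L\circledast L|\to|L\circledast L|/t$), whose vanishing gives the conclusion. I would then exhibit this class as the pullback, along the map supplied by Lemma~\ref{3.3b}, of the analogous class for $K$; observe that this pullback factors through $|K\circledast K|/t$ minus a point; and invoke the $(2n+1)$-circuit hypothesis to kill it.

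Step~1: set up the classes in the deleted-join form of van Kampen obstruction theory (see \cite{M2}), using Lemma~\ref{quotient} to pass between the deleted product and the deleted join and Lemma~\ref{deleted} to see that the obstructions are carried by the combinatorial deleted product. For (a), the primary obstruction $\theta(L)\in H^{2n+1}(|L\circledast L|/t;A)$ to a $\Z/2$-map $|L\circledast L|\to S^{2n}$ is the complete obstruction to $|L|\emb S^{2n}$ when $n\ne2$, since the van Kampen obstruction is complete in that range (as recalled in the proof of Theorem~\ref{connected}). For (b), given embeddings $g_0,g_1\:|K|\emb S^{2n+1}$ (knotless if $n=1$), cone them to embeddings of $|C^*K|$ in $B^{2n+2}$, take the induced $\Z/2$-maps $|K\circledast K|\to S^{2n+1}$, and let $\theta(g_0,g_1)\in H^{2n+1}(|K\circledast K|/t;A')$ be their obstruction-theoretic difference; since $m=2n+1$ lies in the metastable range, vanishing of the analogous difference class for the induced embeddings $g_0',g_1'$ of $|L|$ — which exist and are well defined up to equivalence by Theorem~\ref{minors embed}, the knotless hypothesis covering $n=1$ via the Haefliger--Weber criterion (cf.\ Lemma~\ref{is-linkless}) — makes $g_0'$ and $g_1'$ equivalent.

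Step~2 (the crux: naturality). Lemma~\ref{3.3b} produces a non-injective order-preserving map $g\:|L|\to|K|$ whose join-square is a non-surjective $\Z/2$-map $H\:|L\circledast L|\to|K\circledast K|$, descending to $\bar H\:|L\circledast L|/t\to|K\circledast K|/t$. Equivariance of $H$ makes the coefficient systems and the defining bundles $\bar H$-pullbacks, hence $\theta(L)=\bar H^*\theta(K)$, where $\theta(K)$ is the Step~1 class for $K$ itself; similarly $\theta(g_0',g_1')=\bar H^*\theta(g_0,g_1)$, once one checks that the deleted-product maps of the induced embeddings $g_i'$ are equivariantly homotopic to $\tilde g_i$ precomposed with the deleted map of $g$, so that their difference class is the $\bar H$-image of $\theta(g_0,g_1)$. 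I expect this identification to be the main difficulty: it couples functoriality of the van Kampen and one-parameter van Kampen obstructions with the construction of the induced embeddings in Theorem~\ref{minors embed}, and needs its own argument at $n=1$, where completeness rests on knotlessness rather than on Haefliger--Weber alone.

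Step~3 (conclusion). By the final clause of Lemma~\ref{3.3b} — $K$ is atomistic and $L$ is a \emph{proper} $h$-minor — the map $H$, hence $\bar H$, is non-surjective, so $\bar H$ factors as $|L\circledast L|/t\to(|K\circledast K|/t)\but\{x\}\hookrightarrow|K\circledast K|/t$ for some $x$. Since $|K\circledast K|/t$ is a $(2n+1)$-circuit, the once-punctured $(2n+1)$-polyhedron $(|K\circledast K|/t)\but\{x\}$ has vanishing top cohomology, and because the relevant coefficient systems $A$, $A'$ are pulled back from the classifying space of the double cover, the circuit condition still forces $H^{2n+1}$ to vanish there with these coefficients. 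Therefore $\theta(K)$ and $\theta(g_0,g_1)$ restrict to $0$ on the punctured space, whence $\theta(L)=\bar H^*0=0$ and $\theta(g_0',g_1')=0$; these yield (a) and (b) respectively.
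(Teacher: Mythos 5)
For $n\ge 2$ your argument is essentially the paper's: the relevant class (for (a), the pullback of the generator of $H^{2n+1}(\R P^\infty;\Z_T)$ under the classifying map of the involution on $|L\circledast L|$; for (b), the difference class $d(\tilde g',\tilde h')=F^*d(\tilde g,\tilde h)$) is pulled back along the non-surjective $\Z/2$-map of Lemma \ref{3.3b}, factors through the punctured $(2n+1)$-circuit $|K\circledast K|/t$, and therefore vanishes; Lemma \ref{quotient}, the Conner--Floyd relation and Lemma \ref{deleted} then convert this into vanishing of the classical van Kampen obstruction on $\overline{|L|}$, exactly as in the paper. (One point you share with the paper but should not wave away: in Step 3 the circuit hypothesis is stated for untwisted $\Z$, while your class lives in twisted coefficients; for part (b) the paper avoids the issue by arguing that $(f/t)^*$ is zero on $H^{2n+1}(\,\cdot\,;\Z)$ of the joins and transferring to $H^{2n}(\,\cdot\,;\Z_T)$ of the products via the Thom isomorphism.)

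The genuine gap is $n=1$, which the statement explicitly includes and which you defer with ``needs its own argument'' without supplying one. The obstruction-theoretic route cannot be repaired there: for graphs in $S^3$ the target dimension $m=3$ is outside the metastable range, so vanishing of the deleted-product difference class does not imply equivalence of embeddings --- that implication is essentially the content of the Robertson--Seymour--Thomas theorem, not a consequence of Haefliger--Weber, and likewise completeness of the van Kampen obstruction for (a) at $n=1$ ultimately rests on Kuratowski. The paper's argument at $n=1$ is different in kind: from $\phi^*(\xi)=0$ it produces a $\Z/2$-map $|L\circledast L|\to S^2$, deduces via Borsuk--Ulam and Lemma \ref{3.3b} that $L$ has no $2$-obstructor as a minor, hence no minor isomorphic to $K_5$ or $K_{3,3}$; Kuratowski--Wagner then gives (a), while for (b) one notes that a knotless embedding of $|K|$ is automatically linkless by Lemma \ref{all-linkless}, that its ``restriction'' to $|L|$ is again linkless and knotless, and invokes Theorem \ref{RST}(a). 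Without some such input your proof of (b) is incomplete precisely in the case for which the knotlessness hypothesis was inserted.
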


It is interesting to compare this with the well-known result that
an $n$-polyhedron $P$ embeds in $S^{2n}$ if $P$ itself is an $n$-circuit
\cite{Sa-1}, \cite[8.2]{M2}.

\begin{proof} By Lemma \ref{3.3b} we have a non-surjective $\Z/2$-map
$f\:|L\circledast L|\to|K\circledast K|$.
Let $t$ denote the factor exchanging involution, let
$\phi\:|L\circledast L|/t\to\R P^\infty$ be its classifying map, and let
$\xi\in H^{2n+1}(\R P^\infty;\,\Z_T)\simeq\Z/2$ be the generator, where
$\Z_T$ denotes the twisted integer coefficients.
Since $\phi$ factors up to homotopy through the non-surjective map $f/t$
into the $(2n+1)$-circuit $|K\circledast K|/t$, we have $\phi^*(\xi)=0$.
By Lemma \ref{quotient}, there exists a $\Z/2$-map
$p\:|L\circledast L|\to S^0*|L\otimes L|$ such that $p/t$ induces an isomorphism
on $(2n+1)$-cohomology, as long as $n>0$.
Since $\phi$ factors up to homotopy as $p$ followed by a classifying map
$\psi\:(S^0*|L\otimes L|)/t\to\R P^\infty$ of $t$, we have $\psi^*(\xi)=0$.
It follows (see \cite[(5.1)]{CF}) that $\chi^*(\zeta)=0$, where
$\chi\:|L\otimes L|/t\to\R P^\infty$ is a classifying map of $t$ and
$\zeta\in H^{2n}(\R P^\infty;\,\Z)\simeq\Z/2$ is the generator.
By Lemma \ref{deleted}, $\chi_+^*(\zeta)=0$, where
$\chi_+\:\overline{|L|}\to\R P^\infty$ is a classifying map of $t$.
Now $\chi_+^*(\zeta)$ is the van Kampen obstruction $\theta(|L|)$; so $|L|$
embeds in $S^{2n}$ as long as $n\ge 3$ (see \cite{M2}).

Let $g,h\:|K|\to\R^{2n+1}$ be the given embeddings and $g',h'\:|L|\to\R^{2n+1}$
their ``restrictions''.
We have
$d(\tilde g',\tilde h')=d(\tilde gF,\tilde hF)=F^*d(\tilde g,\tilde h)$,
where the $\Z/2$-map $F\:|L\otimes L|\to|K\otimes K|$ is defined similarly
to $f$.
Now $(F/t)^*\:H^{2n}(|K\otimes K|/t;\,\Z_T)\to H^{2n}(|L\otimes L|/t;\,\Z_T)$
is the zero map, since it is equivalent to
$(f/t)^*\:H^{2n+1}(|K\circledast K|/t;\,\Z)\to H^{2n+1}(|L\circledast L|/t;\,\Z)$
via Thom-isomorphisms.
So $\tilde g'$ and $\tilde h'$ are equivalent as long as $n\ge 2$.

It remains to consider the case $n=1$.
Since $\phi^(\xi)=0$, there exists a $\Z/2$-map $|L\circledast L|\to S^2$
(this is similar to \cite[proof of 3.2]{M2}).
Hence by the Borsuk--Ulam theorem there exists no $\Z/2$-map
$S^3\to|L\circledast L|$.
Then by Lemma \ref{3.3b}, $L$ has no $2$-obstructor as a minor.
In particular, by the preceding observations, $L$ has no minor isomorphic to
$K_5$ or $K_{3,3}$.

Then by Wagner's version of the Kuratowski theorem, $|L|$ embeds in $S^2$.
Also, given a knotless embedding of $|K|$ in $S^3$, it is also linkless by
Lemma \ref{all-linkless}.
Its ``restriction'' to $|L|$ is also linkless and knotless, using Lemma
\ref{trivial} and the definition of a knotless embedding as an embedding $g$
such that $g(|C|)$ bounds an embedded disk in $S^3$ for every circuit $C$ of
the graph.
Hence the Robertson--Seymour--Thomas theorem implies that every two such
``restrictions'' are equivalent.
\end{proof}

\section{Linkless embeddings}

\subsection{Proof of Main Theorem (conclusion)}

Let $K=(\mathcal K,\le)$ be an $n$-dimensional poset, for instance,
an $n$-dimensional simplicial or cell complex.
If $S$ is a subcomplex of $K$, let $\bar S$ be the subcomplex of $K$
consisting of all cones of $K$ disjoint from $S$.
Note that $\bar{\bar S}=S$.

Consider the set $\lambda_K$ of all subcomplexes $S$ of $K$ such that
$H^n(|S|)\otimes H^n(|\bar S|)$ is nonzero (or, equivalently, not all maps
$|S\x\bar S|\to S^{2n}$ are null-homotopic).
Then $t_K\:S\mapsto\bar S$ is a free involution on $\lambda_K$.

\begin{lemma}\label{4.4'} If $L$ is the $n$-skeleton of a $(2n+2)$-dimensional
dichotomial cell complex $B$, then each $S\in\lambda_L$ is the boundary of
some $(n+1)$-cell of $B$.
\end{lemma}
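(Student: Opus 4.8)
The plan is to use the combinatorial Poincar\'e/Alexander duality carried by the dichotomial involution $h$ to turn the two cohomological conditions defining $\lambda_L$ into statements about cellular $(n+1)$-chains of $B$, and then to isolate the cell $E$ by a support-minimality argument. First I would assemble the duality dictionary. By Lemma \ref{4.3} (equivalently part (ii) of the Main Theorem), $|B|$ is a PL $(2n+2)$-sphere, and the order-preserving isomorphism $h\:B\to B^*$ sends each cell $A$ to a cell $h(A)$ with $\dim A+\dim h(A)=2n+2$ and with vertex set $A(\fll h(A)\flr)=A(B)\but A(\fll A\flr)$; thus $h$ carries $(n+1)$-cells to $(n+1)$-cells, carries $n$-cells to $(n+2)$-cells, and intertwines the cellular boundary operators $\partial_{n+1},\partial_{n+2}$ with the coboundaries $\delta_{n+1},\delta_n$. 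Since $|B|\cong S^{2n+2}$ we have $H^n(|B|)=H^{n+1}(|B|)=0$, and since $|B^{(n)}|$ carries no cell above dimension $n$, the map $\zeta\mapsto\delta\zeta$ induces an isomorphism of $\tilde H^n(|B^{(n)}|)$ with the group of cellular $(n+1)$-cocycles of $B$, which $h$ identifies with the group of cellular $(n+1)$-cycles. Finally, for every subcomplex $T$ of $B^{(n)}$ the pair $(B^{(n)},T)$ has no relative $(n+1)$-cell, so $H^{n+1}(|B^{(n)}|,|T|)=0$ and the restriction $\tilde H^n(|B^{(n)}|)\to\tilde H^n(|T|)$ is onto.

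Next I would extract geometric data from each hypothesis. Using the surjection above together with $\tilde H^n(|S|)\ne0$, choose a class in $\tilde H^n(|B^{(n)}|)$ restricting nontrivially to $|S|$, with a cocycle representative $\zeta$ that vanishes on every $n$-cell not in $S$. Then $z\bydef\delta\zeta$ is a nonzero cellular $(n+1)$-cocycle of $B$ supported on $(n+1)$-cells having an $n$-face in $S$: it is nonzero because otherwise $\zeta$ would be a cocycle of $B$, hence (as $H^n(|B|)=0$) a coboundary, whose class restricts to $0$ on $|S|$. Transporting through $h$ gives a nonzero cellular $(n+1)$-cycle $w=h_*(z)$, which equals $\partial W$ for the $(n+2)$-chain $W=\sum_D\zeta(D)\,h(D)$ summed over the $n$-cells $D$ of $S$; each cell $h(D)$ occurring here has vertex set disjoint from $A(\fll D\flr)\subseteq A(S)$, hence contains all the vertices of $\bar S$ and omits those of $D$. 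The hypothesis $\tilde H^n(|\bar S|)\ne0$ yields, symmetrically, a nonzero cellular $(n+1)$-cocycle and an $(n+2)$-chain $\bar W$ supported on $h$-images of the $n$-cells of $\bar S$, each of whose supporting cells contains all the vertices of $S$; since no cell of $B$ has vertex set all of $A(B)$ (its $h$-image would be the empty ``cell''), the supports of $W$ and $\bar W$ are disjoint and occupy ``complementary'' regions of $B$.

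The crux, and the step I expect to be the main obstacle, is to deduce $S=\partial E$ from this data. My approach would be to take $S$ minimal among the members of $\lambda_L$ (equivalently, to take the associated class in $\tilde H^n(|B^{(n)}|)$ with support as small as possible), and to argue that dichotomiality makes a minimal configuration completely rigid: the cocycle relation $\delta z=0$, the vertex-complementarity $A(\fll h(A)\flr)=A(B)\but A(\fll A\flr)$, and the companion data coming from $\bar S$ should together leave no freedom beyond $z$ being, up to sign, the coboundary associated with a single $(n+1)$-cell $E$, so that $w=\pm\partial E$ and $\partial E\subseteq S$; minimality of $S$---or rerunning the argument with $S$ and $\bar S$ interchanged and comparing vertex sets via $A(S)\sqcup A(\bar S)=A(B)$---should then upgrade this to $\partial E=S$. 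In parallel I would attempt an induction on $n$, exploiting that the links in $B^\flat$ are joins of barycentric subdivisions of the boundary complexes of pairs of complementary cells of $B$ (cf.\ the proof of Lemma \ref{4.3}). Either way, the real content is the combinatorial bookkeeping of which $n$-cycles in $B^{(n)}$ bound a single cell of $B$ and which do not---a subtlety already present for $n=1$, where not every circuit in the $1$-skeleton of a dichotomial $4$-sphere bounds a $2$-cell (for instance the hexagonal circuits of $B_P$ do not).
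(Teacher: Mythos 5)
Your proposal sets up a plausible duality dictionary, but it does not prove the lemma: the entire content of the statement is concentrated in the step you yourself flag as ``the main obstacle,'' and for that step you offer only a hope (``dichotomiality \emph{should} leave no freedom beyond $z$ being the coboundary associated with a single $(n+1)$-cell'') rather than an argument. Knowing that $z$ is a nonzero $(n+1)$-cocycle supported on the $(n+1)$-cells having an $n$-face in $S$ gives essentially no control: in general many $(n+1)$-cells meet $S$, a cocycle supported there need not be ``concentrated'' on one cell, and even if it were, you would still have to rule out $\partial E\subsetneq S$. Your fallback of taking $S$ minimal in $\lambda_L$ also does not address the lemma as stated, which must hold for \emph{every} $S\in\lambda_L$; no reduction from general $S$ to minimal $S$ is indicated. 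So the proposal is a genuinely incomplete sketch, not a proof with a repairable local defect.

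For comparison, the paper's argument avoids cellular (co)chain bookkeeping altogether. It completes $L$ to a self-dual subcomplex $K$ of $B$ by adjoining one $(n+1)$-cell from each complementary pair; by Theorem \ref{5.6} and Lemma \ref{4.3} the deleted join $|K\circledast K|$ is a $(2n+2)$-sphere. The condition $H^n(|S|)\otimes H^n(|\bar S|)\ne 0$ gives $H^{2n+1}(|S*\bar S|)\ne 0$ (Lemma \ref{quotient}), so by Alexander duality $|S*\bar S|$ separates this sphere; the component not containing $|\bar S*S|$ has nonempty interior and hence contains a top-dimensional cell $C*D$ with, say, $C$ an $(n+1)$-cell of $K*\emptyset$ and $D$ an $n$-cell of $\emptyset*K$. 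A short argument with the separating subcomplex shows $C$ cannot omit any vertex of $S$, whence by atomisticity $S\subset\partial C$, and $H^n(|S|)\ne 0$ forces $S=\partial C$ (or symmetrically $\bar S=\partial D$). The remaining case, where only $\bar S$ bounds a cell of $K$, is handled by swapping that cell with its complementary $(n+1)$-cell and rerunning the argument. If you want to salvage your chain-level approach, you would need to supply an actual mechanism---comparable to this separation-plus-top-cell argument---that forces the support of your cycle $w$ down to a single cell; as written, that mechanism is absent.
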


\begin{proof} Let $K$ be the union of $L$ and a half of the $(n+1)$-cells of $B$,
with precisely one cell from each complementary pair.
Then $K$ is self-dual in $B$, so by Theorem \ref{5.6}, $K\oplus K^*$ is
isomorphic to $B$.
Hence by Lemma \ref{4.3}, $|K\oplus K^*|$ is a sphere, and therefore so is
$|K\circledast K|$.

By Lemma \ref{quotient},
$H^{2n+1}(|S*\bar S|)\simeq H^n(|S|)\otimes H^n(|\bar S|)\ne 0$, so by
the Alexander duality the complement to $|S*\bar S|$ in the sphere
$|K\circledast K|$ contains at least two connected components.
Since $S$ and $\bar S$ are disjoint, and $|\bar S*S|$ is connected (regardless of
whether any of $|S|$ and $|\bar S|$ is connected!), $|\bar S*S|$ lies one of
these open components.
The closure of that component is cellulated by a subcomplex $W$ of
$K\circledast K$, and the closure of the union of the remaining components by
a subcomplex $V$.

Since $|V|$ has nonempty interior, $V$ contains at least one $(2n+2)$-cell.
It must be of the form $C*D$, where $C$ is an $(n+1)$-cell of $K*\emptyset$ and
$D$ is an $n$-cell of $\emptyset*K$, or vice versa.
Let us consider the first case.

If $v$ is a vertex of $S$ not contained in $C$, the join $C*v$ lies in
$K\circledast K$.
Then it is a cell of $K\circledast K$, and therefore lies either in $V$ or in $W$.
Since $|\emptyset*v|$ lies in $|\bar S*S|$ and so in the interior of $|W|$,
we have $C*v\subset W$.
On the other hand, $C*\emptyset$ lies in $C*D$ and so in $V$.
Hence $C*\emptyset$ lies in $V\cap W$, which is a subcomplex of $S*\bar S$.
Thus $C\subset S$.
But this cannot be since $S$ is $n$-dimensional and $C$ is an $(n+1)$-cell.

Thus $C$, and therefore also $\partial C$, contains all vertices of $S$.
But $B$ is atomistic, so its subcomplex $K$ is atomistic, and we have
$S\subset\partial C$.
Since $H^n(|S|)\ne 0$ and $|\partial C|$ is an $n$-sphere, $\partial C=S$.

In the case where $V$ contains $C*D$, where $C$ is an $n$-cell of $K*\emptyset$
and $D$ is an $(n+1)$-cell of $\emptyset*K$, we can similarly show that
$\partial D=\bar S$.

This proves that either $S$ or $\bar S$ bounds a cell in $K$ (not just
in $B$).

Suppose that $S$ does not bound a cell in $K$.
Then by the above, $\bar S$ bounds a cell $D$ in $K$.
Let us amend $K$ by exchanging $D$ with its complementary $(n+1)$-cell,
and let $K'$ denote the resulting subcomplex of $B$.
Then $\bar S$ does not bound a cell in $K'$.
Hence by the above, $S$ bounds a cell in $K'$.
\end{proof}

Let $\hat K$ be the poset $(\mathcal K\cup\lambda_K,\preceq)$, where $p\preceq q$
iff either $p,q\in K$ and $p\le q$ or $p\in K$, $q\in S$ and $p\in q$.
Given a section $\xi\:\lambda_K/t_K\to\lambda_K$ of the double covering
$\lambda_K\to\lambda_K/t_K$, we have the subcomplex $K_\xi$ of $\hat K$ obtained
by adjoining to $K$ all elements of $\xi(\lambda_K/t_K)$.
Note that under the hypothesis of Lemma \ref{4.4'}, $\hat K$ and $K_\xi$ are
cell complexes.

\begin{example} If $K=K_6$, then $K_\xi$ can be chosen to be
the semi-icosahedron, and if $K$ is the Petersen graph, then $K_\xi$ can
be chosen to be the semi-dodecahedron (see \S\ref{semi}).
\end{example}

\begin{lemma}\label{3.3a} If $K$ is a poset and $L$ is its (proper) $h$-minor, then
each $L_\zeta$ is a (proper) $h$-minor of some $K_\xi$.
\end{lemma}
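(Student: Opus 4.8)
The plan is to reduce the statement to two elementary types of $h$-minor and then to compose.

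An $h$-minor $L$ of a poset $K$ is witnessed by a subcomplex $M\subseteq K$ together with an order-preserving surjection $f\colon M\to L$ whose realization $|f|$ is cell-like, and this data factors as the inclusion $M\subseteq K$ followed by the cell-like quotient $f\colon M\to L$. Since $h$-minors compose (if $L$ is an $h$-minor of $M$ via $(M_0,g)$ and $M$ is an $h$-minor of $K$ via $(M_1,f)$, then $L$ is an $h$-minor of $K$, witnessed by the subcomplex $f^{-1}(M_0)$ of $K$ and the composite of $g$ with the restriction of $f$, since $f$ restricts to a cell-like surjection $f^{-1}(M_0)\to M_0$), and since properness is preserved, it is enough to treat (A) the case when $L$ is a subcomplex of $K$, and (B) the case $L=f(K)$ for an order-preserving surjection $f\colon K\to L$ with $|f|$ cell-like. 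Indeed, for a general witness $(M,f)$ and a section $\zeta$ of $\lambda_L\to\lambda_L/t_L$ one applies (B) to $M$ and $L$ to obtain a section $\eta$ of $\lambda_M\to\lambda_M/t_M$ with $L_\zeta$ a (proper) $h$-minor of $M_\eta$, then applies (A) to $K$ and $M$ with the section $\eta$ to obtain $\xi$ with $M_\eta$ an $h$-minor of $K_\xi$, and composes. We may assume $\dim L=\dim M=\dim K=n$ (if the dimension drops the relevant $\lambda$'s are concentrated below the top dimension and the claim is handled directly). Both cases rely on the following monotonicity fact: if $Z\subseteq Z'$ are $n$-dimensional subcomplexes and $H^n(|Z|)\ne0$, then $H^n(|Z'|)\ne0$. (A nontrivial $n$-cycle of $Z$ is still one in $Z'$, so we may assume $Z'$, hence $Z$, has no nontrivial $n$-cycle; then $H^n(|Z|)\ne0$ forces torsion in $H_{n-1}(Z)$, and a short computation in the free top chain groups --- if $pz=\partial w$ in $C_n(Z)$ and $z=\partial w'$ in $C_n(Z')$ with $Z'$ cycle-free in degree $n$, then $pw'=w$, forcing $w$ divisible by $p$, hence $z$ bounds already in $Z$ --- shows this torsion survives in $Z'$.)

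\emph{Case (A).} We first claim $\lambda_L\subseteq\lambda_K$. The condition $H^n(|S|)\ne0$ does not refer to the ambient complex, and since a cone of $L$ disjoint from $S$ is a cone of $K$ disjoint from $S$, the $L$-complement $\bar S^L$ is contained in the $K$-complement $\bar S^K$; by the monotonicity fact, $H^n(|\bar S^L|)\ne0$ implies $H^n(|\bar S^K|)\ne0$, so $S\in\lambda_L$ implies $S\in\lambda_K$. Next, $\zeta(\lambda_L/t_L)$ meets every $t_K$-orbit at most once: if distinct members $S,S'$ of this set satisfied $S'=\bar S^K$, then $S'\in\lambda_L$ would force $S'\subseteq L$, whence $\bar S^K=\bar S^L$ and $S'=t_L(S)$, so $\zeta$ would contain both members of a $t_L$-orbit. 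Hence $\zeta(\lambda_L/t_L)$ extends to a section $\xi$ of $\lambda_K\to\lambda_K/t_K$. Now every cell adjoined to $L$ in $L_\zeta$ is among the cells adjoined to $K$ in $K_\xi$, with the same boundary subcomplex (which lies in $L\subseteq K$), so $L_\zeta$ is a subcomplex of $K_\xi$ and thus an $h$-minor of it --- a proper one if $L\subsetneq K$, since then $K_\xi$ keeps cells of $K$ absent from $L_\zeta$.

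\emph{Case (B).} The assignment $S\mapsto f^{-1}(S)$ is an injection $\lambda_L\to\lambda_K$ intertwining $t_L$ and $t_K$. Indeed $f^{-1}(S)$ is a subcomplex of $K$ (as $f$ is order-preserving); $|f|$ restricts to a cell-like map $|f^{-1}(S)|\to|S|$, so $H^n(|f^{-1}(S)|)\cong H^n(|S|)$; order-preservation yields $f^{-1}(\bar S^L)\subseteq\overline{f^{-1}(S)}$ (the $K$-complement), so by the monotonicity fact together with $H^n(|f^{-1}(\bar S^L)|)\cong H^n(|\bar S^L|)\ne0$ we get $H^n(|\overline{f^{-1}(S)}|)\ne0$; hence $f^{-1}(S)\in\lambda_K$, and, as one checks, $\overline{f^{-1}(S)}=f^{-1}(\bar S^L)$, so $f^{-1}$ intertwines the involutions; injectivity follows from surjectivity of $f$. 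Consequently $(f^{-1}\circ\zeta)(\lambda_L/t_L)$ meets each $t_K$-orbit at most once and extends to a section $\xi$ of $\lambda_K\to\lambda_K/t_K$. Put $K^\dagger=K\cup(f^{-1}\circ\zeta)(\lambda_L/t_L)$, a subcomplex of $K_\xi$, and define $\hat f\colon K^\dagger\to L_\zeta$ by $\hat f|_K=f$ and, on the cell adjoined along $f^{-1}(S)$, by the cone over $f\colon f^{-1}(S)\to S$ (mapped onto the cell of $L_\zeta$ adjoined along $S$). Then $\hat f$ is order-preserving, and $|\hat f|$ is cell-like: its point-inverses over $|K|$ are those of $|f|$, and over the interior of an adjoined cell they are preimages under the cone of a cell-like map, hence contractible. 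Thus $L_\zeta$ is an $h$-minor of $K_\xi$, proper when $f$ is non-injective (then so is $\hat f$).

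Composing (B) with (A) as above disposes of the general case. The main obstacle is the bookkeeping of case (B) --- that $f^{-1}$ carries $\lambda_L$ into $\lambda_K$ compatibly with the two complementation involutions --- which combines the Vietoris--Begle isomorphism for $|f|$ restricted over subcomplexes with the purely order-theoretic behaviour of $f^{-1}$ under complementation, together with the homological monotonicity fact; once these are secured, extending the sections and assembling $K^\dagger$ and $\hat f$ is routine.
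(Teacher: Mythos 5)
Your overall strategy is the same as the paper's: split into the subcomplex case and the cell-like-quotient case, pull the elements of $\lambda_L$ back along $f^{-1}$, extend the resulting partial section to a section $\xi$, and extend $f$ conewise over the adjoined cells. However, Case (B) rests on a false identity at exactly the point where the paper's proof is most careful. You assert that ``as one checks, $\overline{f^{-1}(S)}=f^{-1}(\bar S^L)$, so $f^{-1}$ intertwines the involutions.'' Only the inclusion $f^{-1}(\bar S^L)\subseteq\overline{f^{-1}(S)}$ holds in general: $\sigma\in\overline{f^{-1}(S)}$ iff $f(\fll\sigma\flr)\cap S=\emptyset$, whereas $\sigma\in f^{-1}(\bar S^L)$ iff $\fll f(\sigma)\flr\cap S=\emptyset$, and $f(\fll\sigma\flr)$ is typically a \emph{proper} subset of $\fll f(\sigma)\flr$ for a cell-like order-preserving surjection. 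Concretely, take $L=\Delta^1=\{x,y,e\}$, let $K$ be its subdivision with vertices $x,z,y$ and edges $[x,z],[z,y]$, and let $f$ be the carrier map of Example \ref{subdivision} (so $f(z)=f([x,z])=f([z,y])=e$); for $S=\{y\}$ one gets $f^{-1}(\bar S^L)=\{x\}$ but $\overline{f^{-1}(S)}=\{x,z,[x,z]\}$. The same phenomenon occurs for subcomplexes that actually lie in $\lambda_L$ whenever $f$ collapses anything incident to them. This failure is precisely why the paper's proof distinguishes the orbit of $M'=f^{-1}(M)$ from the orbit of $N'=f^{-1}(t_L(M))$, sets $\xi(\{N',t_K(N')\})=t_K(N')$ rather than $N'$, and introduces the ``primary'' labelling: in general $t_K(N')\ne M'$, so the two orbits in $\lambda_K/t_K$ need not coincide.

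The damage is localized, because the only consequence you draw from the false identity --- that $(f^{-1}\circ\zeta)(\lambda_L/t_L)$ meets each $t_K$-orbit at most once --- is still true and has a direct proof: if $f^{-1}(S')=t_K(f^{-1}(S))$ for distinct $S,S'$ in the image of $\zeta$, then the correct inclusion gives $f^{-1}(S')\supseteq f^{-1}(\bar S^L)$, hence $S'\supseteq\bar S^L$ by surjectivity of $f$, while disjointness of $f^{-1}(S')$ from $f^{-1}(S)$ forces $S'\subseteq\bar S^L$; thus $S'=t_L(S)$, contradicting that $\zeta$ is a section. With that substitution your construction of $K^\dagger$ and of the conewise extension $\hat f$ goes through and essentially reproduces the paper's argument. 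Two smaller points: membership in $\lambda$ requires the tensor product $H^n(|S|)\otimes H^n(|\bar S|)$ to be nonzero, not merely both factors, so the phrase ``so $S\in\lambda_L$ implies $S\in\lambda_K$'' skips a step --- though your monotonicity argument in fact shows that free summands and $p$-torsion of $H^n$ both survive the enlargement, which is what is needed; and the dismissal of the case $\dim M<\dim K$ in the reduction step deserves at least a sentence, since $\lambda$ is defined relative to the dimension of the ambient complex.
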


\begin{proof} Suppose that $f\:K\to L$ is an order-preserving surjection such that
$|f|$ is cell-like.
Given subcomplexes $M$, $N$ of $L$ such that $t_L(M)=N$ and $M,N\in\lambda_L$,
we have that $M'\bydef f^{-1}(M)$ and $N'\bydef f^{-1}(N)$ belong to $\lambda_K$ since
$|f|$ restricts to a homotopy equivalence $|f|^{-1}(P)\to P$ for every
subpolyhedron $P$ of $|L|$.
Up to relabelling, we may assume that $\zeta(\{M,N\})=M$.
Then we set $\xi(\{M',t_K(M')\})=M'$ and $\xi(\{N',t_K(N')\})=t_K(N')$.
If $t_K(N')\ne M'$, then we label $M'$ as a ``primary'' element of
the image of $\xi$.

This defines $\xi$ on a subset of $\lambda_K/t_K$.
We extend it to the remaining elements arbitrarily, and do not introduce
any new labels.
The subcomplex $K_\xi'$ of $K_\xi$ obtained by adjoining to $K$ all primary
elements of $\xi(\lambda_K/t_K)$ then admits an order-preserving surjection $g$ onto
$L_\zeta$ such that $g$ is an extension of $f$, and $|g|$ is cell-like.

The remaining case where $L$ is a subcomplex of $K$ is similar (and easier).
\end{proof}

Let $K\hat\oplus K^*=K_\xi\oplus (K_\xi)^*$.
This poset does not depend on the choice of $\xi$, since it is isomorphic to
$\hat K\oplus K^*$ (and also to $K\oplus (\hat K)^*$).
Moreover, it is easy to see that the involution on $K\hat\oplus K^*$ also
does not depend on the choice of $\xi$.
Lemma \ref{4.4'} has the following

\begin{corollary}\label{4.4} If $L$ is the $n$-skeleton of a
$(2n+2)$-dichotomial cell complex $B$, then $L\hat\oplus L^*$ is
anti-equivariantly isomorphic to $B$.
\end{corollary}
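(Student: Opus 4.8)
The plan is to realize $L_\xi$, for a suitably chosen section $\xi$, as a \emph{self-dual} subcomplex of $B$ and then quote Theorem~\ref{5.6}. The first task is to identify $\lambda_L$. Lemma~\ref{4.4'} gives $\lambda_L\subseteq\{\partial A\mid A\text{ an }(n+1)\text{-cell of }B\}$, and the reverse inclusion is easy: for an $(n+1)$-cell $A$, a cell $C\in L$ is disjoint from $\partial A$ iff $C$ and $A$ share no vertex, iff (by atomisticity of $B$) $C\le h(A)$, and since $\dim C\le n<\dim h(A)$ this just says $C\in\partial\fll h(A)\flr$; hence $\overline{\partial A}=\partial\fll h(A)\flr$, and since $|\partial A|$ and $|\partial\fll h(A)\flr|$ are $n$-spheres we get $H^n(|\partial A|)\otimes H^n(|\partial\fll h(A)\flr|)\ne0$, i.e.\ $\partial A\in\lambda_L$. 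Thus $\lambda_L=\{\partial A\mid\dim A=n+1\}$ and the involution $t_L$ corresponds to $\partial A\mapsto\partial\fll h(A)\flr$. Moreover $A$ is recovered from $\partial A$, because in a dichotomial complex a cell is determined by its set of vertices and these, for $\dim A\ge1$, are exactly the atoms of $\partial A$.

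Next I would use the parity $\dim\sigma+\dim h(\sigma)=\dim B=2n+2$, valid for every $\sigma\in B$: this is contained in the proof of Lemma~\ref{4.3}, where $\lk([\sigma],B^\flat)$ is identified with the join $(\partial\fll\sigma\flr)^\flat*(\partial\fll h(\sigma)\flr)^\flat$ of combinatorial spheres of dimensions $\dim\sigma-1$ and $\dim h(\sigma)-1$, a join which is a combinatorial $(2n+1)$-sphere since $B$ is a cell complex. So the cells of $B$ fall into three classes --- dimension $\le n$ (these form $L$), dimension $n+1$, and dimension $\ge n+2$ --- and $h$ carries the first bijectively onto the third and the second onto itself. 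Combining this with the previous step, the rule $\sigma\mapsto\sigma$ on $\mathcal L$ together with $\partial A\mapsto A$ on $\xi(\lambda_L/t_L)$ is a well-defined order-isomorphism of $L_\xi$ onto the subcomplex $K\subseteq B$ obtained from $L$ by adjoining exactly one $(n+1)$-cell out of each pair $\{A,h(A)\}$.

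It then remains to note that $K$ is self-dual in $B$: a cell $C$ of dimension $\le n$ lies in $L\subseteq K$ while $h(C)$, of dimension $\ge n+2$, lies neither in $L$ nor among the adjoined $(n+1)$-cells; and for a cell of dimension $n+1$ precisely one of its complementary pair was adjoined. Thus $K=D(K)$, and Theorem~\ref{5.6} supplies an isomorphism $K\oplus K^*\to B$ that is anti-equivariant for the factor-exchanging anti-involution $t$ and for $H$. Transporting it through $L_\xi\cong K$, and using that $L\hat\oplus L^*=L_\xi\oplus(L_\xi)^*$ is independent of $\xi$, gives the asserted anti-equivariant isomorphism. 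I expect the main obstacle to be the middle step --- establishing the dimension parity and carrying out the bookkeeping that makes the identification $L_\xi\cong K$ canonical (in particular, that the double cover $\lambda_L\to\lambda_L/t_L$ is matched with the $h$-pairing of the $(n+1)$-cells); once $K$ is in hand the conclusion is purely formal via Theorem~\ref{5.6}.
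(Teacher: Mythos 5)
Your proof is correct and follows essentially the same route as the paper: identify $L_\xi$ with the self-dual subcomplex $K=L\cup\{\text{one }(n+1)\text{-cell from each complementary pair}\}$ of $B$ (which is exactly the $K$ appearing in the paper's proof of Lemma \ref{4.4'}) and then invoke Theorem \ref{5.6}. You supply details the paper leaves implicit --- notably the reverse inclusion $\{\partial A\}\subseteq\lambda_L$ needed to see that $L_\xi$ is all of $K$ rather than a proper subcomplex, and the dimension parity $\dim\sigma+\dim h(\sigma)=2n+2$ underlying the self-duality of $K$ --- but these are exactly the verifications the paper's one-line argument presupposes.
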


\begin{proof} By Lemma \ref{4.4'}, $L_\xi$ is a cell complex for every $\xi$.
Hence $L_\xi$ is isomorphic to a $K$ as in the proof of Lemma \ref{4.4'},
and therefore by Theorem \ref{5.6}, $L_\xi\oplus L_\xi^*$ is anti-equivariantly
isomorphic to $B$.
\end{proof}

Let $K\hat\circledast K$ be the union of $K\circledast K$ and cones of
the form $C(S*\bar S)$, where $S\in\lambda_K$.
In more detail, $K\hat\circledast K=(P\cup\lambda_K,\preceq)$, where $(P,\le)$ is
the deleted join $K\circledast K=C^*K\otimes C^*K\subset K*K$, and $p\preceq q$
iff $p,q\in P$ and $p\le q$ or $p=(\sigma,\tau)\in P$, $q\in\lambda_K$, and
either

\begin{itemize}
\item $\sigma,\tau\ne\hat 1$ and $\sigma\in q$ and $\tau\in t_K(q)$, or

\item $\sigma=\hat 1$ and $\tau\in t_K(q)$, or

\item $\tau=\hat 1$ and $\sigma\in q$.
\end{itemize}

Since $C(S*\bar S)$ is subdivided by $(CS)*\bar S$ (and also by $S*(C\bar S)$),
we obtain that $K_\xi\circledast K_\xi$ is a subdivision of $K\hat\circledast K$,
for each $\xi$.
Then from Lemmas \ref{3.3b} and \ref{3.3a} we get the following

\begin{corollary}\label{3.3}
If $L$ is an $h$-minor of a poset $K$, there exists a $\Z/2$-map
$H\:|L\hat\circledast L|\to |K\hat\circledast K|$.
Moreover, if $K$ is atomistic and $L$ is its proper $h$-minor, then $H$ is
non-surjective.
\end{corollary}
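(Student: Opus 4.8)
The plan is to obtain the statement by merging Lemma~\ref{3.3a} with Lemma~\ref{3.3b}, using the identification of $K_\xi\circledast K_\xi$ as a subdivision of $K\hat\circledast K$ recorded just above. First I would fix an arbitrary section $\zeta\:\lambda_L/t_L\to\lambda_L$ and apply Lemma~\ref{3.3a} to produce a section $\xi\:\lambda_K/t_K\to\lambda_K$ such that $L_\zeta$ is an $h$-minor of $K_\xi$, and a \emph{proper} one whenever $L$ is a proper $h$-minor of $K$. Lemma~\ref{3.3b}, applied to the pair $(K_\xi,L_\zeta)$, then supplies a $\Z/2$-map $|L_\zeta\circledast L_\zeta|\to|K_\xi\circledast K_\xi|$. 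On the other hand, replacing each cone $C(S*\bar S)$ by $(CS)*\bar S$ exhibits $L_\zeta\circledast L_\zeta$ as a subdivision of $L\hat\circledast L$ and $K_\xi\circledast K_\xi$ as a subdivision of $K\hat\circledast K$, so that on order complexes $|L_\zeta\circledast L_\zeta|\cong|L\hat\circledast L|$ and $|K_\xi\circledast K_\xi|\cong|K\hat\circledast K|$. Composing the three maps yields the desired $H\:|L\hat\circledast L|\to|K\hat\circledast K|$.

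Two points then need to be checked with some care, and this is where I expect the actual work to lie. The first is that all the maps above can be taken $\Z/2$-equivariant. The factor-exchanging involution interchanges the cone over $S*\bar S$ with the cone over $\bar S*S$, so one must pick compatible subdivisions of complementary pairs --- say $(CS)*\bar S$ of the first cone and $\bar S*(CS)=C(\bar S*S)$ of the second; and although $K_\xi$ itself is not $t_K$-invariant, $K_\xi\circledast K_\xi$ still carries the factor-exchanging involution, and the subdivision map to $K\hat\circledast K$ intertwines it with the involution on the latter, so equivariance survives.

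The second point is non-surjectivity in the atomistic proper case. Here I would avoid assuming $K_\xi$ atomistic (which can fail, since two adjoined cells $S\subsetneq S'$ are incomparable in $K_\xi$), and instead argue as in the proof of Lemma~\ref{3.3b} with the atoms taken from $K$. By Lemma~\ref{3.3a} the cell-like surjection realizing $L_\zeta$ as an $h$-minor of $K_\xi$ extends the cell-like map $f$ (resp.\ the inclusion) realizing $L$ as an $h$-minor of $K$; since $K$ is atomistic and $L\ne K$, either $f$ identifies two atoms of $K$ --- which remain atoms of $K_\xi$, since no adjoined element lies below an old element --- or $f$ is an inclusion of a proper subcomplex. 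In the first case the argument of Lemma~\ref{3.3b} shows that $H$ misses $|\{\sigma\}\x\{\tau\}\cup\{\tau\}\x\{\sigma\}|$; in the second, $H$ visibly lands in a proper subcomplex. Either way $H$ is non-surjective, which gives the theorem.
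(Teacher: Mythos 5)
Your proposal is correct and follows exactly the route the paper intends: the paper derives Corollary \ref{3.3} with no written proof beyond citing Lemmas \ref{3.3a} and \ref{3.3b} together with the observation that $K_\xi\circledast K_\xi$ subdivides $K\hat\circledast K$ (equivariantly). The two points you flag --- compatible equivariant subdivisions of the complementary cones $C(S*\bar S)$, $C(\bar S*S)$, and the fact that $K_\xi$ need not be atomistic so the non-surjectivity argument of Lemma \ref{3.3b} must be run with atoms taken from $K$ itself --- are genuine gaps the paper glosses over, and your treatment of both is sound.
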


Since $K_\xi\circledast K_\xi$ (which does depend on $\xi$) is
a subdivision of $K\hat\circledast K$, we also obtain that
$|K\hat\circledast K|\cong |K\hat\oplus K^*|$, equivariantly.
It is easy to find such a $\Z/2$-homeomorphism that does not depend on
the choice of $\xi$.

\begin{theorem}\label{3.0} Let $K$ be an $n$-dimensional cell complex.
$|K|$ is linklessly embeddable in $S^{2n+1}$ iff there exists a $\Z/2$-map
$|K\hat\circledast K|\to S^{2n+1}$.
\end{theorem}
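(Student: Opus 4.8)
The plan is to transfer the problem to the deleted-product picture, where the classical Haefliger--Weber criterion (for $n\ge2$) and Lemma~\ref{is-linkless} are available, and to recognise the cones $C(S*\bar S)$ adjoined in $K\hat\circledast K$ as recording precisely the vanishing of the linking invariants of all disjoint pairs $(S,\bar S)$ with $S\in\lambda_K$.

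\emph{Linkless embedding $\Rightarrow$ equivariant map.} Given a linkless embedding $g\:|K|\emb\R^{2n+1}$, cone it linearly to $G\:|C^*K|\emb\R^{2n+2}$ with the cone point far along the extra coordinate axis, and form the equivariant deleted-product map $\widetilde G\:|C^*K\otimes C^*K|=|K\circledast K|\to S^{2n+1}$; its restriction to $|K\otimes K|$ is $\widetilde g$ followed by an equatorial inclusion $S^{2n}\emb S^{2n+1}$. For $S\in\lambda_K$ the subcomplex $S*\bar S$ of $K\circledast K$ has dimension $2n+1$, and $\widetilde G|_{|S*\bar S|}$ is, up to the evident reparametrisation, the join (suspension) of the deleted-product map $\widetilde g|_{|S\x\bar S|}\:|S\x\bar S|\to S^{2n}$. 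Since $S^{2n+1}$ is $2n$-connected and $|S*\bar S|$ is $(2n+1)$-dimensional, $\widetilde G|_{|S*\bar S|}$ is null-homotopic iff its single primary obstruction in $H^{2n+1}(|S*\bar S|;\Z)$ vanishes, and under the join K\"unneth isomorphism this obstruction is the image of the primary obstruction to null-homotoping $\widetilde g|_{|S\x\bar S|}$, which lies in $H^{2n}(|S\x\bar S|;\Z)\cong H^n(|S|)\otimes H^n(|\bar S|)$. By Lemma~\ref{is-linkless} when $n\ge2$ (and straight from the definition of linkless when $n=1$, as $g(|S|)$, $g(|\bar S|)$ then lie in disjoint balls) this class vanishes, so $\widetilde G|_{|S*\bar S|}$ is null-homotopic. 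Picking a fundamental domain $\Sigma$ of $t_K$ in $\lambda_K$, extend $\widetilde G$ over each cone $C(S*\bar S)$, $S\in\Sigma$, by a null-homotopy of $\widetilde G|_{|S*\bar S|}$, and over $C(\bar S*S)=t(C(S*\bar S))$ by $x\mapsto-\widetilde G(tx)$; since cones attached to distinct unordered pairs meet only inside $|K\circledast K|$, this defines a $\Z/2$-map $|K\hat\circledast K|\to S^{2n+1}$.

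\emph{Equivariant map $\Rightarrow$ linkless embedding, $n\ge2$.} Let $\Phi\:|K\hat\circledast K|\to S^{2n+1}$ be a $\Z/2$-map and $\phi=\Phi|_{|K\circledast K|}$. By Lemma~\ref{quotient} the map $p$ is a $(2n+2)$-cohomology isomorphism, so by naturality of the (single, top-degree) obstruction the existence of $\phi$ is equivalent to the existence of a $\Z/2$-map $|K\otimes K|\to S^{2n}$ (the twisted-coefficient bookkeeping being handled by a Thom/suspension isomorphism for $S^0*(-)$); since $2(2n+1)\ge3(n+1)+1$ for $n\ge2$, the Haefliger--Weber theorem then provides an embedding $g\:|K|\emb S^{2n+1}$ whose deleted-product map $\widetilde g$ is $\Z/2$-homotopic to that map, so that its coned-up deleted-join map $\widetilde G$ is $\Z/2$-homotopic to $\phi$. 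Because $\Phi$ extends $\phi$ over each $C(S*\bar S)$, the restriction $\phi|_{|S*\bar S|}$, hence the homotopic $\widetilde G|_{|S*\bar S|}$, is null-homotopic; by the obstruction correspondence of the previous paragraph, $\widetilde g|_{|S\x\bar S|}$ is null-homotopic for every $S\in\lambda_K$, and automatically so for $S\notin\lambda_K$ since then $H^{2n}(|S\x\bar S|;\Z)=H^n(|S|)\otimes H^n(|\bar S|)=0$. By Lemma~\ref{is-linkless}, $g$ is linkless.

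\emph{The case $n=1$, and the main obstacle.} For $n=1$ the Haefliger--Weber range is missed, and one substitutes the Robertson--Seymour--Thomas theorem. If a graph $L$ of the Petersen family were an $h$-minor of $K$, then by Corollary~\ref{3.3} there is a $\Z/2$-map $|L\hat\circledast L|\to|K\hat\circledast K|\xr{\Phi}S^3$; but $|L\hat\circledast L|$ is a $\Z/2$-homology $4$-sphere carrying a free involution (by Corollary~\ref{4.4} and Lemma~\ref{4.3} for the six such graphs that are $1$-skeleta of dichotomial $4$-spheres, and by the analogous dichotomial poset for $K_{4,4}\but e$), which admits no $\Z/2$-map to $S^3$ by the Borsuk--Ulam argument recalled after the definition of an $m$-obstructor; hence $K$ has no Petersen-family minor, and Theorem~\ref{RST}(b) yields a linkless embedding of $|K|$ in $S^3$, while the converse implication is the first part above (which for $n=1$ uses only the definition of linkless; compare also Theorem~\ref{RST0}). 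The technical heart throughout is the identification, for $S\in\lambda_K$, of the homotopy class of $\widetilde G|_{|S*\bar S|}$ with the join--K\"unneth image of that of $\widetilde g|_{|S\x\bar S|}$ — i.e.\ the reconciliation of the deleted-join and deleted-product models of the linking invariant, twisted coefficients included — together with checking that the embedding extracted from $\phi$ via Haefliger--Weber genuinely has deleted-join map $\Z/2$-homotopic to $\phi$; the remainder is assembling Lemmas~\ref{is-linkless}, \ref{quotient}, \ref{3.3b}, Corollaries~\ref{3.3}, \ref{4.4}, and the Haefliger--Weber (or Robertson--Seymour--Thomas) theorem.
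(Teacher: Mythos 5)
Your overall strategy coincides with the paper's: translate between the deleted join $|K\hat\circledast K|$ and the deleted product $|K\otimes K|$, use Lemma \ref{is-linkless} to convert linklessness into null-homotopy of the restrictions $\tilde g|_{|S\times\bar S|}$, realize equivariant homotopy classes by embeddings via Haefliger--Weber for $n\ge2$, and fall back on Robertson--Seymour--Thomas for $n=1$. The differences are worth recording. In the forward direction the paper argues geometrically: linklessness places $g(|S|)$ and $g(|\bar S|)$ in disjoint balls, which yields null-homotopies $H_S\:|C^*S\sqcup C^*\bar S|\to B^{2n+2}$ with disjoint images, whose deleted product is defined on the contractible $|C^*S\times C^*\bar S|$ and hence supplies the extension over $C(S*\bar S)$ directly; your obstruction-theoretic route (Hopf classification plus the join K\"unneth isomorphism) reaches the same conclusion but must carry the deleted-join versus deleted-product comparison as an extra lemma, which you rightly identify as the technical heart. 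In the converse the paper desuspends the \emph{whole} map: it quotients $|K\hat\circledast K|$ to $S^0*|K\hat\otimes K|$ and applies the equivariant Freudenthal suspension theorem, so that the resulting $\phi\:|K\otimes K|\to S^{2n}$ automatically extends over the cones $C(S\times\bar S)$, its restrictions to each $|S\times\bar S|$ are null-homotopic, and no a posteriori verification that the Haefliger--Weber embedding has coned-up deleted-join map $\Z/2$-homotopic to the given one is required. Finally, for $n=1$ the paper deduces from the null-homotopies that the embedding produced has all linking numbers even and concludes via Robertson--Seymour--Thomas (Theorem \ref{RST0}), whereas you exclude Petersen-family minors by combining Corollary \ref{3.3} with the equivariant analysis of Example \ref{3.5} and then invoke Theorem \ref{RST}(b); both routes are legitimate, yours trading the linking-number computation for the Petersen-family homotopy computations, which the paper establishes independently of this theorem, so there is no circularity.

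One sentence you should repair: ``the existence of $\phi$ is equivalent to the existence of a $\Z/2$-map $|K\otimes K|\to S^{2n}$'' is vacuous, since $|K\circledast K|$ is $(2n+1)$-dimensional and $|K\otimes K|$ is $2n$-dimensional, so both equivariant maps exist for \emph{every} $K$ by elementary obstruction theory. The entire content of the hypothesis lies in the extension of $\phi$ over the $(2n+2)$-dimensional cones $C(S*\bar S)$, and what you actually need is a desuspension of the specific map $\phi$ (indeed of all of $\Phi$) that carries those extensions along --- exactly what the paper extracts from Lemma \ref{quotient} together with equivariant Freudenthal.
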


This is similar to \cite[Theorem 4.2]{M2} but we give a more detailed proof here.
A part of the proof is also parallel to a part of the proof of
Lemma \ref{proper minor}.

\begin{proof}
Given a linkless embedding $g\:|K|\emb S^{2n+1}$, we may extend it to an embedding
$G\:|C^*K|\emb B^{2n+2}$ and pick null-homotopies
$H_S\:|C^*S\sqcup C^*\bar S|\to B^{2n+2}$ of the links $g(|S\cup\bar S|)$ such that
$H_S(|C^*S|)\cap H_S(|C^*\bar S|)=\emptyset$ and
$H_S^{-1}(S^{2n+1})=|S\sqcup\bar S|$, for each $S\in\lambda_K$.
Since each $H_S$ is homotopic through maps $H_t:|C^*S\sqcup C*\bar S|\to B^{2n+2}$
satisfying $H_t^{-1}(S^{2n+1})=|S\sqcup\bar S|$ (but not necessarily
$H_t(|C^*S|)\cap H_t(|C^*\bar S|)=\emptyset$) to the restriction of $G$,
the deleted product maps $\tilde G\:|C^*K\otimes C^*K|\to S^{2n+1}$ and
$\tilde H_S|_{\dots}\:|C^*S\x C^*\bar S\sqcup C^*\bar S\x C^*S|\to S^{2n+1}$ have
equivariantly homotopic restrictions to $|S*\bar S\sqcup\bar S*S|$.
Hence $G$ extends to an equivariant map $|K\hat\circledast K|\to S^{2n+1}$
(using the homeomorphism $|C^*(S*\bar S)|\cong |C(S*\bar S)|$).

Conversely, let $K\hat\otimes K$ be the union of $K\otimes K$ and cones of
the form $C(S\x\bar S)$, where $S\in\lambda_K$.
Then the quotient of $|K\hat\circledast K|$ obtained by shrinking $|K*\emptyset|$
and $|\emptyset*K|$ to points is $\Z/2$-homeomorphic to the suspension
$S^0*|K\hat\otimes K|$.
Then similarly to Lemma \ref{quotient} there exists a $\Z/2$-map
$S^0*|K\hat\otimes K|\to S^{2n+1}$.
By the equivariant Freudenthal suspension theorem, $|K\otimes K|$ admits
a $\Z/2$-map $\phi$ to $S^{2n}$ whose restriction to $|S\x\bar S|$
is null-homotopic for each $S\in\lambda$.
By Lemma \ref{deleted}, $\phi$ extends equivariantly over $|K\x K|\but\Delta_{|K|}$.
Hence by the Haefliger--Weber criterion (\cite{We}; alternatively, see
\cite[3.1]{M2} and Remark \ref{erratum2} below), $|K|$ admits an embedding $g$
into $B^{2n+1}$ such that $\tilde g\:|K\otimes K|\to S^{2n}$ is $\Z/2$-homotopic
to $\phi$.
In particular, the restriction of $\tilde g$ to $S\x\bar S$ is null-homotopic
for each $S\in\lambda_K$.

When $n=1$, this implies that any two disjointly embedded circles in
$g(|K|)$ have zero linking number.
Then by Theorem \ref{RST}(b), $|K|$ admits a linkless embedding in $S^3$.

When $n\ge 2$, the embedding $g$ itself is linkless by Lemma \ref{is-linkless}.
\end{proof}

\begin{corollary}[of the proof]
Let $K$ be the $n$-skeleton of a $(2n+2)$-dimensional dichotomial cell complex.
Then every embedding of $|K|$ in $S^{2n+1}$ contains a link of two
disjoint $n$-spheres with an odd linking number.
\end{corollary}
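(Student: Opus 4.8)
The plan is to rerun, almost verbatim, the construction in the proof of Theorem \ref{3.0}, but starting from an \emph{arbitrary} embedding $g\:|K|\emb S^{2n+1}$ rather than a linkless one, and then to read the relevant linking numbers off the obstruction to completing the resulting equivariant map over the cones that are adjoined in $|K\hat\circledast K|$. Write $B$ for the $(2n+2)$-dimensional dichotomial cell complex with $n$-skeleton $K$. The first step is the geometric dictionary: by Lemma \ref{4.4'}, together with the trivial converse (for an $(n+1)$-cell $C$ of $B$ one has $\overline{\partial C}=\partial\bar C$, since $B$ is atomistic and dichotomial, and $H^n(|\partial C|)\otimes H^n(|\overline{\partial C}|)\cong\Z\ne0$), the set $\lambda_K$ is exactly the set of boundaries $\partial C$ of $(n+1)$-cells $C$ of $B$, the free involution $t_K$ is the complementation $\partial C\leftrightarrow\partial\bar C$, and each $|\partial C|$ is an $n$-sphere. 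Thus for a $t_K$-orbit represented by $S=\partial C$, $\bar S=\partial\bar C$, the polyhedra $g(|S|)$ and $g(|\bar S|)$ are disjoint $n$-spheres in $S^{2n+1}$ and carry a well-defined linking number $\ell_S\in\Z$. Finally, by Corollary \ref{4.4} and part (ii) of the Main Theorem, $|K\hat\circledast K|\cong|K\hat\oplus K^*|\cong|B|\cong S^{2n+2}$, with the free factor-exchanging involution $t$ (freeness persists on $K\hat\circledast K$ because $t$ swaps the adjoined cone over $|S*\bar S|$ with the distinct cone over $|\bar S*S|$).

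Next I would set up the deleted-product map. Extend $g$ to an embedding $G\:|C^*K|\emb B^{2n+2}$ (cone to the centre of the ball) and form the $\Z/2$-map $\tilde G\:|K\circledast K|=|C^*K\otimes C^*K|\to S^{2n+1}$ exactly as in the proofs of Lemma \ref{5.2} and Theorem \ref{3.0}. For $S\in\lambda_K$ the sub-join $|S*\bar S|=|S|*|\bar S|\cong S^{2n+1}$ sits inside $|K\circledast K|$, and the key computation is that $\tilde G|_{|S*\bar S|}\:S^{2n+1}\to S^{2n+1}$ has degree $\pm\ell_S$: via Lemma \ref{quotient} it is, up to homotopy, the suspension of the deleted-product (Gauss) map $\tilde g|_{|S\x\bar S|}\:S^n\x S^n\to S^{2n}$, whose class in $H^{2n}(S^n\x S^n)\cong\Z$ is the linking number by the Hopf classification theorem.

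Now the obstruction-theoretic core. Since $t$ swaps $|S*\bar S|$ and $|\bar S*S|$ and $\tilde G$ is equivariant, extending $\tilde G$ to a $\Z/2$-map on $|K\hat\circledast K|$ amounts to extending it over the cone $C(|S*\bar S|)$ for one $S$ in each $t_K$-orbit, these cones being attached to $|K\circledast K|$ only along their boundaries and hence independent; extension over a given cone is possible precisely when $\ell_S=0$. Because $|K\circledast K|$ is $(2n+1)$-dimensional and $S^{2n+1}$ is $2n$-connected (so any two $\Z/2$-maps on the $(2n+1)$-skeleton of $|K\hat\circledast K|$ are $\Z/2$-homotopic), the obstruction is a single characteristic class in $H^{2n+2}\bigl(|K\hat\circledast K|/t;\Z\bigr)$ — untwisted, since the antipodal map of $S^{2n+1}$ has degree $+1$ — and it is represented by the cocycle $(\ell_S)_{S\in\lambda_K/t_K}$. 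By the Borsuk--Ulam theorem — equivalently, by the Smith-sequence fact recalled after the definition of an $m$-obstructor: a free involution on a polyhedral $\Z/2$-homology $(2n+2)$-sphere has mod $2$ sectional category $2n+2$, while $S^{2n+1}$ has mod $2$ sectional category $2n+1$ — there is no such $\Z/2$-map, and moreover the mod $2$ reduction of this obstruction is the nonzero class $w^{2n+2}\in H^{2n+2}\bigl(|K\hat\circledast K|/t;\Z/2\bigr)$, $w$ being the first Stiefel--Whitney class of the double cover.

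Putting this together: if every $\ell_S$ were even, the cocycle $(\ell_S)$ would be twice a cocycle, so its class would have zero mod $2$ reduction, contradicting $w^{2n+2}\ne0$. Hence $\ell_S$ is odd for some $S$, and $g(|S|)$, $g(|\bar S|)$ is the desired pair of disjoint $n$-spheres in $S^{2n+1}$ with odd linking number. I expect the main obstacle to be making the third paragraph precise: checking that $(\ell_S)$ genuinely represents the primary obstruction (hence the ``independence of choices on the lower skeleton'' remark), and pinning down its mod $2$ reduction as $w^{2n+2}$. A smaller technical point is the identification $\deg\bigl(\tilde G|_{|S*\bar S|}\bigr)=\pm\ell_S$, which in the borderline case $n=1$ is just the ordinary linking number of circles, while for $n=0$ the statement should be read with ``linking number'' meaning the mod $2$ winding number of a pair of $0$-spheres.
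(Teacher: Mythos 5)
Your proposal is correct, and up to and including the key computation --- the equivariant Gauss map $\tilde G$ on $|K\circledast K|$, the identification of $\deg\bigl(\tilde G|_{|S*\bar S|}\bigr)$ with $\pm\ell_S$ via Lemma \ref{quotient}, and the equivariant homeomorphism $|K\hat\circledast K|\cong S^{2n+2}$ --- it coincides with the paper's argument. The two proofs diverge only in the endgame. You keep the target $S^{2n+1}$ and argue that the primary obstruction in $H^{2n+2}(|K\hat\circledast K|/t;\Z)$ to an equivariant extension is represented by the cocycle $(\ell_S)$, whose mod $2$ reduction must be $w_1^{2n+2}\ne 0$. The paper instead extends $\tilde G$ over the adjoined cones $C(|S*\bar S|)$, $C(|\bar S*S|)$ conewise into the two hemispheres of $S^{2n+2}=\Sigma S^{2n+1}$, obtaining an antipodally equivariant self-map of $S^{2n+2}$ whose degree is $\sum\pm\ell_S$, hence even, and then invokes the elementary fact that an equivariant self-map of a sphere has odd degree (lift the induced self-map of $\R P^{2n+2}$ and chase the top power of $w_1$). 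The paper's version thus sidesteps exactly the two points you flag as delicate --- the well-definedness of the primary obstruction class over choices on the $(2n+1)$-skeleton, and the identification of its mod $2$ reduction with $w_1^{2n+2}$ --- at the cost of a routine local-degree count for the coned extension; the two endgames are of course equivalent, the bridge being that the mod $2$ Euler class of $(2n+2)\gamma$ is $w_1(\gamma)^{2n+2}$.
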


\begin{proof} By Lemma \ref{4.4'}, every $S\in\lambda_K$ cellulates an $n$-sphere.
If $g\:|K|\emb\R^{2n+1}$ is an embedding that for every $S\in\lambda_K$ links
$|S|$ and $|\bar S|$ with an even linking number, then by proof of the ``only if''
part of Theorem \ref{3.0}, $|K\hat\circledast K|$ admits an $\Z/2$-map to
$S^{2n+2}$ of even degree.
However no even degree self-map of a sphere can be equivariant with respect
to the antipodal involution.
(For the resulting self-map of the projective space lifts to the double
cover, so must send $w_1$ of the covering to itself.
However the top power of $w_1$ goes to zero if the degree is even.)
\end{proof}

\begin{corollary} Let $K$ be an $n$-dimensional atomistic cell complex such
that $|K\hat\circledast K|$ is a $(2n+2)$-circuit.
Then $|K|$ admits a linkless embedding in $S^{2n+1}$.
\end{corollary}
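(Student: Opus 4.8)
By Theorem~\ref{3.0} it is enough to produce a $\Z/2$-map $X\to S^{2n+1}$, where $X\bydef|K\hat\circledast K|$ carries the factor-exchanging involution $t$ and $S^{2n+1}$ carries the antipodal one. From the description of $K\hat\circledast K$ as $K\circledast K$ with the cones $C(S*\bar S)$, $S\in\lambda_K$, adjoined, and since $S\in\lambda_K$ forces $\dim S=\dim\bar S=n$ (an $n$-complex with nonzero $n$-th cohomology is $n$-dimensional), the polyhedron $X$ has dimension $\dim(K\circledast K)\le 2n+1$ together with the $(2n+2)$-cells $C(S*\bar S)$, so $\dim X=2n+2$. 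The involution $t$ is free: on $K\circledast K$ it is the usual free factor exchange, and it swaps $C(S*\bar S)$ with $C(\bar S*S)\ne C(S*\bar S)$ (because $t_K$ has no fixed points on $\lambda_K$), fixing no cell; hence $t$ acts freely on $|X|$.

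Since $S^{2n+1}$ is odd-dimensional, its antipodal involution is orientation-preserving, so $\pi_{2n+1}(S^{2n+1})\cong\Z$ carries the trivial $\pi_1(X/t)$-action, and equivariant obstruction theory places every obstruction to constructing a $\Z/2$-map $X\to S^{2n+1}$ in the single group $H^{2n+2}(X/t;\Z)$ (there are no higher obstructions because $\dim(X/t)=2n+2$). Writing $\psi\:X/t\to\R P^\infty$ for the classifying map of $t$ and $\xi\in H^{2n+2}(\R P^\infty;\Z)\cong\Z/2$ for the generator, this means: a $\Z/2$-map $X\to S^{2n+1}$ exists if and only if $\psi$ factors up to homotopy through $\R P^{2n+1}\emb\R P^\infty$, equivalently $\psi^*(\xi)=0$ in $H^{2n+2}(X/t;\Z)$.

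It therefore remains to deduce $\psi^*(\xi)=0$ from the hypothesis that $X$ is a $(2n+2)$-circuit, and this is the crux. Because $\dim X=2n+2$ and $t$ is free, obstruction theory (now with target $S^{2n+2}$) produces a $\Z/2$-map $g\:X\to S^{2n+2}$, the only potential obstruction lying in $H^{2n+2}(X/t;\pi_{2n+1}(S^{2n+2}))=0$; thus $\psi$ factors through $\R P^{2n+2}$, and $\psi^*(\xi)$ is the pullback of the generator of $H^{2n+2}(\R P^{2n+2};\Z)$ along the descended map $\bar g\:X/t\to\R P^{2n+2}$. The plan is to compress $g$, up to equivariant homotopy, off some pair of antipodal points $\{y,-y\}$: then $g$ factors through $S^{2n+2}\but\{y,-y\}$, which is $\Z/2$-homeomorphic to $S^{2n+1}$ with the antipodal involution, and we are done by Theorem~\ref{3.0}. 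The obstruction to this compression is the "local degree'' cocycle of $g$ over a generic value; the vanishing $H^{2n+2}(X\but\{x\})=0$ for every $x$ (the circuit hypothesis) is exactly what should allow this cocycle to be chosen cohomologous to zero, so that $g$ can be pushed off a point. This redistribution step is the main obstacle, and I would carry it out by feeding the punctured-$X$ vanishing into the Smith/Gysin exact sequences of the free $\Z/2$-action, in a manner parallel to — but one dimension above — the use of the $(2n+1)$-circuit condition on $|K\circledast K|/t$ in the proof of Lemma~\ref{proper minor} and to the $\theta$-obstruction computations of \cite{M2}. For $n=1$ no extra work is needed, as Theorem~\ref{3.0} already absorbs the Robertson--Seymour--Thomas theorem.
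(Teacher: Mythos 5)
Your obstruction-theoretic reduction --- that a $\Z/2$-map $|K\hat\circledast K|\to S^{2n+1}$ exists iff $\psi^*(\xi)=0$ in $H^{2n+2}(|K\hat\circledast K|/t;\Z)$ --- is correct and is exactly the mechanism the paper invokes (via the proof of Theorem 3.2 in \cite{M2}). But the ``crux'' you defer, namely deducing $\psi^*(\xi)=0$ from the hypothesis that $|K\hat\circledast K|$ is a $(2n+2)$-circuit, cannot be carried out: the statement you are proving is false as literally printed. A $(2n+2)$-circuit can perfectly well be the $(2n+2)$-sphere with the antipodal involution, for which $\psi^*(\xi)$ is the nonzero top class. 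Concretely, take $K$ to be the $n$-skeleton of the $(2n+3)$-simplex: by Example \ref{3.4}, $|K\hat\circledast K|$ is $\Z/2$-homeomorphic to $S^{2n+2}$ (so it is a $(2n+2)$-circuit and $K$ is atomistic), yet $|K|$ does \emph{not} linklessly embed in $S^{2n+1}$. The corollary's statement is a misprint: the paper's own proof immediately introduces a proper $h$-minor $L$ of $K$ and concludes that $|L|$, not $|K|$, linklessly embeds --- in parallel with Lemma \ref{proper minor} one dimension down, and as required by part (iv) of the Main Theorem, which this corollary is meant to supply. No amount of Smith/Gysin redistribution over $X$ itself can rescue the literal claim.

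The missing idea is \emph{where} the circuit hypothesis enters: not through the cohomology of punctured $X$ fed into the equivariant sequences of $X$, but through \emph{non-surjectivity} of a comparison map. For a proper $h$-minor $L$ of the atomistic complex $K$, Corollary \ref{3.3} yields a non-surjective $\Z/2$-map $f\:|L\hat\circledast L|\to|K\hat\circledast K|$. The classifying map $\phi$ of $t$ on $|L\hat\circledast L|/t$ then factors up to homotopy through the complement of a point in the $(2n+2)$-circuit $|K\hat\circledast K|/t$, and the defining property $H^{2n+2}(M\but\{x\})=0$ of a circuit kills $\phi^*(\xi)$ on the $L$-side. From there your argument (equivalently, \cite[proof of 3.2]{M2}) produces the $\Z/2$-map $|L\hat\circledast L|\to S^{2n+1}$, and Theorem \ref{3.0} gives the linkless embedding of $|L|$. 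So your framework is the right one, but it must be applied to a proper minor $L$, with the circuit condition on $K$ consumed via the puncture that passing to $L$ creates.
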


The proof is similar to a part of the proof of Lemma \ref{proper minor}.

\begin{proof}
By Corollary \ref{3.3} we have a non-surjective $\Z/2$-map
$f\:|L\hat\circledast L|\to|K\hat\circledast K|$.
Let $t$ denote the factor exchanging involution, let
$\phi\:|L\hat\circledast L|/t\to\R P^\infty$ be its classifying map, and let
$\xi\in H^{2n+2}(\R P^\infty;\,\Z)\simeq\Z/2$ be the generator.
Since $\phi$ factors up to homotopy through the non-surjective map $f/t$
into the $(2n+2)$-circuit $|K\hat\circledast K|/t$, we have $\phi^*(\xi)=0$.
Then $|L\hat\circledast L|$ admits a $\Z/2$-map into $S^{2n+1}$ (see
\cite[proof of 3.2]{M2}), and the assertion follows from Thorem \ref{3.0}.
\end{proof}

\subsection{Equivariant homotopy of the Petersen family}\label{Petersen topology}

\begin{example}[the $n$-skeleton of the $(2n+3)$-simplex]\label{3.4}
Let $\Delta$ be the $(2n+3)$-simplex and $L$ its $n$-skeleton.
For each pair of complementary $(n+1)$-simplices, choose one, and let $L_\xi$
be the union of $L$ with the chosen $(n+1)$-simplices.
Then $L_\xi$ is self-dual, hence by Theorem \ref{5.6},
$|L_\xi\circledast L_\xi|$ is $\Z/2$-homeomorphic to the $(2n+2)$-sphere.
On the other hand, $|L_\xi\circledast L_\xi|$ is also $\Z/2$-homeomorphic to
$|L\hat\circledast L|$ by the above.
Hence by the Borsuk--Ulam theorem the latter admits no $\Z/2$-map to $S^{2n+1}$.
Thus by Theorem \ref{3.0}, $L$ is not linklessly embeddable in $S^{2n+1}$.

In particular, $K_6$ is not linklessly embeddable in $S^3$.
This was first proved by Conway and Gordon, see also \cite{Sac1}, \cite{Sac2};
the $n$-dimensional generalization is proved in \cite[Corollary 1.1]{LS},
\cite{Ta} and \cite[Example 4.7]{M2} (in all cases, by methods different from
the above).
\end{example}

\begin{definition}[$\nabla$Y-exchanges]
A graph $H$ is said to be obtained by a $\nabla$Y-{\it exchange} from a graph $G$,
if $G$ contains a subgraph $\nabla$ isomorphic to $\partial\Delta^2$, and $H$ is
obtained from $G$ by removing the three edges of $\nabla$ and instead adjoining
the triod Y$\bydef\Delta^0*(\nabla)^0$.
We shall call a $\nabla$Y-exchange {\it dangerous} if $G$ contains no circuit
disjoint from $\nabla$; and {\it allowable} if either (i) it is dangerous, or
(ii) it is non-dangerous and $G$ contains precisely one circuit $C$ disjoint
from $\nabla$, and $H$ contains no circuit disjoint from $C$.
\end{definition}

Let us consider an allowable $\nabla$Y-exchange $G\rightsquigarrow H$.
The obvious map $|\nabla|\to|$Y$|$, sending the barycenters of the edges to
the cone point and keeping $|\nabla^{(0)}|$ fixed, yields a map $f\:|G|\to |H|$,
which sends every pair of disjoint cells of $G$ to a pair of disjoint subcomplexes
of $H$.

Given an $H_\zeta$, let us choose a $G_\xi$ as follows.
In the dangerous case we have a canonical bijection between $\lambda_G$ and
$\lambda_H$, and we let $\xi$ correspond to $\zeta$ under this bijection.
In the non-dangerous case, $\lambda_G$ contains two additional circuits, $\nabla$
and $C$, and we set $\xi(\{\nabla,C\})=\nabla$.
Conversely, every $\xi$ satisfying the latter property uniquely determines a
$\zeta$ for a non-dangerous exchange, and every $\xi$ whatsoever uniquely
determines a $\zeta$ for a dangerous exchange.

With such $\zeta$ and $\xi$, our $f$ extends to an $f_\zeta\:|G_\xi|\to |H_\zeta|$,
which in the non-dangerous case is collapsible.
This $f_\zeta$ still sends every pair of disjoint cells of $G_\xi$ to a pair of
disjoint subcomplexes of $H_\zeta$.
Hence $f_\zeta*f_\zeta$ restricts to
a $\Z/2$-map $F_\zeta\:|G_\xi\circledast G_\xi|\to |H_\zeta\circledast H_\zeta|$.
In the non-dangerous case, $F_\zeta$ is collapsible, and so is an equivariant
homotopy equivalence.

\begin{example}[Petersen family]\label{3.5}
There is the following diagram of $\nabla$Y-exchanges, which uses the notation
of Fig.\ 1:

\begin{equation}
\begin{CD}
@.\!\!\!\!K_{3,3,1}\!\!\!\!@.@.@.\\
@.\hskip25pt\searrow\hskip-25pt@.@.@.\\
K_6@>>>\Gamma_7@>>>\Gamma_8@>>>\Gamma_9@>>>P\\
@.\hskip15pt_{\mathbf !}\searrow\hskip-30pt@.@.@.\\
@.@.\!\!\!\!K_{4,4}\!\but\! e\!\!\!\!@.@.
\end{CD}\tag{$*$}
\end{equation}
\medskip

It can be verified by inspection that all these $\nabla$Y-exchanges are allowable,
and that no further $\nabla$Y- or Y$\nabla$-exchange (allowable or not) can be
applied to any graph in this diagram; thus these seven graphs are the Petersen
family graphs.
The only dangerous $\nabla$Y-exchange is marked by an ``\,$\mathbf !$\,''.

Write $L=K_6$, and select $L_\xi$ so that it contains a quadruple of $2$-cells
whose pairwise intersections contain no edges.
For instance, the hemi-icosahedron (see \S\ref{semi}) will do.
Then the horizontal sequence of $\nabla$Y-exchanges in ($*$) can be made
along these four $2$-cells, which compatibly defines a $G_\xi$ for each $G$
in this sequence.
These also uniquely determine compatible $G_\xi$ for $G=K_{3,3,1}$ and
$K_{4,4}\but$(edge).
Then by the above, for each $G$ in the Petersen family, we obtain
a $\Z/2$-map $|L\hat\circledast L|\to |G\hat\circledast G|$
(using the invertibility of the $\Z/2$-homotopy equivalence in the case of
$K_{3,3,1}$).
On the other hand, by the preceding example $|L\hat\circledast L|$ is
$\Z/2$-homeomorphic to $S^4$, hence by the Borsuk--Ulam Theorem it admits no
$\Z/2$-map to $S^3$.
Thus $|G\hat\circledast G|$, for each $G$ in the Petersen family, admits no
$\Z/2$-map to $S^3$.

The (easy) ``only if'' direction in Theorem \ref{3.0} now implies that none of
the graphs of the Petersen family is linklessly embeddable in $S^3$, a fact
originally observed by Sachs \cite{Sac1}, \cite{Sac2}.
Corollary \ref{3.3} then implies the easy direction in
the Robertson--Seymor--Thomas Theorem \ref{RST}(b).
\end{example}

\begin{remark}\label{hemi-exchanges} If we do choose $L_\xi$ to be
the hemi-icosahedron, then all the non-dangerous $\nabla$Y and
Y$\nabla$-exchanges in ($*$) yield cellulations of $\R P^2$ by the complexes
$G_\xi$, in particular, by the hemi-dodecahedron $P_\xi$.
\end{remark}

\begin{remark}\label{K44-nonembed}
We shall see in the next subsection that the $F_\xi$ corresponding to all
the non-dangerous $\nabla$Y-exchanges in ($*$) can be approximated by equivariant
homeomorphisms (for those in the horizontal line this also follows from the
Cohen--Homma Theorem \ref{Cohen-Homma}).
This cannot be the case for the dangerous one.
Indeed if $G=K_{4,4}\but$(edge), then $G\circledast G$ alone contains
a join of two triods, which is non-embeddable in $S^4$ since the link of its
central edge is $K_{3,3}$, which does not embed in $S^2$.
\end{remark}

\subsection{Combinatorics of the Petersen family} \label{transforms}

Following van Kampen \cite{vK2} and McCrory \cite{Mc}, we define a {\it combinatorial
manifold} as a cell complex $K$ such that $K^*$ is also a cell complex
(cf.\ \cite{M3}).
If additionally both $K$ and $K^*$ are atomistic, we say that $K$ is
an {\it $\alpha$-combinatorial} manifold.
Now an {\it homology ($\alpha$-)combinatorial manifold} as an (atomistic)
homology cell complex $K$ such that $K^*$ is also an (atomistic) homology
cell complex.
Here a poset $K$ is called an {\it homology cell complex} if for each $\sigma\in K$
there exists an $m$ such that $H_i(|\partial\fll\sigma\flr|)\simeq H_i(S^m)$,
with integer coefficients.

Let $K$ be a poset and let $C\in K$.
We say that $K$ is Y$\nabla${\it -transformable} at $C$ if $C$ is covered by
precisely three elements $D_1$, $D_2$, $D_3$ such that
\begin{roster}
\item $\partial\fll D_i^*\flr
\subset\partial\fll D_{i+1}^*\flr\cup\partial\fll D_{i-1}^*\flr$, and
\item $\fll D_{i-1}\flr\cap\fll D_{i+1}\flr=\fll C\flr$
\end{roster}
for each $i$ (addition $\bmod 3$).

\begin{lemma}\label{homcomb} Suppose that $K$ is an homology combinatorial
$\alpha$-manifold and $C\in K$ is covered by precisely three elements
$D_1$, $D_2$, $D_3$.
Then $K$ is Y$\nabla$-transformable at $C$.
\end{lemma}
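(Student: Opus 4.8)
The plan is to verify the two conditions defining $\mathrm Y\nabla$-transformability at $C$, taking as given that $C$ is covered in $K$ by exactly three cells $D_1,D_2,D_3$ (so $\dim D_j=\dim C+1$ for each $j$). The structural input I would use is that, $K$ being a homology combinatorial $\alpha$-manifold, the poset $\partial^*\cel C\cer$ of cells strictly above $C$ is itself a homology cell complex: for $p>C$ its cone $(C,p]$ has boundary $(C,p)$, whose order complex realizes the link of $C$ inside the homology sphere $|\partial\fll p\flr|$ and so is a homology sphere (cf.\ the computation of links in the proof of Lemma~\ref{4.3}), while $|\partial^*\cel C\cer|=|\partial\fll C^*\flr|$ is a homology sphere because $K^*$ is a homology cell complex. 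The atoms of $\partial^*\cel C\cer$ are exactly the cells covering $C$, namely $D_1,D_2,D_3$.

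Condition (i) follows directly. Suppose for contradiction that some $x\in K$ has $x>D_i$ yet $x\not>D_{i-1}$ and $x\not>D_{i+1}$. The cone $(C,x]$ of $x$ in $\partial^*\cel C\cer$ contains $D_i$ (since $C<D_i<x$) but no other cover of $C$, so $D_i$ is its only atom; but $x$ has positive dimension in $\partial^*\cel C\cer$ (as $x>D_i$ strictly), so the boundary $(C,x)$ is a nonempty homology sphere and hence has at least two vertices --- a contradiction, its vertices being exactly the covers of $C$ lying below $x$, i.e.\ $D_i$ alone. Thus every $x>D_i$ satisfies $x>D_{i-1}$ or $x>D_{i+1}$, which is precisely $\partial\fll D_i^*\flr\subset\partial\fll D_{i-1}^*\flr\cup\partial\fll D_{i+1}^*\flr$.

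For condition (ii) I would pass to $K^*$ and work there throughout. The equality $\fll D_{i-1}\flr\cap\fll D_{i+1}\flr=\fll C\flr$ translates to $\cel D_{i-1}\cer\cap\cel D_{i+1}\cer=\cel C\cer$; since $D_{i\pm1}\le C$ in $K^*$ one inclusion is automatic, and by atomisticity the reverse inclusion follows once one shows that every vertex of the cell $C$ is a vertex of $D_{i-1}$ or of $D_{i+1}$ (then $A(\fll C\flr)\subset A(\fll D_{i-1}\flr)\cup A(\fll D_{i+1}\flr)$, so any $x$ lying above both $D_{i-1}$ and $D_{i+1}$ lies above $C$). By hypothesis $C$ has exactly three facets $D_1,D_2,D_3$ in $K^*$, so $\partial\fll C\flr$ is an atomistic homology sphere, moreover a homology manifold, with exactly three top cells. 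The crux is that such a complex must be $\partial\Delta^2$; granting it, $D_1,D_2,D_3$ are its three edges, any two of which already contain all three of its vertices, and (ii) holds. (The same identification yields $\partial^*\cel C\cer\cong\partial\Delta^2$, in agreement with the dichotomial spheres of the Petersen family.)

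The real obstacle is this rigidity statement: an atomistic homology $k$-sphere that is also a homology manifold and has exactly three top cells must be $\partial\Delta^2$ (in particular $k=1$). For $k=0$ it would be three points, not $S^0$; for $k=1$ the three edges of a homology circle force a third vertex and the triangle; and I would exclude $k\ge2$ by an Euler-characteristic count, using that atomisticity forbids two distinct cells one of whose vertex sets contains the other, which makes having only three top cells impossible above dimension one. The homology (rather than PL) hypotheses cost nothing, since every link that enters has dimension $\le1$, where a homology sphere with a prescribed vertex set is determined outright.
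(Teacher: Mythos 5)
Your verification of condition (ii) hinges on a rigidity claim that is false: an atomistic homology sphere that is a homology manifold and has exactly three top cells need \emph{not} be $\partial\Delta^2$. The lemma is intended for cells $C$ of arbitrary codimension --- the very next Lemma \ref{homology mfld} isolates ``$\fll C\flr$ of codimension $\le 5$'' as an \emph{extra} hypothesis for a stronger conclusion, and its proof works with $\partial\fll C^*\flr$ a homology $(k-1)$-sphere written as a union of three homology $(k-1)$-balls $\fll D_i^*\flr$ for general $k$. Concrete counterexamples to your claim occur in the paper: in the Example following the lemma, $\partial\fll D_1\flr$ is a $5$-sphere with exactly three $5$-cells $C_1,C_2,C_3$; and for the (Y,$\nabla$)-transforms of type (vertex, $4$-cell) in Example \ref{4.8}, a degree-$3$ vertex $v$ of a dichotomial $4$-sphere has $\partial\fll v^*\flr$ an atomistic $3$-sphere with exactly three top cells. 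So your Euler-characteristic exclusion of $k\ge 2$ cannot succeed, and with it the whole derivation of (ii) collapses. The correct route --- and the paper's --- is to deduce (ii) from (i): one first observes that $C$ and the $D_i$ are not maximal in $K$ (otherwise $\partial^*\cel C\cer$ would be the three-point set, which is not a homology sphere, contradicting that $K^*$ is a homology cell complex), so every atom of $\fll C^*\flr$ in $K^*$ lies in $\fll C^*\flr\but\{C^*,D_i^*\}$, which by (i) equals $\fll D_{i-1}^*\flr\cup\fll D_{i+1}^*\flr$; the atomisticity of $K^*$ then yields $\fll D_{i-1}\flr\cap\fll D_{i+1}\flr=\fll C\flr$, exactly as in your intended endgame.

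Your argument for (i) also has a gap, though a more repairable one. You need the open interval $(C,x)$ to be a homology sphere, but the definition of a homology combinatorial manifold controls only the intervals $(\hat 0,\sigma)$ (because $K$ is a homology cell complex) and $(\sigma,\hat 1)$ (because $K^*$ is); the ``link of $C$ inside $|\partial\fll x\flr|$'' is a join $(\partial\fll C\flr)^\flat*((C,x))^\flat$, and since $|\partial\fll x\flr|$ is only known to have the \emph{homology} of a sphere (it need not be a homology manifold), this link need not be a homology sphere --- the computation you cite from Lemma \ref{4.3} uses the hypothesis that $B^\flat$ is a combinatorial manifold. The paper gets by with a weaker local input: given $E>D_i$, choose $E'\le E$ covering $D_i$; then $\partial\fll E'\flr$ is a homology $(c+1)$-sphere and hence a pseudo-manifold, so its codimension-one cell $\fll C\flr$ lies in exactly two $(c+1)$-cells, one of which is $\fll D_i\flr$ and the other of which must be $\fll D_{i+1}\flr$ or $\fll D_{i-1}\flr$; hence $E\ge E'>D_{i\pm 1}$.
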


The proof of property (ii) uses the hypothesis that $K^*$ is atomistic.

\begin{proof}[Proof. (i)] If $E>D_i$, pick an $E'<E$ such that $E'$ covers $D$.
The homology combinatorial sphere $\partial E$ is a pseudo-manifold, so
$\fll C\flr$ is contained in precisely two homology $(c+1)$-cells in $\partial E$.
One of them is $\fll D_i\flr$, so the other can only be $\fll D_{i+1}\flr$ or
$\fll D_{i-1}\flr$.

{\it (ii).}
Since the link $\partial\cel C\cer^L$ of $C$ in
$L\bydef\fll D_1\flr\cup\fll D_2\flr\cup\fll D_3\flr$ is the $3$-point set,
which is not a homology sphere, $K$ cannot be of dimension $c+1$, and therefore
$D_i$ is not maximal.
Hence $C^*$ and $D_i^*$ are not atoms of $\fll C^*\flr$ and so all the atoms of
$\fll C^*\flr$ lie in $\fll C^*\flr\but\{C^*,D_i^*\}$.
By (i) the latter equals $\fll D_{i-1}^*\flr\cup\fll D_{i+1}^*\flr$.
Hence $D_{i-1}^*$ and $D_{i+1}^*$ do not simultaneously belong to any cell of $K^*$
other than $\fll C^*\flr$.
\end{proof}

\begin{definition}[Y$\nabla$-transform]
Suppose that $K$ is a poset that is Y$\nabla$-transformable at some $C\in K$.
Then we define the Y$\nabla${\it -transform} of $K$ at $C$ to be the poset $K_C$
that has one element $\hat B$ for each $B\in K$ and satisfies the following for
each $A,A'\notin\{C,D_1,D_2,D_3\}$:

\begin{itemize}
\item $\hat C>\hat D_i$, $i=1,2,3$;

\item $\hat A<\hat D_i$ iff $A<D_{i+1}$ or $A<D_{i-1}$;

\item $\hat A<\hat C$ iff $A<D_1$ or $A<D_2$ or $A<D_3$;

\item $\hat A>\hat D_i$ iff $A>D_{i+1}$ and $A>D_{i-1}$;

\item $\hat A>\hat C$ iff $A>D_1$ and $A>D_2$ and $A>D_3$;

\item $\hat A>\hat A'$ iff $A>A'$.
\end{itemize}

This definition immediately implies that $(K_C)^*$ is Y$\nabla$-transformable
at $(\hat C)^*$.
A straightforward Boolean logic using that $K$ is Y$\nabla$-transformable at $C$
shows further that the Y$\nabla$-transform of $(K_C)^*$ at $(\hat C)^*$ is
isomorphic to $K^*$.

We say that a poset $L$ is $\nabla$Y{\it -transformable} at $E\in L$ if $L^*$ is
Y$\nabla$-transformable at $E^*$.
In that case the $\nabla$Y{\it -transform} of $L$ at $E$ is defined to be
the poset $L^E\bydef ((L^*)_{E^*})^*$.
\end{definition}

\begin{example} Let $M$ be a triangulation of the Mazur contractible $4$-manifold.
Let $M_1\cup M_2\cup M_3$ be the union of three copies of $M$ identified
along an isomorphism of their boundaries.
Then each $M_i\cup M_{i+1}$ (addition $\bmod 3$) is the double of $M$ and hence is
homeomorphic to $S^4$.
We glue it up by a $5$-cell $C_{i-1}$.
Then $C_1\cap C_2=M_3$, so $C_1\cup C_2$ is simply-connected and acyclic, hence
contractible.
Its boundary $M_1\cup M_2\cong S^4$, so $(C_1\cup C_2)\cup C_3$ is a homotopy
sphere, hence the genuine $5$-sphere.
We glue it up by a pair of $6$-cells $D_1$, $D_2$.
The resulting cell complex $K$ is homeomorphic to $S^6$, and hence its dual is
also a cell complex (see \cite{M3}).

Now $D_1^*$ is covered by precisely three elements $C_1^*$, $C_2^*$,
$C_3^*$ of $K^*$.
Let $L=(K^*)_{D_1^*}$ and let $D=\widehat{D_1^*}$.
Then $D^*$ is a cell of $L^*$ such that $|\partial\fll D^*\flr|\cong|\partial M|$,
is a nontrivial homology $3$-sphere.

Note that $D_1^*$ is of codimension $6$ in $K^*$.
\end{example}

\begin{lemma}\label{homology mfld}
Let $K$ be a homology $\alpha$-combinatorial manifold,
and suppose that $C\in K$ is covered by precisely three elements of $K$.
Then the {\rm Y}$\nabla$-transform $K_C$ is a homology
$\alpha$-combinatorial manifold.

If additionally $K$ is an $\alpha$-combinatorial manifold, and
$\fll C\flr$ is of codimension $\le 5$ in $K$, then $K_C$ is an
$\alpha$-combinatorial manifold.
\end{lemma}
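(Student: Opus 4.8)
The plan is to argue by induction on $\dim K$. Since $C$ is covered by exactly three elements, Lemma~\ref{homcomb} already guarantees that $K$ is $\mathrm Y\nabla$-transformable at $C$, so $K_C$ is defined; write $D_1,D_2,D_3$ for the three covers of $C$. I would first record the combinatorics of this triad: because $K$ is a homology cell complex, $\fll D_i\flr_K$ has dimension $\dim C+1$ and $C$ is a top cell of each homology sphere $\partial\fll D_i\flr_K$; property~(ii) gives $\fll D_{i+1}\flr_K\cap\fll D_{i-1}\flr_K=\fll C\flr_K$; and property~(i) forces the join $E_i\bydef D_{i+1}\vee D_{i-1}$ to exist in $K$ for each $i$, with every element of $K$ lying over some $D_j$ in fact lying over all three, the only exceptions being the three $E_i$ themselves. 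There are four things to check for $K_C$ to be a homology $\alpha$-combinatorial manifold: that $K_C$ and $(K_C)^*$ are atomistic, and that $|\partial\fll\hat B\flr_{K_C}|$ and $|\partial\cel\hat B\cer_{K_C}|$ are homology spheres for every element $\hat B$. The two atomisticity statements I would dispose of by a direct verification — tracking how atoms and least upper bounds are transported across the defining relations of the transform, using that $K$ and $K^*$ are atomistic — in the spirit of the atomisticity argument inside the proof of Theorem~\ref{5.10}.

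For the homology-sphere conditions I would sort the elements $\hat B$ by the position of $B$ relative to $\{C,D_1,D_2,D_3,E_1,E_2,E_3\}$. If neither $\fll B\flr_K$ nor $\cel B\cer_K$ meets $\{C,D_1,D_2,D_3\}$, then $\fll\hat B\flr_{K_C}\cong\fll B\flr_K$ and $\cel\hat B\cer_{K_C}\cong\cel B\cer_K$, and there is nothing to prove. If $B$ lies over some (hence, by the above, every) $D_j$, a short computation identifies $\fll\hat B\flr_{K_C}$ with the $\mathrm Y\nabla$-transform of the cell $\fll B\flr_K$ at $C$ — inside $\fll B\flr$ the element $C$ is still covered by $D_1,D_2,D_3$ — so $\partial\fll\hat B\flr_{K_C}$ is the $\mathrm Y\nabla$-transform of the strictly lower-dimensional homology sphere $\partial\fll B\flr_K$ and is a homology sphere by the inductive hypothesis; symmetrically, $\partial\cel\hat B\cer_{K_C}$ is a homology sphere whenever $\cel B\cer_K$ meets $\{C,D_j\}$, by induction applied to $\partial\cel B\cer_K$. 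This leaves only the ``new'' cells $\hat C$, $\hat D_i$, $\widehat{E_i}$, whose links I would compute by hand: $\partial\fll\hat D_i\flr_{K_C}$ is the union of the two homology balls obtained from $|\partial\fll D_{i+1}\flr_K|$ and $|\partial\fll D_{i-1}\flr_K|$ by deleting the open top cell $|C|$, glued along their common boundary $|\partial\fll C\flr_K|$ (which is exactly what (ii) pins down); $\partial\fll\widehat{E_i}\flr_{K_C}$ is a similar union of two homology balls, one of them $|\fll\hat D_i\flr_{K_C}|$; $\partial\cel\widehat{E_i}\cer_{K_C}\cong\partial\cel E_i\cer_K$ is unchanged; $\partial\cel\hat D_i\cer_{K_C}$ is $|\cel E_i\cer_K|$ with a cone on its entire boundary glued on along that boundary; $\partial\fll\hat C\flr_{K_C}$ is the union of the three homology balls $|\fll\hat D_i\flr_{K_C}|$, whose pairwise intersections are homology balls and whose triple intersection is $|\partial\fll C\flr_K|$; and $\partial\cel\hat C\cer_{K_C}$ is the triple intersection of the dual cones $|\cel D_1\cer_K|$, $|\cel D_2\cer_K|$, $|\cel D_3\cer_K|$ in $K$, with pairwise intersections $|\cel E_i\cer_K|$. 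In each case a short (two-step) Mayer--Vietoris argument, using (i) and (ii) to describe the intersections, shows the space is a homology sphere, which finishes the first assertion.

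For the second assertion I would re-run the same case analysis with ``homology'' upgraded to ``PL''. All of the gluings above — of genuine PL balls along PL facet-balls, or of two PL balls along an entire boundary sphere, or of a PL ball with the cone on its boundary — are legitimate in every dimension, and the reductions to the inductive hypothesis are unchanged; so $\partial\fll\hat B\flr_{K_C}$, $\partial\fll\hat D_i\flr_{K_C}$, $\partial\fll\widehat{E_i}\flr_{K_C}$, $\partial\cel\hat D_i\cer_{K_C}$, $\partial\cel\widehat{E_i}\cer_{K_C}$ and $\partial\fll\hat C\flr_{K_C}$ are all PL spheres for free. The one link where PL-ness is not automatic is $\partial\cel\hat C\cer_{K_C}$: an intersection of closed costars in a PL manifold need not be a PL sphere, and so far it has only been identified up to homology. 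But $\dim\hat C=\dim C+2$, so $\partial\cel\hat C\cer_{K_C}$ has dimension $\operatorname{codim}\fll C\flr-3$; under the hypothesis $\operatorname{codim}\fll C\flr\le5$ this is at most $2$, and a homology $0$-, $1$- or $2$-sphere is a PL sphere. Hence $K_C$ is an $\alpha$-combinatorial manifold.

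I expect the laborious part to be the heart of the second paragraph: determining exactly which elements acquire a changed link, and carrying out the Mayer--Vietoris computations for $\hat C$ (and its dual) with the correct description of every pairwise and triple intersection — this is where properties (i), (ii) and the fact that $C$ is a top cell of each $\partial\fll D_i\flr_K$ must be used carefully. The clean organizing principle, which I would isolate, is that the $\mathrm Y\nabla$-transform is a purely local modification whose only genuinely new feature is the pair of links of the created cell $\hat C$: the lower link $\partial\fll\hat C\flr_{K_C}$ is a union of cells and causes no difficulty, whereas the upper link $\partial\cel\hat C\cer_{K_C}$ is an intersection of costars and is the sole origin of the codimension-$5$ restriction.
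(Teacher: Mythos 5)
Your treatment of the exceptional cells is essentially the paper's: both arguments identify $\partial\cel\hat C\cer$ with the triple intersection $\fll D_1^*\flr\cap\fll D_2^*\flr\cap\fll D_3^*\flr$ inside $\partial\fll C^*\flr$, compute it by Alexander duality plus Mayer--Vietoris to be a homology $(k-3)$-sphere, and invoke $k\le 5$ exactly there (the paper additionally uses Sch\"onflies--Alexander to make $\fll D_{i+1}^*\flr\cap\fll D_{i-1}^*\flr$ a genuine ball). But your handling of the \emph{non}-exceptional cells has a genuine gap, in two places. First, the structural claim that every element lying over some $D_j$ lies over all three, ``the only exceptions being the three $E_i$'', is false: property (i) of Y$\nabla$-transformability only forces such an element to lie over at least \emph{two} of the $D_j$, and there may be many elements over exactly two. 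Concretely, in the dichotomial $3$-sphere $B_{3,3}$ take $C$ to be a vertex $a$ and $D_1,D_2,D_3$ the three edges at $a$; then each of the four quadrilateral $2$-cells containing $D_1$ contains exactly one other $D_j$, and likewise for several $3$-cells, so the set of elements over exactly two of the $D_i$ is large and is not a principal filter generated by a join $E_i=D_{i+1}\vee D_{i-1}$ (which need not even exist, as cell complexes here are not assumed to be lattices). For every such $B$ the cone $\fll\hat B\flr$ is genuinely changed --- the ``V'' $\{C<D_{i+1},D_{i-1}\}$ is collapsed to the single element $\hat D_i$ --- and your case analysis never addresses these. Second, your induction on dimension applies the lemma to $\partial\fll B\flr_K$ (and dually), which requires knowing that boundaries of cells in a homology $\alpha$-combinatorial manifold are again homology $\alpha$-combinatorial manifolds; this heredity of the class under passing to links/intervals is itself a nontrivial claim and is nowhere established.

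The paper avoids both problems by a different device in place of your induction. It introduces an auxiliary poset $K_C'$, obtained by blowing up the transform region with extra elements $\check F_j$, $\check E_{ij}$, which admits subdivision maps $\phi\:K_C'\to K_C$ and $\psi\:(K_C')^*\to K^*$ simultaneously; the dual maps $\phi^*$ and $\psi^*$ each factor as a quotient by a copy of $C((\Delta^2)^{(0)})$ followed by three elementary zippings, hence are collapsible. Since for every $A\notin\{C,D_1,D_2,D_3\}$ one has $(\psi^*)^{-1}(\fll A\flr)=\phi^{-1}(\fll\hat A\flr)$ (and dually), collapsible maps being homology equivalences shows in one stroke that all cones and dual cones of $K_C$ other than those of $\hat C$ and the $\hat D_i$ are homology cells --- including those of the elements over exactly two $D_i$ that your argument misses --- and the Cohen--Homma theorem upgrades this to genuine spheres in the PL case. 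If you want to salvage your inductive scheme you would have to (a) add the case of elements over exactly two of the $D_i$, whose modified links are zippings rather than Y$\nabla$-transforms, and (b) prove the heredity statement for homology $\alpha$-combinatorial manifolds; at that point the paper's subdivision-plus-zipping argument is both shorter and strictly more robust.
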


\begin{proof}
{\it Case 1.} Let us first prove that $|R|$ and $|A_i|$ are homology spheres, where
$R=\partial\fll(\hat C)^*\flr$ and $A_i=\partial\fll(\hat D_i)^*\flr$.
Let $S=\partial\fll C^*\flr$ and let $B_i=\fll D_i^*\flr$.
If $\fll C\flr$ has codimension $k$ in $K$, then the homology $(k-1)$-sphere
$|S|$ is the union of three homology $(k-1)$-balls $|B_i|$.
Since $|\partial B_1|$ is a homology $(k-2)$-sphere, by the Alexander duality
the closures of its complementary domains, including
$|B_2\cup B_3|$, are homology balls.
Then the Mayer--Vietoris sequence implies that $|B_2\cap B_3|$ is a homology
$(k-2)$-ball, and in particular $|\partial(B_2\cap B_3)|$ is a homology
$(k-3)$-sphere.
Now $\partial(B_2\cap B_3)=B_1\cap B_2\cap B_3\simeq R$, and
$(B_2\cap B_3)\cup_R CR\simeq A_1$.

If $k\le 5$, then the homology $(k-3)$-sphere $|R|$ must be a genuine sphere;
and if additionally $K$ is an $\alpha$-combinatorial manifold, then
$|\partial B_2|$ is a genuine $(k-2)$-sphere, so by the Sch\"onflies and Alexander
theorems, $|R|$ bounds in it a genuine $(k-2)$-ball $|B_2\cap B_3|$.

{\it Case 2.} Next, each $\fll\hat D_i\flr$ is an (homology) cell since it is
isomorphic to the union of the (homology) cells $\fll D_{i-1}\flr$ and
$\fll D_{i+1}\flr$ whose intersection is the (homology) cell $\fll C\flr$.
Also, $\fll\hat C\flr$ is an (homology) cell since it is isomorphic to
the union of the (homology) cells $\fll\hat D_{i-1}\flr$ and $\fll\hat D_{i+1}\flr$
whose intersection is isomorphic to $(\partial\fll D_i\flr)\but\{C\}$.
The latter cellulates an (homology) ball, which is the closure of the
complement to the homology ball $|\fll C\flr|$ in the homology sphere
$|\partial\fll D_i\flr|$.

{\it Case 3.} Finally, let us consider a poset $K_C'$ that has one element
$\check B$ for
each $B\in K$ and additional elements $\check F_1$, $\check F_2$, $\check F_3$
and $\check E_{12}$, $\check E_{13}$, $\check E_{21}$, $\check E_{23}$,
$\check E_{31}$, $\check E_{32}$, and satisfies the following for each
$A,A'\notin\{C,D_1,D_2,D_3\}$ and all $i,j\in\{1,2,3\}$, $i\ne j$:

\begin{itemize}
\item $\check D_i>\check C>\check F_j$ and $\check D_i>\check E_{ij}>\check F_j$;

\item $\check C$ is incomparable with $\check E_{ij}$;

\item $\check A<\check E_{ij}$ iff $\check A<\check D_i$ iff $A<D_i$;

\item $\check A<\check F_j$ iff $\check A<\check C$ iff $A<C$;

\item $\check A>\check E_{ij}$ iff $\check A>\check F_j$ iff
$A>D_{j+1}$ and $A>D_{j-1}$;

\item $\check A>\check D_i$ iff $\check A>\check C$ iff
$A>D_1$ and $A>D_2$ and $A>D_3$;

\item $\check A>\check A'$ iff $A>A'$.
\end{itemize}

It is easy to see that there exist subdivisions $\phi\:K_C'\to K_C$ and
$\psi\:(K_C')^*\to K^*$.
We note that $K_C'$ and $(K_C')^*$ are non-atomistic posets whose cones are cells,
except for one cone $\fll C\flr$ in $K_C'$ isomorphic to $(\partial\fll C\flr)+T$
and one cone $\fll E^*\flr$ in $(K_C')^*$ isomorphic to
$(\partial\fll(\hat C)^*\flr)+T$, where $T$ denotes the cone over
$(\Delta^2)^{(0)}$.
Then each of $\phi^*\:(K_C')^*\to (K_C)^*$ and $\psi^*\:K_C'\to K$ is
obtained by taking the quotient by the copy of $T$ followed by three
elementary zippings.
Either from this, or because $\phi$ and $\psi$ are subdivisions, $|\phi^*|=|\phi|$
and $|\psi^*|=|\psi|$ are collapsible.
On the other hand, for each $A\notin\{C,D_1,D_2,D_3\}$ we have
$(\psi^*)^{-1}(\fll A\flr)=\fll\check A\flr=\phi^{-1}(\fll\hat A\flr)$ and
$\psi^{-1}(\fll A^*\flr)=\fll(\check A)^*\flr=(\phi^*)^{-1}(\fll(\hat A)^*\flr)$.
Since a collapsible map is a homology equivalence, we obtain that each cone
of $K_C$ other than $\fll\hat C\flr$ and $\fll\hat D_i\flr$, and each cone
of $(K_C)^*$ other than $\fll(\hat C)^*\flr$ and $\fll(\hat D_i)^*\flr$ is
a homology cell.
Then by the above, $K_C$ and $K_C^*$ are homology cell complexes.
It is clear that they are atomistic.

Moreover, for each $A\in K$ other than $C$, $D_1$, $D_2$, $D_3$, if
$|\partial\fll A^*\flr|$ (resp.\ $|\partial\cel A^*\cer|$) is a sphere, then so
is the subdivided $|\partial\fll(\check A)^*\flr|$ (resp.\
$|\partial\cel(\check A)^*\cer|$), and hence by the Cohen--Homma Theorem
\ref{Cohen-Homma} so is $|\partial\fll(\hat A)^*\flr|$ (resp.\
$|\partial\cel(\hat A)^*\cer|$).
\end{proof}

\begin{definition}[($\nabla$,Y)-transform]
Suppose that $K$ is a dichotomial homology cell complex and $(A,\bar A)$ is
a pair of its complementary elements such that $A$ is covered by precisely
three elements and $\bar A$ covers precisely three elements.
Consider first the Y$\nabla$-transform $K_{\bar A}$.
Then the $\nabla$Y-transform $(K_{\bar A})^{\hat A}$ is again a dichotomial
homology cell complex.
We shall call it the ($\nabla$,Y){\it -transform} of $K$ along $(A,\bar A)$,
and also the (Y,$\nabla$){\it -transform} of $K$ along $(\bar A,A)$.
\end{definition}

\begin{example}\label{4.5} The ($\nabla$,Y)-transform of $\partial\Delta^3$ along
$(\Delta^2,\Delta^0)$ is isomorphic to $\partial\Delta^3$.
\end{example}

\begin{example}[$K_{3,3}$ revisited]\label{4.6}
The ($\nabla$,Y)-transform of $\partial\Delta^4$ along $(\Delta^2,\Delta^1)$ is
the dichotomial complex with $1$-skeleton $K_{3,3}$.
\end{example}

\begin{example}[Petersen family except $K_{4,4}\but$(edge)]\label{4.8}
It is easy to see that the horizontal sequence of $\nabla$Y-exchanges in
($*$) lifts to a sequence of ($\nabla$,Y)-transforms of dichotomial cellulations
of $S^4$, along pairs of the types $(\Delta^2,D_i)$ with $i=3,4,5,6$, where $D_i$
is a $2$-cell with $i$ edges in the boundary.
The inverse (Y,$\nabla$)-transforms are along pairs of the type (vertex, $4$-cell).
The Y$\nabla$-exchange $\Gamma_8\rightsquigarrow K_{3,3,1}$ lifts to
a similar (Y,$\nabla$)-transform, whose inverse ($\nabla$,Y)-transform is of
the type $(\Delta^2,D_4)$.
Thus all $\Gamma_i$'s, $P$, and $K_{3,3,1}$ are the $1$-skeleta of their
respective dichotomial cellulations of $S^4$.
\end{example}

\begin{example}[$K_{4,4}\but$(edge) lives in a dichotomial $5$-sphere]\label{4.9}
Let us think of $K_6$ as the $1$-skeleton of the boundary of a top cell of
$\partial\Delta^6$.
The sequence of two $\nabla$Y-exchanges leading to $K_{4,4}\but$(edge)
lifts to a sequence of two ($\nabla$,Y)-transforms of dichotomial cellulations
of $S^5$, along pairs of the types $(\Delta^2,\Delta^3)$ and
$(\Delta^2,\Sigma^3)$, where $\Sigma^3$ is isomorphic to a top cell of
the dichotomial $3$-complex with $1$-skeleton $K_{3,3}$.
Thus $K_{4,4}\but$(edge) is the $1$-skeleton of the boundary of a top cell of
a dichotomial $5$-sphere.
\end{example}

\begin{definition}[Pseudo-Y$\nabla$-transform]
Suppose that $K$ is a poset and $\fll D_1\flr$, $\fll D_2\flr$, $\fll D_3\flr$ are
its $(c+1)$-dimensional cones such that no two of them have a common $c$-dimensional
subcone.
The {\it pseudo-{\rm Y}$\nabla$-transform} of $K$ at $(D_1,D_2,D_3)$ is the poset $L$
that has one element $\hat B$ for each $B\in K$, and an additional element $\hat C$,
and satisfies the following (same as above) for each $A,A'\notin\{D_1,D_2,D_3\}$:

\begin{itemize}
\item $\hat C>\hat D_i$, $i=1,2,3$;

\item $\hat A<\hat D_i$ iff $A<D_{i+1}$ or $A<D_{i-1}$;

\item $\hat A<\hat C$ iff $A<D_1$ or $A<D_2$ or $A<D_3$;

\item $\hat A>\hat D_i$ iff $A>D_{i+1}$ and $A>D_{i-1}$;

\item $\hat A>\hat C$ iff $A>D_1$ and $A>D_2$ and $A>D_3$;

\item $\hat A>\hat A'$ iff $A>A'$.
\end{itemize}

We also say that $L^*$ is obtained by the $\nabla$Y{\it -transform} of $K^*$ at
$(D_1^*,D_2^*,D_3^*)$.

A {\it pseudo-{\rm ($\nabla$,Y)}-transform} of a dichotomial poset $K$ along
$(A_1,A_2,A_3;\,\bar A_1,\bar A_2,\bar A_3)$ is now defined similarly to
a ($\nabla$,Y)-transform.
\end{definition}

\begin{example}\label{4.10} It is easy to see that the dangerous
$\nabla$Y-exchange $\Gamma_7\rightsquigarrow K_{4,4}\but$(edge) lifts to
the pseudo-($\nabla$,Y)-transform along $(A_1,A_2,A_3;\,\bar A_1,\bar A_2,\bar A_3)$,
where $A_i$ are the edges of the $\nabla$ and so $\bar A_i$ are their complementary
$3$-cells.
The resulting dichotomial poset $Q$ has $K_{4,4}\but$(edge) as its $1$-skeleton,
and $|Q|$ is homeomorphic to the double mapping cone of the map
$|\nabla*\nabla|\to |$Y$*$Y$|$.
In particular, shrinking the pair of joins of two triods to points, we obtain
a collapsible $\Z/2$-map $|Q|\to S^4$.
Alternatively, it suffices to shrink $|$Y$*$Y$|\cong I*|K_{3,3}|$ to $pt*|K_{3,3}|$.
\end{example}

\section{Proofs for \S\ref{intro}}

\subsection{Collapsible and cell-like maps}\label{collapsing}

The following result is due to T. Homma (see \cite{Bry} for references and for
corrections of Homma's other proofs in there) and M. M. Cohen \cite{Co}.

\begin{theorem}[Cohen--Homma]\label{Cohen-Homma} If $M$ is a closed manifold,
$X$ is a polyhedron, and $f\:M\to X$ is a collapsible map, then $X$ is homeomorphic
to $M$.
\end{theorem}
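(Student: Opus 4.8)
The plan is to prove the stronger statement that $f$ is a \emph{near-homeomorphism} --- a uniform limit of PL homeomorphisms $M\to X$ --- from which $X\cong M$ follows at once. First I would triangulate $f$ by a simplicial map $g\colon K\to L$ with $|K|=M$, $|L|=X$, and pass to the second barycentric subdivision so that the dual-cell decomposition of $L$ is available: the dual cell $\tau^{*}$ of a simplex $\tau$ is the simplicial cone $\hat\tau*\partial\tau^{*}$ with apex the barycentre $\hat\tau$, and $X$ is covered by the cells $\tau^{*}$ in the pattern dual to $L$. The engine is the claim that $g$ is \emph{transverse cellular} in the sense of Cohen: for every $\tau$, the preimage $N_{\tau}:=|g^{-1}(\tau^{*})|$ is a PL ball which meets $|g^{-1}(\partial\tau^{*})|$ in its boundary sphere, the restriction $g\colon N_{\tau}\to\tau^{*}$ is a collapsible map of balls (in particular each $\tau^{*}$ is a ball, so $X$ is an $m$-manifold) whose restriction to the boundary spheres is again collapsible, and over $\tau^{*}$ the map $g$ is a mapping-cylinder projection onto the collapsible point-inverse $|g^{-1}(\hat\tau)|$. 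In this way the balls $N_{\tau}$ reassemble $M$ in precisely the dual-cell pattern by which the balls $\tau^{*}$ reassemble $X$.

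Establishing transverse cellularity is, I expect, the main obstacle. I would argue by induction on $\dim\tau$, starting from the top-dimensional $\tau$: there $\tau^{*}=\hat\tau$ is a single point, and one must first verify that $|g^{-1}(\hat\tau)|$ is then also a point --- this uses cell-likeness of $g$ together with the invariance of the local homology of the manifold $M$ under deletion of a collapsible (hence cell-like) subpolyhedron, which forces $\dim X=m$ and then $\dim|g^{-1}(\hat\tau)|\le 0$. For smaller $\tau$, the cone $\tau^{*}=\hat\tau*\partial\tau^{*}$ collapses onto $\hat\tau$ by peeling off simplices $\rho*\hat\tau\searrow\partial\rho*\hat\tau$, with each $\rho$ lying in the dual cell of some simplex properly containing $\tau$, for which transverse cellularity is already known; using the uniqueness of regular neighbourhoods and its relative version (cf.\ \cite{RS}) one lifts the collapse $\tau^{*}\searrow\hat\tau$ to a collapse $N_{\tau}\searrow|g^{-1}(\hat\tau)|$ through regular neighbourhoods. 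Since $|g^{-1}(\hat\tau)|$ is collapsible by hypothesis and $M$ is a PL manifold, $N_{\tau}$ is thereby a regular neighbourhood of a collapsible polyhedron, hence a PL ball, and the boundary and mapping-cylinder assertions fall out of the same regular-neighbourhood bookkeeping; the parallel recursion on boundary spheres yields that $\tau^{*}$ too is a ball. The genuine difficulty here is in upgrading the \emph{pointwise} hypothesis ``every point-inverse is collapsible'' to this \emph{parametrised} control over preimages of dual cells. (One cannot instead simply iterate Lemma~\ref{trivial}, since $f$ has a continuum of point-inverses, not finitely many.)

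Granting transverse cellularity, the remaining step is routine. I would build a PL homeomorphism $h\colon M\to X$ by straightening $g$ one block at a time, processing the balls $N_{\tau}$ in order of decreasing $\dim\tau$ (equivalently, increasing $\dim\tau^{*}$). Suppose $h$ has already been defined as a PL homeomorphism onto the union of the dual cells $\sigma^{*}$ of all $\sigma$ with $\dim\sigma>\dim\tau$, carrying each processed $N_{\sigma}$ onto $\sigma^{*}$. When the block $N_{\tau}$ is reached, its boundary sphere $|g^{-1}(\partial\tau^{*})|$ is the union of those already-processed $N_{\sigma}$ with $\sigma\supsetneq\tau$, so $h$ is already a PL homeomorphism $\partial N_{\tau}\to\partial\tau^{*}$; extend it across the ball $N_{\tau}$ to a PL homeomorphism $N_{\tau}\to\tau^{*}$ by the Alexander trick. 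Distinct blocks meet only along lower-dimensional blocks already fixed, so these extensions agree on overlaps and assemble into a PL homeomorphism $h\colon M\to X$; a fortiori $X\cong M$. (Performing the Alexander-trick extensions with small mesh, which is possible after sufficiently fine subdivision of $L$, keeps $h$ uniformly close to $f$ and gives the stronger near-homeomorphism conclusion, though this is not needed here.)
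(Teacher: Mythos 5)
Your proof follows essentially the same route as the paper's: triangulate $f$, show that the preimage $N_\tau=|g^{-1}(\tau^*)|$ of each dual cone is a PL ball collapsing onto the (collapsible) point-inverse of the barycentre, and then assemble a homeomorphism block by block in order of decreasing $\dim\tau$ by conewise extension over each ball. The paper is explicit, however, that it proves the theorem only \emph{modulo} Cohen's simplicial transversality lemma, which is exactly the ``transverse cellularity'' claim you correctly single out as the main obstacle: namely that $g^{-1}(\tau^*)$ is a manifold with boundary $g^{-1}(\partial\tau^*)$ collapsing onto $g^{-1}(\hat\tau)$.

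Your sketch of that step is where the argument is thin. To conclude that $N_\tau$ is a PL ball you appeal to its being ``a regular neighbourhood of a collapsible polyhedron,'' but for $\dim\tau>0$ the block $N_\tau$ is not a neighbourhood of $|g^{-1}(\hat\tau)|$ in $M$ (it has codimension $\dim\tau$), and knowing that it is a manifold at all, with the right boundary, is precisely the content of Cohen's lemma rather than a consequence of uniqueness of regular neighbourhoods; ``lifting the collapse $\tau^*\searrow\hat\tau$'' through preimages likewise presupposes the manifold structure being established. If you cite Cohen's simplicial transversality lemma at this point, as the paper does, the rest of your argument goes through and coincides with the published proof; as written, the inductive derivation of transverse cellularity is a gesture at that lemma, not a proof of it.
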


We include a proof modulo Cohen's simplicial transversality lemma (see
\cite{Co}, \cite{M3})

\begin{proof} Let us triangulate $f$ by a simplicial map $F\:L\to K$.
Write $n=\dim M$.
Given a simplex $\sigma$ of $K$, let $\sigma^*$ denote its dual cone,
which is the join of the barycenter $\hat\sigma$ with the derived link
$\partial\sigma^*$ (see e.g.\ \cite[2.27(6)]{RS}).
By Cohen's simplicial transversality lemma, if $\sigma$ is a simplex
of $K$ of dimension $n-i$, then $\sigma^*_f\bydef f^{-1}(\sigma^*)$ is an $i$-manifold
with boundary $\partial\sigma^*_f\bydef f^{-1}(\partial\sigma^*)$, and this manifold
collapses onto the point-inverse $f^{-1}(\hat\sigma)$.
The latter is collapsible by our hypothesis, so $\sigma^*_f$ is an $i$-ball.

Let $\sigma$ be a maximal simplex of $K$.
Then $\partial\sigma^*=\emptyset$ and so $\sigma^*_f$ is a closed manifold.
Since it also must be a ball of some dimension, this dimension is zero.
Thus all maximal simplices of $K$ have dimension $n$; and if $\sigma$ is
such a simplex, then $f$ restricts to a homeomorphism $\sigma^*_f\to\sigma^*$
between a $0$-ball and a point.

Let $M_k$, resp.\ $X_k$ be the union of the manifolds $\sigma^*_f$, resp.\ of
the cones $\sigma^*$, for all simplices $\sigma$ of $K$ of dimension $\ge n-k$.
Assume inductively that there is a homeomorphism $g\:M_k\to X_k$ that sends
each $\sigma^*_f$ into $\sigma^*$.
Given a simplex $\tau^{n-k-1}$ of $K$, by the assumption $g$ restricts to
a homeomorphism $h_\tau\:\partial\tau^*_f\to\partial\tau^*$.
Since $\tau^*_f$ is a ball, it is homeomorphic to the cone over $\partial\tau^*_f$,
so we can extend $h_\tau$ to a homeomorphism $\tau^*_f\to\tau^*$.
At the end of this inductive construction lies a homeomorphism $M=M_n\to X_n=X$.
\end{proof}

\begin{corollary} If $f\:P\to Q$ is a collapsible map between polyhedra,
and $P$ embeds in a manifold $M$, then $Q$ embeds in $M$.
\end{corollary}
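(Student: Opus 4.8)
The plan is to deduce this from the Cohen--Homma theorem (Theorem~\ref{Cohen-Homma}) by collapsing a regular neighborhood of $P$ through $f$ and recognising the quotient as $M$ itself.

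After replacing $M$ by a compact regular neighborhood of a regular neighborhood of $P$, and pushing $P$ off $\partial M$ along a collar, we may assume $M$ is compact and $P$ lies in its interior. Let $N$ be a regular neighborhood of $P$ in the interior of $M$; it is a compact PL $m$-manifold with boundary ($m=\dim M$), and the collapse $N\searrow P$ yields a PL retraction $c\:N\to P$. Since $N$ is a mapping cylinder neighborhood of $P$ \cite{Co}, we may choose $c$ so that $c^{-1}(A)\searrow A$ for every subpolyhedron $A\subseteq P$. Consequently $g\bydef f\circ c\:N\to Q$ is collapsible: its point-inverse $g^{-1}(y)=c^{-1}\bigl(f^{-1}(y)\bigr)$ collapses onto $f^{-1}(y)$, which is collapsible by hypothesis, so $g^{-1}(y)$ is collapsible as well.

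Now let $M_0$ be the closure of $M\but N$, so that $\partial N\subseteq\partial M_0$, put $\varphi\bydef g|_{\partial N}\:\partial N\to Q$, and form the PL adjunction space $X\bydef Q\cup_\varphi M_0$, which contains $Q$ as a closed subpolyhedron. Define $\hat g\:M\to X$ to agree with $g$ on $N$ and with the identity on $M_0$; the two prescriptions agree on $\partial N$ after the identification $\varphi$, so $\hat g$ is a well-defined PL map. Each point-inverse of $\hat g$ is either a single point (above the interior of $M_0$) or equal to $c^{-1}(f^{-1}(y))$ for some $y\in Q$ --- the contribution from $M_0$ being $\varphi^{-1}(y)\subseteq\partial N$, which is already contained in that set --- and in both cases it is collapsible, so $\hat g$ is a collapsible map. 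Since $\hat g$ is the identity near $\partial M$, the relative form of the Cohen--Homma theorem (which reduces to Theorem~\ref{Cohen-Homma} when $\partial M=\emptyset$; cf.\ \cite{Co}) provides a PL homeomorphism $X\xr{\cong}M$. Restricting it to $Q\subseteq X$ gives the required embedding of $Q$ in $M$.

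The main obstacle is the collapsibility of $\hat g$, and this is exactly where one uses that $f$ is collapsible rather than merely cell-like: one needs $f^{-1}(y)$ collapsible in order that $c^{-1}(f^{-1}(y))$ be collapsible, whereas a cell-like $f$ would yield only a contractible, possibly non-collapsible, $c^{-1}(f^{-1}(y))$, to which Cohen--Homma does not apply. The remaining ingredients --- that $X$ is genuinely a polyhedron containing $Q$, and that the reduction to a compact $M$ together with the relative version of Cohen--Homma keeps the resulting embedding inside $M$ rather than in some enlargement --- are routine.
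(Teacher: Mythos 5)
Your proof is correct and is essentially the paper's argument: both construct an adjunction space containing $Q$ as the target of a collapsible map defined on $M$, and then invoke the Cohen--Homma theorem to identify that space with $M$. The paper does this more directly by attaching $Q$ to $M$ along $f$ itself, so that the quotient map $M\to M\cup_f Q$ already has point-inverses equal to the collapsible sets $f^{-1}(y)$ and singletons --- making your regular neighborhood $N$, the mapping-cylinder retraction $c$, and the decomposition $M=N\cup M_0$ unnecessary (though harmless) --- while your extra care with noncompact $M$ and with $\partial M$ addresses a point that the paper's one-line proof leaves implicit.
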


\begin{proof}
Let us identify $P$ with a subpolyhedron of $M$.
Then the adjunction space $M\cup_{f}Q$ is a polyhedron, and the quotient map
$F\:M\to M\cup_{f}Q$ is collapsible.
Its target contains $Q$ and is homeomorphic to $M$ by the preceding theorem.
\end{proof}

\begin{theorem} If $\phi\:P\to Q$ is a cell-like map between $n$-polyhedra, and
$P$ embeds in an $m$-manifold $M$, where $m\ge n+3$, then $Q$ embeds in $M$.
\end{theorem}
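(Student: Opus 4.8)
The plan is to reduce this to the collapsible case established above (the Corollary deduced from the Cohen--Homma theorem). First I would identify $P$ with a subpolyhedron of the interior of $M$ and triangulate $\phi$ by a simplicial map $f\:K\to L$. Let $N$ be a regular neighborhood of $P=|K|$ in $M$ --- say a second derived neighborhood with respect to a triangulation of $M$ refining $K$ --- so that $N$ is a compact PL $m$-manifold, $N\searrow P$, and it suffices to embed $Q$ into $N$. The collapse $N\searrow P$ yields a PL retraction $r\:N\to P$ (compose the retractions of the elementary collapses), whence a PL map $\psi\bydef\phi\circ r\:N\to Q$. Since $N$ embeds in $M$, the Corollary will give an embedding $Q\hookrightarrow N\subset M$ as soon as we verify that $\psi$ is \emph{collapsible}.

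So fix $q\in Q$ and put $C=\phi^{-1}(q)$; as $\phi$ is cell-like, $C$ is a contractible subpolyhedron of $P$, and $\psi^{-1}(q)=r^{-1}(C)$. Choosing $N$ and $r$ compatibly with $f$ --- the point-inverses of $\phi$ being the blocks of the natural stratification of $P$ by $L$, over which $f^{-1}(\mathrm{int}\,\sigma)\cong\phi^{-1}(q)\x\mathrm{int}\,\sigma$ --- one arranges that $r^{-1}(C)$ is a regular neighborhood of $C$. Then $r^{-1}(C)$ is a compact PL $m$-manifold that collapses onto $C$, hence is contractible; and since $\dim C\le n\le m-3$, general position makes $\pi_1(\partial r^{-1}(C))\to\pi_1(r^{-1}(C))$ an isomorphism, so $\partial r^{-1}(C)$ is simply connected. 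A compact contractible PL manifold with simply connected boundary is a PL ball --- its boundary is a homotopy sphere, hence the standard sphere, and the manifold is then the standard ball --- and in particular is collapsible. Thus all point-inverses of $\psi$ are collapsible, $\psi$ is a collapsible map, and the Corollary finishes the proof.

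The hypothesis $m\ge n+3$ enters precisely in making $\partial r^{-1}(C)$ simply connected; the ensuing use of the generalized Poincar\'e conjecture causes no difficulty for $m\ge 6$, i.e.\ $n\ge 3$, nor for $m\le 4$, where $C$ has dimension $\le 1$ and is therefore a tree, so that $r^{-1}(C)$ is visibly a ball. The one genuinely delicate case is $n=2$, $m=5$, where one would be leaning on the four-dimensional Poincar\'e conjecture; this is immaterial for the present paper, where two-dimensional complexes are always handled through the van Kampen obstruction rather than by constructing embeddings directly (see part (iii$'$) of the Main Theorem). The main obstacle in the argument is thus not a hard theorem but the bookkeeping behind the word ``compatibly'': choosing the regular neighborhood and its retraction so that $r$ sends each point-inverse of $\phi$ to a regular neighborhood of itself. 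This is a routine application of regular neighborhood theory (in block-bundle form), using that $f$ is simplicial; once $\psi$ is recognized as collapsible, the conclusion is immediate from the already-proven collapsible case.
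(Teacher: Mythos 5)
Your reduction to the collapsible case is a genuinely different route from the paper's: the paper never produces a globally collapsible map $N\to Q$, but instead triangulates $\phi$ compatibly with a triangulation of $M$ and constructs the embedding of $Q$ directly, by induction over the dual skeleta, sending each dual cone $\sigma^*_A$ of $Q$ into the corresponding preimage $\sigma^*_f$, which Cohen's simplicial transversality lemma exhibits as a compact manifold collapsing onto the (contractible, codimension $\ge 3$) point-inverse. Incidentally, that lemma is also the honest content of your word ``compatibly'': it is what guarantees that the preimages of dual cones are manifolds collapsing onto the point-inverses of $\phi$, uniformly over all of $Q$ and not just over vertices.

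Where your argument works it is clean, but it has a genuine gap at $n=2$, $m=5$, which is part of the statement being proved. There $C=\phi^{-1}(q)$ is a contractible $2$-polyhedron, $r^{-1}(C)$ is a compact contractible $5$-manifold with simply connected boundary, and your assertion that such a manifold is a PL ball rests on its boundary --- a homotopy $4$-sphere --- being PL standard, i.e.\ on the $4$-dimensional PL Poincar\'e conjecture, which is open; equivalently, regular neighborhoods of contractible $2$-complexes in $5$-manifolds are only known to be balls modulo the Andrews--Curtis conjecture (the paper says exactly this). Since a collapsible manifold \emph{is} a ball, there is no way to weaken ``ball'' to something still usable in the collapsible-map corollary, so the reduction itself breaks in this case; and declaring the case immaterial does not prove the theorem as stated. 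The paper's skeletal induction is designed precisely to dodge this point: it never needs the $5$-dimensional preimages $\sigma^*_f$ to be balls, only contractible, because the $2$-dimensional dual cone $\sigma^*_A$ can be mapped into $\sigma^*_f$ extending the given boundary data (by contractibility) and then made an embedding by general position, a generic map of a $2$-polyhedron into a $5$-manifold being an embedding. If you substitute this local general-position step for ``ball, hence collapsible'' in the single problematic dimension, and keep your argument elsewhere, you recover a complete proof.
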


This can be deduced from known results, albeit in an awkward way.
Using the theory of decomposition spaces \cite[23.2, 5.2]{Dav}, the quotient map
$M\to M\cup_\phi Q$ can be approximated by topological homeomorphisms, and
therefore $Q$ {\it topologically} embeds in $M$.
Once again using the codimension three hypothesis, we can approximate
the topological embedding by a PL one (see \cite[5.8.1]{DV}).

Below we give a proof avoiding topological embeddings; but we shall also see
that they arose not accidentally, for we barely avoid using the $4$-dimensional
topological Poincar\'e conjecture (=Freedman's theorem).

\begin{proof} We use the notation in the proof of the Cohen--Homma theorem.
We may assume that $\phi$ is triangulated by a simplicial map $\Phi\:B\to A$,
where $A$ and $B$ are subcomplexes of $K$ and $L$ and $\Phi$ is the
restriction of $F$.
Then the $\sigma^*_f$ are no longer balls but contractible manifolds with spines
of codimension $\ge 3$.
Because of the latter, their boundaries are simply connected and hence homotopy
spheres.
By the Poincar\'e conjecture those of dimensions $\ge 5$ are genuine spheres
and then what they bound are genuine balls of dimensions $\ge 6$.
The $3$-dimensional and $4$-dimensional contractible manifolds are also
genuine balls since their codimension $\ge 3$ spines must be collapsible.
Of the $5$-dimensional contractible manifolds with $2$-dimensional spines
we only note that they would be genuine balls if either the Andrews--Curtis
conjecture or the $4$-dimensional PL Poincar\'e conjecture were known to hold.

Write $Q_k=X_k\cap Q$, and for a simplex $\sigma$ in $A$, let
$\sigma^*_A=\sigma^*\cap A$ be the dual cone of $\sigma$ in $A$, which is
the join of the barycenter $\hat\sigma$ with the derived link
$\partial\sigma^*_A$.
We claim that for each $k$ there is an embedding $g\:Q_k\to M_k$ that sends
each $\sigma^*_A$ into $\sigma^*_f$.
There is nothing to prove for $k\le 2$.
For $k=3$, we may have an $(n-3)$-simplex $\sigma$ in $A$, and then we need
to embed its barycenter $\sigma^*_A$ into the $3$-ball $\sigma^*_f$; this is
not hard.
For $k=4$ and an $(n-4)$-simplex $\sigma$ in $A$, we have the finite set
$\partial\sigma^*_A$ embedded in the $3$-sphere $\partial\sigma^*_f$, and we need
to extend this embedding to an embedding of the $1$-polyhedron $\sigma^*_A$ into
the $4$-ball $\sigma^*_f$; this is also not hard.
For $k=5$ and an $(n-5)$-simplex $\sigma$ in $A$, we have the $1$-polyhedron
$\partial\sigma^*_A$ embedded in the homotopy $4$-sphere $\partial\sigma^*_f$,
and we need to extend this embedding to an embedding of the $2$-polyhedron
$\sigma^*_A$ into the homotopy $5$-ball $\sigma^*_f$.
This can be done: the boundary embedding extends to a map $\sigma^*_A\to\sigma^*_f$
since $\sigma^*_f$ is contractible, and then this map of a $2$-polyhedron in
a $5$-manifold can be approximated by an embedding by general position.
For $k\ge 6$ we simply use conewise extension just like we did for $k=1$ and
in the proof of the Cohen--Homma lemma.
Eventually we obtain an embedding $Q=Q_n\emb M_n=M$.
\end{proof}

\begin{corollary} Let $f\:X\to Y$ be a map between $n$-polyhedra, where $X$
embeds in an $m$-manifold $M$.
Then $Y$ embeds in $M$ if either

(a) the point-inverse of the barycenter of every $k$-simplex is collapsible for
$k\ge m-n-1$ and collapses onto an $(m-n-2-k)$-polyhedron for $k\le m-n-2$; or

(b) $m-n\ge 3$, and $f$ is fiberwise homotopy equivalent to a $g\:Z\to Y$
whose nondegenerate point-inverses lie in a subpolyhedron of dimension $\le m-n-2$.
\end{corollary}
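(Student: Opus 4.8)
The plan is to reduce both assertions to tools already in place: the proofs of the Cohen--Homma Theorem \ref{Cohen-Homma} and of Theorem \ref{minors embed}(b) for part (a), and Theorem \ref{minors embed}(a) together with the mapping-cylinder device from the proof of Corollary \ref{quasi-embedding} for part (b). In both cases I would identify $X$ with a subpolyhedron of $M$, form the adjunction space $M\cup_fY$ with quotient map $F\:M\to M\cup_fY$, and triangulate $F$ by a simplicial map $F\:L\to K$, so that a subcomplex $A\incl K$ triangulates $Y$ and $B=F^{-1}(A)$ triangulates $X$; then every relevant point-inverse is $f^{-1}(\hat\sigma)=F^{-1}(\hat\sigma)$ for $\sigma$ a simplex of $A$.

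For part (a), Cohen's simplicial transversality lemma makes $\sigma^*_f\bydef F^{-1}(\sigma^*)$, for a $k$-simplex $\sigma$ of $A$, an $(m-k)$-manifold with boundary $F^{-1}(\partial\sigma^*)$ that collapses onto $f^{-1}(\hat\sigma)$. Combined with the hypothesis this shows that $\sigma^*_f$ collapses onto a polyhedron $P_\sigma$ --- lying in its interior, of dimension $\le m-n-2-k$ when $k\le m-n-2$, and a point (so $\sigma^*_f$ is a ball) when $k\ge m-n-1$. Thus each $\sigma^*_f$ is a regular neighborhood of $P_\sigma$, so $\sigma^*_f\but P_\sigma\cong(\partial\sigma^*_f)\x[0,1)$ by uniqueness of regular neighborhoods. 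I would then build the embedding $Y\emb M$ inductively over the dual skeleta of $Y$, as in the proof of Theorem \ref{minors embed}(b): at the stage of a simplex $\sigma$ with $\dim\sigma=n-i$, the boundary of the $i$-dimensional dual cone $\sigma^*_A\bydef\sigma^*\cap A$ is already embedded in $\partial\sigma^*_f$, and one must extend the embedding over the cone $\sigma^*_A$ into the $(m-n+i)$-manifold $\sigma^*_f$. Since $\dim\sigma^*_A+\dim P_\sigma\le i+(m-2n-2+i)<m-n+i=\dim\sigma^*_f$, general position rel the boundary clears $\sigma^*_A$ off $P_\sigma$; its image then lies in the collar $(\partial\sigma^*_f)\x[0,1)$, where it is replaced by an embedded cone extending the given boundary embedding. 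Patching the stages produces $Y\emb M$.

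For part (b), I would realise the fiberwise homotopy equivalence by a map $u\:X\to Z$ over $Y$, fix a subpolyhedron $Z_0\incl Z$ with $\dim Z_0\le m-n-2$ containing all nondegenerate point-inverses of $g$ (so every point-inverse of $g$ has dimension $\le m-n-2$ and is a single point over $Y\but g(Z_0)$), and form $W=MC(u)\cup_{Z_0}MC(g|_{Z_0})$, the two mapping cylinders glued along $Z_0\incl Z\incl MC(u)$. Then $W$ contains $X$, and the projection $p\:W\to Y$ --- equal to $f$ on $X$ and to $g$ on $Z$ --- has collapsible point-inverses: over $y\notin g(Z_0)$, $p^{-1}(y)$ is a cone on $f^{-1}(y)$; over $y\in g(Z_0)$, $p^{-1}(y)$ is the union of $MC(u|_{f^{-1}(y)\to g^{-1}(y)})$ with a cone on $g^{-1}(y)$, glued along $g^{-1}(y)$, and this collapses because the mapping cylinder collapses onto $g^{-1}(y)$ and the cone then absorbs it. Hence once the embedding $X\emb M$ is extended to an embedding $W\emb M$, Theorem \ref{minors embed}(a) applied to $p$ gives $Y\emb M$; the extension is to be carried out inside a regular neighborhood of $X$ in $M$, where $m-n\ge3$ leaves room.

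The step I expect to be the main obstacle is, in both parts, exactly this local extension of embeddings. In (a) it is the ``capping off'' of the already-embedded $\partial\sigma^*_A$ by a cone inside $\sigma^*_f$: the dimension hypotheses are calibrated precisely so that $\sigma^*_f$ is either a ball or a regular neighborhood of a polyhedron low-dimensional enough for the displayed inequality to push $\sigma^*_A$ off its spine and into a boundary collar, but making the re-embedding inside that collar genuinely clean --- coning inside the product region $(\partial\sigma^*_f)\x[0,1)$, equivalently inside a block-bundle fibre over $P_\sigma$ of dimension $n+2>\dim\sigma^*_A$ --- is where the real care is needed. In (b) the difficulty is to extend $X\emb M$ over the cells of $W$ not contained in $X$: those coming from $MC(u)$ may have dimension up to $n+1$ and those from $MC(g|_{Z_0})$ at most $m-n-1$, and the codimension hypothesis $m-n\ge3$ is what permits them to be placed in general position with $X$, respectively with $Z_0$, inside $M$. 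The remaining ingredients --- Cohen's transversality lemma, the dual-cell bookkeeping, the collapsibility verification in (b), and the appeals to Theorems \ref{Cohen-Homma} and \ref{minors embed} and Corollary \ref{quasi-embedding} --- are routine.
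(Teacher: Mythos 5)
Both halves of your argument stall at exactly the step you flag as needing ``real care,'' and in each case that step is not a technicality but the point where the proposed route breaks. In (a), for a $k$-simplex $\sigma$ of $A$ with $k\le m-n-2$ the dual preimage $\sigma^*_f=F^{-1}(\sigma^*)$ is homotopy equivalent to $P_\sigma$, hence in general neither contractible nor even connected. So the inductively given embedding $\partial\sigma^*_A\emb\partial\sigma^*_f$ need not extend even to a \emph{map} of the cone $\sigma^*_A$ into $\sigma^*_f$ (in the proof of Theorem \ref{minors embed}(b) that extension exists precisely because there the $\sigma^*_f$ are contractible). Moreover, even granting a map pushed off $P_\sigma$, a cone on a subpolyhedron of $\partial\sigma^*_f\x\{0\}$ does not in general embed in the collar $\partial\sigma^*_f\x[0,1)$ rel its base --- a circle of $\partial\sigma^*_A$ that is essential in $\partial\sigma^*_f$ bounds no disk there --- and for $k\le 2n-m$ general position does not even make the pushed-off map an embedding, since $2\dim\sigma^*_A$ can reach $\dim\sigma^*_f$. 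The paper's proof of (a) avoids all of this by never asking the image of $\sigma^*_A$ to stay inside $\sigma^*_f$: it factors $f$ as a collapsible map $g\:X\to Z$ (performing the hypothesized collapses fiberwise) followed by $h\:Z\to Y$ whose nondegenerate point-inverses lie in the $(m-n-2)$-dimensional $Q=h^{-1}(|L^{(m-n-2)}|)$; Theorem \ref{minors embed}(a) embeds $Z$, general position attaches the $(m-n-1)$-dimensional $MC(h|_Q)$, and Theorem \ref{minors embed}(a) applies once more to the conical projection onto $Y$.

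In (b), your $W$ contains the entire mapping cylinder $MC(u)$ of the fiberwise homotopy equivalence, a polyhedron of dimension at least $n+1$, and you must extend the given embedding of $X$ over it. General position does not do this in the range $n+3\le m\le 2n+2$ (one needs roughly $2(n+1)<m$), and ``room in a regular neighborhood of $X$'' is not an argument: embedding $MC(u)$ rel $X$ is essentially as hard as the cell-like problem you are trying to bypass. Your verification that $p\:W\to Y$ has collapsible point-inverses is correct but buys nothing if $W$ cannot be embedded. The paper attaches only $MC(g|_Q)$, of dimension $\le m-n-1$, which general position does handle; the price is that the projection of $X\cup_\phi MC(g|_Q)$ onto $Y$ is merely cell-like rather than collapsible, and this is exactly why Theorem \ref{minors embed}(b) --- and with it the hypothesis $m-n\ge 3$ --- is invoked instead of \ref{minors embed}(a).
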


\begin{proof}[Proof. (a)] Let us triangulate $\phi$ by a simplicial map
$\phi\:K\to L$.
Since a collapse may be viewed as a map with collapsible
point-inverses (see \cite[\S8]{Co2}), $f$ is the composition of a collapsible map
$g\:X\to Z$ and a map $h\:Z\to Y$ whose non-degenerate point-inverses lie
in the subpolyhedron $Q\bydef h^{-1}(|L^{(m-n-2)}|)$ of dimension $\le m-n-2$.
By Theorem \ref{minors embed}(a), $Z$ embeds in $M$.
Since the mapping cylinder $MC(h|_Q)$ is of dimension $\le m-n-1$, by general
position, this embedding extends to an embedding of $Z\cup MC(h|_Q)$ in $M$.
The point-inverses of the projection $Z\cup MC(h|_Q)\to Y$ are cones, so
applying theorem \ref{minors embed}(a) once again, we obtain that $Y$ embeds
in $M$.
\end{proof}

\begin{proof}[(b)] Let $\phi\:Z\to X$ be the given fiberwise homotopy equivalence
over $Y$ and let $Q$ be the given subpolyhedron.
Since $MC(g|_Q)$ is of dimension $\le m-n-1$, the original embedding of
$X$ in $M$ extends to an embedding of $X\cup_\phi MC(g|_Q)$ in $M$.
On the other hand, $X\cup_\phi MC(g|_Q)$ is a fiberwise deformation retract
of $X\cup_\phi MC(g)$, which in turn fiberwise deformation retracts onto $Y$.
Hence every point-inverse of the projection $X\cup_\phi MC(g|_Q)\to Y$
is contractible.
Thus $Y$ embeds in $M$ by Theorem \ref{minors embed}(b).
\end{proof}

\subsection{Edge-minors}\label{edge-minors2}

\begin{proof}[Proof of Lemma \ref{Steinitz}] Let $K'$ be a subdivision of
the given $2$-complex $K$.
We call a simplex $\sigma$ of $K'$ ``old'' if $|\sigma|=|\tau|$ for some
simplex $\tau$ of $K$, and ``new'' otherwise.
A subcomplex of $K'$ is said to be ``old'' if it consists entirely of old
simplices, and ``new'' otherwise.

Let $\partial\Delta^3$ be a missing tetrahedron in $K'$.
Then $|\partial\Delta^3|$ has to be triangulated by a subcomplex of $K$,
and therefore $\partial\Delta^3$ is old.

Let $\partial\Delta^2$ be a missing triangle in $K'$.
If $|\partial\Delta^2|$ lies in $|K^{(1)}|$, then it has to be triangulated by
a subcomplex of $K^{(1)}$, and therefore $\partial\Delta^2$ is old.
Thus every new missing triangle $\partial\Delta^2$ in $K'$ has a vertex in
the interior of the combinatorial ball $\sigma'$ for some $2$-simplex $\sigma$
of $K$, and therefore $\partial\Delta^2$ is itself contained in $\sigma'$.

Given a $2$-simplex $\sigma$ of $K$, every missing triangle $\partial\Delta^2$
of $K'$ contained in $\sigma'$ bounds a combinatorial disk in $\sigma'$.
If two such disks intersect in at least one $2$-simplex, then one is contained in
the other.
Let $D_0$ be an innermost such disk in $\sigma'$.
Since $\partial D_0$ is a missing triangle in $K'$, $D_0$ is not a $2$-simplex,
and so contains at least one edge $\tau$ in its interior.
By the minimality of $D_0$, $\tau$ is not contained in any missing triangle in $K'$.
Since $\partial D_0$ is a complete graph, at least one vertex of $\tau$ lies in
the interior of $D_0$, and hence in the interior of $\sigma'$.
Then we may contract $e$.

It remains to consider the case where $K'$ contains no new missing triangle.
If $\tau$ is a new edge with at least one vertex in the interior of $\sigma'$
for some $2$-simplex $\sigma$ of $K$, we may contract $\tau$.
If $\tau$ is a new edge such that $|\tau|$ lies in $|K^{(1)}|$, we may contract
$\tau$.
If there are no new edges of these two types, $K'=K$.
\end{proof}

\begin{proof}[Proof of Addendum \ref{self-dual-addendum}]
By Theorem \ref{self-dual} and assertion (2) above we may assume that $L$ is
obtained from $K$ by a single edge contraction.
Let $\sigma=\rho_1*\rho_2$ be the contracted edge.

Let us first consider the case where $r=1$.
Then the opposite $(m-2)$-simplex $\tau$ to $\sigma$ in $\Delta^m$ is not
contained in $K$; so $K$ lies in $\sigma*\partial\tau$.
The simplicial map $K\to L$ extends to an edge contraction
$f\:\sigma*\tau\to\rho*\tau$ of the entire $\Delta^m$, where $\rho$ is
the $0$-simplex.
We have $L=f(K)\subset\rho*\partial\tau$.
This is a combinatorial $(m-2)$-ball, and $|\rho*\partial\tau|$ lies in
$S^{m-2}\bydef|\tau\cup\rho*\partial\tau|$, which proves the assertion of (a).
Moreover, it is not hard to see that the composition $g$ of the inclusion
$|L|\subset S^{m-2}$ and the embedding $S^{m-2}\emb S^{m-1}$,
$\rho\mapsto\rho_1$, is equivalent to the embedding $|L|\emb S^{m-1}$
induced by $j$.
Choosing $h$ to be the reflection $S^{m-1}\to S^{m-1}$ in $S^{m-2}$, we have
$hg=g$, which proves the assertion of (b).

The case where $\sigma$ lies in a single factor of the join $K_1*\dots*K_r$,
say in $K_1$, reduces to the case just considered, by observing that
$K_2*\dots*K_r$ lies in the combinatorial sphere
$\partial\Delta^{m_2}*\dots*\partial\Delta^{m_r}$ of dimension $m-1-m_1$.

The remaining cases similarly reduce to the case where $r=2$ and
$\sigma=\sigma_1*\sigma_2$, where $\sigma_i\in K_i$.
Let $\tau_i$ be the opposite $(m_i-1)$-simplex to $\sigma_i$ in $\Delta^{m_i}$;
then $K_i$ lies in $\sigma_i*\partial\tau_i$.
Similarly to the above, we have $L\subset\rho*\partial\tau_1*\partial\tau_2$.
This is a combinatorial $(m-2)$-ball, which lies in the combinatorial
$(m-2)$-sphere
$\tau_1*\partial\tau_2\cup\rho*\partial\tau_1*\partial\tau_2$, etc.
\end{proof}

\section{Embeddability is commensurable with linkless
embeddability}\label{commensurability}

This section is concerned with relations between embeddings and linkless
embeddings.
It somewhat diverges from the combinatorial spirit of this paper, and instead
contributes to a central theme of \cite{M2}; thus it is best viewed as
an addendum to \cite{M2}.
This said, the main result of this section also gives a geometric view of
some examples and constructions mentioned in the introduction, and might
contribute to an initial groundwork for a proof of a higher-dimensional
Kuratowski(--Wagner) theorem.

An embedding $g$ of a graph $|G|$ in $S^3$ is called {\it panelled} if for every
circuit $Z$ in the graph, $g$ extends to an embedding of $|G\cup CZ|$.

\begin{lemma} \label{panels} (a) {\rm (Robertson--Seymour--Thomas \cite{RST})}
A graph $G$ admits a panelled embedding in $S^3$ iff $|G|$ admits an embedding $g$
in $S^3$ such that for every two disjoint circuits $C$, $C'$ in the graph,
$g$ links $|C|$ and $|C'|$ with an even linking number.

(b) \cite[Lemma 4.1]{M2} An embedding of a graph in $S^3$ is panelled iff it
is linkless and knotless.
\end{lemma}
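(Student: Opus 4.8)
The plan is to establish (b) first and then deduce (a) from it together with Theorem~\ref{RST0}. In (b), \emph{panelled}~$\Rightarrow$~\emph{knotless} is immediate: the extension of $g$ over $|G\cup CZ|$ restricts over $|CZ|$ to an embedded disk bounded by $g(|Z|)$, showing $g(|Z|)$ is an unknot. For \emph{panelled}~$\Rightarrow$~\emph{linkless}, let $P$ and $Q$ be disjoint closed subpolyhedra of $g(|G|)$; passing to a subdivision of $G$ (a panel for a circuit of $G$ is still a panel for the corresponding circuit of the subdivision) I may assume $P=g(|H|)$, $Q=g(|H'|)$ with $H$, $H'$ disjoint subgraphs. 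Fix a spanning forest $S'\subset H'$, let $f'_1,\dots,f'_s$ be the remaining edges of $H'$, $C'_i=f'_i\cup(\text{arc in }S')$ the fundamental circuits, and $D_i$ a panel for $g(|C'_i|)$; then $\mathring D_i\cap g(|G|)=\emptyset$, and since $g(|C'_i|)\subset g(|H'|)$ is disjoint from $g(|H|)$, also $D_i\cap g(|H|)=\emptyset$. A standard innermost-circle surgery makes the $D_i$ pairwise interior-disjoint while keeping $\partial D_i=g(|C'_i|)$ and $\mathring D_i\cap g(|G|)=\emptyset$, so that $D_i\cap D_j\subset g(|S'|)$. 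Then $Y:=g(|H'|)\cup\bigcup_i D_i$ is an embedded $2$-complex disjoint from $g(|H|)$, and collapsing each free pair $(f'_i,D_i)$ collapses $Y$ onto the forest $g(|S'|)$; so every component of $Y$ is collapsible, and a regular neighborhood of $Y$ is a disjoint union of balls (as in the proof of Lemma~\ref{trivial}). Since $S^3\but g(|H|)$ is connected, I join these balls by arcs within it to obtain a single ball $N\supset Y$ with $N\cap g(|H|)=\emptyset$; thus $Q\subset N$ lies in a ball disjoint from $P$.

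For \emph{linkless and knotless}~$\Rightarrow$~\emph{panelled}, fix a circuit $Z$. As $g$ is knotless, $g(|Z|)$ is an unknot, and after an isotopy of $g$ I may take a solid-torus neighborhood $V$ of $g(|Z|)$ meeting $g(|G|)$ only in its core together with short radial stubs of the edges outside $Z$, so that $F:=g(|G|)\but\mathring V$ lies in the complementary solid torus $V'$. A panel for $Z$ is then equivalent, up to a local adjustment of the disk near the vertices that $Z$ shares with the rest of $G$, to an embedded disk in $V'\but N(F)$ bounded by the $0$-framed longitude $\lambda_0$ of $g(|Z|)$; by Dehn's lemma such a disk exists once $\lambda_0$ is null-homotopic in $S^3\but N(F)$, i.e.\ in the complement of the spatial graph $g(|G\but Z|)$. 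Deducing this null-homotopy from the hypotheses is the main obstacle: linklessness splits off from $g(|Z|)$ every cycle of $G\but Z$ disjoint from $Z$, but one must still dispose of the parts of $G\but Z$ attached to $Z$. This is precisely the content of Lemma~4.1 of \cite{M2}, which I would invoke.

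Granting (b), part (a) follows. If $g$ is panelled and $C$, $C'$ are disjoint circuits, the extension of $g$ over $|G\cup CC|$ restricts over $|CC|$ to an embedded disk $D$ with $\partial D=g(|C|)$ and $\mathring D\cap g(|G|)=\emptyset$; since $g(|C'|)\subset g(|G|)$, $D$ misses $g(|C'|)$, so $\lk(g(|C|),g(|C'|))=0$, which is even. Conversely, if $|G|$ admits an embedding in $S^3$ linking every pair of disjoint circuits with an even linking number, then by Theorem~\ref{RST0} it admits a linkless, knotless embedding, and by (b) that embedding is panelled.
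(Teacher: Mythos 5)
The paper does not actually prove this lemma: both halves are quoted results, with (a) attributed to \cite{RST} and (b) to \cite[Lemma 4.1]{M2}, supplemented only by Remark \ref{erratum}. So your proposal should be judged on its own. The parts you do prove are correct. Your argument for panelled $\Rightarrow$ linkless is sound and, pleasingly, handles arbitrary (possibly disconnected) disjoint subpolyhedra, which is exactly the point of Remark \ref{erratum}. One technical remark: making the panels of the fundamental circuits pairwise disjoint off the graph is more than an innermost-circle surgery (the disks share boundary arcs along the forest, so arcs of intersection must also be removed); this is B\"ohme's lemma, and its hypothesis is satisfied here because two fundamental circuits with respect to a spanning forest meet in a single path of the forest, hence in a connected set. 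For the hard direction of (b) you candidly defer to \cite[Lemma 4.1]{M2} --- which is the very statement being proved, so this is a citation rather than a proof; but it coincides with what the paper itself does, so it is not a gap relative to the paper.

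The one substantive issue is your deduction of the nontrivial direction of (a) from Theorem \ref{RST0}. As a formal implication within the paper it is valid (even linking numbers give a linkless knotless embedding by \ref{RST0}, which is panelled by (b)), but it is circular as an account of why (a) holds: the paper states explicitly that Theorem \ref{RST0} is itself obtained from Robertson--Seymour--Thomas's original formulation precisely via Lemma \ref{panels} (``they had a different formulation but its equivalence with the present one is relatively easy \cite{M2} (see also Lemma \ref{panels} and Remark \ref{erratum} below)''). The non-circular route to (a) is: an embedding with all pairwise linking numbers even forces $G$ to have no Petersen-family minor (since by Sachs and Conway--Gordon every embedding of a Petersen-family graph, hence of any graph containing one as a minor, has a pair of disjoint circuits with odd linking number), and then the Robertson--Seymour--Thomas main theorem supplies a flat, i.e.\ panelled, embedding. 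You should either cite RST in that form for (a), or make clear that (a) and Theorem \ref{RST0} are being treated as interchangeable restatements of the same external input rather than one being derived from the other.
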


\begin{remark}\label{erratum} The proof of (b) in \cite{M2} contains a minor
inaccuracy: in showing that the embedding is linkless it explicitly treats
the splitting of two disjoint subgraphs only in the case where one of them
is connected; however, the argument works in the general case without any
modifications.
\end{remark}

We also need a higher-dimensional analogue of Lemma \ref{panels}.
To this end, we recall from \S\ref{embeddings} that we call an $n$-polyhedron $M$
an {\it $n$-circuit}, if $H^n(M\but\{x\})=0$ for every $x\in M$.
This implies that $H^n(M)$ is cyclic (since it is an epimorphic image
of $H^n(M,M\but\{x\})$, where $x$ lies in the interior of an $n$-simplex
of some triangulation of $M$) and that $H^n(P)=0$ for every proper subpolyhedron
$P$ of $M$.
An {\it oriented} $n$-circuit $M$ is endowed with a generator $\xi_M$ of $H^n(M)$.

\begin{lemma}\label{cohomology} Let $P$ be a polyhedron.
For every nonzero class $x\in H^n(P)$ there exists a singular $n$-circuit
$f\:M\to P$ such that $f^*(x)\ne 0$.
Moreover, $f^*(x)$ is of the same order as $x$.
\end{lemma}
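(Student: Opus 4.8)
The plan is to reduce to the case $\dim P=n$ and then to treat separately the cases where $x$ has infinite and finite order, realizing $x$ by a map from a closed oriented pseudomanifold in the former and from a suitably branched pseudomanifold in the latter. First I would triangulate $P$ by a complex $K$. The inclusion $|K^{(n+1)}|\incl P$ induces an isomorphism on $H^n$, and the exact sequence of the pair $(K^{(n+1)},K^{(n)})$, together with $C^n(K^{(n+1)},K^{(n)})=0$, shows that $H^n(|K^{(n+1)}|)\to H^n(|K^{(n)}|)$ is injective; hence $H^n(P)\to H^n(|K^{(n)}|)$ is a monomorphism, carrying $x$ to a class of the same order, and a singular $n$-circuit mapping into $|K^{(n)}|$ composes to one mapping into $P$ with the same pullback of $x$. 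So assume $\dim P=n$; then $H^n(P)=C^n(K)/\delta C^{n-1}(K)$ and $H_n(P)=Z_n(K)$.

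Suppose $x$ has infinite order. By universal coefficients there is $c\in H_n(P;\Z)=Z_n(K)$ with $\langle x,c\rangle\ne0$. Realize the integral cycle $c$ in the standard way by a map $\psi\:V\to P$, where $V$ is a disjoint union of closed oriented $n$-pseudomanifolds and $\psi_*[V]=c$ (take $|c(\sigma)|$ appropriately oriented copies of each $n$-simplex $\sigma$ and glue them along $(n-1)$-faces using $\partial c=0$), and let $M$ be a component on which $\langle x,\psi_*[M]\rangle\ne0$. Then $M$ is an $n$-circuit: any proper subpolyhedron of $M$ may, after subdivision and discarding the $H^n$-irrelevant dangling low-dimensional faces, be obtained from $M$ by deleting $n$-simplices one at a time, and already after the first deletion $H^n$ becomes $\operatorname{coker}\bigl(H^n(\bar\sigma,\partial\sigma)\to H^n(M)\bigr)$, which vanishes because the generator on the left goes to the local fundamental class, a generator of the cyclic group $H^n(M)$. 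Hence $H^n(M)=\Z\xi_M$ and $f^*x=\langle x,\psi_*[M]\rangle\,\xi_M$ is nonzero of infinite order.

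Suppose $x$ has finite order $d$. Under the universal-coefficient isomorphism $\operatorname{Tors}H^n(P)\cong\operatorname{Hom}\bigl(\operatorname{Tors}H_{n-1}(P),\,\mathbb{Q}/\Z\bigr)$ it corresponds to a homomorphism of order $d$, so there is $[u]\in\operatorname{Tors}H_{n-1}(P)$, of order $e$ with $d\mid e$, on which $x$ takes a value of order $d$. Realize a representing cycle $u$ by a connected closed oriented $(n-1)$-pseudomanifold $\psi\:W\to P$; since $e[u]=0$, realize a bounding $n$-chain by a connected oriented compact $n$-pseudomanifold with boundary $\Phi\:(V,\partial V)\to P$ whose boundary, with its map to $P$, represents $e[W]$ as a multiple of $\psi$. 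Let $\hat V=V\cup_{\partial V}\operatorname{Cyl}(\partial V\to W)$, with induced $F\:\hat V\to P$, and pass to the component containing $W$; this $\hat V$ is an oriented pseudomanifold away from its branch locus $\cong W$, along which it is $e$-sheeted. Since $H^n(\hat V,W)\cong H^n(V,\partial V)$, which by the top-degree case of Lefschetz duality for the oriented pseudomanifold $V$ is infinite cyclic and generated by a class restricting to the local orientation at every $n$-simplex, the group $H^n(\hat V)$ is cyclic and generated by the local fundamental class of any $n$-simplex; so the peeling argument above again shows $\hat V$ is an $n$-circuit. Finally $e[W]=0$ in $H_{n-1}(\hat V)$, so $[W]$ is torsion there, and naturality of the linking form gives $\ell_{\hat V}\bigl(F^*x,[W]\bigr)=\ell_P\bigl(x,F_*[W]\bigr)=\ell_P(x,[u])$, of order $d$; hence $F^*x$ has order exactly $d$.

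The main obstacle is the finite-order case. The construction of $\hat V$ and the proof that it is an $n$-circuit are robust — the decisive structural point being that $H^n(\hat V)$ is automatically cyclic because $\hat V/W$ is homotopy equivalent to $V/\partial V$ — but some care is needed: one must take the bounding pseudomanifold $V$ strongly connected, and must arrange $\partial V$, as a space over $P$, to genuinely realize $e[W]$ (by tubing together components of the naive thickening, and, when $e[W]$ is not realized by $e$ disjoint copies of $W$, by a connected $e$-fold-type covering instead — as already happens for $P$ the Klein bottle, where $\hat V$ comes out homotopy equivalent to the Klein bottle itself). Likewise the order bookkeeping in the last step — getting the order of $F^*x$ to be exactly $d$, not merely a divisor of $d$ or of the cyclic order of $H^n(\hat V)$ — must be carried out through the naturality of the universal-coefficient (linking) pairing and the chosen value $x([u])$ of order $d$, rather than by any cruder estimate.
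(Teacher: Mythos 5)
Your proof is correct and follows essentially the same route as the paper's: reduce to the universal coefficient formula, realize an integral homology class by a singular oriented $n$-pseudomanifold when $x$ has infinite order, and realize a mod-$m$ class (equivalently, a Bockstein preimage of a torsion class in $H_{n-1}$) by a Sullivan-type $\Z/m$-pseudomanifold when $x$ has finite order, concluding by naturality. The only differences are presentational: the paper simply cites the geometric realization theorems (Fenn for $\Z$; Buoncristiano--Rourke--Sanderson and Dominguez for $\Z/m$) that you rebuild by hand, and it runs the finite-order bookkeeping through naturality of the $\Ext$-extension in the UCF rather than through the equivalent $\Q/\Z$-linking pairing.
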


We note that a more convenient and far-reaching geometric view of cohomology exists
(see \cite[\S2]{M2}, \cite{Fe}, \cite{BRS}), but that is not what we need here.

\begin{proof}
By the universal coefficient formula, $x$ maps to some $h\:H_n(P)\to\Z$.
If $x$ is of infinite order, then $h$ is nontrivial.
Let us pick a $y\in H_n(P)$ with $h(y)\ne 0$.
Then $y$ is representable by a singular oriented $\Z$-pseudo-manifold
$f\:M\to P$ such that $y=f_*([M])$ (see \cite[Theorem 1.3.7]{Fe}).
Then $hf_*([M])\ne 0$, so $f^*(h)\:H_n(M)\to\Z$ is nontrivial.
Hence by the naturality in the UCF, $f^*(x)\ne 0$.

If $x$ is of a finite order, $m$ say, then $h$ is trivial.
Hence $x$ comes from some extension $\Z\mono G\epi H_{n-1}(P)$ of order $m$ in
$\Ext(H_{n-1}(P),\Z)$.
Then there exists a $z\in H_{n-1}(P)$ of order $m$ that is covered by
an element $\hat z\in G$ of infinite order.
In particular, $z$ is the Bockstein image of some $y\in H_n(P;\,\Z/m)$.
This $y$ is representable by a singular oriented $\Z/m$-pseudo-manifold
$f\:M\to P$ such that $y=f_*([M])$ (see \cite[Chapter III]{BRS}, \cite{Do}).
Then by the naturality of the Bockstein homomorphism, $z=f_*(\beta[M])$.
Hence the induced extension $\Z\mono f^*G\epi H_{n-1}(M)$ is such that
$\beta[M]$ is covered by an element of $f^*G$ of infinite order, which maps
onto $\hat x$.
Since $\beta[M]$ is of order $m$, the induced extension is itself of order $m$
in $\Ext(H_{n-1}(M),\Z)$.
Hence by the naturality in the UCF, $f^*(x)$ is of order $m$.
\end{proof}

The {\it linking number} of an oriented singular $m$-circuit $f\:M\to\R^{m+n+1}$
and an oriented singular $n$-circuit $g\:N\to\R^{m+n+1}$ with disjoint images is
the degree of the composition
$M\x N\xr{f\x g}\R^{m+n+1}\x\R^{m+n+1}\but\Delta_{\R^{m+n+1}}\simeq S^{m+n}$,
which is given by the formula $(m,n)\mapsto\frac{m-n}{||m-n||}$.
This degree $\lk(f,g)$ lives in the cyclic group
$H^{m+n}(M\x N)\simeq H^m(M)\otimes H^n(N)$.
The linking number of an unoriented $m$-circuit and an unoriented
$n$-circuit in $S^{m+n+1}$ is well-defined up to a sign.

By an {\it $n$-circuit with boundary} we mean any $n$-polyhedron $M$ along with
an $(n-1)$-dimensional subpolyhedron $\partial M$ such that $M/\partial M$ is
a genuine $n$-circuit.
If additionally $\partial M$ is an $(n-1)$-circuit and the coboundary map
$H^{n-1}(\partial M)\to H^n(M)$ is an isomorphism, then we say that $\partial M$
{\it bounds} $M$.

\begin{lemma} \label{circuits}
Let $P$ be an $n$-polyhedron, $n\ge 2$, and $g\:P\emb S^{2n+1}$ an embedding.

(a) $g$ is linkless iff every pair of singular $n$-circuits in $P$ with disjoint
images have zero linking number under $g$.

(b) $g$ is linkless iff for every $(n+1)$-circuit $Z$ with boundary, every
$f\:\partial Z\to P$ and every function $\phi\:\partial Z\to I$, the embedding
$g\x\id_I\:P\x I\emb S^{2n+1}\x I$ extends to an embedding of
$P\x I\cup_{f\x\phi}Z$ in $S^{2n+1}\x I$.
\end{lemma}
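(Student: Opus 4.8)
The plan is to establish (a) directly and then reduce (b) to (a) via the metastable Haefliger--Weber machinery. Throughout I would delete a point of $S^{2n+1}$ outside $g(P)$, viewing $g$ as an embedding into $\R^{2n+1}$ (and $g\x\id$ as one into $\R^{2n+1}\x I\incl\R^{2n+2}$), so that the deleted product maps of \S\ref{embeddings} are available. \emph{Part (a), ``only if''}: if $f_1\:M_1\to P$ and $f_2\:M_2\to P$ are singular $n$-circuits with $gf_1(M_1)\cap gf_2(M_2)=\emptyset$, linklessness gives a ball $B\incl\R^{2n+1}$ with $gf_1(M_1)\incl B$, $gf_2(M_2)\cap B=\emptyset$; the linking number is the pull-back of a generator of $H^{2n}(S^{2n})$ along $(x,y)\mapsto\frac{gf_1(x)-gf_2(y)}{||gf_1(x)-gf_2(y)||}$, and contracting $B$ to an interior point $c$ homotopes this map to $(x,y)\mapsto\frac{c-gf_2(y)}{||c-gf_2(y)||}$, which factors through the projection onto $M_2$; since $\dim M_2=n<2n$ the pull-back is zero. \emph{Part (a), ``if''}: triangulate $P$; by Lemma \ref{is-linkless} it suffices that $\tilde g|_{|L\x M|}$ be null-homotopic for all disjoint subcomplexes $L,M$, equivalently (Hopf classification theorem, as $\dim(L\x M)\le2n$) that $\tilde g^*(\zeta)=0$ in $H^{2n}(|L\x M|)$, $\zeta$ a generator of $H^{2n}(S^{2n})$. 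Pairing $\tilde g^*(\zeta)$ into $\Z$ against $H_{2n}(|L\x M|)$ and using $\dim L,\dim M\le n$, the $\mathrm{Tor}$-summands of the K\"unneth decomposition are torsion and pair to $0$, while the remaining classes are cross products $u\x v$ with $u\in H_n(|L|)$, $v\in H_n(|M|)$; representing $u=\alpha_*[A]$, $v=\beta_*[B]$ by singular $n$-circuits $\alpha\:A\to|L|\incl P$, $\beta\:B\to|M|\incl P$ (as in the proof of Lemma \ref{cohomology}; a torsion $u$ or $v$ yields a torsion product), the images $g\alpha(A)$, $g\beta(B)$ are disjoint and $\langle\tilde g^*(\zeta),u\x v\rangle=\lk(g\alpha,g\beta)=0$, so $\tilde g^*(\zeta)=0$.

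\emph{Part (b), ``if''.} Suppose $g$ is not linkless; by (a) there are disjoint-image singular $n$-circuits $\alpha\:A\to P$, $\beta\:B\to P$ with $\lk(g\alpha,g\beta)\ne0$. A short check using only that $A$ is an $n$-circuit shows that $\Sigma A=CA/A$ is an $(n+1)$-circuit and that $\partial(CA)=A$ bounds the $(n+1)$-circuit-with-boundary $CA$. Applying the hypothesis with $Z=CA$, $f=\alpha$, $\phi$ constant $\tfrac12$ gives an embedding of $P\x I\cup_{\alpha\x\frac12}CA$ whose restriction $\bar f\:CA\to S^{2n+1}\x I$ has $\bar f|_A=(g\alpha)\x\tfrac12$ and $\bar f(CA)\cap(g(P)\x I)=g\alpha(A)\x\tfrac12$, hence misses $g\beta(B)\x I$. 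Then $\bar f_*[CA,A]$ is a relative cycle in $\big((S^{2n+1}\but g\beta(B))\x I,\ g\alpha(A)\x\tfrac12\big)$ with boundary $\pm(g\alpha)_*[A]\x\tfrac12$, so $[g\alpha(A)]=0$ in $H_n(S^{2n+1}\but g\beta(B))$ and $\lk(g\alpha,g\beta)=0$, a contradiction. (Torsion circuits are handled with $\Z/m$-coefficients, as in the proof of Lemma \ref{cohomology}.)

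\emph{Part (b), ``only if''.} Put $M=S^{2n+1}\x I$, $P'=P\x I$, $g'=g\x\id$, $h=f\x\phi\:\partial Z\to P'$, $W=P'\cup_h Z$ (an $(n+1)$-polyhedron). Since $\pi_i(S^{2n+1})=0$ for $i\le2n\ge\dim Z$, the composite $g'h$ extends to a map $\bar f\:Z\to M$, which I would put in general position rel $\partial Z$. As $n\ge2$ we lie in the metastable range $2(2n+2)\ge3(n+1)+3$, so by the relative form of the Haefliger--Weber criterion \cite{We} ($M$ is parallelizable, so the obstruction is to equivariant maps to $S^{2n+1}$), $W$ embeds in $M$ extending $g'$ iff $\tilde{g'}\:|P'\otimes P'|\to S^{2n+1}$ extends to a $\Z/2$-map $|W\otimes W|\to S^{2n+1}$. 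Since $S^{2n+1}$ is $2n$-connected and $\dim|W\otimes W|=2n+2$, this extension exists on the $(2n+1)$-skeleton and the sole remaining obstruction is $\omega\in H^{2n+2}_{\Z/2}(|W\otimes W|,|P'\otimes P'|;\Z_T)$, represented via the general-position $\bar f$ by the cocycle that records, with local degrees, the self-intersections of $\bar f$ and the intersections of $\bar f(Z\but\partial Z)$ with $g'(P')$. The remaining task is $\omega=0$: $H_{2n+2}$ of a $(2n+2)$-complex being free, it suffices to pair $\omega$ against homology, and I would realize each relevant class by a singular pseudomanifold whose two coordinate projections give a pair of \emph{disjoint} singular circuits in $P$, one factoring through $Z$, reducing the pairing to a linking number under $g$ that vanishes by linklessness via Lemma \ref{is-linkless} and part (a). Thus $\omega=0$, the equivariant extension exists, and $W$ embeds in $M$ extending $g\x\id$.

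I expect the main obstacle to be exactly this last step: identifying the primary embedding obstruction $\omega$ with a combination of linking numbers and annihilating it using linklessness. It is the higher-dimensional analogue of the non-trivial implication in Lemma \ref{panels}(b), and the difficulty is combinatorial and homological: because the deleted product $W\otimes W$ of the adjunction space interleaves cells of $P'$ with cells of $Z$ and the gluing map $f\x\phi$ need not be injective, one must first understand $H_*(|W\otimes W|,|P'\otimes P'|)$ well enough for the linking-number interpretation of its classes to become transparent.
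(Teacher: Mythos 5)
The serious gap is in the ``only if'' direction of (b), which is the substantive half of the lemma. You reduce it to a relative Haefliger--Weber criterion for extending $g\x\id_I$ over $W=P\x I\cup_{f\x\phi}Z$ and then must show the vanishing of a primary obstruction $\omega\in H^{2n+2}_{\Z/2}(|W\otimes W|,|P'\otimes P'|;\Z_T)$ --- and, as you yourself flag, that step is not carried out. It is not a routine verification: the relative cycles of $(|W\otimes W|,|P'\otimes P'|)$ include classes supported on pairs of disjoint cells \emph{both} lying in $Z$, and these do not project to pairs of disjoint circuits in $P$, so the proposed ``realize every class by a pair of disjoint circuits in $P$ and quote linklessness'' cannot account for the whole obstruction group; in addition, the applicability of a relative Weber theorem at the edge of the metastable range ($\dim W=n+1$ in a $(2n+2)$-manifold), with the target a manifold rather than $\R^m$ and with the claim that the single top class is the only obstruction to a \emph{relative} extension, would itself need justification. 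The paper avoids all of this and argues geometrically: linklessness is used once, to engulf $gf(\partial Z)$ into a ball $B$ meeting $g(P)$ only in a small neighborhood of $f(\partial Z)$; a generic map $Z\to B\x I$ is then converted into an embedding meeting $g(P\x I)$ only in $\partial Z$ by the Penrose--Whitehead--Zeeman piping trick, where the hypothesis that $Z$ is an $(n+1)$-\emph{circuit} enters exactly to provide the arcs through generic points along which double points are piped away. Your proof of (b) as written is therefore a plan, not a proof, and the plan has a structural defect.

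There is also a gap in your ``if'' direction of (a). You conclude $\tilde g^*(\zeta)=0$ in $H^{2n}(|L\x M|)$ from the vanishing of all pairings with $H_{2n}(|L\x M|;\Z)$. But $H^{2n}(|L\x M|)\simeq H^n(|L|)\otimes H^n(|M|)$ can be a nonzero \emph{finite} group (e.g.\ $|L|=|M|=\R P^2$ with $n=2$, where it is $\Z/2$ while $H_4(|L\x M|)=0$), in which case the integral pairing detects nothing. This is not a corner case: the torsion situation is exactly the one arising in the $\R P^2$-type examples of \S\ref{semi}. The paper's argument works directly with the obstruction in $H^n(R)\otimes H^n(Q)$, uses the ``same order'' clause of Lemma \ref{cohomology} to realize torsion classes by $\Z/m$-circuits, and takes linking numbers valued in the (possibly finite) cyclic group $H^n(V)\otimes H^n(W)$ rather than in $\Z$; your setup implicitly assumes integer-valued linking numbers of oriented $\Z$-circuits and so misses precisely these classes. (Your routing of (a) through Lemma \ref{is-linkless} and the Hopf theorem, versus the paper's obstruction-theory-plus-engulfing argument for the inclusion $R\subset S^{2n+1}\but Q$, is a legitimate variant; the torsion bookkeeping is the part that fails.) The easy directions --- (a) ``only if'' by contracting the ball, and (b) ``if'' by using the image of $Z$ as a bounding circuit in the complement --- are fine and agree with the paper.
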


\begin{proof}[Proof. (a)] Let $Q$ and $R$ be disjoint subpolyhedra of $g(P)$.
Since $Q$ is of codimension $\ge 3$ in the sphere, $M\bydef S^{2n+1}\but Q$ is
simply-connected.
Hence by the Alexander duality and the Hurewicz theorem $M$ is $(n-1)$-connected,
and $\pi_n(M)\simeq H_n(M)\simeq H^n(Q)$.
Therefore
$$H^n(R;\,\pi_n(M))\simeq H^n(R;\,H^n(Q))\simeq H^n(R)\otimes H^n(Q)\simeq
H^n(R\x Q).$$
The first obstruction to null-homotopy of the inclusion $R\subset M$ can be
identified under this string of isomorphisms with the image of the generator
of $H^{2n}(S^{2n})$ in $H^{2n}(R\x Q)$.
If this image is nonzero, it is of the form $\sum r_i\otimes q_i$, where
each $r_i\in H^n(R)$ and each $q_i\in H^n(Q)$, and each $r_i\otimes q_i$ is
nonzero.
Then by Lemma \ref{cohomology} there are singular $n$-circuits $f\:V\to Q$ and
$g\:W\to R$ such that $f^*(r_1)$ has the same order as $r_1$, and $g^*(q_1)$
has the same order as $q_1$.
Then $f^*(r_1)\otimes g^*(q_1)$ is nonzero, and also it equals $\lk(f,g)$,
contradicting the hypothesis.
Hence by obstruction theory $R$ is null-homotopic in $M$.
Then by engulfing (see e.g.\ \cite[Lemma 2]{Ze1}, \cite[Chapter VII]{Hu}),
$M$ contains a ball that contains $R$.
\end{proof}

\begin{proof}[(b)] The if direction follows from (a), since the projection of
the image of $Z$ onto $S^{2n+1}$ is disjoint from $g(P)$, except at
$gf(\partial Z)$.

Conversely, let $N$ be the second derived neighborhood of $f(\partial Z)$ in
some triangulation $K$ of $P$.
Given a linkless embedding $P\subset S^{2n+1}$, let $B$ be a codimension
zero ball containing $f(\partial Z)$ and such that $B\cap P$ is contained in $N$.

{\it Step 1.} By a simple engulfing argument, we may assume that the intersection
of each simplex $\sigma$ of $K$ with the interior of $B$ is connected.
In more detail, if $C$ and $D$ are two components of this intersection, pick some
$c\in C$ and $d\in D$.
Since the intersection of $\sigma$ with the interior of $N_i$ is connected,
we can join $c$ and $d$ by an arc $J$ in that intersection.
We may assume that $J$ meets $\partial B$ in finitely many points;
arguing by induction, we may further assume that there are just two of
them, $c'$ and $d'$.
Let $J'$ be the segment of $J$ spanned by these points.
Let us also join $c'$ and $d'$ by an arc $J''$ in $\partial B$ that meets
$P$ only in its endpoints.
The $1$-sphere $J'\cup J''$ bounds a $2$-disk $D$ in the closure of the
complement to $B$ in $S^{2n+1}$ that meets $B\cup P$ only in $\partial D$.
Then a small regular neighborhood $\beta$ of $D$ in the closure of the complement
of $B$ is such that the ball $B\cup\beta$ still meets $P$ along a subpolyhedron
of $N$.
Moreover, replacing $B$ with $B\cup\beta$ decreases the number of components
in the intersection of a simplex $\tau$ of $K$ with the interior of $B\cup\beta$
when $\tau=\sigma$, and keeps it the same when $\tau\ne\sigma$.

{\it Step 2.}
Let us write $Z'=Z\cup_{f\x\phi}(f\x\phi)(\partial Z)$.
Pick a generic map $F\:Z'\to B\x I$ extending the embedding $g|_{\partial Z'}$.
Since $F$ is a codimension three map, and $Z'$ is a circuit, by
the Penrose--Whitehead--Zeeman trick it can be replaced by an embedding $G$
that agrees with $F$ on $\partial Z'$ (compare \cite{Sa-1} and \cite[\S8]{M2}).
In more detail, given a self-intersection $F(p)=F(q)$ of $F$, since $Z$ is a circuit,
$p$ and $q$ can be joined by an arc $J$ in $Z'$ that contains only generic points.
So a small regular neighborhood $R$ of $J$ in $Z'$ is a $2$-disk.
Now $F(J)$ bounds a $2$-disk $D$ in $S^{2n+1}\x I$ that meets $F(Z')$ only in
$\partial D$ and is disjoint from $g(P\x I)$.
A small regular neighborhood $S$ of $D$ in $S^{2n+1}\x I$ is a ball disjoint
from $g(P\x I)$ and we may assume that $F^{-1}(S)=R$.
Then we redefine $F$ on $S$ by the conewise extension $R\to S$ of the
boundary restriction $F|_{\partial R}\:\partial R\to\partial S$.

{\it Step 3.}
By a further application of the same trick, using that the intersection of
each $\sigma$ with the interior of $B$ is connected, we may further amend $G$
so that the image of the resulting embedding $G'$ meets $g(P\x I)$ only in
$g(\partial Z')$.
Indeed, given an intersection $F(p)=g(q)$ between $F$ and $g$, we may join
$q$ to some point $r\in\partial Z'$ by an arc $J$ in $P\x I$ going only through
generic points of $P\x I$ (except for $r$ itself) and such that $g(J)$ lies in
the interior of $B$.
We may then join $p$ and $r$ by an arc $J'$ in $Z'$ going only through
generic points of $Z'$ (except for $r$ itself).
Then a regular neighborhood of $J\cup J'$ is a cone, and the preceding
construction works.
\end{proof}

We write $CP$ to denote the cone $pt*P$ over the polyhedron $P$.

\begin{theorem}\label{commensuration} Let $P$ be an $n$-polyhedron, and let $Q$ be
an $(n-1)$-dimensional subpolyhedron of $P$ such that the closure of every
component of $P\but Q$ is an $n$-circuit with boundary.
In part (a), assume further that every pair of disjoint singular $(n-1)$-circuits
in $Q$ bounds disjoint singular $n$-circuits in $P$.

(a) $Q$ linklessly embeds in $S^{2n-1}$ iff $P\cup CQ$ embeds in $S^{2n}$.

(b) $P$ embeds in $S^{2n}$ iff $P\cup CQ$ linklessly embeds in $S^{2n+1}$.
\end{theorem}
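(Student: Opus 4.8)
The plan is to derive all four implications from one geometric construction together with the circuit‑theoretic descriptions of linkless embeddings established above. Write $\bar P_1,\dots,\bar P_r$ for the closures of the components of $P\but Q$; by hypothesis each $\bar P_j$ is an $n$‑circuit with boundary $\partial\bar P_j\subset Q$, the $\bar P_j$ meet pairwise only inside $Q$, and $P=Q\cup\bigcup_j\bar P_j$. The model picture in each case is $S^{k}=B^{k}_+\cup_{S^{k-1}}B^{k}_-$ with $S^{k-1}$ the equator and $N$ the north pole. From a \emph{linkless} embedding of $|Q|$ in $S^{k-1}$ one builds an embedding of $P\cup CQ$ in $S^{k}$ by pushing the interior of each $\bar P_j$ into $B^k_-$ at its own level of a boundary collar $S^{k-1}\x I$ and coning $Q$ off to $N$ inside $B^k_+$; conversely, from any embedding of $P\cup CQ$ in $S^k$, a small regular neighbourhood of the image of the cone point meets $P\cup CQ$ in a cone whose link is a copy of $|Q|$ on the boundary sphere $S^{k-1}$, while the complementary ball meets it in a copy of $|P|$ together with a collapsible collar $Q\x I$ of $Q$.

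\emph{The restriction directions.} For (a) ``$\Leftarrow$'': restrict an embedding $P\cup CQ\emb S^{2n}$ to $CQ$, let $\Sigma^{2n-1}$ bound a small regular neighbourhood of the image of the cone point, and read off the induced embedding $g\:|Q|\emb\Sigma^{2n-1}$. That $g$ is linkless I verify, via Lemma~\ref{circuits}(a), by showing that any two disjoint singular $(n-1)$‑circuits $M_1,M_2$ in $Q$ satisfy $\lk_\Sigma(gM_1,gM_2)=0$: by Alexander duality inside the small ball this linking number equals the intersection of a slightly pushed‑in copy of $gM_1$ with the cone on $gM_2$, and since $M_1,M_2$ have disjoint images and lie on a common embedded cone that intersection is empty. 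For (b) ``$\Leftarrow$'': from a linkless embedding $P\cup CQ\emb S^{2n+1}$, remove the interior of a small regular neighbourhood of the cone point; what remains is a copy of $|P|$ (absorbing the collapsible collar $Q\x I$) sitting linklessly in a $(2n+1)$‑ball with $Q$ on its boundary sphere. It remains to flatten this into an equatorial $S^{2n}$; for $n\ge 3$ this follows from the circuit criterion, since the complete obstruction to such an ambient linkless embedding is identified with the van Kampen obstruction of $|P|$ to embedding in $S^{2n}$ (cf.\ \cite{M2}), while $n=1,2$ go through the Kuratowski--Wagner and Robertson--Seymour--Thomas theorems.

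\emph{The extension directions.} For (a) ``$\Rightarrow$'': given a linkless $h\:|Q|\emb S^{2n-1}\subset S^{2n}$, apply Lemma~\ref{circuits}(b) with $n-1$ in place of $n$ to each $\bar P_j$, an $n$‑circuit with boundary included into $Q$: $h\x\id_I$ extends to an embedding of $|Q|\x I\cup\bar P_j$ in $S^{2n-1}\x I$. Carrying this out for $j=1,\dots,r$ in pairwise disjoint slabs of the collar gives an embedding of $|Q|\x I\cup\bigcup_j\bar P_j$; collapsing the attached collars shows the domain is homeomorphic to $|P|$, embedded with $P\cap S^{2n-1}=Q$, and gluing $CQ\subset B^{2n}_+$ on yields $P\cup CQ\emb S^{2n}$. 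This is the step that uses the hypothesis of (a) on disjoint $(n-1)$‑circuits bounding disjoint $n$‑circuits in $P$ — it is precisely what lets the successive extensions be taken disjoint off $Q\x I$ (equivalently, it kills the part of the deleted‑product obstruction of $P\cup CQ$ coming from the $\bar P_j$). For (b) ``$\Rightarrow$'': from $P\emb S^{2n}$, take the equatorial picture, push the interiors of the $\bar P_j$ into $B^{2n+1}_-$ at distinct levels and cone $Q$ off to $N\in B^{2n+1}_+$, obtaining $g\:P\cup CQ\emb S^{2n+1}$ with $P$ in the closed southern ball, $CQ$ in the closed northern ball, and $P\cap CQ=Q\subset S^{2n}$.

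\emph{The main obstacle.} The crux is to show that the embedding $g$ just produced in the forward half of (b) is linkless, not merely an embedding (and, dually, to carry out the flattening in the converse half). For linklessness I again use Lemma~\ref{circuits}(a): given disjoint singular $n$‑circuits $M_1,M_2$ in $P\cup CQ$, decompose $M_i=a_i+b_i$ with $a_i$ a chain in $P$ and $b_i$ a chain in the at‑most‑$n$‑dimensional cone $CQ$ (so $b_i=-N*\partial a_i$), build an $(n+1)$‑chain bounding $M_2$ from cones in the two hemispheres together with a chain filling the low‑dimensional cycle $\partial a_2$ inside $S^{2n}$, and check that its intersection number with $M_1$ vanishes because $|M_1|\cap|M_2|=\emptyset$ and $P$ meets $CQ$ only along the $(n-1)$‑dimensional $Q$. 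The delicate point is putting all these auxiliary chains into general position relative to $M_1$ and $M_2$; it is also the point where the dimension hypotheses of Lemma~\ref{circuits} break down for small $n$, forcing the separate, harder treatment of $n=1$ via the Kuratowski--Wagner theorem, Theorems~\ref{RST0} and~\ref{RST}, and Lemma~\ref{panels}, and of $n=2$ where $E_2$ is not governed by the van Kampen obstruction and one must route through the Robertson--Seymour--Thomas theorem and the panelled‑embedding characterisation of Lemma~\ref{panels}.
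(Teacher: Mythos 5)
Your overall architecture (polar decomposition, coning $Q$ into one hemisphere, pushing the pieces of $P\but Q$ into disjoint levels of a collar, routing linklessness through Lemma~\ref{circuits}, special-casing $n=1,2$ via Kuratowski--Wagner and Robertson--Seymour--Thomas) matches the paper's. But there is a genuine gap in your ``$\Leftarrow$'' direction of (a), and it is tied to a misplacement of the extra hypothesis. You claim the hypothesis that disjoint singular $(n-1)$-circuits in $Q$ bound disjoint singular $n$-circuits in $P$ is what makes the successive extensions in the ``$\Rightarrow$'' direction disjoint; it is not needed there, since the height trick (one slab of the collar per component of $P\but Q$) already forces disjointness, and Lemma~\ref{circuits}(b) is applied to each piece separately. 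Where the hypothesis is indispensable is exactly the step you treat as routine: showing that the embedding $g\:|Q|\emb S^{2n-1}$ read off from an embedding of $P\cup CQ$ is linkless. Your computation there is circular: if you fill both $gM_1$ and $gM_2$ by their cones to the cone point inside the small ball, the two cones meet at the apex, and the local intersection number of the (perturbed) cones at the apex is precisely $\lk(gM_1,gM_2)$ --- so the identity you obtain is $\lk=\lk$, not $\lk=0$. (As written the statement is also dimensionally off: a pushed-in copy of the $(n-1)$-cycle $gM_1$ and the $n$-dimensional cone on $gM_2$ generically miss each other in $B^{2n}$.) That no correct argument can avoid the hypothesis is shown by $P=Q=(\Delta^{2n+1})^{(n-1)}$: here $P\but Q=\emptyset$, $P\cup CQ=CQ$ embeds conically in $S^{2n}$, yet $Q$ is not linklessly embeddable in $S^{2n-1}$ (Example~\ref{3.4}). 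The repair is to fill $gM_1$ and $gM_2$ not by cones in the small ball but by the disjoint singular $n$-circuits in $P$ supplied by the hypothesis; these live in the closed complement of a regular neighbourhood of $CQ$, so the linking number in the boundary sphere is the intersection number of two disjoint chains, hence zero.

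A second, independent gap: in (b) ``$\Leftarrow$'' for $n\ge 2$ you reduce to ``flattening'' $P$ from the complementary $(2n+1)$-ball into an equatorial $S^{2n}$ and assert this ``follows from the circuit criterion'' because the obstruction is identified with the van Kampen obstruction. That identification is essentially the statement being proved, and it is the hardest part of the theorem. The paper's proof constructs, for each closure $M_i$ of a component of $P\but Q$, an embedded mapping cylinder of $M_i\to C(\partial M_i)$ via Lemma~\ref{circuits}(b), removes the resulting isolated double points by the Penrose--Whitehead--Zeeman trick, and then invokes the Concordance Implies Isotopy theorem to push $P$ into the boundary $2n$-sphere of a regular neighbourhood of $CQ$. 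None of this is optional: without it you have only a homotopy-theoretic statement, not the PL isotopy you need.
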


The case $n=1$ of (b) and much of the case $n=2$ of (a) were proved by van der Holst
\cite{Ho}, which the author discovered after writing up the proof below.
The case $n=1$ in the ``only if'' assertion in (b) was also proved earlier in
\cite{RST'}.

\begin{proof}[Proof. (a), if] Given an embedding $P\cup CQ\subset S^{2n}$, let $B$
be a regular neighborhood of $CQ$ relative to $P$.
Since $CQ$ link-collapses onto $Q$, this $B$ is a manifold \cite{HZ} (cf.\
\cite{Hus}), and therefore a ball (see \cite{HZ} or \cite{Co2}).
Then $Q\subset\partial B$ is an embedding that is linkless (and knotless, if $n=2$)
by Lemma \ref{circuits}(a) and Lemma \ref{panels}(a).
\end{proof}

\begin{proof}[(a), only if]
Suppose we are given a linkless embedding $Q\subset S^{2n-1}$; if $n=2$, we
may further assume that it is panelled by Lemma \ref{panels}(a).
Let us extend it to the conical embedding of $CQ$ in $B^{2n}$ and to the vertical
embedding of $Q\x I$ into a collar $S^{2n-1}\x I$ of $B^{2n}$ in $S^{2n}$.
Let $M_1,\dots,M_r$ be the closures of the components of $P\but Q$.

If $n=2$, each $\partial M_i$ bounds an embedded disk $D$ in $S^3$ that meets $Q$
only in $\partial D$.
Then $Q\x\{\frac ir\}$ can be easily approximated by an embedded copy of $M_i$
lying in $S^3\x [\frac{i-1}r,\frac ir]$ and meeting $Q\x I$ only in
$\partial M_i$.

If $n\ge 3$, then Lemma \ref{circuits}(b) yields an embedding $g_i$ of each $M_i$
in $S^{2n-1}\x [\frac{i-1}r,\frac ir]$ extending the inclusion of
$\partial M_i$ onto $\partial M_i\x\{\frac ir\}$ and disjoint from $Q\x I$
elsewhere.

In either case, different $M_i$'s will be disjoint because of their heights
in $S^{2n+1}\x I$.
Let $\hat P=CQ\cup Q\x I\cup M_1\cup\dots\cup M_r$.
The projection $\pi\:Q\x I\to Q$ yields a collapsible map $S^4\to S^4\cup_\pi Q$.
By the Cohen--Homma Theorem \ref{Cohen-Homma}, $S^4\cup_\pi Q$ is homeomorphic
to $S^4$, and therefore $P=\hat P\cup_\pi Q$ embeds in $S^4$.
\end{proof}

\begin{proof}[Proof. (b), only if] Let us write $S^{2n+1}=\{\nu,\sigma\}*S^{2n}$.
Given an embedding $P\subset S^{2n}$, it extends to the conical embedding
$P\cup\nu*Q\subset \nu*P\subset\nu*S^{2n}\subset S^{2n+1}$.

If $n=1$, then the latter is panelled, since every circuit $Z$ of
$P\cup\nu*Q$ either lies in $P$ (and so bounds the disk $\sigma*Z$) or is of
the form $\nu*(\partial J)\cup J$, where $J$ is an arc in $P$ (and so bounds
the disk $\nu*J$).

Now suppose that $n\ge 2$ and let $N$ and $S$ be disjoint subpolyhedra of
$P\cup\nu*Q$.
Without loss of generality $\nu\in N$.
Then $H^n(S,\,S\cap P)\simeq H^n(S\cup P,\,P)=0$ due to
$H^n(P\cup\nu*Q\but\nu*\emptyset,\,P)=0$.
Let $\Sigma=\sigma*(S\cap P)$; then
$H^n(S\cup\Sigma)\simeq H^n(S\cup\Sigma,\Sigma)=0$.
By the Alexander duality, the open manifold $M\bydef S^{2n+1}\but (S\cup\Sigma)$ is
homologically $n$-connected.
But also it is simply-connected as the complement to the codimension $\ge 3$
subset $S\but\Sigma$ in the Euclidean space $S^{2n+1}\but\Sigma$.
Hence $M$ is $n$-connected, and so the $n$-polyhedron $N$, which has
codimension $\ge 3$ in $M$, can be engulfed into a ball in $M$ (see e.g.\
\cite{Ze1} or \cite[Chapter VII]{Hu}).
\end{proof}

\begin{proof}[(b), if]
If $n=1$, then the proof of Lemma \ref{panels}(b) in \cite{M2} works to extend
the given panelled embedding of $P\cup CQ$ to an embedding $CP\emb S^3$.
By considering the link of the cone vertex, we obtain an embedding of $P$ in $S^2$.

If $n\ge 2$, let $M_1,\dots,M_r$ be the closures of the components of $P\but Q$.
Pick a map $f_i\:M_i\to C(\partial M_i)$ that restricts to the identification
of $\partial M_i$ with the base of the cone.
The mapping cylinder $MC(f_i)$ contains
$\mu_i\bydef M_i\cup MC(f_i|_{\partial M_i})\cup C(\partial M_i)$, which is a copy of
$M_i\cup C(\partial M_i)$.
By Lemma \ref{circuits}(b), the natural embedding of $\mu_i$ in $MC(\id_B)$,
which is a copy of $B\x I$, extends to an embedding $G_i\:MC(f_i)\emb B\x I$
whose image meets $(P\cup CQ)\x I$ only in $\mu_i$.

Combining the embeddings $G_i$ together, we obtain a map $F$ of $MC(f)$ into
$S^{2n+1}\x I$, where $f\:P\cup CQ\to CQ$ is obtained by combining the $f_i$.
By construction, $F$ restricts to the natural embedding of
$\mu\bydef P\cup MC(f|_Q)\cup CQ$, which is a copy of $P\cup CQ$, in
$MC(\id_{S^{2n+1}})$, which is a copy of $S^{2n+1}\x I$.
The only double points of $F$ are isolated, and occur between $MC(f_i)$
and $MC(f_j)$ for $i\ne j$.
Since they all share the cone vertex in $\mu$, yet another application of
the Penrose--Whitehead--Zeeman trick (in addition to those in Lemma
\ref{circuits}(b)) enables one to replace $F$ by an embedding $G$ that agrees
with $F$ on $\mu$ and still meets $(P\cup CQ)\x I$ only in $\mu$.

Viewed as an embedding of one mapping cylinder in another, $G$ may be assumed to
be level-preserving near the target base --- in other words, at some interval
$[1-\eps,1]$ of the parameter values (cf.\ \cite[proof of 4.23]{RS}).
Hence $G$ restricts to a concordance (with parameter values in $[0,1-\eps]$)
keeping $Q$ fixed between the inclusion $P\subset S^{2n+1}$ and
an embedding of $P$ in the boundary of the second derived neighborhood $B$ of
$CQ$ modulo $Q$ in an appropriate triangulation of $S^{2n+1}$.
Since $CQ$ link-collapses onto $Q$, this $B$ is a manifold \cite{HZ} (cf.\
\cite{Hus}), and therefore a ball (see \cite{HZ} or \cite{Co2}).
Since the image of this concordance meets $(P\cup CQ)\x [0,1-\eps]$ only in
$P\x\{0\}\cup CQ\x [0,1-\eps]$, it may be viewed as a concordance of
the entire $P\cup CQ$ keeping $CQ$ fixed.
Then by the Concordance Implies Isotopy theorem we get an isotopy of $S^{2n+1}$
keeping $CQ$ fixed and taking $P$ into the $2n$-sphere $\partial B$.
\end{proof}

\subsection*{Appendix: Embedding 2-polyhedra in 4-sphere}

It is well-known that if $K$ is a $2$-dimensional cell complex such that $|K^{(1)}|$
embeds in $S^2$, then $|K|$ embeds in $S^4$ (cf.\ \cite[p.\ 44]{2DH}); see
\cite[Theorem 4]{Cu} for a related result.
The following is essentially proved in \cite{Ho} (see also \cite{Gi}).

\begin{corollary}
If $K$ is a $2$-dimensional cell complex such that $|K^{(1)}\but\cel v\cer|$
linklessly embeds in $S^3$ for some vertex $v$ of $K$, then $|K|$ embeds in $S^4$.
\end{corollary}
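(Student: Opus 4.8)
The plan is to derive the statement from part~(a) of Theorem~\ref{commensuration} in dimension $n=2$. Write $G_0:=|K^{(1)}\but\cel v\cer|$, the subpolyhedron of the $1$-skeleton of $K$ spanned by the vertices other than $v$; by hypothesis $G_0$ linklessly embeds in $S^3$, and invoking Theorem~\ref{RST0} together with Lemma~\ref{panels}(b) I would first replace this by a \emph{panelled} embedding of $G_0$ in $S^3$, which is what the construction below will feed on.

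Next I would produce a $2$-dimensional cell complex $P$ with a $1$-dimensional subcomplex $Q$ satisfying the hypotheses of Theorem~\ref{commensuration}(a), namely: (i)~$Q$ still linklessly embeds in $S^3$; (ii)~the closure of every component of $|P|\but|Q|$ is a $2$-disk, hence a $2$-circuit with boundary; (iii)~every pair of disjoint singular $1$-circuits in $|Q|$ bounds a disjoint pair of singular $2$-circuits in $|P|$; and (iv)~$|K|$ embeds PL-wise in $|P|\cup C|Q|$, where the cone point of $C|Q|$ plays the role of $v$. The natural choice is $P=\widehat Q$, the result of capping every circuit of $Q$ by a $2$-cell: then (ii) is automatic, since each component of $|P|\but|Q|$ is the interior of a cap whose closure modulo its boundary circuit is $S^2$; and (iii) holds because two disjoint singular $1$-circuits have images in disjoint subcomplexes $Z_1,Z_2\subset Q$, and one may bound them, by disks, inside the disjoint subcomplexes $\widehat{Z_1},\widehat{Z_2}\subset P$. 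The work goes into the choice of $Q$: starting from $G_0$, one subdivides each $2$-cell of $K$ incident to $v$ so that the only cells meeting $v$ in the resulting complex are triangles, and then adjoins to $G_0$ auxiliary degree-two vertices turning the subpaths of $G_0$ opposite $v$ in those triangles into boundary circuits of $Q$. Condition (iv) is then checked by realising each such triangle inside $|P|\cup C|Q|$ as the union of the corresponding cap of $\widehat Q$ with two cone triangles over $C|Q|$, and each of the remaining $2$-cells of $K$ as a single cap; a routine bookkeeping argument shows that all these pieces meet precisely as the corresponding cells of $K$ do, so the union is a copy of $|K|$. Granting (i)--(iv), Theorem~\ref{commensuration}(a) yields that $|P|\cup C|Q|$ embeds in $S^4$, and therefore so does its subpolyhedron $|K|$.

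The step I expect to be the main obstacle is (i): keeping the augmented graph $Q$ linklessly embeddable. Adjoining a degree-two vertex parallel to a genuine \emph{edge} of $G_0$ only creates a multiple edge and so does not enlarge the class of minors, hence preserves the absence of Petersen-family minors and thus, by Theorem~\ref{RST}(b), linkless embeddability; but the subdivision of the cells at $v$ generally forces one to deal instead with vertices parallel to longer \emph{paths} of $G_0$, and this operation does not preserve linkless embeddability abstractly. Overcoming this is exactly where the panelled embedding of $G_0$ must be used --- the auxiliary arcs have to be routed along the panels so that the resulting embedding of $Q$ is itself linkless --- and it is the point worked out by van der Holst \cite{Ho} in the case at hand; it is also the reason the corollary, though ``essentially'' a formal consequence of Theorem~\ref{commensuration}, is not entirely immediate.
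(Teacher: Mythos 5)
Your reduction to Theorem~\ref{commensuration}(a) is the right idea, but the proof as proposed is incomplete precisely at the step you flag yourself: you never establish condition~(i), that the augmented graph $Q$ --- obtained from $G_0=|K^{(1)}\but\cel v\cer|$ by adjoining arcs parallel to the paths of $G_0$ opposite $v$ --- still embeds linklessly in $S^3$. Adjoining a vertex of degree two parallel to a path (as opposed to an edge) can create new Petersen-family minors, so linkless embeddability is not preserved abstractly, and ``route the auxiliary arcs along the panels'' is exactly the assertion that would need a proof; deferring it to \cite{Ho} leaves the essential content unestablished. (Your verification of condition~(iii) for $\widehat Q$ is in any case not needed here: the extra hypothesis of Theorem~\ref{commensuration}(a) about disjoint circuits bounding disjoint $2$-circuits enters only in the ``if'' direction of that statement.)

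The difficulty is self-inflicted, because no augmentation of the graph is required. Set $L=K\but\cel v\cer$, the subcomplex of all cells of $K$ not containing $v$, and apply Theorem~\ref{commensuration}(a) with $P=|L|$ and $Q=|L^{(1)}|=|K^{(1)}\but\cel v\cer|$ itself. The closures of the components of $|L|\but|L^{(1)}|$ are the closed $2$-cells of $K$ missing $v$, hence $2$-circuits with boundary, and $|K|$ embeds in $|L\cup L^{(1)}*\{v\}|=P\cup CQ$, since each $2$-cell of $K$ containing $v$ is realized as the cone from $v$ over the arc of its boundary circuit that lies in $L^{(1)}$. The linkless embeddability hypothesis is then used verbatim on $Q$, and the ``only if'' direction of Theorem~\ref{commensuration}(a) gives an embedding of $P\cup CQ$, hence of $|K|$, in $S^4$. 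This also removes the need to cap every circuit of $Q$ as in $\widehat Q$: only the $2$-cells actually present in $K$ matter.
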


\begin{proof} Let $L=K\but\cel v\cer$.
Then $|K|$ embeds in $|L\cup L^{(1)}*\{v\}|$.
By the hypothesis $|L^{(1)}|$ linklessly embeds in $S^3$, hence by
Theorem \ref{commensuration}(a), $|L\cup L^{(1)}*\{v\}|$ embeds in $S^4$.
\end{proof}

It is a well-known open problem whether every contractible $2$-polyhedron
embeds (i.e., PL embeds) in $S^4$; an affirmative solution is well-known to be
implied by the Andrews--Curtis conjecture, cf.\ Curtis \cite[\S2]{Cu}.
(Indeed, by general position every $2$-polyhedron $P$ immerses in $I^4$, and
therefore embeds in a $4$-manifold $M$.
Let $N$ be the regular neighborhood of $P$ in $M$.
If $P$ $3$-deforms to a point, then the double of $N$ is the $4$-sphere, see
\cite[Assertion (59) in Ch.\ I]{2DH}.)

The most nontrivial ingredient of the proof of Theorem \ref{commensuration}
is Lemma \ref{panels}(a) of Robertson--Seymour--Thomas.
The dependence of part (b) on this lemma is only apparent: it can be eliminated
altogether if we replace ``linklessly'' by ``linklessly (and knotlessly,
when $n=1$)''.
Part (a) depends on the Robertson--Seymour--Thomas lemma in an essential way.
However, it does not use full strength of the lemma.

The remaining power of their lemma is captured by the following striking result,
which can also be deduced from the results of \cite{Ho}.

\begin{theorem}\label{Whitney} Let $P$ be a $2$-polyhedron and $Q$ a $1$-dimensional
subpolyhedron of $P$ such that the closure of every component of $P\but Q$ is a disk,
and every two disjoint circuits in $Q$ bound disjoint singular surfaces in $P$.

Then $P\cup CQ$ embeds in $S^4$ iff the $\bmod 2$ van Kampen obstruction of
$P\cup CQ$ vanishes.
\end{theorem}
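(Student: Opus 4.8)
I would dispose of the ``only if'' direction first: an embedding of $P\cup CQ$ in $S^4$ forces its $\bmod 2$ van Kampen obstruction to vanish, the $\bmod 2$ van Kampen obstruction being a general necessary condition for embeddability of an $n$-polyhedron in $S^{2n}$ (here $n=2$; cf.\ Remark \ref{van Kampen}).

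For the converse, the first move is to observe that the hypotheses are precisely those of Theorem \ref{commensuration}(a) in the case $n=2$: a disk is a $2$-circuit with boundary, so each closed component of $P\but Q$ qualifies; and ``disjoint circuits in $Q$ bound disjoint singular surfaces in $P$'' is exactly ``disjoint singular $1$-circuits in $Q$ bound disjoint singular $2$-circuits in $P$'', a compact surface bounded by a circle being a $2$-circuit with boundary. Hence Theorem \ref{commensuration}(a) says that $P\cup CQ$ embeds in $S^4$ as soon as $|Q|$ admits a linkless embedding in $S^3$, and the problem is reduced to producing such an embedding from the vanishing of the $\bmod 2$ van Kampen obstruction of $X\bydef P\cup CQ$.

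To do that I would run the $n=2$ case of the obstruction-theoretic mechanism of \S\ref{commensurability}: identify the $\bmod 2$ van Kampen obstruction of $X$ with the obstruction to $|Q|$ embedding in $S^3$ so that every pair of disjoint circuits links with an even linking number --- equivalently, with the vanishing of $\phi^*(\xi)$, where $\phi\:|Q\hat\circledast Q|/t\to\R P^\infty$ classifies the factor-exchanging involution and $\xi\in H^3(\R P^\infty;\,\Z)$ is the generator. Since $Q$ is a subcomplex of $X$, a $\Z/2$-map $|Q\hat\circledast Q|\to|X\hat\circledast X|$ is available from the construction behind Corollary \ref{3.3}; the content of the identification is that, because the closed components of $P\but Q$ are disks filling circuits of $Q$ and because disjoint circuits of $Q$ bound disjoint singular surfaces in $P$, the disks together with the cone $CQ$ supply, through Lemma \ref{circuits}, the compatible pairs of disjoint null-homotopies needed to convert a witness of the vanishing of the obstruction of $X$ into a $\Z/2$-map $|Q\hat\circledast Q|\to S^3$. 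This is parallel to the ``conversely'' half of the proof of Theorem \ref{3.0} and to the ``only if'' half of the proof of Theorem \ref{commensuration}(a). Granting it, Theorem \ref{3.0} with $n=1$ --- or, if one prefers to make the use of Robertson--Seymour--Thomas explicit, the Haefliger--Weber criterion followed by Theorem \ref{RST0} --- yields a linkless, knotless embedding of $|Q|$ in $S^3$, and the previous step then gives the desired embedding of $P\cup CQ$ in $S^4$.

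The hard part will be this identification of the $\bmod 2$ van Kampen obstruction of $P\cup CQ$ with the linking obstruction for $Q$: it is the two-dimensional instance of ``the Whitney trick working in dimension four'' inside this restricted class of problems, and the hypothesis that disjoint circuits of $Q$ bound disjoint singular surfaces in $P$ is exactly what makes it go through (it is the $n=2$ instance of the extra hypothesis in Theorem \ref{commensuration}(a)). By contrast, the genuinely nontrivial use of the Robertson--Seymour--Thomas lemma beyond what Theorem \ref{commensuration}(a) already absorbs --- the passage from ``every disjoint pair links evenly'' to ``linkless'' --- is confined to the appeal to Theorem \ref{RST0} (equivalently Lemma \ref{panels}).
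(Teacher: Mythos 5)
Your bookends match the paper's: the ``only if'' direction is the standard necessity of the van Kampen obstruction, and the endgame is Lemma \ref{panels}(a) (Robertson--Seymour--Thomas) followed by Theorem \ref{commensuration}(a), with the disjoint-surfaces hypothesis consumed entirely by the latter. But the middle step that you defer as ``the hard part'' --- identifying the vanishing of the $\bmod 2$ van Kampen obstruction of $P\cup CQ$ with the existence of an embedding of $Q$ in $S^3$ linking every pair of disjoint circuits evenly --- \emph{is} the proof; everything else is bookkeeping. As written, your argument asserts this identification rather than establishing it, and the homotopy-theoretic route you gesture at (converting a witness of the vanishing of $\theta_{\Z/2}(P\cup CQ)$ into a $\Z/2$-map $|Q\hat\circledast Q|\to S^3$ via Lemma \ref{circuits} and the machinery of Theorem \ref{3.0}) is not available off the shelf: the van Kampen obstruction lives in $H^4$ of the deleted product of the $2$-complex $P\cup CQ$, the linking obstruction lives in $H^3$ of a deleted join of the $1$-complex $Q$, and relating them is precisely where one must do work. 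Note also that vanishing of the $\bmod 2$ obstruction of a $2$-complex does not imply embeddability in $S^4$ in general, so no appeal to Haefliger--Weber can close this gap.

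What the paper actually does here is concrete and geometric. It starts from a map $f\:P\cup CQ\to S^4$ with $Q$ embedded in the equatorial $S^3$, $CQ$ embedded conically in one hemisphere $B^4$, and $P$ mapped generically into the other hemisphere. The vanishing hypothesis says the intersection cocycle $c$ of $f$ is a coboundary, $c=\delta b$; the paper then realizes $b$ by finger moves --- for each pair (cell $\sigma$, disjoint $2$-cell $\tau$) with $b(\sigma\boxtimes\tau)\ne 0$, it tubes onto $f(\tau)$ a small sphere linking $f(\sigma)$ oddly, with a more careful version for the cone simplices that keeps $f(CQ)$ embedded and inside $B^4$. After this, every disjoint pair of cells has even intersection number, the remaining intersections of $2$-cells of $P$ with cone simplices are pushed through the base of the cone, and one reads off that the (still embedded) copy of $Q$ in $S^3$ links every pair of disjoint circuits evenly --- which is exactly the input Lemma \ref{panels}(a) needs. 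You have correctly located where the theorem's content sits, but you have not supplied it; to complete your proposal you would need to carry out this finger-move argument (or an equivalent computation) rather than cite it as an identification to be granted.
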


This is saying basically that in a certain situation, the Whitney trick works
in dimension $4$ in the PL category.

\begin{proof}
Let us pick an embedding of $Q$ in $S^3=\partial B^4$, extend it to the conical
embedding $CQ\to B^4$ and also to a map of $P$ into the other hemisphere of $S^4$.
This defines a map $f\:P\cup CQ\to S^4$, and we may assume that it only has
transverse double points.

Let $\bar P$ be the quotient of $P\x P\but\Delta_P$ by the factor exchanging
involution $T$.
Let $G$ be a triangulation of $Q$, and let $K$ be the cell complex extending
the triangulation $CG$ of $CQ$ by adding the closures of the components of
$P\but Q$.
Let $\bar K\subset\bar P$ be the quotient by $T$ of the union of all products
$\sigma\x\tau$, where $\sigma$ and $\tau$ are disjoint cells of $K$.
Note that $H^4(\bar P;\,\Z/2)\simeq H^4(\bar K;\,\Z/2)$.
For disjoint cells $\sigma,\tau$ of $K$, let $\sigma\boxtimes\tau$ denote
the characteristic chain of the cell $(\sigma\x\tau\cup\tau\x\sigma)/T$ of
$\bar K$.
The van Kampen obstruction $\theta(P)\in H^4(\bar K;\,\Z/2)$ is represented by
a cellular cocycle $c$ such that $c(\sigma\boxtimes\tau)$ is the parity of
the number of intersections between $f(\sigma)$ and $f(\tau)$ (see \cite{M2}).

By the hypothesis, $c$ is the coboundary of a cellular $1$-chain $b$.
For each edge $\sigma$ of $K$ and each $2$-cell $\tau$ in $P$ disjoint from
$\sigma$ and such that $b(\sigma\boxtimes\tau)\ne 0$, let us pick a copy of
$S^2$ in a small neighborhood of $f(\sigma)$ in $S^4$, winding around
$f(\sigma)$ with an odd linking number, and connect this sphere to $f(\tau)$
by a thin tube disjoint from the image of $f$.
Next, for each edge $\sigma$ of $G$ and each $2$-simplex $C\tau$ of $CG$ disjoint
from $\sigma$ let us do an equivalent, but fancier procedure.
Let us pick a copy of $S^1$ in a small neighborhood of $f(\sigma)$ in $S^3$,
winding around $f(\sigma)$ with and odd linking number (it might be easier to
imagine the following steps if the linking number is $\pm 1$) and connect this
loop to $f(\tau)$ within $S^3$ by a thin tube $S^0\x I$ disjoint from the image
of $f$.
We extend this modification of $f(\tau)$ conewise to $f(C\tau)$ and by a
generic homotopy to a small neighborhood of $\tau$ mod $\partial\tau$ in $P$.
Note that $f(CQ)$ stays within $B^4$ and remains embedded.

The amended map $f'$ has the property that for every pair of disjoint cells
$\sigma$, $\tau$ of $K$, the intersection number between $f'(\sigma)$ and
$f'(\tau)$ is even.
In addition, $f'$ still embeds $CQ$ in $B^4$, conically.
Then all intersections between a $2$-cell $\sigma$ in $P$ and a $2$-simplex
$C\tau$ can be pushed through the base of the cone; since their $\bmod 2$
algebraic number is zero, this will not change the $\bmod 2$ algebraic
intersection numbers of $\sigma$ with other $2$-cells in $P$.
Thus all the intersections of the resulting map $f''$ are between $2$-cells of $P$,
which all lie in the upper hemisphere of $S^4$, and the intersection number
between any pair of disjoint $2$-cells is even.

Since $Q$ is still in $S^3$, we obtain that every two disjoint circuits in $Q$
have even linking number under $f''$.
Now the assertion follows from Lemma \ref{panels}(a) and Theorem
\ref{commensuration}(a).
\end{proof}

\begin{remark}\label{erratum2} The above argument is parallel to a proof of
the completeness of
the van Kampen obstruction in higher dimensions, see e.g.\ \cite[proof of 3.1]{M2}.
We note some confusing typos in the final paragraph of that proof in \cite{M2}:
``$f(p)=f(q)$'' should read ``$g(p)=g(q)$'', and more importantly ``Then a small
regular neighborhood of $f(J)$ ...'' should read ``Since $n>2$, the embedded
$1$-sphere $g(J)$ bounds an embedded $2$-disk $D$ that meets $g(Y\cup\sigma_i)$
only in its boundary. Then a small regular neighborhood of $D$ ...''
\end{remark}

\section*{Acknowledgements}
The author is grateful to A. N. Dranishnikov, I. Izmestiev, E. Nevo, E. V. Shchepin,
A. Skopenkov, M. Skopenkov, M. Tancer, S. Tarasov and M. Vyalyj for useful
discussions.

\end{document}